\theoremstyle{definition}
\newtheorem{Definition}[subsection]{Definition}
\newtheorem{Notation}[subsection]{Notation}
\newtheorem{Convention}[subsection]{Convention}
\newtheorem{Construction}[subsection]{Construction}
\newtheorem{Remark}[subsection]{Remark}
\theoremstyle{plain}
\newtheorem{Lemma}[subsection]{Lemma}
\newtheorem{Proposition}[subsection]{Proposition}
\newtheorem{Theorem}[subsection]{Theorem}
\newtheorem{Corollary}[subsection]{Corollary}
\newtheorem*{sTheorem}{Theorem}
\DeclareMathOperator{\Spec}{Spec}
\DeclareMathOperator{\Et}{Et}
\DeclareMathOperator{\Aut}{Aut}
\DeclareMathOperator{\Tr}{Tr}
\DeclareMathOperator{\tot}{tot}
\DeclareMathOperator{\Ind}{Ind}
\DeclareMathOperator{\Coker}{Coker}
\DeclareMathOperator{\Ker}{Ker}
\DeclareMathOperator{\Img}{Im}
\newcommand{\BC}{\mathrm{BC}}
\newcommand{\GBC}{\mathrm{GBC}}
\newcommand{\PF}{\mathrm{PF}}
\newcommand{\KF}{\mathrm{KF}}
\newcommand{\ev}{\mathrm{ev}}
\newcommand{\Dbc}{D^b_c}
\newcommand{\ft}{\mathrm{ft}}
\newcommand{\Dcft}{D_{c\ft}}
\newcommand{\adj}{\mathrm{adj}}
\newcommand{\red}{\mathrm{red}}
\newcommand{\norm}{\mathrm{norm}}
\newcommand{\et}{{\mathrm{et}}}
\newcommand{\beq}{\begin{equation}}
\newcommand{\eeq}{\end{equation}}
\newcommand{\simto}[1][]{\xrightarrow[#1]{\sim}}
\newcommand{\xto}[1]{\xrightarrow{#1}}
\newcommand{\cHom}{\mathcal{H}om}
\newcommand{\cTor}{\mathcal{T}or}
\newcommand{\cExt}{\mathcal{E}xt}
\newcommand{\GD}{\mathrm{GD}}
\newcommand{\PsFun}{\mathrm{PsFun}}
\newcommand{\Ob}{\mathrm{Ob}}
\newcommand{\trdeg}{\mathrm{tr.deg}}
\newcommand{\Etqc}{\mathrm{Etqc}}
\newcommand{\Rep}{\mathrm{Rep}}
\newcommand{\Ext}{\mathrm{Ext}}
\newcommand{\Hom}{\mathrm{Hom}}
\newcommand{\Mod}{\mathrm{Mod}}
\newcommand{\cosk}{\mathrm{cosk}}
\newcommand{\op}{\mathrm{op}}
\newcommand{\lisse}{\mathrm{lisse}}
\newcommand{\cA}{\mathcal{A}}
\newcommand{\cB}{\mathcal{B}}
\newcommand{\cC}{\mathcal{C}}
\newcommand{\cD}{\mathcal{D}}
\newcommand{\cF}{\mathcal{F}}
\newcommand{\cG}{\mathcal{G}}
\newcommand{\cN}{\mathcal{N}}
\newcommand{\cO}{\mathcal{O}}
\newcommand{\cP}{\mathcal{P}}
\newcommand{\cS}{\mathcal{S}}
\newcommand{\cT}{\mathcal{T}}
\newcommand{\cU}{\mathcal{U}}
\newcommand{\cV}{\mathcal{V}}
\newcommand{\cX}{\mathcal{X}}
\newcommand{\cY}{\mathcal{Y}}
\newcommand{\cZ}{\mathcal{Z}}
\newcommand{\fH}{\mathcal{H}}
\newcommand{\fm}{\mathfrak{m}}
\newcommand{\one}{\mathrm{id}}
\newcommand{\F}{F}
\newcommand{\Z}{\mathbb{Z}}
\newcommand{\N}{\mathbb{N}}
\newcommand{\Kt}{K\sptilde{}}
\newcommand{\Cart}{\mathrm{Cart}}
\newcommand{\res}{\mathbin{|}}
\newcommand{\id}{\mathrm{id}}
\setlist[enumerate,1]{(1)}
\setlist{nosep}
\begin{document}
\numberwithin{equation}{subsection}
\title{Six operations and Lefschetz-Verdier formula for Deligne-Mumford stacks}
\author{Weizhe Zheng\thanks{Morningside Center of Mathematics, Academy of Mathematics and Systems Science, Chinese Academy of
Sciences, Beijing 100190, China; email: \texttt{wzheng@math.ac.cn}}}
\date{}
\maketitle

\begin{abstract}
Laszlo and Olsson constructed Grothendieck's six operations for
 constructible complexes on Artin stacks in \'etale cohomology under an
 assumption of finite cohomological dimension, with base change
 established on the level of sheaves. In this article we give a more
 direct construction of the six operations for complexes on
 Deligne-Mumford stacks without the finiteness assumption and establish
 base change theorems in derived categories. One key tool in our
 construction is the theory of gluing finitely many pseudofunctors
 developed in \cite{glue}. As an application, we prove a Lefschetz-Verdier
 formula for Deligne-Mumford stacks. We include both torsion and
 $\ell$-adic coefficients.
\end{abstract}

\section*{Introduction}
Grothendieck's six operations in \'etale cohomology of schemes were
constructed in \cite{SGA4}. In \cite{LO1}, \cite{LO2}, Laszlo and Olsson
extended much of this formalism to Artin stacks. One key ingredient in their
approach is biduality, which allows them to define the extraordinary direct
image functor $Rf_!$ as $D (Rf_*) D$. Base change for the functor $Rf_!$
defined in this way was only proved on the level of cohomology sheaves, due
to the difficulty of gluing objects in derived categories. The theory has
some other restrictions: it only works for constructible complexes of
modules over Gorenstein rings, and the base scheme $S$ is assumed to be
excellent admitting a dimension function with the additional assumption that
every finite-type $S$-scheme has finite $\ell$-cohomological dimension. Note
that the spectrum of a field does not satisfy this assumption in general.

In this article, we propose a more direct construction of Grothendieck's six
operations for Deligne-Mumford stacks, which allows to lift most of the
above-mentioned restrictions. Our construction of the functor $Rf_!$ is
closer to the original construction for schemes in SGA 4 \cite[XVII]{SGA4}.
Recall that every separated finite-type morphism $f\colon X\to Y$ of
quasi-compact quasi-separated schemes is the composition of an open
immersion $j$ followed by a proper morphism $p$ by Nagata compactification.
The functors $Rf_!$ for such morphisms is constructed by gluing \emph{two}
pseudofunctors: direct image $p\mapsto Rp_*$ and extension by zero $j\mapsto
j_!$. Although at present we do not have Nagata compactification for general
Deligne-Mumford stacks\footnote{See \cite{Rydhcomp} for partial results
toward this direction.}, Nagata compactification for algebraic spaces was
established by Conrad, Lieblich, and Olsson \cite{CLO}. Applying this to the
coarse spaces provides a decomposition of a morphism of Deligne-Mumford
stacks into \emph{three} morphisms, the first and third being proper and the
second being an open immersion. We then apply the theory of gluing finitely
many pseudofunctors developed in \cite{glue}, which extends the results on
gluing two cofibered categories in \cite[XVII]{SGA4}. Our construction is
compatible with the construction via biduality, whenever the two are both
defined.

Let us summarize the six operations and state the base change theorem for
torsion coefficients. By the functoriality of \'etale topoi of
Deligne-Mumford stacks, we have, for every morphism $f\colon \cX\to \cY$ of
Deligne-Mumford stacks, and every commutative ring $\Lambda$, functors
\begin{gather*}
  -\otimes^L-\colon D(\cX,\Lambda)\times D(\cX,\Lambda)\to D(\cX,\Lambda),\quad  R\cHom\colon D(\cX,\Lambda)^{\op}\times D(\cX,\Lambda)\to D(\cX,\Lambda),\\
f^*\colon D(\cY,\Lambda)\to D(\cX,\Lambda), \quad Rf_*\colon D(\cX,\Lambda)\to D(\cY,\Lambda).
\end{gather*}
Now let $S$ be a Noetherian scheme, and let $\Lambda$ be a commutative ring
annihilated by an integer $m$. For every separated morphism $f\colon \cX\to
\cY$ of $m$-prime inertia (Definition \ref{d.inert}) between finite-type and
finite-inertia Deligne-Mumford $S$-stacks, we construct functors
\[
Rf_!\colon D(\cX,\Lambda)\to D(\cY,\Lambda), \quad Rf^!\colon D(\cY,\Lambda)\to D(\cX,\Lambda).
\]
Here $D(-,\Lambda)$ denotes the unbounded derived category, with no
assumption on constructibility. Moreover, we construct pseudonatural
transformations encoding base change isomorphisms in derived categories. In
particular, we have the following (special case of Theorem \ref{p.bcc}
\ref{p.bcc1}).

\begin{sTheorem}
Let
  \[\xymatrix{\cX'\ar[r]^{h}\ar[d]_{f'} & \cX\ar[d]^{f}\\
  \cY'\ar[r]^{g} & \cY}\]
be a 2-Cartesian square of Deligne-Mumford stacks of finite type and finite
inertia over~$S$ with $f$ separated of $m$-prime inertia. The base change
map
  \[g^* Rf_! M \to Rf'_! h^* M\]
  is an isomorphism for all $M\in D(\cX,\Lambda)$.
\end{sTheorem}

Another approach for six operations on algebraic stacks has been developed
in a series of joint work with Yifeng Liu (\cite{LZ2}, \cite{LZ3}). One key
ingredient in that approach is homological descent, which reduces the
construction of $Rf_!$ to the case of schemes. Although that approach gives
the most general results in many cases, the techniques required, including a
theory on gluing in the $\infty$-categorical setting \cite{LZ1}, are quite
involved.

As an application of the constructions in the present article, we prove a
Lefschetz-Verdier formula for Deligne-Mumford stacks. Let us state the
formula for torsion coefficients (special case of Corollary \ref{c.lv}).
Note that even in the case of schemes, our result is slightly more general
than Lefschetz-Verdier formulas previously proven by Illusie \cite[III
Corollaire 4.5]{SGA5} and Varshavsky \cite[Proposition 1.2.5]{Varshavsky} as
we make fewer properness assumptions.

\begin{sTheorem}
Let $S$ be the spectrum of a field and let $\Lambda$ be a Noetherian
commutative ring annihilated by an integer $m$ invertible on $S$. Let
\[\xymatrix{X_1\ar[d]_{f_1} & B'\ar[l]_{b'_1}\ar[d]^{g'}\ar[r]^{b'_2} & X_2\ar[d]^{f_2} & B''\ar[l]_{b''_2}\ar[d]^{g''}\ar[r]^{b''_1} &
X_1\ar[d]^{f_1}\\
Y_1 & C'\ar[l]\ar[r] & Y_2 & C''\ar[l]\ar[r] & Y_1}
\]
be a 2-commutative diagram of separated Deligne-Mumford stacks of $m$-prime
inertia and of finite type over $S$ such that the morphisms $g'$ and $B''\to
C''\times_{Y_2} X_2$ are proper. For $i=1,2$, let $L_i\in
\Dcft(X_i,\Lambda)$,\footnote{For a Deligne-Mumford stack $T$,
$\Dcft(T,\Lambda)$ denotes the full subcategory of $D(T,\Lambda)$ spanned by
complexes of constructible cohomology sheaves and finite tor-dimension.}
$i=1,2$, $u\colon b'^*_1 L_1\to b'^!_2 L_2$, $v\colon b''^*_2L_2\to b''^!_1
L_1$. Then the morphism
\[g\colon B=B' \times_{X_1\times X_2} B'' \to C'\times_{Y_1\times Y_2} C''=C\]
is proper and we have an identity
\[g_!\langle u,v\rangle = \langle g'^\sharp_! u, g''^\sharp_! v\rangle\]
in $H^0(C,K_C)$. Here $K$ denotes the dualizing complex, $g_!\colon
H^0(B,K_B)\to H^0(C,K_C)$ is the map induced by the adjunction map $g_!
K_B\simeq g_! g^! K_C\to K_C$, $g'^\sharp_!$, $g''^\sharp_!$ are pushforward
maps defined in Construction \ref{c.shriek}, and $\langle-,-\rangle$ is the
Lefschetz-Verdier pairing defined in Construction \ref{c.pair}.
\end{sTheorem}

This article is organized as follows. In Section~\ref{s.1}, we apply the
theory of gluing pseudofunctors \cite{glue} and obtain a general gluing
result for Deligne-Mumford stacks. In Section~\ref{s.2}, we apply this
gluing result to construct the six operations for torsion coefficients and
relations between them, including the base change isomorphism (Theorem
\ref{p.bcc}). In Section~\ref{s.3}, we specialize the six operations to
constructible complexes and compare with operations constructed by duality.
In Section~\ref{s.topos}, we develop an $\ell$-adic formalism for a general
topos. In Section~\ref{s.5}, we apply this formalism to Deligne-Mumford
stacks and construct the six operations for $D^+_c(-,\cO)$, where $\cO$ is a
complete discrete valuation ring of residue characteristic~$\ell$ invertible
on $S$. Again, our gluing technique allows us to construct the base change
isomorphism in derived categories (Theorem \ref{p.dagO}). In
Section~\ref{s.6}, we show that if the fraction field $E$ of $\cO$ has
characteristic $0$, the restriction on the inertia of $f$ in the
construction of $Rf_!$ disappears when we pass from $\cO$ to $E$. In
Section~\ref{s.cc}, we define pushforward of cohomological correspondences,
which is used in the statement of Lefschetz-Verdier formula. In
Section~\ref{s.LV}, we state and prove the Lefschetz-Verdier formula for
Deligne-Mumford stacks.

The six operations constructed in this article are used in joint work with
Illusie \cite[Section~3]{Illusie-Zheng} to give a generalization of Laumon's
theorem comparing direct image and extraordinary direct image. In an
appendix to the present article (Section~\ref{s.7}), we relate this to
modular representation theory and collect a couple of consequences.

\subsection*{Acknowledgments}
The author warmly thanks Luc Illusie for the numerous comments he made on
drafts of this paper. The author also thanks Johan de Jong, Ofer Gabber,
Yifeng Liu, Martin Olsson, Jo\"el Riou, David Rydh, and Yichao Tian for
useful discussions. The author is grateful to the referee for a very careful
reading and many helpful suggestions. This work was partially supported by
China's Recruitment Program of Global Experts; National Natural Science
Foundation of China Grant 11321101; Hua Loo-Keng Key Laboratory of
Mathematics, Chinese Academy of Sciences; National Center for Mathematics
and Interdisciplinary Sciences, Chinese Academy of Sciences.

\section{Gluing pseudofunctors on Deligne-Mumford stacks}\label{s.1}
In this section, we prove a general result for the construction of
pseudofunctors from the 2-category of Deligne-Mumford stacks. Although the
statement of the result is about gluing two pseudofunctors, the key tool in
the proof is the general theory of gluing finitely many pseudofunctors
developed in \cite{glue}.

\begin{Convention}\label{s.stack}
In this article, all algebraic spaces and Deligne-Mumford stacks are assumed
to be quasi-separated. By a \emph{Deligne-Mumford stack}, we thus mean a
stack $\cX$ such that the diagonal $\Delta\colon \cX\to \cX\times_S\cX$ is
representable, quasi-compact and quasi-separated, and such that there exists
an algebraic space $X$ and an \'etale surjective morphism $X\to \cX$. As in
\cite{SP} and contrary to \cite[D\'efinition 4.1]{LMB}, we do not assume
that the diagonal of a Deligne-Mumford stack is separated. Note however that
a Deligne-Mumford stack of finite inertia has separated diagonal. Unless
otherwise stated, all rings are assumed to be commutative. Quasi-excellent
schemes are assumed to be Noetherian.
\end{Convention}

\begin{Proposition}\label{p.comp}
Let $S$ be a Noetherian scheme and let $f\colon \cX\to \cY$ be a separated
morphism between finite-type finite-inertia Deligne-Mumford $S$-stacks. Then
$f$ is isomorphic to a composition
  \[\cX\xrightarrow{\pi}\cX'\xrightarrow{j} \cZ \xrightarrow{p} \cY,\]
where $\pi$ is a proper homeomorphism, $j$ is an open immersion, and $p$ is
proper and representable. Moreover, if $f$ is quasi-finite, we can take $p$
to be finite.
\end{Proposition}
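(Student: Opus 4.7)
My plan is to reduce the statement to Nagata compactification for algebraic spaces \cite{CLO} by passing to coarse moduli spaces. Since $\cX$ and $\cY$ have finite inertia, Keel-Mori provides coarse moduli spaces $\pi_\cX\colon \cX\to X$ and $\pi_\cY\colon \cY\to Y$, with $X$ and $Y$ Noetherian algebraic spaces and both $\pi_\cX$, $\pi_\cY$ proper universal homeomorphisms. The universal property of $\pi_\cX$ applied to $\pi_\cY\circ f\colon \cX\to Y$ produces a morphism $\bar f\colon X\to Y$ with $\bar f\circ\pi_\cX=\pi_\cY\circ f$; a standard descent argument (from the Keel-Mori literature) shows that $\bar f$ inherits finite type and separatedness from $f$.

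Applying \cite{CLO} to $\bar f$ yields a factorization $X\xrightarrow{\bar j}\bar X\xrightarrow{\bar p}Y$ with $\bar j$ an open immersion and $\bar p$ proper. Setting $\cX':=X\times_Y\cY$ and $\cZ:=\bar X\times_Y\cY$, I obtain a proper morphism $p\colon \cZ\to \cY$ and an open immersion $j\colon \cX'\hookrightarrow \cZ$ by base change; both $\cX'$ and $\cZ$ are representable over $\cY$ because $X$ and $\bar X$ are algebraic spaces. The identity $\bar f\circ \pi_\cX=\pi_\cY\circ f$ produces a canonical morphism $\pi\colon \cX\to \cX'=X\times_Y\cY$ with $p\circ j\circ \pi$ canonically isomorphic to $f$.

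The crucial point is to show that $\pi$ is a proper homeomorphism. Properness holds because $\cX\to X$ is proper and $\cX'\to X$, as a base change of $\pi_\cY$, is separated. For the universal homeomorphism property, I consider the factorization $\cX\xrightarrow{\pi}\cX'\to X$ of $\pi_\cX$, in which both $\pi_\cX$ and the second map (base change of $\pi_\cY$) are universal homeomorphisms; a two-out-of-three argument then gives that $\pi$ is one as well, verified by pulling back along an arbitrary test morphism $\cT\to \cX'$ and observing that the remaining two edges of the resulting triangle become homeomorphisms.

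For the quasi-finite case, $\bar f$ is quasi-finite because $\pi_\cX$ and $\pi_\cY$ are bijections on points, so Zariski's Main Theorem for Noetherian algebraic spaces allows $\bar p$ to be chosen finite; then $p=\bar p\times_Y\cY$ inherits finiteness. The main obstacles I anticipate are (i) confirming the descent of separatedness from $f$ to $\bar f$, which I would cite from the Keel-Mori literature, and (ii) making the two-out-of-three argument for universal homeomorphisms precise, which is essentially formal but requires some bookkeeping with 2-categorical base change.
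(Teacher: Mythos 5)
Your proposal is correct and follows essentially the same route as the paper: form the morphism $g\colon X\to Y$ of coarse spaces (Conrad/Keel--Mori), apply Nagata compactification for algebraic spaces \cite{CLO} (resp.\ Zariski's Main Theorem in the quasi-finite case) to $g$, and pull the resulting factorization back along $\cY\to Y$, with $\pi\colon\cX\to X\times_Y\cY$ the induced map. The paper's proof is terser and leaves the verification that $\pi$ is a proper homeomorphism implicit; your cancellation and two-out-of-three arguments supply exactly the missing details and are sound.
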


\begin{proof}
Let $g\colon X\to Y$ be the morphism of coarse spaces associated to~$f$,
which exists by the Keel-Mori theorem \cite{KM} (see also \cite[Theorem
1.1]{Conrad} and \cite[Theorem 6.12]{Rydhquot}). Then $g$ is a separated
morphism between finite-type algebraic $S$-spaces. Applying Nagata
compactification \cite[Theorem 1.2.1]{CLO} (or \cite[Theorem~B]{Rydhcomp})
to $g$, we get $g=qk$, where $q\colon Z\to Y$ is a proper morphism of
algebraic spaces and $k\colon X\to Z$ is an open immersion. It then suffices
to take $p$ to be the base change of $q$ and $j$ to be the base change
of~$k$. If $f$ is quasi-finite, so is $g$, and it then suffices to apply
Zariski's Main Theorem \cite[Th\'eor\`eme 16.5]{LMB} to~$g$.
\end{proof}

Let $\cC$ be a 2-category admitting 2-fiber products. The \emph{inertia} of
a morphism $f\colon X\to Y$ is defined to be
\[I_f=X\times_{\Delta_f,X\times_Y X,\Delta_f} X.\]
The following is an immediate consequence of \cite[Lemma
3.10]{Illusie-Zheng}.

\begin{Lemma}\label{p.I}
  Let
  \[\xymatrix{U'\ar[rr]\ar[dd]\ar[rd] && X'\ar[dd]|\hole \ar[rd]\\
  & U\ar[rr]\ar[dd] && X\ar[dd]\\
  V' \ar[rr]|\hole\ar[rd] && Y' \ar[rd]\\
  & V\ar[rr] && Y }\]
  be a 2-commutative cube in $\cC$ with 2-Cartesian bottom and top squares. Then the square
  \[\xymatrix{I_{U'/V'}\ar[r]\ar[d] & I_{X'/Y'}\ar[d]\\
  I_{U/V}\ar[r] & I_{X/Y}}\]
  is 2-Cartesian.
\end{Lemma}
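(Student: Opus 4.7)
The plan is to reformulate the hypothesis as a single 2-Cartesian square in the 2-category $\cC^{\to}$ of arrows of $\cC$, and then appeal to \cite[2.11]{Illusie-Zheng}. Since 2-limits in the arrow 2-category are computed separately on sources and targets, the simultaneous 2-Cartesianness of the top and bottom faces of the cube is precisely the statement that the square of arrows
\[
\xymatrix{(U'\to V')\ar[r]\ar[d] & (X'\to Y')\ar[d]\\
(U\to V)\ar[r] & (X\to Y)}
\]
is 2-Cartesian in $\cC^{\to}$, where the source of each arrow is the object in the top face of the cube and the target is the object in the bottom face.

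Next, I would observe that the inertia $I_f = X\times_{X\times_Y X} X$, with both legs the diagonal $\Delta_f$, defines a 2-functor $I\colon \cC^{\to}\to \cC$ which sends a commutative square of arrows to the induced morphism on inertias. Since $I$ is built from iterated 2-fiber products of the source and target (both of which preserve 2-limits), it preserves 2-limits, and in particular sends 2-Cartesian squares in $\cC^{\to}$ to 2-Cartesian squares in $\cC$. Applying $I$ to the 2-Cartesian square above yields the desired square of inertias, proving the lemma.

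The only nontrivial point is the preservation of 2-Cartesian squares of arrows by $I$, and this is the substance of what \cite[2.11]{Illusie-Zheng} provides; once that reference is in hand, our statement follows by the reformulation above with no further work.
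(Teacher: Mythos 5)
Your argument is correct and is essentially the argument the paper intends: the paper gives no proof beyond the citation of \cite[2.11]{Illusie-Zheng}, and your reduction — 2-limits in the arrow 2-category are computed componentwise, and the inertia construction is itself a finite 2-limit of the arrow, so 2-limits commute — is the standard way to make that ``immediate consequence'' precise. The only caveat is that \cite[2.11]{Illusie-Zheng} is most naturally a cube lemma about a single 2-fiber product, so in practice one applies it twice (first to identify $U\times_V U$ as the 2-fiber product of the corresponding $X\times_Y X$'s, then again for the second fiber product $X\times_{X\times_Y X}X$ defining the inertia) rather than invoking it once for the composite functor $I$; this is a repackaging of your step, not a gap.
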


It follows that Deligne-Mumford stacks of finite inertia are stable under
2-fiber products.

\begin{Definition}\label{d.inert}
Let $m$ be an integer. Following \cite[Definition 3.8]{Illusie-Zheng}, we
say that a morphism $f\colon \cX\to \cY$ of Deligne-Mumford stacks \emph{has
$m$-prime inertia} if, for every algebraically closed field~$\Omega$ and
every point $x\in \cX(\Omega)$, the order of the group
\[\Aut_{\cX_y}(x)\simeq \Ker(\Aut_{\cX}(x)\to \Aut_{\cY}(y))\]
is prime to $m$, where $y\in \cY(\Omega)$ is the image of $x$ under~$f$.
Note that morphisms of $m$-prime inertia are closed under composition and
base change.
\end{Definition}

Let us recall some definitions from \cite{glue}. Let $\cC$ be a
(2,1)-category, namely, a 2-category whose 2-cells are all invertible. A
2-subcategory $\cA$ of $\cC$ is called \emph{arrowy} if $\Ob(\cA)=\Ob(\cC)$
and for every pair of objects $(X, Y)$ of $\cC$, $\cA(X,Y)$ is a full
subcategory of $\cC(X,Y)$. Let $\cA$ and $\cB$ be two 2-arrowy subcategories
of $\cC$ and let $\cD$ be a 2-category. We defined in \cite[Definition
6.3]{glue} a 2-category $\GD^\Cart_{\cA,\cB}(\cC,\cD)$ whose objects are
quadruples $(F_\cA,F_\cB,(G_D),\rho)$, where $F_\cA\ \colon \cA\to \cD$ and
$F_\cB\colon \cB\to \cD$ are pseudofunctors, $(G_D)$ is a system of
compatibility data between $F_\cA$ and $F_\cB$ for Cartesian 2-squares $D$,
and $\rho\colon F_\cB\res \cA\cap \cB \to F_\cA \res \cA\cap \cB$ is a
pseudonatural equivalence.

\begin{Proposition}\label{p.gluestack}
Let $m$ be an integer, let $S$ be a Noetherian scheme, and let $\cC$ be the
2-category whose objects are Deligne-Mumford $S$-stacks of finite type and
finite inertia and whose morphisms are the separated morphisms of $m$-prime
inertia. Let $\cX$ be an object of $\cC$, let $\cA_\cX$ be the arrowy
subcategory of $\cC/\cX$ spanned by the open immersions, let $\cB_\cX$ be
the arrowy subcategory of $\cC/\cX$ spanned by the proper morphisms, and let
$\cD$ be a 2-category. Then the $\cD^{\Ob(\cC/\cX)}$-functor
  \begin{equation}\label{e.gluestack}
  \PsFun(\cC/\cX,\cD)\to \GD^\Cart_{\cA_\cX,\cB_\cX}(\cC/\cX,\cD)
  \end{equation}
is a $\cD^{\Ob(\cC/\cX)}$-equivalence \cite[Definition 1.5]{glue}.
\end{Proposition}

\begin{proof}
To simplify notation, let $\cC'=\cC/\cX$, $\cA=\cA_\cX$, $\cB=\cB_\cX$ and
let $\cA_1$ (resp.\ $\cA_2$, resp.\ $\cB_1$, resp.\ $\cB_2$) be the arrowy
subcategories of $\cC'$ whose morphisms are the quasi-finite (resp.\
representable and quasi-finite, resp.\ quasi-finite and proper, resp.\
finite) morphisms. By \cite[Theorem 6.5]{glue}, we may replace $\GD^\Cart$
by $\GD$. Then the $\cD^{\Ob(\cC')}$-functor \eqref{e.gluestack} is a
composite
  \begin{multline*}
    \PsFun(\cC',\cD)\xrightarrow{E_1} \GD_{\cA_1,\cB}(\cC',\cD) \xrightarrow{E_2} \GD_{\cB_1,\cA_2,\cB}(\cC',\cD)\\
    \xrightarrow{P_1} \GD_{\cA_2,\cB}(\cC',\cD) \xrightarrow{E_3} \GD_{\cA,\cB_2,\cB}(\cC',\cD) \xrightarrow{P_2} \GD_{\cA,\cB}(\cC',\cD),
  \end{multline*}
where $P_1$ and $P_2$ are $\cD^{\Ob(\cC')}$-equivalences by
\cite[Proposition 5.6]{glue} since $\cB_1,\cB_2\subset \cB$. Moreover, $E_1$
is a $\cD^{\Ob(\cC')}$-equivalence by \cite[Theorem 4.13]{glue}: it
satisfies assumption (1) of \cite[Theorem 4.13]{glue} by Proposition
\ref{p.comp}. Furthermore, $E_2$ and $E_3$ are
$\cD^{\Ob(\cC')}$-equivalences by \cite[Theorem 5.10]{glue}: $E_2$ and $E_3$
satisfy assumption (1) of \cite[Theorem 5.10]{glue} by Proposition
\ref{p.comp} and Zariski's Main Theorem, respectively. The other assumptions
of \cite[Theorems 4.13, 5.10]{glue} because the arrowy 2-subcategories are
stable under 2-base change and taking diagonals in $\cC'$.
\end{proof}

\section{Operations on $D(\cX,\Lambda)$}\label{s.2}
In this section, we apply the gluing result of the previous section to
construct the six operations for torsion coefficients and several relations
between them, including the base change isomorphism (Theorem \ref{p.bcc}
\ref{p.bcc1}) and the projection formula (Theorem \ref{p.bcc} \ref{p.bcc2}).
We then construct a functor $Rf_\dag$ in Construction \ref{s.dag}, which
will be used in the construction of $Rf_!$ for coefficients of
characteristic $0$ in Section~\ref{s.6}.

Let $\Lambda$ be a (commutative) ring. See Convention \ref{s.stack}.

\begin{Construction}\label{s.top}
Let $f\colon \cX\to \cY$ be a morphism of topoi. The functors
\begin{gather*}
  -\otimes_\Lambda -\colon \Mod(\cX,\Lambda)\times \Mod(\cX,\Lambda)\to \Mod(\cX,\Lambda),\quad  \cHom\colon \Mod(\cX,\Lambda)^{\op}\times \Mod(\cX,\Lambda)\to \Mod(\cX,\Lambda),\\
f^*\colon \Mod(\cY,\Lambda)\to \Mod(\cX,\Lambda), \quad f_*\colon \Mod(\cX,\Lambda)\to \Mod(\cY,\Lambda)
\end{gather*}
have derived functors \cite[Section 18.6]{KSCat}
\begin{gather*}
  -\otimes^L_\Lambda-\colon D(\cX,\Lambda)\times D(\cX,\Lambda)\to D(\cX,\Lambda),\quad  R\cHom\colon D(\cX,\Lambda)^{\op}\times D(\cX,\Lambda)\to D(\cX,\Lambda),\\
f^*\colon D(\cY,\Lambda)\to D(\cX,\Lambda), \quad Rf_*\colon D(\cX,\Lambda)\to D(\cY,\Lambda).
\end{gather*}
For $M\in D(\cX,\Lambda)$ and $N\in D(\cY,\Lambda)$, we have a canonical
isomorphism
\begin{equation}\label{e.RcHomadj}
R\cHom_\cY(N,Rf_*M)\simto Rf_*R\cHom_\cX(f^*N,M).
\end{equation}
We consider the projection formula map
\begin{equation}\label{e.pf}
  N\otimes_\Lambda^L Rf_* M \to Rf_* (f^*N \otimes_\Lambda^L M),
\end{equation}
adjoint to the composition
\[f^*(N\otimes_\Lambda^L Rf_*M)\xrightarrow{\sim} f^*N\otimes_\Lambda^L f^*Rf_* M \to f^*N \otimes_\Lambda^L M,\]
where the second map is induced by the adjunction $f^* Rf_* M\to M$.

If $\cX$ is algebraic, $\cY$ is locally coherent \cite[VI D\'efinition
2.3]{SGA4} and $f$ is coherent \cite[VI D\'efinition 3.1]{SGA4},  then
$R^qf_*$ commutes with small filtered inductive limits for all $q$ by
\cite[VI Th\'eor\`eme 5.1]{SGA4}. Thus, in this case, $f_*$-acyclic sheaves
on $\cX$ are stable under small filtered inductive limits. If, moreover,
$f_*$ is of finite cohomological dimension, then $Rf_*$ commutes with small
direct sums by \cite[Proposition 14.3.4 (ii)]{KSCat}.
\end{Construction}

\begin{Definition}
Let $\cX$ be a topos. For $M\in D(\cX,\Lambda)$ and an interval $I\subset
\Z$, we say $M$ has tor-dimension contained in $I$ if for every (constant)
$\Lambda$-module $N$, and every $i\in \Z\backslash I$, $\fH^i
(M\otimes^L_\Lambda N)=0$. We let $D_{\ft}(\cX,\Lambda)$ denote the full
subcategory of $D(\cX,\Lambda)$ spanned by objects of finite tor-dimension.
\end{Definition}

\begin{Remark}
Let $\cX$ be a Deligne-Mumford stack. The 2-category of Deligne-Mumford
$S$-stacks representable over $\cX$ is 2-equivalent to the 1-category
$\Rep(\cX)$ obtained by identifying isomorphic morphisms. The objects of
$\Rep(\cX)$ are pairs $(\cY,f)$, where $\cY$ is a Deligne-Mumford $S$-stack
and $f\colon \cY\to \cX$ is a representable morphism. A morphism of
$\Rep(\cX)$ from $(\cY,f)$ to $(\cZ,g)$ is an isomorphism class of pairs
$(h,\alpha)$, where $h\colon \cY\to \cZ$ is a morphism of Deligne-Mumford
$S$-stacks and $\alpha\colon f\Rightarrow g h$ is a 2-cell. In such a pair,
$h$ is necessarily a representable morphism. Two such pairs $(h,\alpha)$ and
$(i,\beta)$ are isomorphic if there exists a 2-cell $\gamma\colon
h\Rightarrow i$ such that $\beta=(g \gamma) \alpha$, or, in other words,
that $\beta$ is equals the composite $2$-cell
\[\xymatrix{\cY\ruppertwocell^i{^\gamma}\drlowertwocell_f{^\alpha} \ar[r]_h & \cZ\ar[d]^{g} \\
& \cX.}\]
\end{Remark}

\begin{Construction}
We define the \'etale site $\Et(\cX)$ of $\cX$ to be the full subcategory
$\Et(\cX)$ of $\Rep(\cX)$ consisting of Deligne-Mumford stacks representable
and \'etale over $\cX$, endowed with the \'etale topology. The category
$\Et(\cX)$ admits finite projective limits. The corresponding topos
$\cX_\et$ is algebraic. The full subcategory of $\Et(\cX)$ consisting of
affine schemes \'etale over~$\cX$, endowed with the \'etale topology,
defines the same topos by \cite[III Th\'eor\`eme 4.1]{SGA4}. If $\cX$ is
quasi-compact, then $\cX_\et$ is coherent (see Convention \ref{s.stack}). We
will write $\Mod(\cX,\Lambda)$ for $\Mod(\cX_\et,\Lambda)$ and
$D(\cX,\Lambda)$ for $D(\cX_\et,\Lambda)$.

Let $f\colon \cX\to \cY$ be a morphism of Deligne-Mumford stacks. The
functor
\[f^{-1}\colon \Et(\cY)\to \Et(\cX), \quad \cU\mapsto \cU\times_{\cY} \cX\]
is left exact and continuous by \cite[III Proposition 1.6]{SGA4}. Hence it
induces a morphism of topoi $(f_*, f^*)\colon \cX_\et \to \cY_\et$ by
\cite[III Proposition 1.3]{SGA4} and Construction \ref{s.top} applies. If
$f$ is quasi-compact, then this morphism of topoi is coherent. If
$g\colon\cX\to \cY$ is also a morphism and $\alpha\colon f\Rightarrow g$ is
a 2-cell, $\alpha$ induces a natural transformation $g^{-1}\Rightarrow
f^{-1}$, and hence a 2-cell $(f_*,f^*)\Rightarrow(g_*,g^*)$.

A surjective smooth morphism of Deligne-Mumford stacks is of cohomological descent, because \'etale locally it has a section.
\end{Construction}

\begin{Definition}\label{s.uh}
We say that a morphism $f$ of Deligne-Mumford stacks is a \emph{universal
homeomorphism} if it is a homeomorphism and remains so after every 2-base
change of Deligne-Mumford stacks. Note that we do \emph{not} assume $f$ to
be representable. Unlike the case of schemes, a universal homeomorphism does
\emph{not} induce an equivalence of \'etale topoi in general.
\end{Definition}

\begin{Construction}\label{d.jt}
Let $\cX$ be a topos and $U$ be an object of $\cX$. Let $\cU=\cX/U$ and
consider the morphism of topoi $j\colon \cU\to \cX$. The restriction functor
$j^*\colon \Mod(\cX,\Lambda) \to \Mod(\cU,\Lambda)$ admits a left adjoint
\cite[IV Proposition 11.3.1]{SGA4}
\[j_{!}\colon \Mod(\cU,\Lambda)\to \Mod(\cX,\Lambda).\]
We denote $j_!\Lambda_\cU$ by $\Lambda_{\cU,\cX}$. The functor $j_!$ is exact and induces a triangulated functor
\[j_!\colon D(\cU,\Lambda)\to D(\cX,\Lambda).\]

For $M\in D(\cU,\Lambda)$, $N\in D(\cX,\Lambda)$, we have canonical
isomorphisms
\begin{align}
\label{e.jshradj}
R\Hom_\cX(j_! M, N) &\to R\Hom_\cU(M,j^* N),\\
\label{e.jshradju}
R\cHom_\cX(j_! M, N) &\to Rj_* R\cHom_\cU(M,j^* N)
\end{align}
induced by
\begin{align*}
\Hom^\bullet_\cX(j_! M, N') &\simto \Hom^\bullet_\cU(M,j^* N'),\\
\cHom^\bullet_\cX(j_! M', N') &\simto j_* \cHom^\bullet_\cU(M',j^* N'),
\end{align*}
where $N'$ is homotopically injective and equipped with a quasi-isomorphism
$N\to N'$, $M'$ belongs to $\tilde\cP_\cU$ and is equipped with a
quasi-isomorphism $M'\to M$. Here $\Hom^\bullet$ and $\cHom^\bullet$ denote
the complexes associated to double complexes by taking products, and
$\tilde\cP_\cU$ is the smallest full triangulated subcategory of
$K(\Mod(\cU,\Lambda))$ closed under small direct sums and containing
$K^{-}(\cP_\cU)$, where $\cP_\cU$ is the full additive subcategory of
$\Mod(\cU,\Lambda)$ spanned by flat sheaves. Note that $j^*$ preserves
homotopically injective complexes. The map \eqref{e.jshradju} is adjoint to
the composition
\[j^*R\cHom_\cX(j_! M, N) \xrightarrow{\sim} R\cHom_\cU(j^*j_!M,j^* N) \to R\cHom_\cU(M,j^* N),\]
where the second map is deduced from the adjunction $M\to j^* j_! M$.

The projection formula map
\begin{equation}\label{e.jpf}
j_!(j^* N\otimes_\Lambda^L M)\to N\otimes_\Lambda^L j_! M,
\end{equation}
adjoint to the composition
\[j^* N\otimes^L_\Lambda M \to j^*N\otimes^L_\Lambda j^*j_! M \simeq j^*(N\otimes^L_\Lambda j_! M),\]
where the first map is deduced from the adjunction $M\to j^* j_! M$, is an
isomorphism. In fact, \eqref{e.jpf} is induced by the isomorphism of
complexes \cite[Proposition 18.2.5]{KSCat}
\[j_!\tot_\oplus(j^* N'\otimes_\Lambda M) \simto \tot_\oplus( N'\otimes_\Lambda j_! M)\]
where $N'$ belongs to $\tilde\cP_\cX$ and is equipped with a quasi-isomorphism $N'\to N$.
\end{Construction}

\begin{Construction}\label{s.jD}
Let $f\colon \cY\to \cX$ be a morphism of topoi and $U$ be an object of $\cX$. Let $V=f^{-1}(U)$,
$\cU=\cX/U$, $\cV=\cY/V$. Consider the following 2-commutative square $D$ of topoi
  \[\xymatrix{\cV\ar[r]^{j'}\ar[d]_{g} & \cY\ar[d]^f\\
\cU\ar[r]^j & \cX.}
\]
The base change map, natural transformation
of functors $D(\cY,\Lambda)\to D(\cU,\Lambda)$
\[B_D\colon j^* Rf_* \Rightarrow Rg_* j'{}^*,\]
is a natural isomorphism. Thus the base change map deduced from $B_D$ by
taking left adjoints is a natural isomorphism of functors $D(\cU,\Lambda)\to
D(\cY,\Lambda)$
\begin{equation}\label{e.shrbc}
A_D\colon j'_!g^* \Rightarrow f^* j_!.
\end{equation}
By \cite[Construction 8.6]{glue}, $B_D^{-1}$ and $A_D^{-1}$ induce by
adjunction the same natural transformation of functors $D(\cV,\Lambda)\to
D(\cX,\Lambda)$
\begin{equation}\label{e.shrstar}
G_D\colon j_! Rg_* \Rightarrow Rf_* j'_!.
\end{equation}
\end{Construction}

\begin{Construction}\label{d.j}
Let $j\colon \cU\to \cX$ be a representable \'etale morphism of
Deligne-Mumford $S$-stacks. Then Construction \ref{d.jt} applies to
$j_{\et}\colon \cU_{\et}\to \cX_{\et}$. If $j$ is an open immersion, the
adjunction map $j^*Rj_*\Rightarrow \one_{D(\cU,\Lambda)}$ is invertible and
we have thus a natural transformation of functors $D(\cU,\Lambda)\to
D(\cX,\Lambda)$
\begin{equation}\label{e.rho}
  j_!\Rightarrow Rj_*,
\end{equation}
compatible with composition of open immersions.

If
\[\xymatrix{\cV\ar[r]^{j'}\ar[d]_{g} & \cY\ar[d]^f\\
\cU\ar[r]^j & \cX}
\]
is a 2-Cartesian square of Deligne-Mumford $S$-stacks with $j$ representable
and \'etale, then Construction \ref{s.jD} applies.
\end{Construction}

\begin{Construction}\label{d.dist}
Let $f\colon \cX\to\cY$ be a finite morphism of Deligne-Mumford $S$-stacks.
Then
\[f_*\colon \Mod(\cX,\Lambda)\to \Mod(\cY,\Lambda)\]
is exact and commutes with small filtered inductive limits. Let $f^!\colon
\Mod(\cY,\Lambda)\to \Mod(\cX,\Lambda)$ be its right adjoint, which is left
exact and thus has a right derived functor $Rf^!\colon D(\cY,\Lambda)\to
D(\cX,\Lambda)$, right adjoint to $Rf_*\colon D(\cX,\Lambda)\to
D(\cY,\Lambda)$. By finite base change, the projection formula map
\eqref{e.pf} is an isomorphism. It follows that for $K\in D(\cX,\Lambda)$,
$M,N\in D(\cY,\Lambda)$, we have canonical isomorphisms
\begin{gather}\notag
  R\cHom_{\cY}(Rf_* K, N) \to Rf_* R\cHom_{\cX}(K, Rf^! N),\\
  R\cHom_\cX(f^* M, Rf^! N) \to Rf^!R\cHom_\cY(M,N).\label{e.dist4}
\end{gather}
We consider the map
\begin{equation}\label{e.pffinite}
  f^*N\otimes_\Lambda^L Rf^!M \to Rf^!(N\otimes^L_\Lambda M)
\end{equation}
adjoint to the composite
\[f_*(f^*N\otimes_\Lambda^L Rf^!M)\simto N\otimes^L_\Lambda f_*Rf^!M \to N\otimes^L_\Lambda M,\]
where the first map is the inverse of \eqref{e.pf} and the second map is
induced by the adjunction $f_*Rf^!M \to M$.
\end{Construction}

\begin{Construction}
Let $i\colon \cY\to\cX$ be a closed immersion of Deligne-Mumford stacks, and
let $j\colon \cU \to \cX$ be the complementary open immersion. For any
complex $M$ of $\Lambda$-modules on $\cX$, we have a natural short exact
sequence
\[0 \to j_!j^* M \to M \to i_* i^* M \to 0,\]
hence a distinguished triangle in $D(\cX,\Lambda)$
\begin{equation}\label{e.dist1}
j_!j^* M \to M \to i_* i^* M \to.
\end{equation}

For any complex $N$ of injective $\Lambda$-modules on $\cX$, we have a natural short exact sequence
\[0\to i_* i^! N \to N \to j_*j^* N \to 0.\]
It follows that, for any $N\in D(\cX,\Lambda)$, we have a distinguished triangle
\begin{equation}\label{e.dist2}
i_*Ri^!N \to N \to Rj_* j^* N \to.
\end{equation}
In fact, it suffices to take a quasi-isomorphism $N'\to N$, where $N'$ is
homotopically injective with injective components \cite[Proposition 14.1.6,
Theorem 14.1.7]{KSCat}.
\end{Construction}

\begin{Notation}
For a 2-Cartesian square
\[\xymatrix{\cX\ar[r]^{p_2}\ar[d]_{p_1} &\cX_2\ar[d]\\
\cX_1\ar[r] & \cS }
\]
of Deligne-Mumford stacks and $M_i\in D(\cX_i,\Lambda)$, $i=1,2$, we write
\[M_1\boxtimes^L_{\cS,\Lambda} M_2=p_1^*M_1\otimes^L_\Lambda p_2^* M_2.\]
We write $\boxtimes^L_\cS$ for $\boxtimes^L_{\cS,\Lambda}$ when no confusion
arises.
\end{Notation}

\begin{Proposition}[Smooth base change]\label{p.sbc}
Let
\[\xymatrix{\cX'\ar[r]^{h}\ar[d]_{f'} & \cX\ar[d]^{f}\\
\cY'\ar[r]^{g} & \cY}
\]
be a 2-Cartesian square of Deligne-Mumford stacks, and let $M\in
D(\cX,\Lambda)$. Assume either
 \begin{itemize}
 \item[(a)] $g$ is \'etale, or
 \item[(b)] $g$ is smooth, $\Lambda$ is annihilated by an integer $m$
     invertible on~$\cY$, and $M\in D^{+}(\cX,\Lambda)$.
 \end{itemize}
 Then the base change map
\[g^* Rf_* M \to Rf'_* h^* M\]
is an isomorphism.
\end{Proposition}

\begin{proof}
This is standard. We give a proof for the sake of completeness.

Consider the diagram with 2-Cartesian square
\[\xymatrix{Y''\ar[r]^{\beta}& Y'\ar[r]^{g'}\ar[d]_{\alpha'} & Y\ar[d]^{\alpha}\\
&\cY'\ar[r]^{g} & \cY}\] where $\alpha$ and $\beta$ are \'etale
presentations with $Y$ and $Y''$ disjoint unions of quasi-compact schemes.
Since base change by $\alpha$ and $\alpha'\beta$ holds trivially, up to
replacing $g$ by $g' \beta$, we may assume that $\cY$ and $\cY'$ are
quasi-compact schemes. Case (a) is then trivial. In case (b), take an
\'etale presentation $\gamma\colon X\to \cX$ with $X$ an algebraic
$S$-space. Let $X_\bullet=\cosk_0\gamma$ and let $\gamma_\bullet\colon
X_\bullet\to \cX$ be the projection. Then $X_n=(X/\cX)^{n+1}$ is an
algebraic $S$-space (even if we had taken $X$ to be an $S$-scheme). Take
$X'_\bullet=\cX'\times_\cX X_\bullet$ (2-fiber product) and consider the
square
\[\xymatrix{X'_\bullet\ar[r]^{h_\bullet}\ar[d]_{\gamma'_\bullet} & X_\bullet\ar[d]^{\gamma_\bullet}\\
\cX'\ar[r]^{h} & \cX.}
\]
By cohomological descent, the adjunction $M\to
R\gamma_{\bullet*}\gamma_\bullet^*M$ is an isomorphism. It follows that it
suffices to show that the base change maps
\[g^* R(f\gamma_\bullet)_* M_\bullet \to R(f'\gamma'_\bullet)_*h^* M_\bullet, \quad h^* R\gamma_{\bullet*} M_\bullet\to R\gamma'_{\bullet*} h_\bullet^* M_\bullet
\]
are isomorphisms, where $M_\bullet = \gamma_\bullet^* M$. For the second
map, repeating the first reduction step of this proof, we may assume that
$\cX$ and $\cX'$ are quasi-compact schemes. Therefore, we are reduced to
proving the theorem under the additional hypotheses that $\cY$, $\cY'$ are
quasi-compact schemes and $\cX$ is an algebraic space. In this case, take an
\'etale presentation $X\to \cX$ with $X$ a scheme and repeat the preceding
reduction. We may then assume that $\cX$ is also an $S$-scheme. In this
case, the result is classical (\cite[XVI Corollaire 1.2]{SGA4} for $f$
quasi-compact, and a consequence of base change for $Rg_!$ \cite[XVII
Th\'eor\`eme 5.2.6]{SGA4} and Poincar\'e duality \cite[XVIII Th\'eor\`eme
3.2.5]{SGA4} in the general case).
\end{proof}

\begin{Proposition}\label{p.Homsm}
Let $f\colon \cX\to \cY$ be a smooth morphism of Deligne-Mumford stacks.
Assume that $\Lambda$ is annihilated by an integer invertible on $\cY$. Let
$M,L\in D(\cY,\Lambda)$. Then the map
  \begin{equation}\label{e.Homsm}
  f^* R\cHom_\cY(M,L)\to R\cHom(f^* M,f^* L)
  \end{equation}
  is an isomorphism.
\end{Proposition}

\begin{proof}
The problem is local for the \'etale topology on $\cX$ and on $\cY$. We may
assume that $\cX$ and $\cY$ are quasi-compact schemes and $f$ is separated.
Then $f$ is compactifiable and \eqref{e.Homsm} becomes the inverse of the
trivial duality isomorphism \cite[XVIII Corollaire 3.1.12.2]{SGA4} via the
natural isomorphism $f^*(d)[2d]\simto Rf^!$ \cite[XVIII Th\'eo\`eme
3.2.5]{SGA4}, where $d$ is the relative dimension of $f$. Note that trivial
duality holds in fact for unbounded complexes: the proof of
\eqref{e.shradj2} applies.
\end{proof}

Assume in the rest of this section that $S$ is a Noetherian
scheme\footnote{In fact $S$ will denote a Noetherian scheme in the rest of
the article. Since we will at times make other assumptions on $S$, we prefer
to list the assumptions on $S$ in each section.}, and $\Lambda$ is
annihilated by an integer $m\ne 0$. Unless otherwise stated, we do
\emph{not} assume that $m$ is invertible on $S$. We say that a morphism
$f\colon \cX\to \cY$ of Deligne-Mumford $S$-stacks is \emph{$S$-proper}, if
$f$ is the 2-base change of some proper morphism $f_0\colon \cX_0\to \cY_0$
of Deligne-Mumford $S$-stacks with $\cY_0$ locally of finite type over $S$.
Recall that the existence of finite covers by schemes \cite[Th\'eor\`eme
16.6]{LMB} implies base change in $D^+$ for $S$-proper morphisms. For
morphisms of $m$-prime inertia, this can be generalized to unbounded
complexes as follows.

\begin{Lemma}\label{p.cd}
Let $f\colon \cX\to \cY$ be an $S$-proper morphism of Deligne-Mumford
$S$-stacks. Assume that $f$ has $m$-prime inertia and the fibers have
dimension $\le d$. Then $f_*$ has cohomological dimension $\le 2d$.
\end{Lemma}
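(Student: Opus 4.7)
The plan is to reduce, via proper base change, to a fibrewise vanishing on geometric fibres of $f$, and then to treat each fibre by passing through its coarse moduli space and applying the Artin--Grothendieck bound for proper algebraic spaces.

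First I would apply proper base change for $S$-proper morphisms and $D^+$ (recalled immediately before the lemma) to the complex $F[0]$ for an arbitrary $F\in\Mod(\cX,\Lambda)$. This identifies $(R^i f_* F)_{\bar y}$ with $H^i(\cX_{\bar y}, F|_{\cX_{\bar y}})$ at every geometric point $\bar y\to \cY$. Since a point has trivial inertia, the relative prime-to-$m$ inertia of $f$ becomes absolute prime-to-$m$ inertia on each fibre. The lemma is thus reduced to: for every proper Deligne--Mumford stack $\cZ$ of dimension $\le d$ over an algebraically closed field $k$ whose absolute inertia has order prime to $m$, and every $G\in\Mod(\cZ,\Lambda)$, one has $H^i(\cZ,G) = 0$ for $i>2d$.

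Next I would pass to the coarse space of such a $\cZ$. The diagonal $\Delta_\cZ$ is a closed immersion (separatedness over $k$) and unramified (DM), so the inertia $I_\cZ=\cZ\times_{\cZ\times\cZ}\cZ$ is a finite group scheme over $\cZ$ of order prime to $m$. By Keel--Mori, $\cZ$ admits a coarse moduli space $\pi\colon\cZ\to Z$, with $Z$ a proper algebraic space over $k$ of dimension $\le d$ (as $\pi$ is a universal homeomorphism). \'Etale-locally on $Z$, $\cZ\cong [U/H]$ with $U$ \'etale over $Z$ and $H$ a finite group of order prime to $m$; on such a chart $\pi_* G = G^H$, which is exact on $\Lambda$-modules because $|H|$ is a unit in $\Lambda$ (as $|H|$ is prime to $m$ and $m\Lambda = 0$). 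Hence $R\pi_* = \pi_*$ is of cohomological dimension $0$ and $H^i(\cZ,G) = H^i(Z, \pi_* G)$.

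Finally I would invoke the Artin--Grothendieck bound: for the proper algebraic space $Z$ of dimension $\le d$ over an algebraically closed field, $H^i(Z,-) = 0$ for $i>2d$ on every torsion sheaf. This is classical for constructible sheaves on proper schemes (Artin's $2d$-bound for $\ell$-torsion with $\ell$ invertible on $k$, and Artin--Schreier's $\le d$-bound for $p$-torsion in characteristic $p$); it passes to algebraic spaces by \'etale descent; and it extends from constructible to arbitrary $\Lambda$-module sheaves because $Z$ is a coherent topos (so higher cohomology commutes with filtered colimits) and every torsion sheaf is a filtered colimit of constructibles. The main delicate point is precisely this last extension, which is needed here because we do not assume $m$ invertible on $S$ and must therefore handle $p$-torsion in residue characteristic $p$ on an equal footing with $\ell$-torsion.
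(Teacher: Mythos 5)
Your overall route is the same as the paper's: reduce by proper base change for $D^+$ to a geometric fibre, use the prime-to-$m$ inertia hypothesis to make the coarse-space map $\pi\colon\cZ\to Z$ exact on $\Lambda$-modules, and then bound the cohomology of the proper algebraic space $Z$ of dimension $\le d$ by the classical $2d$-bound (combining the $\ell$-adic bound of SGA~4 X~4.3 with the Artin--Schreier bound of X~5.2, and extending to arbitrary sheaves by commutation of cohomology with filtered colimits on a coherent topos). Up to the order of the reductions this is exactly the paper's argument, and your attention to the $p$-torsion part and to the colimit extension is on point. (A minor slip: in the local model $\cZ\simeq[U/H]$ over the coarse space, $U$ is \'etale over $\cZ$ and \emph{finite} over $Z$, not \'etale over $Z$; this does not affect the exactness of $\pi_*$, which is $(\text{finite pushforward})^H$.)

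The genuine gap is the sentence ``it passes to algebraic spaces by \'etale descent.'' It does not. For an \'etale cover $U\to Z$ by a scheme, the descent spectral sequence has $E_1^{p,q}=H^q(U_p,G)$ with $U_p$ the $(p+1)$-fold fibre product; the vanishing $H^q(U_p,G)=0$ for $q>2d$ gives no control on $H^n(Z,G)$ for large $n$, because the \v{C}ech direction $p$ is unbounded. Some dévissage specific to algebraic spaces is required, and the paper supplies it: by Chow's lemma for algebraic spaces (Knutson IV.3.1) one chooses $\pi\colon X\to Z$ proper and birational with $X$ a scheme, completes $\cF\to R\pi_*\pi^*\cF$ into a distinguished triangle with third term $M$, observes that $R(f\pi)_*\pi^*\cF\in D^{\le 2d}$ by the scheme case and that $\fH^iM$ is supported in dimension $\le d-1-i/2$, and concludes by induction on $d$. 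Without this (or an equivalent Noetherian induction), your final step is unjustified, since $Z$ need not be a scheme even when $\cX$ is a quotient stack.
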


\begin{proof}
Up to replacing $\cY$ by an \'etale presentation and $\cX$ by the
corresponding 2-pullback, we may assume that $\cY$ is a separated scheme.
Then $f$ factorizes through the coarse space of $\cX$, and we are reduced to
two cases: (a) $f$ is a universal homeomorphism; (b) $f$ is representable.
By proper base change for $D^+$, we may assume that $\cY$ is the spectrum of
an algebraically closed field. In case (a), $\cX^\red=BG$, $G$ of order
prime to $m$. In this case, $f_*$ can be identified with the functor of
taking $G$-invariants, which is exact. Case (b) follows from \cite[Lemma
7.4]{Illusie-Zheng}. We give a proof that does not make use of cohomology
with proper support. We proceed by induction on $d$. By Chow's lemma
\cite[Theorem IV.3.1]{Knutson}, there exists $\pi\colon X\to \cX$ proper and
birational such that $X$ is a scheme. For any $\cF\in \Mod(\cX,\Lambda)$,
complete the adjunction $\cF\to R\pi_* \pi^*\cF$ into a distinguished
triangle
  \[\cF\to R\pi_* \pi^*\cF\to M \to.\]
By the case of schemes of this lemma \cite[X Corollaires 4.3, 5.2]{SGA4},
$Rf_* R\pi_* \pi^*\cF \simeq R(f\pi)_* \pi^* \cF\in D^{\le 2d}$ and, for
all~$i$, the support of $\fH^i M$ has dimension $\le d-1-i/2$. By induction
hypothesis, $Rf_* M \in D^{\le 2d-2}$. Thus $Rf_* \cF\in D^{\le 2d}$.
\end{proof}

\begin{Proposition}\label{p.pbc}\leavevmode
\begin{enumerate}
\item \label{p.pbc1} (Proper base change) Let
\[\xymatrix{\cX'\ar[r]^{h}\ar[d]_{f'} & \cX\ar[d]^{f}\\
\cY'\ar[r]^{g} & \cY}
\]
be a 2-Cartesian square of Deligne-Mumford
$S$-stacks where $f$ is $S$-proper and has $m$-prime inertia, $M\in
D(\cX,\Lambda)$. Then the base change map \[g^* Rf_* M \to Rf'_* h^* M\] is
an isomorphism.

\item \label{p.pbc2} (Projection formula) Let $f\colon \cX\to \cY$ be an
    $S$-proper morphism of Deligne-Mumford $S$-stacks, and let $M\in
    D(\cX,\Lambda)$, $N\in D(\cY,\Lambda)$. Assume that $f$ has $m$-prime
    inertia. Then the map \eqref{e.pf}
\[N\otimes^L_\Lambda Rf_* M\to Rf_*(f^* N \otimes^L_\Lambda M)\]
is an isomorphism.
\end{enumerate}
\end{Proposition}

\begin{proof}
\begin{itemize}
  \item[\ref{p.pbc1}] We may assume $\cY$ quasi-compact. By Proposition
      \ref{p.cd}, we are then reduced to the known case when $M\in D^+$.

  \item[\ref{p.pbc2}] By \ref{p.pbc1}, we may assume that $\cY$ is the
      spectrum of a separably closed field. Since $Rf_*$ commutes with
      small direct sums (Construction \ref{s.top}), we may assume $M\in
      D^-(\cX,\Lambda)$, and $N$ is represented by a complex in
      $C^-(\cY,\Lambda)$ with flat components. Then we may assume $M\in
      \Mod(\cX,\Lambda)$, and $N$ is a flat $\Lambda$-module. Then $N$ is
      a filtered inductive limit of finite free $\Lambda$-modules. Since
      $R^q f_*$ commutes with such limits (Construction \ref{s.top}), we
      may assume that $N$ is a finite free $\Lambda$-module. In this case
      \eqref{e.pf} is obviously an isomorphism.
\end{itemize}
\end{proof}

\begin{Proposition}\label{p.uhmp}
  Let
  \[\xymatrix{\cX\ar[r]^f\ar[d] & \cY\ar[d]\\
  \cX_0 \ar[r]^{f_0} & \cY_0}\]
be a 2-Cartesian square of Deligne-Mumford $S$-stacks where $f_0$ is a
proper universal homeomorphism (Definition \ref{s.uh}) of $m$-prime inertia,
and $\cY_0$ is locally of finite type over $S$. Let $M\in D(\cY,\Lambda)$.
Then the adjunction map $M\to Rf_* f^* M$ is an isomorphism.
\end{Proposition}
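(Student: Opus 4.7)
I plan to reduce successively to $\cY=\Spec\bar k$ algebraically closed, then to $\cX=BG$ with $|G|$ prime to $m$, at which point the claim becomes the exactness of $G$-invariants when $|G|$ is invertible in $\Lambda$. The statement is \'etale-local on $\cY$ (by \ref{p.sbc}(a) combined with cohomological descent, \ref{s.stack}), so I may take an \'etale presentation and assume $\cY$ is an affine scheme, and then check $M\to Rf_*f^*M$ on stalks at geometric points. Since $f$ is a 2-base change of the $S$-proper $f_0$ and inherits its prime to $m$ inertia (\ref{d.inert}), proper base change \ref{p.pbc}(i) yields, for $\bar y\colon \Spec\bar k\to \cY$, an isomorphism $\bar y^* Rf_*f^*M \simeq Rf_{\bar y*} f_{\bar y}^* (\bar y^* M)$, where $f_{\bar y}\colon \cX_{\bar y}\to \Spec\bar k$ is again a proper universal homeomorphism of prime to $m$ inertia. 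I am thus reduced to $\cY=\Spec\bar k$.

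In this case $\cX$ has a single underlying point, so its reduced closed substack $\cX^\red$ is of the form $BG$ for a finite group $G$ (the automorphism group of the unique geometric point), and the prime to $m$ hypothesis forces $|G|$ coprime to $m$, hence invertible in $\Lambda$ since $m\Lambda=0$. The closed immersion $i\colon \cX^\red\to\cX$ is a nilpotent thickening, so by topological invariance of the \'etale site (reduced to the scheme case via an \'etale presentation) the functor $i^*\colon \cX_\et\to \cX^\red_\et$ is an equivalence of topoi with quasi-inverse $i_*$ exact; in particular the unit $\cF\to Ri_*i^*\cF$ is an isomorphism for every $\cF\in D(\cX,\Lambda)$. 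Applied to $\cF=f^*M$, this gives $Rf_*f^*M\simeq R(fi)_*(fi)^*M$, so I may further replace $\cX$ by $BG$.

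The remaining case $f\colon BG\to \Spec\bar k$ is essentially group cohomology: $f_*$ is the $G$-invariants functor $\Mod(BG,\Lambda)\to \Mod(\Lambda)$, $f^*M$ is $M$ endowed with the trivial $G$-action, and since $|G|$ is invertible in $\Lambda$ the $G$-invariants functor is exact. Hence $Rf_*=f_*$ and $Rf_*f^*M=M^G=M$, with the adjunction unit the identity. The one point that needs genuine care in a written-up proof is the topological invariance of the \'etale site for a nilpotent thickening of Deligne-Mumford stacks; all other steps reduce directly to results already established in the paper or to a standard cohomological calculation.
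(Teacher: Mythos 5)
Your proposal is correct and follows essentially the same route as the paper: reduce by proper base change (\ref{p.pbc}) to $\cY=\Spec\bar k$, identify $\cX^\red$ with $BG$ for $G$ of order prime to $m$, and conclude by exactness of $G$-invariants. You simply make explicit two steps the paper leaves implicit, namely the passage to the reduction via topological invariance of the \'etale topos and the identification of $Rf_*$ with the invariants functor.
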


\begin{proof}
By proper base change (Proposition \ref{p.pbc}), we may assume that $\cY$ is
the spectrum of an algebraically closed field. Then $\cX^\red=BG$, $G$ of
order prime to $m$. In this case the assertion is trivial.
\end{proof}

\begin{Construction}\label{s.fshr}
We use the gluing result of Proposition \ref{p.gluestack} to give a
construction of $Rf_!$. Let $\cC$, $\cA=\cA_S$, $\cB=\cB_S$ be as in
Proposition \ref{p.gluestack}, and let $\cD$ be the 2-category of
triangulated categories. Define an object $(F_\cA,F_\cB,G,\rho)$ of
$\GD^\Cart_{\cA,\cB}(\cC,\cD)$ as follows. Let $F_\cA\colon \cA\to \cD$ be
the pseudofunctor given by Construction \ref{d.j}:
  \[\cX\mapsto D(\cX,\Lambda),\quad j\mapsto j_!, \quad \alpha\mapsto \alpha_!,\]
and let $F_\cB\colon \cB\to \cD$ be the pseudofunctor given by
  \[\cX\mapsto D(\cX,\Lambda),\quad p\mapsto Rp_*, \quad \alpha\mapsto R\alpha_*.\]
For every proper open immersion $f$, the 2-cell \eqref{e.rho} $f_!\Rightarrow
Rf_*$ is invertible and let $\rho(f)$ be its inverse. For every
2-Cartesian square $D$ of the form
  \begin{equation}\label{e.2}
  \xymatrix{X\ar[r]^j\ar[d]_q & Y \ar[d]^p\\
  Z\ar[r]^i & W,}
  \end{equation}
let $G_D\colon i_! Rq_* \Rightarrow Rp_* j_!$ be the 2-cell as in
\eqref{e.shrstar}. Then $i^* G_D$ can be identified with $\one_{Rq_*}$,
hence is an isomorphism. Consider the complementary square
  \[\xymatrix{Y-X\ar[r]^{j'}\ar[d]_{q'} & Y\ar[d]^p\\
  W-Z\ar[r]^{i'} & W.}\]
The proper base change map
  \begin{equation}\label{e.fshr}
  i'^*Rp_* \Rightarrow Rq'_*j'{}^*
  \end{equation}
is an isomorphism by Proposition \ref{p.pbc} \ref{p.pbc1}, hence $i'{}^*
G_D$ is $0\Rightarrow 0$. It follows that $G_D$ is an isomorphism. Axioms
(b), (b$'$), (c), (c$'$) in the definition of $\GD^\Cart$ follow from
\cite[Proposition 8.8]{glue}. By Proposition \ref{p.gluestack}, this defines
a pseudofunctor $F\colon \cC\to\cD$. For any morphism of~$\cC$, namely, any
separated morphism $f\colon \cX\to \cY$ of $m$-prime inertia between
finite-type and finite-inertia Deligne-Mumford $S$-stacks, we define
  \[Rf_!\colon D(\cX,\Lambda)\to D(\cY,\Lambda)\]
to be $F(f)$.

The gluing formalism also enables us to construct the following natural
transformations.
\begin{enumerate}
\item \emph{Support-forgetting map.} Let $\tilde \cD$ be the 2-category
    whose objects are triangulated categories and morphisms $\cT\to \cT'$
    are triples $(E,E',\epsilon)$, where $E,E'\colon \cT\to \cT'$ are
    triangulated functors and $\epsilon\colon E\Rightarrow E'$ is a
    pseudonatural transformation. Let $\tilde F_\cA\colon \cA\to \tilde
    \cD$ be the pseudofunctor
  \[\cX\mapsto D(\cX,\Lambda), \quad j\mapsto (j_!\Rightarrow Rj_*).\]
given by \eqref{e.rho} and let $\tilde F_\cB\colon \cB\to \tilde \cD$ be
the pseudofunctor given by
\[\cX\mapsto D(\cX,\Lambda), \quad p\mapsto (Rp_* = Rp_*).\]
Let $F'\colon \cC\to \cD$ be the pseudofunctor given by
\[\cX\mapsto D(\cX,\Lambda), \quad f\mapsto Rf_*.\]
By \cite[Proposition 8.8 (4)]{glue}, the gluing data for $F$ and $F'$
determine an object $(\tilde F_\cA,\tilde F_\cB, \tilde G,\tilde \rho)$ of
$\GD^\Cart_{\cA,\cB}(\cC,\tilde \cD)$, thus defines a pseudofunctor
$\tilde F\colon \cC\to \tilde \cD$ by Proposition \ref{p.gluestack}. For
any morphism $f\colon \cX\to \cY$ of $\cC$, this defines a natural
transformation of functors $D(\cX,\Lambda)\to D(\cY,\Lambda)$
  \[f_!\Rightarrow Rf_*,\]
which is a natural isomorphism if $f$ is proper.

\item \label{s.fshr2} \emph{Base change isomorphism.} Let $g\colon \cX'\to
    \cX$ be a morphism of Deligne-Mumford stacks, with $\cX$ of finite
    type and finite inertia over $S$, $\cX'$ of finite type and finite
    inertia over some Noetherian scheme $S'$. For every object $\cY$ of
    $\cC/\cX$, fix a 2-base change $\cY'\to \cY$ of $g$. Then $\cY'$ is of
    finite inertia over $S'$ by Lemma \ref{p.I}. For every morphism
    $\cZ\to \cY$ of $\cC/\cX$, fix a 2-Cartesian square of $S$-stacks
    obtained by 2-base change by $g$
\[\xymatrix{\cZ'\ar[r]^{g''}\ar[d]_{f'} & \cZ\ar[d]^f\\
  \cY'\ar[r]^{g'} & \cY}
\]
In this way, we have defined a pseudofunctor $\cC/\cX\to \cC_{S'}$, where
 $\cC_{S'}$ is defined similarly to $\cC$ but with $S$ replaced by $S'$.
 Let $F_1$ be the composition $\cC/\cX\to \cC\xrightarrow{F} \cD$, let
 $F_2$ be the composition $\cC/\cX\to \cC_{S'}\xrightarrow{F_{S'}} \cD$,
 and let $\epsilon_0\colon \lvert F_1 \rvert \to \lvert F_2 \rvert$ be
 given by $\epsilon_0(\cY)=g'{}^*$ \cite[Notation 1.9]{glue}. Let
 $\epsilon_\cA\colon F_1\res\cA_\cX \Rightarrow F_2\res\cA_\cX$ be the
 pseudonatural transformation with $\lvert \epsilon_\cA \rvert=\epsilon_0$
 given by the inverse of the base change map \eqref{e.shrbc}, and let
 $\epsilon_\cB\colon F_1 \res\cB_\cX \Rightarrow F_2\res\cB_\cX$ be
 pseudonatural transformation with $\lvert \epsilon_\cB \rvert=\epsilon_0$
 given by proper base change. It follows from \cite[Propositions 8.10,
 8.11]{glue} that $(\epsilon_\cA,\epsilon_\cB)$ is a morphism of
 $\GD^\Cart_{\cA_\cX,\cB_\cX}(\cC/\cX,\cD)$. Thus it induces a
 pseudonatural transformation $\epsilon \colon F_1\Rightarrow F_2$ with
 $\lvert \epsilon \rvert = \epsilon_0$. For any morphism $f\colon \cZ\to
 \cY$ of $\cC/\cX$, $\epsilon(f)$ is a natural isomorphism
  \[g'{}^* Rf_!\Rightarrow Rf'_! g''{}^*.\]

\item \emph{Projection formula isomorphism.} Let $\cX$ be a
    Deligne-Mumford $S$-stack of finite type and finite inertia, and let
    $N\in D(\cX,\Lambda)$. Let $F_1$ be the composition $\cC/\cX\to \cC
    \xrightarrow{F} \cD$ as in \ref{s.fshr2} and let $\epsilon_0\colon
    \lvert F_1\rvert \to \lvert F_1 \rvert$ be given by
    $\epsilon_0(\cY)=N_\cY\otimes^L_\Lambda -$, where $N_\cY$ is the pull
    back of $N$ to $\cY$. Let $\epsilon_\cA\colon F_1\res\cA_\cX
    \Rightarrow F_1\res\cA_\cX$ be the pseudonatural transformation with
    $\lvert \epsilon_\cA\lvert = \epsilon_0$ given by the inverse of  the
    projection formula \eqref{e.jpf}, and let $\epsilon_\cB\colon
    F_1\res\cB_\cX \Rightarrow F_1\res\cB_\cX$ be the pseudonatural
    transformation with $\lvert \epsilon_\cB\lvert = \epsilon_0$ given by
    the projection formula (Proposition \ref{p.pbc} \ref{p.pbc2}).  Let
    $[1]$ be the category whose objects are $0$ and $1$ and whose
    morphisms are $\one_0$, $\one_1$ and $s\colon 0\to 1$. Consider the
    pseudofunctor $\cC/\cX\times [1]\to \cD$ given by
  \[(\cY,i)\mapsto D(\cY,\Lambda), \quad (\one_\cY,s)\mapsto N_\cY\otimes^L_\Lambda-, \quad (f,\one_i)\mapsto f^*,\]
  where $i=0,1$. Then \eqref{e.jpf} and the map in Proposition \ref{p.pbc}
  \ref{p.pbc2} are respectively the base change maps \cite[Constructions
  8.5 and 8.3]{glue}. Hence \cite[Propositions 8.10, 8.11]{glue} implies
  that $(\epsilon_\cA,\epsilon_\cB)$ is a morphism of
  $\GD^\Cart_{\cA_\cX,\cB_\cX}(\cC/\cX,\cD)$.  Thus it induces a
  pseudonatural transformation $\epsilon \colon F_1\Rightarrow F_1$ with
  $\lvert \epsilon \rvert= \epsilon_0$. For any morphism $f\colon \cZ\to
  \cY$ of $\cC/\cX$, $\epsilon(f)$ is a natural isomorphism
  \[N_\cY\otimes^L_\Lambda Rf_! - \Rightarrow Rf_!(N_\cZ\otimes^L_\Lambda -).\]
\end{enumerate}
  We have obtained the following.
\end{Construction}

\begin{Theorem}\label{p.bcc}\leavevmode
\begin{enumerate}
\item \label{p.bcc1} (Base change) For every 2-Cartesian square of Deligne-Mumford stacks
  \[\xymatrix{\cX'\ar[r]^{h}\ar[d]_{f'} & \cX\ar[d]^{f}\\
  \cY'\ar[r]^{g} & \cY}\] with $\cX$ and $\cY$ of finite type and finite
inertia over $S$, $\cY'$ of finite type and finite inertia over some
Noetherian scheme $S'$, $f$ separated of $m$-prime inertia, the base
change map
  \[g^* Rf_! M \to Rf'_! h^* M\]
  is an isomorphism for all $M\in D(\cX,\Lambda)$.

\item \label{p.bcc2} (Projection formula) Let $f\colon \cX\to \cY$ be a
    separated morphism between finite-type finite-inertia Deligne-Mumford
    $S$-stacks, and let $M\in D(\cX,\Lambda)$, $N\in D(\cY,\Lambda)$.
    Assume that $f$ has $m$-prime inertia. Then the projection formula map
    \begin{equation}\label{e.pfshr0}
  N\otimes^L_\Lambda Rf_! M \to Rf_!(f^* N \otimes^L_\Lambda M)
  \end{equation}
  is an isomorphism.
\end{enumerate}
\end{Theorem}

\begin{Corollary}
Let $f\colon \cX\to \cY$ be a separated morphism between finite-type and
finite-inertia Deligne-Mumford $S$-stacks. Assume that $f$ has $m$-prime
inertia and the fibers of $f$ have dimension $\le d$. Then the cohomological
amplitude of $Rf_!\colon D(\cX,\Lambda)\to D(\cY,\Lambda)$ is contained in
$[0,2d]$. Moreover, for $M\in D(\cX,\Lambda)$ of tor-dimension $\ge 0$,
$Rf_! M$ has tor-dimension $\ge 0$.
\end{Corollary}

\begin{proof}
For the first assertion, we may assume that $\cY$ is the spectrum of a field
by Theorem \ref{p.bcc} \ref{p.bcc1}. We decompose $f\simeq pj\pi$ as in
Proposition \ref{p.comp} with $j$ dominant. Then $j_!$ is exact, $\pi_*$ is
exact, and $p_*$ has cohomological dimension $\le 2d$ by Lemma \ref{p.cd},
because the source of $p$ has dimension $\le d$. The second assertion
follows from Theorem \ref{p.bcc} \ref{p.bcc2}.
\end{proof}

The following is immediate.

\begin{Corollary}[K\"unneth formula]
For every 2-commutative diagram
\[\xymatrix{\cX_1\ar[rd]\ar[d]_{f_1} && \cX_2\ar[d]^{f_2}\ar[ld]\\
\cY_1\ar[r] &\cS& \cY_2\ar[l]}
\]
between finite-type and finite-inertia Deligne-Mumford $S$-stacks such that
$f_1$ and $f_2$ are separated of $m$-prime inertia, and for $M_i\in
D(\cX_i,\Lambda)$, $i=1,2$, we have a canonical isomorphism
\begin{equation}\label{e.Kunnethlshr}
\KF_!\colon f_{1!}M_1\boxtimes^L_{\cS} f_{2!}M_2 \simto f_! (M_1\boxtimes^L_{\cS} M_2),
\end{equation}
where $f\colon \cX_1\times_{\cS}\cX_2 \to \cY_1\times_{\cS}\cY_2$.
\end{Corollary}

Moreover, \eqref{e.Kunnethlshr} is compatible with composition of morphisms.

\begin{Proposition}\label{p.trace}
There exists a unique way to define, for every separated flat morphism
$f\colon \cX\to \cY$ of $m$-prime inertia and fibers of dimension $\le d$
between Deligne-Mumford stacks of finite type and finite inertia over some
Noetherian scheme $T$ and every sheaf of $\Lambda$-modules $\cF$ on $\cY$, a
trace map
  \[\Tr_f(\cF)\colon R^{2d}f_!f^* \cF (d)\to \cF\]
  satisfying the following conditions:
  \begin{itemize}
    \item[(a)] (Compatibility with base change) For every 2-Cartesian square of Deligne-Mumford stacks
    \[\xymatrix{\cX'\ar[r]^h\ar[d]_{f'}&\cX\ar[d]^f\\
    \cY'\ar[r]^g & \cY}\] with $\cX$ and $\cY$ of finite type and finite
  inertia over some Noetherian scheme $T$, $\cX'$ and $\cY'$ of finite
  type and finite inertia over some Noetherian scheme $T'$, $f$ as above,
  and for every $\cF\in \Mod(\cY,\Lambda)$, the following diagram commutes
    \[\xymatrix{g^*R^{2d} f_! f^* \cF (d)\ar[r]^-{g^*\Tr_f(\cF)}\ar[d]_c & g^*\cF\\
    R^{2d}f'_! h^* f^* \cF (d)\ar[r]^\simeq & R^{2d}f'_! f'{}^* g^* \cF (d).\ar[u]_{\Tr_{f'}(g^*\cF)}}\]
    Here $c$ is the base change isomorphism.
    \item[(b)] (Compatibility with composition) For every composable pair
        of separated flat morphisms
        \[\cX\xrightarrow{f}  \cY \xrightarrow{g}  \cZ \]
    of $m$-prime inertia of Deligne-Mumford stacks of finite type and
    finite inertia over some Noetherian scheme $T$ and such that the
    fibers of $f$ and $g$ have dimension $\le d$ and $\le e$,
    respectively, and for every $\cF\in \Mod(\cZ,\Lambda)$, the following
    diagram commutes
        \[\xymatrix{R^{2(d+e)}(gf)_!(gf)^* \cF(d+e) \ar[rr]^-{\Tr_{gf}(\cF)}\ar[d]_{\simeq} && \cF\\
        R^{2e}g_!R^{2d}f_!f^*g^*\cF (d+e)\ar[rr]_-{R^{2e}g_!\Tr_f(g^*\cF)(e)} && R^{2d}f_!f^* \cF(d).\ar[u]_{\Tr_f(\cF)}}\]
    \item[(c)] (Normalization) If $f$ is finite flat of constant degree
        $n$, the composition
    \[\xymatrix{\cF\ar[r]^-a & f_* f^* \cF \ar[r]^{\simeq} & Rf_! f^* \cF \ar[r]^-{\Tr_f(\cF)} & \cF,} \]
    where $a$ is the adjunction, is multiplication by $n$.
  \end{itemize}
  Moreover, $\Tr_f(\cF)$ is functorial in $\cF$, and, if $f$ is \'etale and surjective, then
  the sequence
  \[g_!g^*\cF\xrightarrow{\Tr_{p_1}(g^*\cF)-\Tr_{p_2}(g^*\cF)} f_!f^* \cF \xrightarrow{\Tr_f(f^*\cF)} \cF \to 0\]
  is exact. Here $p_1$, $p_2$ are the projections $\cX\times_\cY \cX\to \cX$ and $g\colon \cX\times_\cY \cX\to \cY$.
\end{Proposition}

\begin{proof}
The construction of $\Tr_f$ being local for the \'etale topology on $\cY$
and on $\cX$, the proposition follows from the case of schemes \cite[XVIII
Th\'eor\`eme 2.9]{SGA4}.
\end{proof}

\begin{Remark}\label{s.Rshr=shr}
Let $j\colon \cX\to \cY$ be a separated \'etale representable morphism
between finite-type and finite-inertia Deligne-Mumford $S$-stacks. We have
two functors $\Mod(\cX,\Lambda)\to \Mod(\cY,\Lambda)$: $j_!$ from
Construction \ref{d.j} and the restriction $R^0j_!$ of $Rj_!$ from
Construction \ref{s.fshr}. The composite natural transformation
  \[\xymatrix{R^0j_!\ar@{=>}[r]^-{R^0j_! a} & Rj_!j^*j_! \ar@{=>}[r]^-{\Tr_j j_!} & j_!,}\]
where $a\colon \one\Rightarrow j^* j_!$ is the adjunction, is a natural
equivalence. In fact, by base change, we are reduced to the trivial case
where $\cY$ is the spectrum of a separably closed field. More generally, we
have a natural isomorphism $Rj_!\simeq j_!$ of functors $D(\cX,\Lambda)\to
D(\cY,\Lambda)$.
\end{Remark}

\begin{Proposition}\label{p.shradj}
Let $f\colon \cX\to \cY$ be a separated morphism between finite-type and
finite-inertia Deligne-Mumford $S$-stacks. Assume that $f$ has $m$-prime
inertia. Then $Rf_!$ admits a right adjoint $Rf^!\colon D(\cY,\Lambda)\to
D(\cX,\Lambda)$. In particular, for $K\in D(\cX,\Lambda)$ and $L\in
D(\cY,\Lambda)$, we have a canonical isomorphism
  \begin{equation}\label{e.shradj}
  \Hom_\cY(Rf_!K,L)\simto \Hom_\cX(K,Rf^! L),
  \end{equation}
  functorial in $K$ and $L$.
\end{Proposition}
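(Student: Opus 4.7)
The plan is to decompose $f$ and construct a right adjoint on each piece. By Proposition~\ref{p.comp}, factor $f \simeq p \circ j \circ \pi$, with $\pi$ a proper homeomorphism, $j$ an open immersion, and $p$ proper and representable; each factor inherits prime-to-$m$ inertia from~$f$, and each of $\cX'$, $\cZ$ is of finite type and finite inertia over $S$ (the latter by \ref{p.I}, since they are pulled back from schemes). By the pseudo-functoriality of the construction in~\ref{s.fshr}, together with $Rp_! = Rp_*$ and $R\pi_! = R\pi_*$ on proper morphisms and $Rj_! = j_!$ on open immersions (by definition of $F_\cA$ in~\ref{s.fshr}), we obtain a natural equivalence $Rf_! \simeq Rp_* \circ j_! \circ R\pi_*$.

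It then suffices to produce right adjoints for the three factors and compose them in reverse order. The functor $j_!$ has right adjoint $j^*$ by~\ref{d.jt}. For $R\pi_*$ and $Rp_*$, I appeal to Brown representability. Both functors preserve small direct sums by~\ref{s.top}, applied to the coherent (indeed proper, hence quasi-compact) morphisms $\pi$, $p$, together with the finite cohomological dimension bound $\le 2d$ supplied by~\ref{p.cd} (with $d=0$ for the homeomorphism $\pi$, where in fact $\pi_*$ is exact). The target $D(\cY,\Lambda)$ is the unbounded derived category of the Grothendieck abelian category $\Mod(\cY_\et,\Lambda)$, hence is well-generated in the sense of Neeman, so every coproduct-preserving triangulated functor into it admits a right adjoint. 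This furnishes $Rp^!$ and $R\pi^!$, and setting
\[
Rf^! \;:=\; R\pi^! \circ j^* \circ Rp^!
\]
yields the desired right adjoint of $Rf_!$. The isomorphism~\eqref{e.shradj} is then formal.

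The main obstacle is precisely the appeal to Brown representability in the unbounded setting. The naive compactly-generated approach is not available: the étale topos of a general $\cY$ over $S$ has infinite cohomological dimension (recall that a field is allowed as base), so the natural candidates $j_!\Lambda_\cU$ for $\cU$ in a small étale generating family need not be compact. Neeman's well-generated framework circumvents this and supplies exactly what is needed. Independence of $Rf^!$ from the chosen factorization of $f$ is automatic from the universal property of adjoints.
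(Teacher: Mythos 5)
Your proposal is correct and follows essentially the paper's own route: the paper deduces the existence of $Rf^!$ in one line from the fact that $Rf_!$ commutes with small direct sums (citing Kashiwara--Schapira 14.3.1(ix), i.e.\ Brown representability for derived categories of Grothendieck categories), and the inputs you invoke — the decomposition $f\simeq pj\pi$ of \ref{p.comp}, exactness of $j_!$, and finite cohomological dimension of $Rp_*$ and $R\pi_*$ via \ref{p.cd} and \ref{s.top} — are precisely what makes that one-line argument work, whether one applies representability factor-by-factor as you do or to $Rf_!$ all at once as the paper does. One small correction: well-generation is a hypothesis on the \emph{source} of the coproduct-preserving functor, not on its target as you wrote, but this is harmless here since every category in sight is the unbounded derived category of a Grothendieck abelian category.
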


If $f$ is finite, $Rf^!$ is isomorphic to the functor in Construction
\ref{d.dist}.

\begin{proof}
This is a formal consequence of Brown Representability Theorem \cite[Theorem
14.3.1 (ix)]{KSCat} and the fact that $Rf_!$ commutes with small direct
sums.  We may also repeat the construction of $Rf^!$ in \cite[XVIII
Th\'eor\`eme 3.1.4]{SGA4} as follows. Choose a decomposition $f\simeq pj\pi$
as in Proposition \ref{p.comp} and an integer $d$, upper bound of the
dimensions of the fibers of $f$. For $\cF\in \Mod(\cX,\Lambda)$, define
  \[f_!^\bullet\cF=p_* \tau_{\le 2d}\cC_\ell^\bullet(j_!\pi_*\cF),\]
  where $\cC_\ell^\bullet$ is the modified Godement resolution (Construction \ref{s.God}).
Then $f_!^\bullet\cF$ computes $Rf_! \cF$. For every $q$, the functor
$f_!^q\colon \Mod(\cX,\Lambda)\to \Mod(\cY,\Lambda)$ is exact and commutes
with small inductive limits, hence admits by Adjoint Functor Theorem a right
adjoint \cite[XVIII Lemme 3.1.3]{SGA4}
  \[f^!_q\colon \Mod(\cY,\Lambda)\to \Mod(\cX,\Lambda)\]
  that preserves injectives. The functor
  \[\tot f^!_\bullet\colon C(\Mod(\cY,\Lambda))\to C(\Mod(\cX,\Lambda))\]
  gives rise to a functor between homotopy categories, which has a right localization \cite[Theorem 14.3.1 (vi)]{KSCat}
  \[Rf^!\colon D(\cY,\Lambda)\to D(\cX,\Lambda).\]
  For $K\in C(\Mod(\cX,\Lambda))$ and $M\in C(\Mod(\cY,\Lambda))$, adjunction induces an
  isomorphism of complexes
  \begin{equation}\label{e.shradj0}
  \Hom^\bullet_\cY(\tot f_!^\bullet K, M)\to \Hom^\bullet_\cX(K,\tot f^!_\bullet M).
  \end{equation}
  It follows that $\tot f^!_\bullet$ preserves homotopically injective complexes.
  The isomorphism \eqref{e.shradj} is induced by \eqref{e.shradj0},
  where $M$ is a homotopically injective resolution of $L$.
\end{proof}

\begin{Proposition}
Let $f\colon \cX\to \cY$ be as in Proposition \ref{p.shradj}, let $K\in
D(\cX,\Lambda)$, and let $L,M\in D(\cY,\Lambda)$. We have canonical
isomorphisms
\begin{gather}\label{e.shradj1}
  R\cHom_{\cY}(Rf_! K, L) \simto Rf_* R\cHom_{\cX}(K, Rf^! L),\\
  R\cHom_\cX(f^* M, Rf^! L) \simto Rf^!R\cHom_\cY(M,L),\label{e.shradj2}
\end{gather}
functorial in $K$, $L$ and $M$.
\end{Proposition}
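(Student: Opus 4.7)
My plan is to derive both isomorphisms by a short chain of adjunction isomorphisms and then to identify the resulting map in the derived category via Yoneda. The ingredients available are exactly what is needed: the adjunction $(Rf_!,Rf^!)$ of Proposition~\ref{p.shradj} (formula \eqref{e.shradj}), the projection formula isomorphism in Theorem~\ref{p.bcc}~(ii), the tensor-$R\cHom$ adjunction, and the standard $(f^*,Rf_*)$ adjunction. Concretely, the natural transformations can be written down from unit/counit maps, and the Yoneda computation below shows they are isomorphisms.

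For \eqref{e.shradj1}, I test against an arbitrary $N\in D(\cY,\Lambda)$ and compute
\begin{align*}
R\Hom_\cY(N, R\cHom_\cY(Rf_!K, L))
&\simeq R\Hom_\cY(N\otimes^L_\Lambda Rf_!K,\, L) \\
&\simeq R\Hom_\cY(Rf_!(f^*N\otimes^L_\Lambda K),\, L) \\
&\simeq R\Hom_\cX(f^*N\otimes^L_\Lambda K,\, Rf^!L) \\
&\simeq R\Hom_\cX(f^*N,\, R\cHom_\cX(K, Rf^!L)) \\
&\simeq R\Hom_\cY(N,\, Rf_*R\cHom_\cX(K, Rf^!L)).
\end{align*}
The first and fourth isomorphisms are the tensor-$R\cHom$ adjunction, the second is projection formula \ref{p.bcc}~(ii), the third is \eqref{e.shradj}, and the last is the $(f^*,Rf_*)$ adjunction. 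By Yoneda in $D(\cY,\Lambda)$ this bijection of corepresented functors is induced by a unique morphism, which is therefore an isomorphism, functorial in $K$ and $L$.

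For \eqref{e.shradj2}, I repeat the same pattern, now testing against an arbitrary $K\in D(\cX,\Lambda)$:
\begin{align*}
R\Hom_\cX(K, R\cHom_\cX(f^*M, Rf^!L))
&\simeq R\Hom_\cX(K\otimes^L_\Lambda f^*M,\, Rf^!L) \\
&\simeq R\Hom_\cY(Rf_!(K\otimes^L_\Lambda f^*M),\, L) \\
&\simeq R\Hom_\cY(Rf_!K\otimes^L_\Lambda M,\, L) \\
&\simeq R\Hom_\cY(Rf_!K,\, R\cHom_\cY(M,L)) \\
&\simeq R\Hom_\cX(K,\, Rf^!R\cHom_\cY(M,L)),
\end{align*}
combining the same four ingredients, with the projection formula used in the reverse direction. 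Yoneda in $D(\cX,\Lambda)$ yields the desired isomorphism.

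The only real obstacle is a bookkeeping issue: to check that the abstract Yoneda map coincides with the natural morphism one would write down from units/counits and evaluation pairings, so that the isomorphisms are compatible with the composition of morphisms, with base change as constructed in \S\ref{s.fshr}~(ii), and with future functoriality. This is a matter of unwinding the adjunctions carefully rather than a conceptual difficulty, since each step above is already a natural isomorphism with an explicit description.
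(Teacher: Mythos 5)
Your proof is correct and is essentially the same as the paper's: the paper defines both isomorphisms via exactly the same chain of adjunctions (tensor--$R\cHom$, projection formula \ref{p.bcc}~(ii), the $(Rf_!,Rf^!)$ adjunction \eqref{e.shradj}, and $(f^*,Rf_*)$), tested against an arbitrary object and concluded by Yoneda.
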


\begin{proof}
  These are induced by the following isomorphisms
  \begin{multline*}
    \Hom_\cY(M,R\cHom_\cY(Rf_!K,L)) \simeq \Hom_\cY(M\otimes^L_\Lambda Rf_!K,L)\xrightarrow{A^{-1}} \Hom_\cY(Rf_!(f^*M\otimes^L_\Lambda K),L)\\
    \shoveright{\simeq \Hom_\cX(f^*M\otimes^L_\Lambda K,Rf^!L)
    \simeq \Hom_\cX(f^*M,R\cHom_\cX(K,Rf^!L))\simeq \Hom_\cY(M,Rf_*R\cHom_\cX(K,Rf^!L)),}\\
    \shoveleft{\Hom_\cX(K,R\cHom_\cX(f^*M,Rf^!L))\simeq \Hom_\cX(f^*M\otimes^L_\Lambda K,Rf^!L) \simeq\Hom_\cY( Rf_!(f^*M\otimes^L_\Lambda K),L)}\\
    \xrightarrow{A} \Hom_\cY(M\otimes^L_\Lambda Rf_!K,L)
    \simeq\Hom_\cY(Rf_!K,R\cHom_\cY(M,L)) \simeq \Hom_\cX(K,Rf^!R\cHom_\cY(M,L)),
  \end{multline*}
where $A$ is induced by projection formula $M\otimes^L_\Lambda Rf_! K \simto
Rf_!(f^* \otimes^L_\Lambda K)$ (Theorem \ref{p.bcc} \ref{p.bcc2}).
\end{proof}

The base change isomorphism in Theorem \ref{p.bcc} \ref{p.bcc1} induces the
following by adjunction.

\begin{Proposition}\label{p.usbc}
Let
    \[\xymatrix{\cX'\ar[r]^h \ar[d]_{f'} & \cX\ar[d]^f\\
  \cY'\ar[r]^g & \cY}\]
be a 2-Cartesian square of Deligne-Mumford $S$-stacks of finite type and
finite inertia, with $g$ separated of $m$-prime inertia. Let $M\in
D(\cX,\Lambda)$. Then the map
  \[Rf'_*Rh^! M \to Rg^! Rf_* M\]
  is an isomorphism.
\end{Proposition}

\begin{Construction}
Let $f\colon \cX\to \cY$ be a separated morphism of $m$-prime inertia
between finite-type finite-inertia Deligne-Mumford $S$-stacks, and let
$M,N\in D(\cY,\Lambda)$. We consider the map
\begin{equation}\label{e.pfshr}
f^*N\otimes^L_\Lambda Rf^! M \to Rf^!(N\otimes^L_\Lambda M)
\end{equation}
adjoint to the composite
\[Rf_!(f^*N\otimes^L_\Lambda Rf^! M)\simto N\otimes^L_\Lambda Rf_!Rf^! M \to N\otimes^L_\Lambda M,\]
where the first map is the inverse of \eqref{e.pfshr0} and the second map is
induced by the adjunction $Rf_!Rf^!M\to M$.

If the fibers of $f$ have dimension $\le d$, then we obtain a
map
\begin{equation}\label{e.Poincare}
f^*N(d)[2d]\to Rf^! N,
\end{equation}
composite of \eqref{e.pfshr} applied to $M=\Lambda$ and the map $f^*\Lambda
(d)[2d]\to Rf^!\Lambda$ adjoint to $\Tr_f(\Lambda)$. The map
\eqref{e.Poincare} is an isomorphism for $f$ \'etale and $d=0$. To see this,
we reduce to the case $f$ is \'etale and representable, where the assertion
follows from Remark \ref{s.Rshr=shr}.
\end{Construction}

\begin{Construction}\label{s.dag}
The construction of $Rf_!$  in Construction \ref{s.fshr} works without
assumption on the inertia of $f$ if we restrict to $D^+$. More precisely,
let $\cC_1$ be the 2-category whose objects are Deligne-Mumford $S$-stacks
of finite type and finite inertia and whose morphisms are the separated
morphisms, let $\cA$ be the arrowy 2-subcategory of $\cC_1$ whose morphisms
are the open immersions, let $\cB_1$ be the arrowy 2-subcategory of $\cC_1$
whose morphisms are the proper morphisms, and let $\cD$ be the 2-category of
triangulated categories. Let $F_\cA\colon \cA\to \cD$ be the pseudofunctor
given by Construction \ref{d.j}:
  \[\cX\mapsto D^+(\cX,\Lambda), \quad j\mapsto j_!, \quad \alpha\mapsto \alpha_!,\]
and let $F_{\cB_1}\colon \cB_1\to \cD$ be the pseudofunctor given by
  \[\cX\mapsto D^+(\cX,\Lambda), \quad p\mapsto Rf_*, \quad \alpha\mapsto R\alpha_*.\]
We define $G$ and $\rho$ as before. Note that the proper base change map
\eqref{e.fshr} is still an isomorphism in this case because we work in $D^+$. This defines an
object of $\GD^\Cart_{\cA,\cB_1}(\cC_1,\cD)$. Let $F\colon \cC_1\to \cD$
be a corresponding pseudofunctor. For any morphism $f\colon \cX\to \cY$ of
$\cC_1$, we define
  \[Rf_\dag \colon D^+(\cX,\Lambda)\to D^+(\cY,\Lambda)\]
to be $F(f)$. If $f$ has $m$-prime inertia, $Rf_\dag$ is isomorphic to the
restriction of $Rf_!$ to $D^+$. In general, $Rf_\dag$ is not the correct
definition of $Rf_!$, but in Section~\ref{s.6} we will use it to give the
correct definition of $Rf_!$ for coefficients of characteristic $0$.

For any morphism $f\colon \cX\to \cY$ of $\cC_1$, we have the
support-forgetting natural isomorphism of functors $D^+(\cX,\Lambda)\to
D^+(\cY,\Lambda)$
  \[Rf_\dag\Rightarrow Rf_*,\]
which is a natural isomorphism if $f$ is proper. Base change isomorphisms
are also constructed as before.
\end{Construction}

We have obtained the following proposition.

\begin{Proposition}\label{p.dagbc}
For any 2-Cartesian square of Deligne-Mumford stacks
  \[\xymatrix{\cX'\ar[r]^{h}\ar[d]_{f'} & \cX\ar[d]^{f}\\
  \cY'\ar[r]^{g} & \cY}\]
with $\cX$ and $\cY$ of finite type and finite inertia over $S$, $\cY'$ of
finite type and finite inertia over some Noetherian scheme $S'$, $f$
separated, the base change map
  \[g^* Rf_\dag M \to Rf'_\dag h^* M\]
is an isomorphism for all $M\in D^+(\cX,\Lambda)$.
\end{Proposition}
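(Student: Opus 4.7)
The plan is to mimic the construction of the base change isomorphism in \ref{s.fshr} (ii), with $\cC$ replaced by the 2-category $\cC_1$ of \ref{s.dag} (whose 1-cells are arbitrary separated morphisms, without the prime-to-$m$ inertia restriction) and the pseudo-functor $F$ replaced by the one defining $Rf_\dag$ in \ref{s.dag}. Concretely, for every object $\cY$ of $\cC_1/\cX$ I fix a 2-base change $\cY'\to \cY$ of $g$, which is of finite inertia over $S'$ by \ref{p.I}, and for every 1-cell $f\colon \cZ\to\cY$ I fix a 2-Cartesian square obtained by 2-base change via $g$. Let $F_1$ denote the composition $\cC_1/\cX\to \cC_1\xrightarrow{F}\cD$ and $F_2$ the analogous composition built over $S'$, and set $\epsilon_0(\cY)=g'{}^*$.

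Next I would define pseudo-natural transformations $\epsilon_\cA$ on the subcategory of open immersions in $\cC_1/\cX$, given by the inverse of \eqref{e.shrbc}, and $\epsilon_{\cB_1}$ on the subcategory of proper morphisms in $\cC_1/\cX$, given by the proper base change map on $D^+$. Crucially, the latter is available without any hypothesis on the inertia: this is the classical statement recalled in the paragraph before \ref{p.cd}, which rests on Chow's lemma. This is what makes the argument go through for $Rf_\dag$ in the generality of $\cC_1$, unlike in the case of the full $Rf_!$.

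The key step is then to check that $(\epsilon_\cA, \epsilon_{\cB_1})$ is a 1-cell of the gluing-data category $\GD'_{\cA,\cB_1}(\cC_1/\cX,\cD)$, i.e.\ that it is compatible with the 2-cells $\rho$ and $G$ attached to $F$ in \ref{s.dag}. By \ref{p.rhoAB} and \ref{p.GAB} from the appendix, this compatibility reduces to the base change formulas already in hand, exactly as in \ref{s.fshr} (ii). The resulting pseudo-natural transformation $\epsilon\colon F_1\Rightarrow F_2$ evaluates on each $f\colon \cZ\to\cY$ to the sought natural equivalence $g^*Rf_\dag M \simto Rf'_\dag h^* M$.

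The main obstacle I foresee is notational rather than mathematical: all substantive ingredients (open-immersion base change, proper base change on $D^+$, and the gluing lemmas \ref{p.rhoAB}, \ref{p.GAB}) are already available, and the argument parallels \ref{s.fshr} (ii) step by step. The essential new point is simply that restricting to $D^+$ removes the prime-to-$m$ inertia hypothesis needed in \ref{p.pbc} (i), so that proper base change is at our disposal for every separated morphism in $\cC_1$.
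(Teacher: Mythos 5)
Your proposal matches the paper's own treatment: the paper gives no separate proof of \ref{p.dagbc}, deferring to the sentence in \ref{s.dag} that "base change isomorphisms are also constructed as before," i.e.\ by the gluing argument of \ref{s.fshr}~(ii) with $\cC$ replaced by $\cC_1$, the compatibilities checked via \ref{p.rhoAB} and \ref{p.GAB}. You also correctly identify the one substantive point, namely that on $D^+$ proper base change holds for arbitrary $S$-proper morphisms (via Chow's lemma), so the prime-to-$m$ inertia hypothesis is not needed.
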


\begin{Proposition}\label{p.cdim}
Assume that $S$ is finite-dimensional and $\Lambda$ is annihilated by an
integer $m$ invertible on $S$. Let $f\colon \cX\to \cY$ be a morphism of
$m$-prime inertia between finite-type Deligne-Mumford $S$-stacks.
\begin{enumerate}
\item \label{p.cdim1} $Rf_*\colon D(\cX,\Lambda)\to D(\cY,\Lambda)$ has
    finite cohomological amplitude. Moreover, for $M\in D(\cX,\Lambda)$
    and $N\in D(\Lambda)$, the projection formula map \eqref{e.pf}
    \[N\otimes_\Lambda^L Rf_* M\to Rf_*(N\otimes^L_\Lambda M)\]
is an isomorphism. In particular, $Rf_*$ preserves complexes of
tor-dimension $\ge 0$.

\item \label{p.cdim2} Assume that $f$ is a closed immersion (resp.\ $f$ is
    separated and $\cX$, $\cY$ are of finite inertia). Then $Rf^!\colon
    D(\cY,\Lambda)\to D(\cX,\Lambda)$ has finite cohomological amplitude.
    Moreover, for $M\in D(\cY,\Lambda)$ and $N\in D(\Lambda)$ the map
    \eqref{e.pffinite} (resp.\ \eqref{e.pfshr})
    \[  N\otimes_\Lambda^L Rf^!M \to Rf^!(N\otimes^L_\Lambda M)
\]
is an isomorphism. In particular, $Rf^!$ carries $D_{\ft}$ to $D_{\ft}$.
\end{enumerate}
\end{Proposition}

Note that \eqref{e.pf} is not an isomorphism for $N\in D(\cY,\Lambda)$ in
general. Indeed, if $f$ an open immersion and $N$ is supported on the
complement of $\cX$, then $Rf_*(f^*N\otimes^L_\Lambda M)=0$ but
$N\otimes^L_\Lambda Rf_* M$ is not necessarily zero.

\begin{proof}
\begin{itemize}
  \item[\ref{p.cdim1}] The second assertion follows from the first one by
      an argument similar to the proof of Proposition \ref{p.pbc}
      \ref{p.pbc2}. To show the first assertion, up to replacing $\cY$ by
      an \'etale presentation and $\cX$ by the corresponding 2-pullback,
      we may assume that $\cY$ is a scheme. In particular, if $f$ is an
      open immersion, then the result follows from the case of schemes,
      which is a result of Gabber \cite[Corollary 1.4]{ILOXVIIIA}. In the
      general case, we proceed by induction. Let $j\colon\cU\to \cX$ be a
      dominant open immersion with $\cU$ separated over $S$ and let $i$ be
      a complementary closed immersion. Applying $Rf_*$ to \eqref{e.dist2}
      and induction hypothesis to $fi$, we are reduced to prove that
      $R(fj)_*$ has finite cohomological dimension. In other words, we may
      assume $\cX$ separated over $S$. By the factorization in Proposition
      \ref{p.comp}, we are then reduced to two cases: (a) $f$ is an open
      immersion; (b) $f$ is proper. Case (a) has been proven above and
      case (b) follows from Lemma \ref{p.cd}.

  \item[\ref{p.cdim2}] We easily reduce to two cases: (a) $f$ is a smooth
      morphism of schemes; (b) $f$ is a closed immersion. In case (a),
      $Rf^!\simeq f^*(d)[2d]$, where $d$ is the relative dimension, and
      the assertions are obvious. In case (b), the first assertion follows
      from \ref{p.cdim1} and the distinguished triangle \eqref{e.dist2},
      and the second assertion follows from the first one and an argument
      similar to the proof of Proposition \ref{p.pbc} \ref{p.pbc2}.
\end{itemize}
\end{proof}

\begin{Proposition}[Poincar\'e duality]\label{p.Poincare}
Assume that $\Lambda$ is annihilated by an integer $m$ invertible on $S$.
Let $f\colon \cX\to \cY$ be a separated morphism, smooth of relative
dimension $d$, of $m$-prime inertia between finite-type finite-inertia
Deligne-Mumford $S$-stacks. For every $N\in D(\cY,\Lambda)$, the map $f^*
N(d)[2d]\to Rf^! N$ \eqref{e.Poincare} is an isomorphism.
\end{Proposition}

\begin{proof}
We easily reduce to the case of schemes \cite[XVIII
      Th\'eor\`eme 3.2.5]{SGA4}.
\end{proof}

\section{Operations on $D_c(\cX,\Lambda)$}\label{s.3}

In this section, after reviewing the notion of constructible sheaves on
Deligne-Mumford stacks, we investigate the effects of the six operations
constructed in the previous section on constructible complexes. We then
compare them with operations constructed by duality.

In Definition \ref{s.constr} through Construction \ref{s.God}, let $\Lambda$
be a ring.

\begin{Definition}\label{s.constr}
We say a sheaf of $\Lambda$-modules $\cF$ on a Deligne-Mumford stack $\cX$
is \emph{constructible} if $\alpha^* \cF\in \Mod(X,\Lambda)$ is
constructible\footnote{For $\Lambda$ not necessarily Noetherian, we use the
definition of constructible sheaves of $\Lambda$-modules on a scheme $X$
given in \cite[IX D\'efinition 2.3]{SGA4}, not the one suggested in the
footnote at that place. In other words, we say that $\cG\in \Mod(X,\Lambda)$
is constructible if for every affine open subscheme $U\subset X$, there
exists a finite partition of $U$ into constructible locally closed
subschemes $U=\bigcup U_i$ such that $\cG\res U_i$ is locally constant of
finite presentation for every $i$.} for some (or, equivalently, for every)
\'etale presentation $\alpha\colon X\to\cX$ where $X$ is a scheme.
\end{Definition}

The full subcategory $\Mod_c(\cX,\Lambda)$ of $\Mod(\cX,\Lambda)$ consisting
of constructible sheaves $\Lambda$-modules is stable under extensions and
cokernels.

\begin{Proposition}\label{p.ctriv} Let $f\colon \cX\to \cY$ be a morphism of Deligne-Mumford stacks.
\begin{enumerate}
\item $f^*\colon\Mod(\cY,\Lambda)\to \Mod(\cX,\Lambda)$ preserves
    constructible sheaves.

\item \label{p.cjshr} If $f$ is representable and \'etale, then $f_!\colon
    \Mod(\cX,\Lambda)\to \Mod(\cY,\Lambda)$ preserves constructible
    sheaves.
\end{enumerate}
\end{Proposition}

\begin{proof}
The first assertion is obvious. To prove the second assertion, replacing
$\cY$ by an \'etale presentation, we may assume $\cY$ is a scheme. Let
$g\colon X\to \cX$ be an \'etale presentation. For all $\cF\in
\Mod(\cX,\Lambda)$, we have an exact sequence
  \[h_! h^* \cF \to g_!g^* \cF\to \cF\to 0,\]
where $h$ is the morphism $X\times_\cX X\to \cX$. Thus we may assume $\cX$
is a scheme. The assertion is well-known in this case.
\end{proof}

\begin{Proposition}\label{p.c}
Let $\cX$ be a quasi-compact Deligne-Mumford stack. We let $\Etqc(\cX)$
denote the full subcategory of $\Et(\cX)$ consisting of Deligne-Mumford
stacks representable, \'etale and quasi-compact over $\cX$. Let $\cF$ be a
sheaf of $\Lambda$-modules $\cF$ on $\cX$. Then the following conditions are
equivalent:
  \begin{itemize}
   \item[(a)] $\cF$ is constructible;
   \item[(b)] For any epimorphism $\alpha\colon \cF_I=\bigoplus_{i\in
       I}\Lambda_{\cU_i,\cX}\to \cG$, where $\cU_i$ is an object of
       $\Etqc(\cX)$ for every $i\in I$, and $\cG$ is either $\cF$ or the
       kernel of an epimorphism $\Lambda_{\cV,\cX}\to \cF$, where $\cV$ is
       an object of $\Etqc(\cX)$, there exists a finite subset $J\subset
       I$ such that $\cF_J\hookrightarrow \cF_I \xrightarrow{\alpha} \cG$
       is an epimorphism, where $\cF_J=\bigoplus_{j\in
       J}\Lambda_{\cU_j,\cX}$;
   \item[(c)] $\cF$ is isomorphic to the cokernel of a map $\Lambda_{\cU,\cX} \to \Lambda_{\cV,\cX}$, where $\cU$ and $\cV$ are  objects of $\Etqc(\cX)$;
   \end{itemize}
\end{Proposition}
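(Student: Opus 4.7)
The plan is to establish the cycle of implications (c)$\Rightarrow$(a)$\Rightarrow$(b)$\Rightarrow$(c).

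For (c)$\Rightarrow$(a), I would fix an \'etale presentation $\alpha\colon X\to \cX$ with $X$ a scheme. By the base change property of $j_!$ along \'etale representable morphisms (\ref{s.jD}), one has $\alpha^*\Lambda_{\cU,\cX}\simeq \Lambda_{\cU\times_\cX X,\,X}$ for every $\cU\in \Etqc(\cX)$, and $\cU\times_\cX X$ is \'etale and quasi-compact over the scheme $X$. In particular $\Lambda_{\cU,\cX}$ is constructible. Since $\alpha^*$ is exact and constructible sheaves on $X_\et$ form a thick subcategory, $\alpha^*\cF$, being the cokernel of a morphism between such sheaves, is constructible; hence $\cF$ is constructible by the definition in \ref{s.constr}.

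For (a)$\Rightarrow$(b), the main step, first observe that $\cG$ is constructible in both admissible cases: when $\cG=\cF$ this is the hypothesis, and when $\cG=\Ker(\Lambda_{\cV,\cX}\to\cF)$ it follows by thickness of $\Mod_c(\cX,\Lambda)$ (\ref{s.constr}) together with the constructibility of $\Lambda_{\cV,\cX}$ just verified. Choose a quasi-compact \'etale presentation $\pi\colon X\to \cX$, and pull the given epimorphism back to
\[
\bigoplus_{i\in I}\Lambda_{\cU_i\times_\cX X,\,X}\twoheadrightarrow \pi^*\cG,
\]
an epimorphism of sheaves on $X_\et$ onto the constructible sheaf $\pi^*\cG$, with each $\cU_i\times_\cX X$ \'etale and quasi-compact over $X$. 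The classical scheme-theoretic counterpart of (a)$\Rightarrow$(b) \cite[IX 2.7]{SGA4} then produces a finite $J\subset I$ for which the restricted map is already an epimorphism on $X$. Since $\pi$ is surjective, $\pi^*$ is exact and conservative, so it reflects epimorphisms; hence the same finite $J$ witnesses (b) on $\cX$. The main obstacle is this transfer from the stack $\cX$ to a scheme-theoretic atlas, which hinges on the compatibility of $\Lambda_{-,\cX}$ with \'etale pullback and on descent of surjectivity along a surjective morphism of topoi.

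For (b)$\Rightarrow$(c), I would form the tautological epimorphism
\[
\bigoplus_{(\cU,s)}\Lambda_{\cU,\cX}\twoheadrightarrow \cF,
\]
indexed by all pairs $(\cU,s)$ with $\cU\in \Etqc(\cX)$ and $s\in \cF(\cU)$. Applying (b) with $\cG=\cF$ yields a finite subfamily $(\cU_j)_{j\in J}$ whose direct sum still surjects onto $\cF$; setting $\cV=\coprod_{j\in J}\cU_j\in\Etqc(\cX)$, this gives an epimorphism $\Lambda_{\cV,\cX}\twoheadrightarrow \cF$ with kernel $\cK$. Since $\cK$ itself is covered by the analogous tautological sum $\bigoplus_{(\cU,t)}\Lambda_{\cU,\cX}$ indexed by $t\in \cK(\cU)$, a second application of (b) (now to $\cG=\cK$, which is of the required form) yields $\cU\in \Etqc(\cX)$ and an epimorphism $\Lambda_{\cU,\cX}\twoheadrightarrow \cK$. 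Composing with $\cK\hookrightarrow \Lambda_{\cV,\cX}$, the sheaf $\cF$ is exhibited as the cokernel of the resulting map $\Lambda_{\cU,\cX}\to \Lambda_{\cV,\cX}$, which is (c).
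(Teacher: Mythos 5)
Your proof is correct and follows essentially the same route as the paper: (c)$\Rightarrow$(a) via pullback to a presentation and thickness of constructible sheaves, (a)$\Rightarrow$(b) by pulling the epimorphism back to the atlas, invoking the argument of \cite[IX 2.7]{SGA4} there, and using that $\pi^*$ reflects epimorphisms, and (b)$\Rightarrow$(c) via the tautological epimorphism from the generators $\Lambda_{\cU,\cX}$ applied first to $\cF$ and then to the kernel. The only difference is that you spell out details (the identification $\alpha^*\Lambda_{\cU,\cX}\simeq\Lambda_{\cU\times_\cX X,X}$, conservativity of $\pi^*$) that the paper leaves implicit.
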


Here $\Lambda_{\cU,\cX}=j_!\Lambda_\cU$ is the sheaf defined in Construction
\ref{d.jt}, where $j\colon \cU\to\cX$.

\begin{proof}
Let $f\colon X\to \cX$ be a quasi-compact \'etale presentation where $X$ is
a scheme.

(c)$\implies$(a). This follows from Proposition \ref{p.ctriv} \ref{p.cjshr}.

(a)$\implies$(b). Note that $f^*\cG$ would satisfy the definition of
constructible sheaf if we replace ``of finite presentation'' in the
definition by ``of finite type''. Since $f^*\alpha \colon f^*\cF_I\to f^*
\cG$ is an epimorphism, there exists a finite subset $J\subset I$ such that
$f^*\cF_J\to f^*\cG$ is an epimorphism by the proof of \cite[IX Proposition
2.7]{SGA4}. Then $\cF_J\to \cG$ is an epimorphism.

(b)$\implies$(c). Note that $(\Lambda_{\cU,\cX})_{\cU\in \Etqc(\cX)}$ is a system of generators of $\Mod(\cX,\Lambda)$. 
  Thus there exists an epimorphism $\cF_I\to \cF$.  By (b), we may assume that $I$ is finite. Then $\cF_I\simeq\Lambda_{\cV,\cX}$, where $\cV=\coprod_{i\in J} \cU_i \in \Etqc(\cX)$. Applying the above argument to $\cG=\Ker(\Lambda_{\cV,\cX}\to \cF)$, we get an epimorphism $\Lambda_{\cU,\cX}\to \cG$, $\cU\in \Etqc(\cX)$.
\end{proof}

\begin{Corollary}\label{p.c1}
Let $\cX$ be a quasi-compact Deligne-Mumford stack, and let $\cF$ be a
constructible $\Lambda$-module on~$\cX$. The functor $\Hom_\cX(\cF,-)\colon
\Mod(\cX,\Lambda)\to \Lambda$ commutes with small inductive limits.
\end{Corollary}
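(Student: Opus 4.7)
The plan is to reduce, via the finite presentation from \ref{p.c} (c), to the case of sheaves of the form $\Lambda_{\cW,\cX}$ with $\cW\in\Etqc(\cX)$, for which $\Hom_\cX(\Lambda_{\cW,\cX},-)$ coincides with the global sections functor $\Gamma(\cW,-)$ by the adjunction $j_!\dashv j^*$ recalled in \ref{d.jt}.

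Concretely, I would first apply \ref{p.c} (c) to present $\cF$ as the cokernel of some morphism $\Lambda_{\cU,\cX}\to \Lambda_{\cV,\cX}$ with $\cU,\cV\in\Etqc(\cX)$. Applying the left-exact functor $\Hom_\cX(-,\cG)$ and the above identification yields, functorially in $\cG\in\Mod(\cX,\Lambda)$, an exact sequence
\[0\to \Hom_\cX(\cF,\cG)\to \Gamma(\cV,\cG|_\cV)\to \Gamma(\cU,\cG|_\cU),\]
realising $\Hom_\cX(\cF,-)$ as the kernel of a morphism of two section functors.

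To finish, I would combine two standard facts. Since $\cU$ and $\cV$ are quasi-compact Deligne-Mumford $S$-stacks, the toposes $\cU_\et$ and $\cV_\et$ are coherent (\ref{s.stack}); hence the remark from \ref{s.top}, applied to their structural morphisms to the punctual topos, shows that $\Gamma(\cU,-)$ and $\Gamma(\cV,-)$ commute with small filtrant inductive limits. On the other hand, small filtrant inductive limits are exact in the category of $\Lambda$-modules and therefore commute with the kernel of the displayed sequence. Putting the two together yields the desired commutation for $\Hom_\cX(\cF,-)$. No substantive obstacle is expected; the only point to keep track of is the functoriality in $\cG$ of the presentation, which is automatic from the adjunction.
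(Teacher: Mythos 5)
Your argument is correct and is essentially the paper's own proof unpacked: the paper simply cites \ref{p.c} together with \cite[VI 1.23]{SGA4}, and the steps you spell out --- the presentation $\Lambda_{\cU,\cX}\to\Lambda_{\cV,\cX}\to\cF\to 0$ from \ref{p.c}~(c), the adjunction identifying $\Hom_\cX(\Lambda_{\cW,\cX},-)$ with sections over the coherent topos $\cW_\et$, and the exactness of filtrant inductive limits of $\Lambda$-modules --- are exactly the content of that citation. Note that, as your proof implicitly assumes, the inductive limits in question are the filtrant ones (a left exact functor such as $\Hom_\cX(\cF,-)$ cannot commute with cokernels), which is all that is used in \ref{p.c3}.
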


This follows from Proposition \ref{p.c} and \cite[VI Th\'eor\`eme
1.23]{SGA4}.

\begin{Corollary}\label{p.c2}
Let $\cX$ be a quasi-compact Deligne-Mumford stack. Any $\Lambda$-module
$\cF$ on $\cX$ is a filtered inductive limit of constructible
$\Lambda$-modules on $\cX$.
\end{Corollary}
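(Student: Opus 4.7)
The plan is to realize $\cF$ as the filtered colimit of the category $\cI_\cF$ whose objects are pairs $(\cG,\iota)$ with $\cG$ a constructible $\Lambda$-module and $\iota\colon\cG\to\cF$ an arbitrary morphism, morphisms being the commutative triangles over $\cF$. Once $\cI_\cF$ is shown to be filtered and the natural map $\theta\colon\varinjlim_{\cI_\cF}\cG\to\cF$ shown to be an isomorphism, the statement follows.

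First I would check filteredness of $\cI_\cF$. For two objects, the direct sum $(\cG_1\oplus\cG_2,(\iota_1,\iota_2))$ receives morphisms from both; for a parallel pair $f,g\colon(\cG,\iota)\to(\cG',\iota')$, the projection to $(\Coker(f-g),\bar\iota')$ coequalizes them. Both constructions stay in $\cI_\cF$ because $\Mod_c(\cX,\Lambda)$ is thick in $\Mod(\cX,\Lambda)$ (\ref{s.constr}). Next, $\theta$ is surjective because $\{\Lambda_{\cU,\cX}\}_{\cU\in\Etqc(\cX)}$ generates $\Mod(\cX,\Lambda)$ and each $\Lambda_{\cU,\cX}$ is constructible by \ref{p.c}(c), so every section of $\cF$ factors through some object of $\cI_\cF$. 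For injectivity, since $\Lambda_{\cU,\cX}$ is finitely presented in the sense of \ref{p.c1}, the canonical map $\varinjlim_{\cI_\cF}\Hom(\Lambda_{\cU,\cX},\cG)\to\Hom(\Lambda_{\cU,\cX},\varinjlim_{\cI_\cF}\cG)$ is a bijection; an element of the right-hand side killed by $\theta$ comes from some $\psi\colon\Lambda_{\cU,\cX}\to\cG$ with $\iota\psi=0$, i.e.\ factoring through $\Ker\iota$, and the morphism $(\cG,\iota)\to(\cG/\Ker\iota,\bar\iota)$ in $\cI_\cF$ (with $\cG/\Ker\iota$ constructible by thickness) then sends $\psi$ to zero.

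The main input is really Proposition \ref{p.c}(c), which matches the ad hoc notion of constructibility with coherence in the topos sense; everything else is routine bookkeeping, and the only point that needs any care is the injectivity step above, where finite presentability of the generators (\ref{p.c1}) is crucial. Accordingly, the cleanest route is probably to sidestep the manual argument and proceed as follows: $\cX_\et$ is a coherent topos (\ref{s.stack}), the family $\{\Lambda_{\cU,\cX}\}_{\cU\in\Etqc(\cX)}$ is a generating family of coherent $\Lambda$-modules stable under finite direct sums, and by \ref{p.c}(c) constructible $\Lambda$-modules coincide with coherent ones; the statement then reduces to \cite[VI 1.23]{SGA4}, the reference already used for \ref{p.c1}.
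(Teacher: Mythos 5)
Your overall strategy --- realizing $\cF$ as the colimit of the comma category $\cI_\cF$ of constructible sheaves mapping to $\cF$, with filteredness from thickness, surjectivity from the generators $\Lambda_{\cU,\cX}$, and injectivity from \ref{p.c1} --- is sound, and it packages the argument differently from the paper, which instead writes down an explicit two-step presentation: an epimorphism $\bigoplus_{i\in I}\Lambda_{\cU_i,\cX}\to\cF$, surjections onto kernels, and then $\cF=\varinjlim\Coker(\cF_B\to\cF_A)$ over pairs of finite subsets $(A,B)$, each cokernel being constructible by \ref{p.c}(c). Both arguments rest on the same inputs (\ref{p.c} and, for you, \ref{p.c1}); yours is more canonical, while the paper's avoids having to verify filteredness of a comma category and exhibits the index set explicitly.

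There is, however, one step in your injectivity argument that fails as written: you pass from $(\cG,\iota)$ to $(\cG/\Ker\iota,\bar\iota)$ and assert that $\cG/\Ker\iota$ is ``constructible by thickness''. Thickness of $\Mod_c(\cX,\Lambda)$ only gives closure under kernels and cokernels of morphisms \emph{between constructible sheaves}; here $\Ker\iota=\Ker(\cG\to\cF)$ is the kernel of a map to an arbitrary sheaf and need not be constructible, and an arbitrary quotient of a constructible sheaf need not be constructible when $\cX$ is not Noetherian (closure under subobjects and quotients is only established in \ref{p.cN} under a Noetherian hypothesis; for instance $i_*\Lambda_Z$, for $Z$ a non-open point of the spectrum of an infinite product of fields, is a non-constructible quotient of the constant sheaf). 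The repair is immediate: since $\iota\psi=0$, the map $\iota$ factors through $\Coker(\psi)$, which \emph{is} constructible by thickness because $\psi\colon\Lambda_{\cU,\cX}\to\cG$ is a morphism between constructible sheaves, and the transition map $(\cG,\iota)\to(\Coker\psi,\bar\iota)$ kills $\psi$. Finally, be cautious with the proposed shortcut through \cite[VI 1.23]{SGA4}: that reference (already invoked for \ref{p.c1}) characterizes objects of finite presentation via $\Hom$ commuting with filtered colimits, whereas the assertion that every object is a filtered colimit of such objects is essentially what is being proved here, so citing it outright risks circularity unless you verify that the precise statement you need is contained in it.
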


\begin{proof}
Fix an epimorphism $\bigoplus_{i\in I}\cF_i\to \cF$, where $\cF_i\in
\Mod_c(\cX,\Lambda)$, $i\in I$. For every finite subset $A\subset I$, fix a
surjection $\bigoplus_{j\in J_A}\cG_j\to \Ker(\cF_A\to \cF)$, where
$\cG_j\in \Mod_c(\cX,\Lambda)$, $\cF_A=\bigoplus_{i\in A}\cF_i$. Then $\cF$
is the inductive limit of $\Coker(\cG_B\to \cF_A)$, where $(A,B)$ runs over
pairs of finite subsets $A\subset I$, $B\subset J_A$, $\cG_B=\bigoplus_{j\in
B}\cG_j$. The pairs are ordered by inclusion.
\end{proof}

\begin{Corollary}\label{p.c3}
  Let $\cX$ be a quasi-compact Deligne-Mumford stack. The functor
  \[\varinjlim\colon \Ind\Mod_c(\cX,\Lambda)\to \Mod(\cX,\Lambda)\]
  is an equivalence of categories.
\end{Corollary}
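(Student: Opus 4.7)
The plan is to deduce this from a general category-theoretic fact about ind-categories, using the two preceding corollaries as input. Specifically, for any small category $\cC$ that embeds fully faithfully into a cocomplete category $\cM$ such that (i) every object of $\cM$ is a filtrant colimit of objects of $\cC$, and (ii) every object of $\cC$ is compact in $\cM$ (i.e., $\Hom_\cM(F,-)$ commutes with filtrant colimits), the canonical functor $\varinjlim\colon \Ind\cC\to \cM$ is an equivalence.

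I would first verify essential surjectivity, which is immediate from \ref{p.c2}: any $\cF\in\Mod(\cX,\Lambda)$ is $\varinjlim \cF_i$ for some filtrant system $(\cF_i)_{i\in I}$ with $\cF_i\in\Mod_c(\cX,\Lambda)$, so $\cF$ is the image of the object $(\cF_i)_{i\in I}$ of $\Ind\Mod_c(\cX,\Lambda)$.

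For fully faithfulness, the computation is standard: given objects $(\cF_i)_{i\in I}$ and $(\cG_j)_{j\in J}$ of $\Ind\Mod_c(\cX,\Lambda)$, we have by definition
\[\Hom_{\Ind\Mod_c}\bigl((\cF_i)_i,(\cG_j)_j\bigr) = \varprojlim_i\varinjlim_j \Hom_{\Mod_c}(\cF_i,\cG_j).\]
Since $\Mod_c(\cX,\Lambda)$ is a full subcategory of $\Mod(\cX,\Lambda)$, the inner Hom is the same computed in $\Mod(\cX,\Lambda)$. Now \ref{p.c1} says that $\Hom_\cX(\cF_i,-)$ commutes with the filtrant colimit over $j$, so $\varinjlim_j\Hom_\cX(\cF_i,\cG_j)\simeq \Hom_\cX(\cF_i,\varinjlim_j \cG_j)$. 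Taking the limit over $i^{\op}$ and using that $\Hom_\cX(-,\varinjlim_j\cG_j)$ turns the filtrant colimit over $i$ into a limit yields $\Hom_\cX(\varinjlim_i\cF_i,\varinjlim_j\cG_j)$, which is exactly the Hom in $\Mod(\cX,\Lambda)$ between the images of the two ind-objects.

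There is essentially no obstacle here, since the formal ingredients (\ref{p.c1} and \ref{p.c2}) are already in hand; the only subtlety is a clean bookkeeping of the two equivalent realizations of morphisms in $\Ind\Mod_c$, which is standard (see, e.g., SGA 4 I 8.2.4 or \cite[6.3.5]{KSCat}). One could alternatively phrase the proof as invoking that general result directly, observing that \ref{p.c1} is the compactness hypothesis and \ref{p.c2} is the density hypothesis.
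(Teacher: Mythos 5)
Your proof is correct and follows exactly the route the paper takes: the paper's entire proof of \ref{p.c3} is the single sentence ``This follows from \ref{p.c1} and \ref{p.c2}'', with \ref{p.c1} supplying compactness of constructible sheaves and \ref{p.c2} supplying density. You have simply written out the standard verification that these two facts imply the equivalence $\Ind\Mod_c(\cX,\Lambda)\simeq\Mod(\cX,\Lambda)$, which is what the paper leaves implicit.
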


This follows from Corollaries \ref{p.c1} and \ref{p.c2}.

\begin{Construction}\label{s.God}
As in \cite[XVIII 3.1.2]{SGA4}, Corollary \ref{p.c3} allows us to construct
the modified Godement resolution. Let $\cX$ be a quasi-compact
Deligne-Mumford stack. For any sheaf $\cF$ of $\Lambda$-modules on $\cX$, we
let $\cC^\bullet(\cF)$ denote the Godement resolution of $\cF$ and we define
the modified Godement resolution to be
\[\cC_\ell^\bullet(\cF)=\varinjlim_{i\in I}\cC^\bullet(\cF_i),\]
where $(\cF_i)_{i\in I}$ is a system of constructible $\Lambda$-modules such
that $\cF\simeq \varinjlim_{i\in I}\cF_i$. This is a flabby resolution of
$\cF$, independent of the choice of $(\cF_i)_{i\in I}$ up to isomorphism.
For all $q$, the functor $\cC_\ell^q(\cF)$ is exact and commutes with small
inductive limits and \'etale localization.
\end{Construction}

\begin{Definition}\label{d.thick}
Following \cite[Definition 8.3.21 (iv)]{KSCat}, we say that an additive full
subcategory of an Abelian category $\cA$ is \emph{thick} if it is closed
under kernels, cokernels, and extensions. A \emph{Serre subcategory} of
$\cA$ is a thick subcategory of $\cA$ closed under sub-objects and
quotients. We say that a triangulated subcategory of a triangulated category
is \emph{thick} if it is stable under direct summand.
\end{Definition}

Thick subcategories of Abelian categories are called ``complete
subcategories'' and Serre subcategories are called ``thick subcategories''
in \cite[1.11]{Tohoku}. By \cite[Proposition 1.3]{Rickard}, the above
definition of thick subcategories of triangulated subcategories coincides
with \cite[C.D., D\'efinition I.2.1.1]{SGA4d}. If $\cB$ is a thick
subcategory of an Abelian category $\cA$, then the full subcategory
$D_\cB(\cA)$ of $D(\cA)$ spanned by complexes with cohomology in $\cB$ is a
thick subcategory.

In Propositions \ref{p.cN} through \ref{p.RHomft}, let $\Lambda$ be a
Noetherian ring. In this case, $\Mod_c(\cX,\Lambda)$ is a thick subcategory
of $\Mod(\cX,\Lambda) $. Let $D_c(\cX,\Lambda)$ be the triangulated
subcategory of $D(\cX,\Lambda)$ consisting of complexes with constructible
cohomology sheaves, which is a thick subcategory. We put
$\Dcft(\cX,\Lambda)=D_c(\cX,\Lambda)\cap D_{\ft}(\cX,\Lambda)$.

\begin{Proposition}\label{p.cN}
Let $\cX$ be a Noetherian Deligne-Mumford stack. A sheaf of
$\Lambda$-modules $\cF$ on $\cX$ is constructible if and only if it is
Noetherian.
\end{Proposition}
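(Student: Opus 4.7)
The plan is to prove the two implications separately, using the Noetherian hypothesis on $\cX$ to reduce the forward direction to the classical case of schemes via a Noetherian étale presentation, and to invoke Corollary \ref{p.c2} for the reverse direction.

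For the \emph{constructible implies Noetherian} direction, I would choose a quasi-compact étale presentation $\alpha\colon X\to \cX$ with $X$ a Noetherian $S$-scheme, available because $\cX$ is Noetherian. Then $\alpha^*\cF$ is constructible, and hence Noetherian as a sheaf on the Noetherian scheme $X$ by the classical result \cite[IX 2.9]{SGA4}. Given an ascending chain $\cF_1\subset \cF_2\subset \cdots$ of subsheaves of $\cF$, the pulled-back chain on $X$ stabilizes; since $\alpha$ is an étale covering, the functor $\alpha^*$ is faithful and reflects isomorphisms of subsheaves, so the original chain stabilizes as well.

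For the \emph{Noetherian implies constructible} direction, the Noetherian stack $\cX$ is quasi-compact, so Corollary \ref{p.c2} writes $\cF=\varinjlim_{i\in I}\cF_i$ as a filtrant inductive limit of constructible $\Lambda$-modules. Setting $\cG_i=\Img(\cF_i\to \cF)\subset \cF$, the family $(\cG_i)_{i\in I}$ is a directed family of subsheaves of $\cF$ whose union is $\cF$. The Noetherian property of $\cF$ forces this family to have a maximal element $\cG_{i_0}$, which must equal $\cF$. Hence $\cF$ is a quotient of the constructible sheaf $\cF_{i_0}$, and the thickness of $\Mod_c(\cX,\Lambda)$ recorded in \ref{s.constr} yields that $\cF$ is itself constructible.

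The only non-formal ingredient is the classical scheme statement that constructibility coincides with the Noetherian property on a Noetherian scheme; the stack case reduces to this by descent along an étale presentation together with the filtrant-colimit presentation supplied by \ref{p.c2}. No single step is a real obstacle.
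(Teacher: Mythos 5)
Your forward direction is exactly the paper's argument (pull back along a quasi-compact \'etale presentation, use \cite[IX 2.9]{SGA4}, descend by faithfulness of $\alpha^*$), and the first half of your reverse direction is sound: by \ref{p.c2} and the Noetherian hypothesis the directed family of images $\cG_i$ stabilizes, so $\cF$ is a quotient of a constructible sheaf. But the final step is a genuine gap. Thickness of $\Mod_c(\cX,\Lambda)$ means closure under kernels, cokernels and extensions of morphisms \emph{between constructible sheaves}; it does not give that an arbitrary quotient $\cF_{i_0}/\cK$ is constructible, because you have no control over the subsheaf $\cK=\Ker(\cF_{i_0}\to\cF)$. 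Indeed, the statement that constructible sheaves on a Noetherian stack are closed under subobjects and quotients is recorded in the paper as a \emph{consequence} of \ref{p.cN}, not an input to it, and for a general ring $\Lambda$ on a non-Noetherian base it is false. So as written your argument is circular at the last line.

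The repair is one more turn of the same crank, and it is what condition (b) of Proposition \ref{p.c} is designed to package. From the explicit form of the colimit in \ref{p.c2}, stabilization of the images actually produces an epimorphism $\Lambda_{\cV,\cX}\to\cF$ with $\cV\in\Etqc(\cX)$. Its kernel $\cG$ is a subsheaf of $\Lambda_{\cV,\cX}$, which is constructible and hence Noetherian by your forward direction; so $\cG$ is Noetherian, and the same stabilization argument applied to $\cG$ yields an epimorphism $\Lambda_{\cU,\cX}\to\cG$. This exhibits $\cF$ as $\Coker(\Lambda_{\cU,\cX}\to\Lambda_{\cV,\cX})$, which is constructible by \ref{p.c} (c)$\implies$(a). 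This is precisely the paper's proof: it verifies \ref{p.c} (b) by observing that the relevant sheaf $\cG$ (either $\cF$ or the kernel of $\Lambda_{\cV,\cX}\to\cF$) is Noetherian in both cases, and then invokes the equivalence (b)$\implies$(a).
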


It follows that $\Mod_c(\cX,\Lambda)$ is a Serre subcategory of
$\Mod(\cX,\Lambda)$ in this case.

\begin{proof}
Let $f\colon X\to \cX$ be a quasi-compact \'etale presentation where $X$ is
a scheme. If $\cF$ is constructible, then $f^*\cF$ is constructible, hence
Noetherian \cite[IX Proposition 2.9 (ii)]{SGA4}. It follows that $\cF$ is
Noetherian. If $\cF$ is Noetherian, then the sheaf $\cG$ in Proposition
\ref{p.c} (b) is Noetherian by assumption (if $\cG=\cF$) or by the above (if
$\cG=\Ker(\Lambda_{\cV,\cX}\to \cF)$), thus $\cF$ is constructible by
Proposition \ref{p.c} (b)$\implies$(a).
\end{proof}

The following is obvious.

\begin{Proposition}\label{p.tensor}
Let $\cX$ be a Deligne-Mumford stack. The functor
  \[-\otimes^L_\Lambda
      -\colon D(\cX,\Lambda)\times D(\cX,\Lambda) \to D(\cX,\Lambda)
      \]
  sends $D^-_c\times D^-_c$ to $D^-_c$.
\end{Proposition}

\begin{Proposition}\label{p.cproper}
Let $f\colon \cX\to \cY$ be a proper finitely-presented morphism of
Deligne-Mumford stacks. Assume $\Lambda$ is annihilated by an integer $m$.
then the functor
      $Rf_*\colon D(\cX,\Lambda)\to D(\cY,\Lambda)$ carries
      $D^+_c(\cX,\Lambda)$ to $D^+_c(\cY,\Lambda)$. If, moreover, $f$ has
      $m$-prime inertia, then $Rf_*$ carries $D_c(\cX,\Lambda)$ to
      $D_c(\cY,\Lambda)$.
\end{Proposition}

\begin{proof}
For the first assertion, we reduce by cohomological descent to the
    case of schemes, which is \cite[XIV Th\'eor\`eme 1.1]{SGA4}. The second assertion
    follows from the first one.
\end{proof}

Deligne's theorem on generic constructibility and generic base change
extends to Deligne-Mumford stacks by cohomological descent as follows. See
\cite[Proposition 2.12]{Quillen} for the case of Artin stacks.

\begin{Proposition}\label{p.gbc}
Let $\cS$ be a Noetherian Deligne-Mumford stack. Assume that $\Lambda$ is
annihilated by an integer $m$ invertible on $\cS$. Let $f\colon \cX\to \cY$
be a morphism between finite-type Deligne-Mumford $\cS$-stacks and let $M\in
D^+_c(\cX,\Lambda)$. For every integer $i$, there exists a dense open
substack $\cS^\circ$ of $\cS$ satisfying the following conditions
\begin{enumerate}
\item The restriction of $R^if_*M$ to $\cY\times_\cS \cS^{\circ}$ is
    constructible.

\item $R^if_* M $ is compatible with arbitrary base change $\cS'\to
    \cS^\circ\subset \cS$ of Deligne-Mumford stacks.
\end{enumerate}
\end{Proposition}

\begin{Proposition}
Let $S$ be a Noetherian scheme. Assume that $\Lambda$ is annihilated by an
integer $m$ invertible in $S$. let $f\colon \cX\to \cY$ be a separated
morphism of $m$-prime inertia between finite-type and finite-inertia
Deligne-Mumford $S$-stacks. Then $Rf_\dag$ carries $D^+_c(\cX,\Lambda)$ to
$D^+_c(\cY,\Lambda)$. If, moreover, $f$ has $m$-prime inertia, then $Rf_!$
carries $D_c(\cX,\Lambda)$ to $D_c(\cY,\Lambda)$.
\end{Proposition}

\begin{proof}
By construction we are reduced to two cases: (a) $f$ is an open immersion;
(b) $f$ is proper. Case (a) is clear while (b) is a special case of
Proposition \ref{p.cproper}.
\end{proof}

\begin{Proposition}\label{p.RHomft}
Let $\cX$ be a finite-dimensional Noetherian Deligne-Mumford stack. Assume
that $\Lambda$ is annihilated by an integer $m$ invertible on $\cX$. Then
  \[R\cHom_{\cX}\colon D(\cX,\Lambda)^\op\times D(\cX,\Lambda) \to D(\cX,\Lambda)\]
  carries $(\Dcft)^\op \times D_{\ft}$ to $D_{\ft}$.
\end{Proposition}

\begin{proof}
As in \cite[Th.\ finitude, Remarque 1.7]{SGA4d}, this follows from
Proposition \ref{p.cdim} \ref{p.cdim1}.
\end{proof}

In Propositions \ref{p.cX} and \ref{p.cf}, let $S$ be either a regular
scheme of dimension $\le 1$ or a quasi-excellent scheme, and let $\Lambda$
be a Noetherian ring annihilated by an integer $m$ invertible on $S$. Under
these assumptions, we can apply finiteness results of Deligne \cite[Th.\
finitude]{SGA4d} or Gabber \cite{ILOXIII}. In the second case, one reduces
to the case $\Lambda=\Z/m\Z$ using arguments similar to \cite[Th.\ finitude,
2.2]{SGA4d}.

\begin{Proposition}\label{p.cX}
  Let $\cX$ be a Deligne-Mumford $S$-stack of finite type. Then
  \[R\cHom_{\cX}\colon D(\cX,\Lambda)^\op\times D(\cX,\Lambda) \to D(\cX,\Lambda)\]
  carries $(D^-_c)^\op \times D^+_c$ to $D^+_c$.
\end{Proposition}

\begin{proof}
Take an \'etale presentation $X\to \cX$ with $X$ a separated finite type
$S$-scheme. It suffices to verify the assertions for $R\cHom_X$.  This is
\cite[Th.\ finitude, Corollaire 1.6]{SGA4d} for $S$ regular of dimension
$\le 1$. For $S$ quasi-excellent, the same proof allows us to deduce this
from \cite[1.1]{ILOXIII}.
\end{proof}

\begin{Proposition}\label{p.cf}
Let $f\colon\cX\to \cY$ be a morphism between finite-type Deligne-Mumford $S$-stacks.
\begin{enumerate}
\item \label{p.cf1} $Rf_*$ sends $D^+_c(\cX,\Lambda)$ to
    $D^+_c(\cY,\Lambda)$. If $f$ has $m$-prime inertia, then $Rf_*$ sends
    $D^b_c(\cX,\Lambda)$ to $\Dbc(\cY,\Lambda)$. If, moreover, $S$ is
    finite-dimensional, then $Rf_*$ sends $D_c(\cX,\Lambda)$ to
    $D_c(\cY,\Lambda)$.

\item \label{p.cf3} Assume that $f$ is a closed immersion (resp.\ $f$ is
    separated, and $\cX$ and $\cY$ are of finite inertia). Then
    $Rf^!\colon D(\cY,\Lambda)\to D(\cX,\Lambda)$ sends
    $D^+_c(\cY,\Lambda)$ to $D^+_c(\cX,\Lambda)$ and $\Dbc(\cY,\Lambda)$
    to $\Dbc(\cX,\Lambda)$. If, moreover, $S$ is finite-dimensional, then
    $Rf^!$ sends $D_c(\cY,\Lambda)$ to $D_c(\cX,\Lambda)$.
\end{enumerate}
\end{Proposition}

\begin{proof}
We will freely use Proposition \ref{p.cdim} to extend results to $D_c$.
\begin{itemize}
\item[\ref{p.cf1}] The case of schemes was proved by Deligne \cite[Th.\
    finitude, Remarque 1.3]{SGA4d} and Gabber \cite[1.1]{ILOXIII}. Up to
    replacing $\cY$ by an \'etale presentation and $\cX$ by the
    corresponding pullback, we may assume that $\cY$ is a scheme. For the
    first assertion, we then reduce by cohomological descent to the known
    case where $\cX$ is also a scheme. For the second assertion, as in the
    proof of Proposition \ref{p.cdim} \ref{p.cdim1}, we reduce first to
    the case where $\cX$ is separated, and then to the cases: (a) is an
    open immersion; (b) $f$ proper. Case (a) follows from the case of
    schemes and case (b) follows from Lemma \ref{p.cd}.

\item[\ref{p.cf3}] We easily reduce to two cases: (a) $f$ is a smooth
    morphism of schemes; (b) $f$ is a closed immersion. In case (a),
    $Rf^!\simeq f^*(d)[2d]$, where $d$ is the relative dimension of $f$,
    and the assertions are trivial. In case (b), the assertions follow
    from \ref{p.cf1} and the distinguished triangle \eqref{e.dist2}.
\end{itemize}
\end{proof}

In the rest of this section let $S$ be either (a) a regular scheme of
dimension $\le 1$ or (b) a finite-dimensional excellent scheme, endowed with
a dimension function $\delta_S$. Recall that every integral universally
catenary scheme admits $x\mapsto -\dim(\cO_{X,x})$ as a dimension function,
and the difference between two dimension functions on a scheme is a locally
constant function. To state results on cohomological dimension more
conveniently, we assume $\delta_S(s)\ge 0$ for every $s\in S$. Let $\Lambda$
be a Noetherian ring annihilated by an integer $m$ invertible on $S$.

\begin{Construction}\label{c.D}
In case (a), let $\Omega_S\in D^b_c(S,\Lambda)$ be the object such that
$\Omega_S\res T=\Lambda(d_T)[2d_T]$ for every connected component $T$ of
$S$, where $d_T=\max (\delta_S\res T)$. In case (b), let $\Omega_S\in
\Dcft(S,\Lambda)$ be the potential dualizing complex associated to
$(S,\delta_S)$ in the sense of \cite[D\'efinition 2.1.2]{ILOXVII}, which
exists and is unique up to isomorphism by \cite[Th\'eor\`emes 5.1.1,
6.1.1]{ILOXVII}\footnote{The definition \cite[D\'efinition 2.1.2]{ILOXVII},
existence and uniqueness \cite[Th\'eor\`eme 5.1.1]{ILOXVII} are stated for
$\Z/m\Z$, but can be extended to $\Lambda$, as observed by Jo\"el Riou
(private communication). If $K$ is a potential dualizing complex with
coefficients in $\Z/m\Z$, then $K\otimes^L_{\Z/m\Z} \Lambda$ is a potential
dualizing complex with coefficients in $\Lambda$ by Proposition \ref{p.cdim}
\ref{p.cdim2}.}. For any finite-type and separated morphism $a\colon X\to S$
of schemes, let $\Omega_X=Ra^! \Omega_S \in \Dcft(X,\Lambda)$. We define a
triangulated functor
\[D_X\colon D(X,\Lambda)^\op \to D(X,\Lambda)\]
by $D_X M = R\cHom_\Lambda(M,\Omega_X)$. We endow $X$ with the dimension
function $\delta_X(x)=\delta_S(f(x))+\trdeg(x/f(x))$ and we take $d_X=\max
\delta_X =\max_{s\in S} (\dim
 X_s +\delta_S(s))$.
\end{Construction}

\begin{Lemma}\label{p.Dscheme}\leavevmode
\begin{enumerate}
\item \label{p.Dscheme1}  The functor $D_X$ restricts to functors
    $(D^-_c)^\op \to D^+_c$ and $\Dcft^\op \to \Dcft$. Moreover, for $M\in
    \Dcft(X,\Lambda)$ of tor-amplitude contained in $[a,b]$, $D_X M$ has
    tor-amplitude contained in $[-b-2d_X,-a]$.

\item \label{p.Dscheme2} For $M\in \Dcft(X,\Lambda)$, the natural map
    $M\to D_XD_XM$ is an isomorphism.
\end{enumerate}
\end{Lemma}

In particular, $\Omega_X$ has tor-amplitude contained in $[-2d_X,0]$ and
$R\cHom(\Omega_X, \Omega_X) = D_X \Omega_X \simeq \Lambda$. The complex
$\Omega_X$ is a dualizing complex in the sense of \cite[D\'efinition
7.1.1]{ILOXVII} and, for $\Lambda=\Z/m\Z$, also in the sense of \cite[I
D\'efinition 1.7]{SGA5}.

\begin{proof}
\begin{itemize}
\item[\ref{p.Dscheme1}] It follows from Proposition \ref{p.cX} that $D_X$
    sends $(D^-_c)^\op$ to $D^+_c$. The bound can be obtained similarly to
    the proof of \cite[Th.\ finitude, Corollaire 1.6]{SGA4d}. In fact, we
    may assume $M=j_! L$, where $j\colon Y\to X$ is an immersion, $Y$ is
    regular and connected and $L\in \Dcft(Y,\Lambda)$ has tor-amplitude
    contained in $[a,b]$ and locally constant cohomological sheaves. Then
    $D_X j_! L\simeq Rj_* D_Y L$ by \eqref{e.jshradju} and
  \[(D_Y L)_{\bar y} = R\cHom_\Lambda(L_{\bar y}, \Lambda(d_Y)[2d_Y])\]
  for every geometric point $\bar{y}\to Y$. We then apply the fact that
  $Rj_*$ has cohomological dimension $\le \max\{2d_Y-1,0\}$ \cite[Theorem
  1.1]{ILOXVIIIA}.

\item[\ref{p.Dscheme2}] In case (a), this is \cite[Th.\ finitude,
    Th\'eor\`eme 4.3]{SGA4d}. In case (b), \cite[Proposition
    4.1.2]{ILOXVII} (either extended to $\Lambda$ or combined with
    Proposition \ref{p.cdim} \ref{p.cdim2}) shows that $\Omega_X$ is a
    potential dualizing complex for $(X,\delta_X)$. It then suffices to
    apply \cite[Th\'eor\`emes 6.1.1, 7.1.3]{ILOXVII}.
\end{itemize}
\end{proof}

\begin{Construction}\label{c.DX}
Let $\cX$ be a finite-type Deligne-Mumford $S$-stack. We apply the
Be{\u\i}linson-Bernstein-Deligne gluing theorem \cite[Th\'eor\`em
3.2.4]{BBD} to define a dualizing complex $\Omega_\cX$. For every \'etale
morphism $\alpha\colon X\to \cX$ with $X$ a separated finite-type
$S$-scheme, we associate $\Omega_X$. For any morphism $f\colon X\to Y$
between such morphisms, there is a canonical isomorphism $f^* \Omega_Y
\simto \Omega_X$. Since, for all~$X$, $\Omega_X$ belongs to
$D^{[-2d_\cX,0]}(X,\Lambda)$, where $d_\cX=\max_{s\in S}(\dim
\cX_s+\delta_S(s))$, there exists an object $\Omega_\cX\in
D^{[-2d_\cX,0]}(\cX,\Lambda)$, unique up to isomorphism, such that, for
every \'etale morphism $\alpha\colon X\to \cX$ with $X$ a separated
finite-type $S$-scheme, we have $\alpha^*\Omega_\cX \simeq \Omega_X$. It
follows that $\Omega_\cX$ belongs to $D_c$ and has tor-amplitude contained
in $[-2d_\cX,0]$. We define a triangulated functor
\[D_\cX\colon D(\cX,\Lambda)^\op \to D(\cX,\Lambda)\]
by $D_\cX M = R\cHom_\Lambda(M,\Omega_\cX)$.
\end{Construction}

\begin{Proposition}\label{p.D}\leavevmode
\begin{enumerate}
\item \label{p.D1} The functor $D_\cX$ restricts to functors $(D_c^-)^\op
    \to D_c^+$ and $\Dcft^\op\to \Dcft$. Moreover, for $M\in
    \Dcft(\cX,\Lambda)$ of tor-amplitude contained in $[a,b]$, $D_{\cX} M$
    has tor-amplitude contained in $[-b-2d_{\cX},-a]$.

\item \label{p.D2} For $M\in \Dcft(\cX, \Lambda)$, the natural map $M\to
    D_\cX D_\cX M$ is an isomorphism.
\end{enumerate}
\end{Proposition}

\begin{proof}
Take an \'etale presentation $X\to \cX$ with $X$ a separated finite
 type $S$-scheme. It suffices to verify the proposition for $X$, which
 is Lemma \ref{p.Dscheme}.
\end{proof}

\begin{Proposition}\label{p.GD}
Assume that $\Lambda$ is Gorenstein of dimension 0.
\begin{enumerate}
\item \label{p.GD1} The functor $D_\cX\colon (D_c^-)^\op \to D_c^+$ has
    cohomological amplitude contained in $[-2d_\cX, 0]$. In particular
    $D_\cX$ sends $(D^b_c)^\op$ to $D^b_c$.

\item \label{p.GD2} For $M\in \Dbc(\cX, \Lambda)$, the natural map $M\to
    D_\cX D_\cX M$ is an isomorphism.
\end{enumerate}
\end{Proposition}

The additional assumption on $\Lambda$ in Proposition \ref{p.GD} is
satisfied notably if $\Lambda=\cO/\fm^{n+1}$, where $\cO$ a discrete
valuation ring of and $\fm$ is the maximal ideal of~$\cO$.

\begin{proof}
It suffices to combine the proofs of Lemma \ref{p.Dscheme} and Proposition
\ref{p.D} with \cite[Th.\ finitude, 4.7]{SGA4d} and \cite[Th\'eor\`eme
7.1.2]{ILOXVII}.
\end{proof}

\begin{Remark}\label{s.fushr}
Let $f\colon \cX\to \cY$ be a separated morphism of $m$-prime inertia
between finite-type Deligne-Mumford $S$-stacks. Assume either $f$ is a
closed immersion, or $\cX$ and $\cY$ are of finite inertia. The functor
$Rf^!\colon D(\cY,\Lambda)\to D(\cX,\Lambda)$ as defined in Construction
\ref{d.dist} and Proposition \ref{p.shradj} preserves $\Dcft$. The
isomorphisms \eqref{e.dist4} and \eqref{e.shradj2} give an isomorphism
$D_\cX f^* \simto R f^! D_\cY $ of functors $D(\cY,\Lambda)\to
D(\cX,\Lambda)$. Applying biduality  (Proposition \ref{p.D} \ref{p.D2}), we
obtain an isomorphism of functors $\Dcft(\cY,\Lambda)\to
\Dcft(\cX,\Lambda)$:
\[R f^! \simeq R f^! D_\cY D_\cY \simeq D_\cX f^* D_\cX.\]
\end{Remark}

\begin{Construction}\label{d.fushriek}
Let $f\colon \cX\to\cY$ be a morphism between finite-type Deligne-Mumford
$S$-stacks. Thanks to Proposition \ref{p.D}, we can  define a triangulated
functor
\[Rf^!\colon \Dcft(\cY,\Lambda)\to \Dcft(\cX,\Lambda), \quad N \mapsto D_\cX f^* D_\cY N.\]
By Remark \ref{s.fushr}, this definition is compatible with Construction
\ref{d.dist} and Proposition \ref{p.shradj}. If $f\colon \cX\to \cY$ and
$g\colon \cY\to \cZ$ are two such morphisms, then, by biduality, we have an
isomorphism of functors $\Dcft(\cZ,\Lambda)\to \Dcft(\cX,\Lambda)$:
\[
R(gf)^! = D_\cX (gf)^* D_\cZ  \simeq D_\cX f^* g^* D_\cZ
 \simeq D_\cX f^* D_\cY D_\cY g^* D_\cZ  = Rf^! Rg^!.
\]

For $L,M\in \Dcft(\cY,\Lambda)$, we have
\begin{multline*}
R\cHom_\cX(f^*M,Rf^!L)\simeq R\cHom_\cX(f^*M,D_{\cX}D_{\cX}Rf^!L) \simeq D_\cX(f^*M\otimes^L_\Lambda D_\cX Rf^! L)\simeq D_\cX(f^*M \otimes^L_\Lambda f^*D_\cY L)\\
\simeq D_\cX f^*(M\otimes^L_\Lambda D_\cY L)
\simeq Rf^!D_\cY(M\otimes^L_\Lambda D_\cY L) \simeq Rf^!R\cHom_\cY(M,D_\cY D_\cY L)\simeq Rf^!R\cHom_\cY(M,L).
\end{multline*}

If $f$ is smooth, it follows from the construction of $\Omega_\cX$ and $\Omega_\cY$ that $\Omega_\cX\simeq f^*\Omega_\cY(d)[2d]$, where $d$ is the relative dimension of $f$. It follows that \eqref{e.Homsm} induces an isomorphism $f^*(D_\cY N)(d)[2d]\simeq D_\cX f^*N$ for all $N\in D(\cX,\Lambda)$. Thus
\[Rf^!=D_\cX f^* D_\cY \simeq f^*(d)[2d].\]
\end{Construction}

\begin{Remark}\label{s.flshr}
Let $f\colon\cX\to \cY$ be a morphism of $m$-prime inertia between
finite-type Deligne-Mumford $S$-stacks. Assume either $f$ is representable
and \'etale, or $f$ is separated and $\cX$ and $\cY$ are of finite inertia.
The functor $Rf_!\colon D(\cX, \Lambda) \to D(\cY,\Lambda)$ as defined in
Constructions \ref{d.j} and \ref{s.fshr} preserves $\Dcft$. The isomorphisms
\eqref{e.jshradju} and \eqref{e.shradj1} give an isomorphism $D_\cY Rf_!
\simto Rf_* D_\cX$ of functors $D(\cX,\Lambda)\to D(\cY,\Lambda)$. Applying
biduality (Proposition \ref{p.D} \ref{p.D2}), we obtain an isomorphism of
functors $\Dcft(\cX,\Lambda)\to \Dcft(\cY,\Lambda)$:
\[Rf_!\simeq D_\cY D_\cY Rf_! \simeq D_\cY Rf_* D_\cX.\]
\end{Remark}

\begin{Construction}\label{d.flshriek}
Let $f\colon \cX\to \cY$ be a morphism of $m$-prime inertia between
finite-type Deligne-Mumford $S$-stacks. Thanks to Proposition \ref{p.D}, we
can define a triangulated functor
\[Rf_!\colon \Dcft(\cX,\Lambda)\to \Dcft(\cY,\Lambda), \quad M \mapsto D_\cY Rf_* D_\cX M.\]
By Remark \ref{s.flshr}, this definition is compatible with Constructions
\ref{d.j} and \ref{s.fshr}. If $f\colon \cX\to \cY$ and $g\colon \cY\to \cZ$
are two such morphisms, then, by biduality, we have an isomorphism of
functors $\Dcft(\cX,\Lambda)\to \Dcft(\cY,\Lambda)$:
\[
  R(gf)_! = D_\cZ R(gf)_* D_\cX \simeq D_\cZ Rg_* Rf_* D_\cX
  \simeq D_\cZ Rg_* D_\cY D_\cY Rf_* D_\cX \simeq Rg_! Rf_!.
\]
\end{Construction}

\begin{Remark}
If $\Lambda$ is Gorenstein of dimension 0, then Remark \ref{s.fushr} through
Construction \ref{d.flshriek} hold with $\Dcft$ replaced by $\Dbc$.
\end{Remark}

\section{Construction of $D_c(\cX,\cO)$}\label{s.topos}
In this section, we develop an $\ell$-adic formalism for general topoi.
Previous work on $\ell$-adic formalism includes Ekedahl \cite{Ekedahl},
Behrend \cite[2.2]{Behrend} for general topoi, Deligne \cite[1.1.2]{WeilII}
for \'etale topoi of algebraic spaces, and Laszlo-Olsson \cite{LO2} for
lisse-\'etale topoi of Artin stacks. Our methods are closely related to
\cite{Ekedahl} and \cite{LO2}. We fix a ring $\cO$ and a principal ideal
$\fm=\lambda\cO$ generated by a non-zero-divisor $\lambda$ such that $\cO\to
\varprojlim_{n\in \N}\Lambda_n$ is an isomorphism. Here $\N$ is the ordered
set of nonnegative integers and $\Lambda_n=\cO/\fm^{n+1}$.

\begin{Construction}
Let $\cX$ be a topos. Consider the topos $\cX^\N$ of projective systems
$(M_\bullet = (M_n)_{n\in \N})$ of sheaves on $\cX$ and the ring
$\Lambda_\bullet=(\Lambda_n)_{n\in \N}\in \cX^\N$ whose transition maps
$\Lambda_{n+1}\to\Lambda_n$ are induced by the identity map on $\cO$.
Consider the morphism of topoi $(\pi_*,\pi^{-1})\colon \cX^\N\to \cX$ with
$\pi_*(F_\bullet)=\varprojlim_n F_n$ such that $\pi^{-1}G$ is constant of
value $G$. The projection map $\pi^{-1}\cO\to \Lambda_\bullet$ then induces
a morphism of ringed topoi $\pi=(\pi_*, \pi^*)\colon
(\cX^\N,\Lambda_\bullet) \to (\cX,\cO)$, with
$\pi^*G=(\Lambda_n\otimes_{\cO} G)_{n\in \N}$.

For all $n$, let $(e_{n*},e_n^{-1})\colon \cX \to \cX^\N$ be the morphism of
topoi defined by $e_n^{-1}(G_\bullet) = G_n$, $(e_{n*} F)_q=F$ for $q\ge n$,
and $(e_{n*}F)_q=\{*\}$ for $q<n$. The functor $e_n^{-1}$ admits a left
adjoint $e_{n!}$ given by $(e_{n!}F)_q=F$ for $q\le n$ and
$(e_{n!}F)_q=\emptyset$ for $q>n$. The morphism of topoi $(e_{n*},e_n^{-1})$
induces a flat morphism of ringed topoi $e_n=(e_{n*},e_n^{-1})\colon (\cX,
\Lambda_n)\to (\cX^\N,\Lambda_\bullet)$. Note that $e_{n*}\colon
\Mod(\cX,\Lambda_n)\to \Mod(\cX^\N,\Lambda_\bullet)$ is exact and
$e_n^{-1}e_{n*}=\one$. For $M\in D(\cX^\N, \Lambda_\bullet)$, we denote
$e_n^{-1}M$ by $M_n$. The functor $e_n^{-1}\colon
\Mod(\cX^\N,\Lambda_\bullet)\to \Mod(\cX,\Lambda_n)$ admits a left adjoint,
still denoted by $e_{n!}$, given by
$(e_{n!}F)_q=\Lambda_q\otimes_{\Lambda_n} F$ for $q\le n$ and
$(e_{n!}F)_q=0$ for $q>n$. We have adjoint pairs of functors
$(Le_{n!},e_n^{-1})$ and $(e_n^{-1},e_{n*})$ between $D(\cX,\Lambda_n)$ and
$D(\cX^\N,\Lambda_\bullet)$ (see \cite[Theorem 14.4.5]{KSCat}). The
composition $\pi e_n\colon (\cX,\Lambda_n) \to (\cX,\cO)$ is the ring change
morphism and for brevity we will sometimes omit $(\pi e_n)_*$ from the
notation.
\end{Construction}

\begin{Definition}\label{s.null}
We say that $M\in\Mod(\cX^\N,\pi^{-1}\cO)$ is \emph{strict} if $M_{n+1}\to
M_n$ is an epimorphism for every $n\ge 0$. We say that
$M\in\Mod(\cX^\N,\pi^{-1}\cO)$ is \emph{essentially zero} if for every $n\ge
0$, there exists $r\ge 0$ such that the transition map $M_{n+r}\to M_n$ is
zero.  We say that $M$ is \emph{AR-null} \cite[V D\'efinition 2.2.1]{SGA5}
if, moreover, $r$ can be chosen independently of $n$. This applies to the
full subcategory $\Mod(\cX^\N,\Lambda_\bullet)\subset
\Mod(\cX^\N,\pi^{-1}\cO)$. The full subcategories
\[\cN \subset \Mod(\cX^\N,\Lambda_\bullet), \quad \cN'\subset \Mod(\cX^\N,\pi^{-1}\cO)\]
spanned by AR-null modules are Serre subcategories (Definition
\ref{d.thick}). We say that $M\in \Mod(\cX^\N,\Lambda_\bullet)$ is
\emph{preadic} (``$\fm$-adic'' in the terminology of \cite[V D\'efinition
3.1.1]{SGA5}) if, for all $n$, the natural map $\Lambda_n
\otimes_{\Lambda_{n+1}} M_{n+1} \to M_n$ is an isomorphism; $M\in
\Mod(\cX^\N,\Lambda_\bullet)$ is \emph{AR-preadic} (``AR-$\fm$-adic'' in the
terminology of \cite[V D\'efinition 3.2.2]{SGA5}) if there exists a preadic
module $N\in \Mod(\cX^\N,\Lambda_\bullet)$ and a homomorphism $N\to M$ with
AR-null kernel and cokernel. We let $\Mod_{pa}(\cX^\N,\Lambda_\bullet)$
(resp.\ $\Mod_{AR\text{-}pa}(\cX^\N,\Lambda_\bullet)$) denote the full
subcategory of $\Mod(\cX^\N,\Lambda_\bullet)$ spanned by the preadic (resp.\
AR-preadic) objects.
\end{Definition}

\begin{Notation}
Let $M\in \Mod(\cX^\N,\pi^{-1}\cO)$. For any integer $r$, we let $L(r)$
denote the translation of $L$ given by $L(r)_n=L_{r+n}$ for $r+n\ge 0$ and
$L(r)_n=0$ for $r+n<0$. For $r\ge 0$ (resp.\ $r\le 0$), the transition map
$L(r)\to L$ (resp.\ $L\to L(r)$) has AR-null kernel and cokernel. We have
$L(r)(r')=L(r+r')$ if $r\le0$ or $r'\ge 0$.
\end{Notation}

We refer the reader to \cite[1.11]{Tohoku} for the construction of quotient
categories of Abelian categories by Serre subcategories and to
\cite[Chap.~I]{GZ} for more general categories of fractions.

\begin{Lemma}\label{p.faith}
The functor
  \[\Mod(\cX^\N,\Lambda_\bullet)/\cN\to \Mod(\cX^\N,\pi^{-1}\cO)/\cN'\]
induced by the inclusion functor is fully faithful.
\end{Lemma}

\begin{proof}
Let $L,M\in \Mod(\cX^\N,\Lambda_\bullet)$. By \cite[V Proposition 2.4.4
(iv)]{SGA5}, we have
  \[\Hom_{\Mod(\cX^\N,\pi^{-1}\cO)/\cN'}(L,M) \simeq \varinjlim_{r\in \N} \Hom_{\Mod(\cX^\N,\pi^{-1}\cO)}(L(r),M).\]
For a morphism $a\colon L(r)\to M$, $a(-r)\colon L(r)(-r)\to M(-r)$ is a morphism of $\Mod(\cX^\N,\Lambda_\bullet)$. This defines an inverse of the map
\[\Hom_{\Mod(\cX^\N,\Lambda_\bullet)/\cN}(L,M) \to \Hom_{\Mod(\cX^\N,\pi^{-1}\cO)/\cN'}(L,M).\]
\end{proof}

\begin{Lemma}\label{p.pread}
Let $L,M\in \Mod(\cX^\N,\Lambda_\bullet)$, $L$ preadic. Then the localization map
\[\Hom_{\Mod(\cX^\N,\Lambda_\bullet)}(L,M)\to \Hom_{\Mod(\cX^\N,\Lambda_\bullet)/\cN}(L,M)\]
is an isomorphism.
\end{Lemma}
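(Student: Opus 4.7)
The plan is to compute $\Hom$ in the Serre quotient $\cA/\cN$, with $\cA = \Mod(\cX^\N,\Lambda_\bullet)$, using its standard description
\[\Hom_{\cA/\cN}(L,M) = \varinjlim_{(L',N')} \Hom_{\cA}(L',M/N'),\]
where $(L',N')$ runs over subobjects $L' \subset L$ with $L/L' \in \cN$ and $N' \subset M$ with $N' \in \cN$. The localization map is the canonical map from the term at the initial pair $(L,0)$.

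First I would show that when $L$ is preadic, the colimit collapses to $\varinjlim_{N'} \Hom_{\cA}(L, M/N')$. Preadicness identifies the transition $L_{n+1} \to L_n$ with the surjection $L_{n+1} \twoheadrightarrow L_{n+1}/\lambda^{n+1}L_{n+1}$, so by iteration $L_{n+r} \to L_n$ is surjective with kernel $\lambda^{n+1}L_{n+r}$. If $L/L' \in \cN$ has AR-parameter $r$, the induced map $(L/L')_{n+r} \to (L/L')_n$ vanishes, so the image of $L_{n+r}$ in $L_n$ lies in $L'_n$; but this image is all of $L_n$, whence $L' = L$. The same principle gives injectivity: if $f \colon L \to M$ has image in some $N' \in \cN$, then $\Img(f) \in \cN$ (closure under subobjects) is a quotient of the preadic $L$, and the argument just given applied to $L \twoheadrightarrow \Img(f)$ forces $\Img(f) = 0$, hence $f = 0$.

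The crucial step is surjectivity, where I will construct an explicit lift of any $g \colon L \to M/N'$ without enlarging $N'$. Let $r$ be an AR-parameter for $N'$. The composite $N'_{n+r} \hookrightarrow M_{n+r} \to M_n$ factors through $N'_{n+r} \to N'_n$, which is zero, so the transition $M_{n+r} \to M_n$ kills $N'_{n+r}$ and descends to a unique $\Lambda_{n+r}$-linear $\phi_n \colon M_{n+r}/N'_{n+r} \to M_n$. Define $\tilde g_n := \phi_n \circ g_{n+r} \colon L_{n+r} \to M_n$. Because $\tilde g_n$ is $\Lambda_{n+r}$-linear and $\lambda^{n+1}M_n = 0$, it annihilates $\lambda^{n+1} L_{n+r}$, so descends via the preadic identification $L_n = L_{n+r}/\lambda^{n+1}L_{n+r}$ to $\tilde g'_n \colon L_n \to M_n$. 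I would then check that $(\tilde g'_n)_n$ is compatible with transitions (this reduces to $t_M \circ \phi_{n+1} = \phi_n \circ t_{M/N'}$, immediate from the uniqueness of $\phi_n$) and that its reduction modulo $N'$ is $g$ (since $\phi_n \bmod N'_n$ is, by construction, the transition map of $M/N'$).

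The main obstacle is the surjectivity construction. A naive level-wise lift would require the vanishing of $\Ext^1(L_n, N'_n)$, which fails in general; the construction sidesteps this by exploiting the simultaneous effects of the AR-null condition (providing the factorization $\phi_n$) and the preadic condition (providing the identification $L_n = L_{n+r}/\lambda^{n+1}L_{n+r}$), and the shift by $r$ is precisely what makes these two inputs mesh.
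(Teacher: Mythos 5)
Your proof is correct. The paper reaches the same key computation by a different route: it first observes that the functor $\Mod(\cX^\N,\Lambda_\bullet)/\cN\to \Mod(\cX^\N,\pi^{-1}\cO)/\cN'$ is faithful, where $\cN'$ is the AR-null subcategory of projective systems of $\pi^{-1}\cO$-modules, and in that larger quotient it invokes SGA5 V 2.4.4 (iv), which gives the closed formula $\Hom_{/\cN'}(L,M)\simeq\varinjlim_{r}\Hom(L(r),M)$ with $L(r)_n=L_{n+r}$ the translation; it then shows each map $\Hom(L,M)\to\Hom(L(r),M)$ is bijective because preadicness makes $L_{n+r}\to L_n$ surjective with kernel $\lambda^{n+1}L_{n+r}$ while $M_n$ is killed by $\lambda^{n+1}$. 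Your surjectivity step --- descending $\phi_n\circ g_{n+r}$ through the identification $L_n=L_{n+r}/\lambda^{n+1}L_{n+r}$ --- is exactly that bijection made explicit, and your collapse of the subobject variable $L'$ plays the role of the paper's translation-indexed colimit. What your version buys is self-containedness: you work directly with the Gabriel description of $\Hom$ in the Serre quotient (legitimate, since the paper has already noted that $\cN$ is thick and closed under subobjects and quotients), avoiding both the detour through $\pi^{-1}\cO$-modules (needed in the paper because SGA5's AR-formalism is set up over a constant ring) and the faithfulness argument used to descend back. What the paper's route buys is brevity, since the citation packages all of the $(L',N')$ bookkeeping and the compatibility checks into one known statement.
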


It follows that $M\in \Mod(\cX^\N,\Lambda_\bullet)$ is AR-preadic if and
only if there exists a preadic module $L\in \Mod(\cX^\N,\Lambda_\bullet)$,
isomorphic to $M$ in $\Mod(\cX^\N,\Lambda_\bullet)/\cN$. Moreover, the
functor $\Mod_{pa}(\cX^\N,\Lambda_\bullet)\to
\Mod(\cX^\N,\Lambda_\bullet)/\cN$ is fully faithful.

\begin{proof}
By Lemma \ref{p.faith}, it suffices to show that the localization map
  \[\Hom_{\Mod(\cX^\N,\Lambda_\bullet)}(L,M)\to \Hom_{\Mod(\cX^\N,\pi^{-1}\cO)/\cN'}(L,M)\]
  is an isomorphism. By \cite[V Proposition 2.4.4 (iv)]{SGA5}, we have
  \[\Hom_{\Mod(\cX^\N,\pi^{-1}\cO)/\cN'}(L,M) \simeq \varinjlim_{r\in \N} \Hom_{\Mod(\cX^\N,\pi^{-1}\cO)}(L(r),M).\]
Since $L$ is preadic and $M$ is a $\Lambda_\bullet$-module, the map
  \[\Hom_{\Mod(\cX^\N,\pi^{-1}\cO)}(L,M)\to \Hom_{\Mod(\cX^\N,\pi^{-1}\cO)}(L(r),M)\]
  induced by the transition map $L(r)\to L$ is an isomorphism for every $r$. In fact, for $n\ge m$, \[\Hom_\cO(L_n,M_m)\simeq \Hom_\cO(\Lambda_n\otimes_{\Lambda_{n+r}}L_{n+r},M_m)\simeq \Hom_\cO(L_{n+r},M_m).\]
\end{proof}

\begin{Definition}\label{s.DN}
We say that $M\in D(\cX^\N, \Lambda_\bullet)$ is \emph{essentially zero}
(resp.\ \emph{AR-null}) if $\fH^i M$ is essentially zero (resp.\ AR-null)
for all $i$. Let $D_\cN$ be the full subcategory of $D(\cX^\N,
\Lambda_\bullet)$ consisting of AR-null complexes, which is a thick
triangulated subcategory (Definition \ref{d.thick}).
\end{Definition}

\begin{Lemma}\label{p.Lpi}\leavevmode
\begin{enumerate}
\item \label{p.Lpi1} For $M\in \Mod(\cX,\cO)$, $\pi^* M$ is preadic,
    $\fH^{-1} L\pi^* M$ is essentially zero, and $\fH^{-q}L\pi^*M=0$ for
    $q>1$.

\item \label{p.Lpi2} For $N\in D(\cX,\Lambda_n)$, the adjunction map
    $L\pi^* (\pi e_n)_* N \to e_{n*}N$ has AR-null cone. In particular,
    for $N\in \Mod(\cX,\Lambda_n)$, $\fH^{-1}L\pi^*(\pi e_n)_* N$ is
    AR-null.
\end{enumerate}
\end{Lemma}

By \ref{p.Lpi1}, for $M\in D(\cX,\cO)$, the distinguished triangle
\[L\pi^* \tau^{\le q} M \to L\pi^* M \to L\pi^* \tau^{\ge q+1} M \to\]
induces a short exact sequence
\begin{equation}\label{e.Lpi}
0\to \pi^* \fH^q M \to \fH^q L\pi^* M \to \fH^{-1}L\pi^* \fH^{q+1} M \to 0,
\end{equation}
where $\pi^* \fH^q M$ is preadic and $\fH^{-1}L\pi^* \fH^{q+1} M$ is
essentially zero.

\begin{proof}
\begin{itemize}
  \item[\ref{p.Lpi1}] The first assertion follows from the isomorphism
      $\Lambda_n\otimes_{\Lambda_{n+1}}\Lambda_{n+1}\otimes_{\cO} M \simeq
      \Lambda_n\otimes_{\cO} M$. The last assertion holds because
      $e_n^{-1} \fH^{-q} L\pi^*M \simeq\cTor_q^\cO(\Lambda_n,M)=0$ for
      $q>1$. Consider the short exact sequence of $\pi^{-1}\cO$-modules
      $0\to F\to \pi^{-1}\cO\to \Lambda_\bullet\to 0$, where $F=(\cO)_{m
      \in\N}$ has transition maps $F_{m+1}\to F_{m}$ given by
      $\cO\xrightarrow{\times \lambda} \cO$, and $F_n\to (\pi^{-1}\cO)_n$
      is given by $\cO\xrightarrow{\times \lambda^{n+1}} \cO$. This
      sequence gives a $\pi^{-1}\cO$-flat resolution of $\Lambda_\bullet$.
      So $\fH^{-1}L\pi^*M$ is a sub-$\pi^{-1}\cO$-module of
      $F\otimes_{\pi^{-1}\cO} \pi^{-1} M$, hence is essentially zero.

  \item[\ref{p.Lpi2}] For $N\in \Mod(\cX,\Lambda_n)$, $\fH^{-1}L\pi^*N$ is
      a sub-$\pi^{-1}\cO$-module of $F\otimes_{\pi^{-1}\cO} \pi^{-1} N$,
      hence is AR-null. For the first assertion, by (a) we may assume
      $N\in \Mod(\cX,\Lambda_n)$. In this case $\pi^*N \to e_{n*}N$ is an
      epimorphism with AR-null kernel.
\end{itemize}
\end{proof}

The following is a variant of \cite[Lemma 1.4]{Ekedahl}.

\begin{Lemma}\label{p.null}
Let $M\in D^-(\cX^\N,\Lambda_\bullet)$ be AR-null, and let $N\in
D(\cX^\N,\Lambda_\bullet)$. Assume either $M\in D^b$ or $N\in D^-$. Then
$M\otimes^L_{\Lambda_\bullet} N$ is AR-null.
\end{Lemma}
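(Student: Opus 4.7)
The plan is a three-step reduction. \emph{First}, I will establish the baseline: if $A$ is a single $\Lambda_\bullet$-module that is AR-null with parameter $r$, i.e., the transition $A_{n+r}\to A_n$ vanishes for every~$n$, and $N\in D(\cX^\N,\Lambda_\bullet)$ is arbitrary, then each $\fH^i(A\otimes^L_{\Lambda_\bullet} N)$ is AR-null with parameter~$r$. Indeed, choosing a K-flat resolution $P\to N$, the complex $A\otimes_{\Lambda_\bullet} P$ represents $A\otimes^L_{\Lambda_\bullet} N$, and its transition from level $n+r$ to level~$n$ is in each degree induced by that of $A$ (tensored with the transition of $P$), hence vanishes termwise and a fortiori on cohomology.

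\emph{Second}, I will handle the case $M\in D^b$. Boundedness of $M$, combined with AR-nullness of each $\fH^q M$, supplies a uniform parameter $r$ good for all $q$. I proceed by induction on the amplitude of $M$ using the truncation triangles $\tau^{<q}M\to\tau^{\le q}M\to \fH^q M[-q]\to$. Tensoring with $N$ and reading the long exact sequence, $\fH^i(M\otimes^L_{\Lambda_\bullet} N)$ fits in a short exact sequence whose two ends are subquotients of modules of the form $\fH^p(\fH^q M\otimes^L_{\Lambda_\bullet} N)$, each AR-null with parameter $r$ by the first step. An extension of two AR-null modules with parameter~$r$ is AR-null with parameter~$2r$ (direct diagram chase in the defining short exact sequence), so the finite filtration coming from $M\in D^b$ yields AR-nullness with a parameter depending only on $r$ and the amplitude of~$M$.

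\emph{Third}, for $M\in D^-$ AR-null and $N\in D^-$, fix $i$ and write $M\in D^{\le a}$, $N\in D^{\le b}$. For any $s\ge b-i$, tensoring the truncation triangle $\tau^{<-s}M\to M\to \tau^{\ge -s}M\to$ with $N$ gives a distinguished triangle whose first term lies in $D^{\le b-s-1}\subset D^{<i}$. The long exact sequence then forces $\fH^i(M\otimes^L_{\Lambda_\bullet} N)\simeq \fH^i(\tau^{\ge -s}M\otimes^L_{\Lambda_\bullet} N)$, and since $\tau^{\ge -s}M\in D^b$ is AR-null, the second step applies.

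The only point of care is the bookkeeping of AR-null parameters: it is preserved in step one, controlled by the amplitude of $M$ in step two, and irrelevant in step three since only qualitative AR-nullness at each $i$ is required. The underlying technical ingredient is the existence of K-flat resolutions in the topos $\cX^\N$ and their compatibility with componentwise tensor products; beyond this I do not foresee an obstacle, and the boundedness hypothesis in either variable is used precisely to prevent an infinite iterated extension of AR-null quotients in steps two and three.
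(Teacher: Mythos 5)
Your proof is correct and follows essentially the same route as the paper's: reduce by truncation/dévissage to the case where $M$ is a single AR-null module (with a uniform parameter $r$), then resolve $N$ by a K-flat complex so that the derived tensor product is computed termwise and inherits the vanishing of the transition maps from $M$. Your explicit bookkeeping of the AR-null parameter through extensions and the boundedness hypotheses just makes precise what the paper's one-line argument leaves implicit.
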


\begin{proof}
We may assume $M\in \Mod(\cX^\N,\Lambda_\bullet)$. It then suffices to take
a quasi-isomorphism $N'\to N$, where $N'$ belongs to the smallest
triangulated subcategory of $K(\Mod(\cX^\N,\Lambda_\bullet))$ stable under
small direct sums and containing $K^-(\cP)$, where $\cP$ is the full
subcategory of $\Mod(\cX^\N,\Lambda_\bullet)$ spanned by flat modules.
\end{proof}

\begin{Lemma}\label{p.enpf}
  Suppose that $\cX$ has enough points. Let $M\in D(\cX,\Lambda_n)$ and $N\in D(\cX,\cO)$. Then the projection formula map
  \[e_{n*}M\otimes^L_{\Lambda_\bullet} L\pi^* N \to e_{n*}(M\otimes^L_{\Lambda_n} e_n^{-1}L\pi^* N)\]
  is an isomorphism.
\end{Lemma}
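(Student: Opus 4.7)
The plan is to check the map on stalks. Since $\cX$ has enough points, the topos $\cX^\N$ has enough points as well: for each point $x$ of $\cX$ and each $q\in\N$, the functor $F_\bullet\mapsto (F_q)_x$ defines a point of $\cX^\N$, and the family of such points is conservative. Hence it suffices to verify that the projection formula map $\alpha$ induces an isomorphism at every stalk $(x,q)$.

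Next I compute these stalks termwise. By the explicit description of $e_{n*}$, the component $(e_{n*}M)_q$ is $M$ (as a $\Lambda_q$-module via the surjection $\Lambda_q\twoheadrightarrow\Lambda_n$) for $q\ge n$, and $0$ for $q<n$; and by the definition of $L\pi^*$, the $q$-th component of $L\pi^*N$ is $\Lambda_q\otimes^L_\cO N$. For $q<n$ both sides of the claimed isomorphism have vanishing stalks at $(x,q)$ and there is nothing to prove. For $q\ge n$ the stalk of the left-hand side is
\[M_x\otimes^L_{\Lambda_q}(\Lambda_q\otimes^L_\cO N_x)\simeq M_x\otimes^L_\cO N_x,\]
while the stalk of the right-hand side is
\[M_x\otimes^L_{\Lambda_n}(\Lambda_n\otimes^L_\cO N_x)\simeq M_x\otimes^L_\cO N_x,\]
both via associativity of tensor product and the fact that $M$ is a $\Lambda_n$-module.

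It remains to check that the stalk of $\alpha$ itself realises this canonical identification. For this I would fix an $\cO$-flat resolution $P^\bullet\to N$, so that $L\pi^*N$ is represented by $G^\bullet=\pi^*P^\bullet$, a complex of $\Lambda_\bullet$-flat sheaves whose $q$-th component is $\Lambda_q\otimes_\cO P^\bullet$. On this representative the projection formula map becomes an honest map of complexes, whose $q$-th component (for $q\ge n$) is
\[M\otimes_{\Lambda_q}(\Lambda_q\otimes_\cO P^\bullet)\to M\otimes_{\Lambda_n}(\Lambda_n\otimes_\cO P^\bullet),\]
induced by the transition $\Lambda_q\twoheadrightarrow\Lambda_n$ in $\Lambda_\bullet$ and the resulting transition map in $G^\bullet$. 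Passing to the stalk at $x$ and applying the two associativity identifications, this collapses to the identity on $M_x\otimes_\cO P^\bullet_x$. The main obstacle is this last bookkeeping step, verifying that the abstract projection formula map coincides with the canonical identification under the stalk isomorphisms; once an $\cO$-flat resolution of $N$ has been fixed the computation is formal.
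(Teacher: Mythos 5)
Your proof is correct. The first step coincides with the paper's: the paper reduces to the punctual topos, which is exactly your reduction to the conservative family of points $F_\bullet\mapsto (F_q)_x$ of $\cX^\N$ (using that $e_q^{-1}$ and $x^*$ are inverse-image functors commuting with $\otimes^L$, $L\pi^*$ and the explicitly given $e_{n*}$). Where you diverge is in the finishing move: the paper invokes the d\'evissage of \ref{p.pbc}~(ii), i.e.\ it uses that $e_{n*}$ commutes with small direct sums and filtrant colimits to reduce to $M$ a single module and $N$ a finite free $\cO$-module, where $L\pi^*N\simeq\Lambda_\bullet^r$ and the statement is trivial; you instead represent $L\pi^*N$ by $\pi^*P^\bullet$ for a flat resolution $P^\bullet\to N$ and identify the projection-formula map at level $q\ge n$ with the transition map $M\otimes_{\Lambda_q}(\Lambda_q\otimes_\cO P^\bullet)\to M\otimes_{\Lambda_n}(\Lambda_n\otimes_\cO P^\bullet)$, which collapses to the identity on $M\otimes_\cO P^\bullet$ under the canonical associativity isomorphisms. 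Your route is more explicit and avoids the colimit d\'evissage altogether, at the cost of the resolution bookkeeping you acknowledge; the paper's route is shorter because it recycles an argument already written. One small point to tighten: since $N\in D(\cX,\cO)$ is unbounded, ``$\cO$-flat resolution'' should be a K-flat resolution with flat terms (a complex in $\tilde\cP_{\cX}$ in the paper's notation); its image under $\pi^*$ again has flat terms and is K-flat, so all the derived tensor products in your computation are indeed computed termwise. With that precision the argument is complete.
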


\begin{proof}
We may assume that $\cX$ is the punctual topos. Since $e_{n*}\colon
\Mod(\cX,\Lambda_n)\to \Mod(\cX^\N,\Lambda_\bullet)$ commutes with small
direct limits, we may repeat the arguments in the proof of Proposition
\ref{p.pbc} \ref{p.pbc2}.
\end{proof}

Let $f\colon \cX\to \cY$ be a morphism of topoi. It induces a flat morphism
of ringed topoi
\[(f^\N_*, f^{\N*})\colon (\cX^\N,\Lambda_\bullet) \to (\cY^\N,\Lambda_\bullet).\]
The base change map $Le_{n!} f^* \Rightarrow f^{\N*}Le_{n!}$ is a natural
isomorphism of functors $D(\cY,\Lambda_n)\to D(\cX^\N,\Lambda_\bullet)$. By
adjunction, we have the following.

\begin{Lemma}\label{p.en}
  For all $M\in D(\cX^\N,\Lambda_\bullet)$, the base change map
  \begin{equation}\label{e.en}
  e_n^{-1} Rf_*^\N M \to Rf_* e_n^{-1} M
  \end{equation}
  is an isomorphism.
\end{Lemma}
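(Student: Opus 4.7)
The plan is to derive the lemma directly from the preceding natural equivalence $Le_{n!}f^*\simeq f^{\N*}Le_{n!}$ via the formal adjunction machinery of \ref{s.GD}. The key observation is that $e_n^{-1}$ is simultaneously the right adjoint of $e_{n!}$ and the left adjoint of $e_{n*}$, and that the morphism of ringed toposes in question is flat, so $e_n^{-1}$ does not need to be derived. The map \eqref{e.en} is, by construction, the mate of the given natural equivalence under the adjunctions $(Le_{n!},e_n^{-1})$ on both sides of $f$, combined with the adjunctions $(f^*,Rf_*)$ and $(f^{\N*},Rf_*^\N)$.

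Concretely, I would check that \eqref{e.en} induces a bijection on $\Hom_{D(\cX,\Lambda_n)}(K,-)$ for every $K\in D(\cY,\Lambda_n)$ (in fact it suffices to test against $K=Le_{n!}$ of compact generators, but no such reduction is needed here). The verification is a chain of natural isomorphisms
\[
\Hom(K,Rf_* e_n^{-1}M)\simeq \Hom(f^*K,e_n^{-1}M)\simeq \Hom(Le_{n!}f^*K,M)
\]
\[
\simeq \Hom(f^{\N*}Le_{n!}K,M)\simeq \Hom(Le_{n!}K,Rf_*^\N M)\simeq \Hom(K,e_n^{-1}Rf_*^\N M),
\]
where the middle isomorphism uses precisely the natural equivalence stated before the lemma. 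Composing these identifies the Hom-side of \eqref{e.en} with an isomorphism, so \eqref{e.en} is itself an isomorphism by Yoneda.

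There is no real obstacle here: the only points that need brief mention are that $e_n^{-1}$ is exact (it is evaluation at level $n$), so it agrees with its left and right derived functors, and that the mate construction in \ref{s.GD} produces base change isomorphisms from base change isomorphisms on the adjoint side. Both are already handled by the abstract framework, which is why the statement is presented as a direct corollary of the preceding sentence.
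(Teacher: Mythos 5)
Your proof is correct and is essentially the paper's own argument: the paper dispatches the lemma with the single phrase ``By adjunction (\ref{s.GD})'' applied to the natural equivalence $Le_{n!}f^*\Rightarrow f^{\N*}Le_{n!}$, and your Hom-set chain is exactly the unwinding of the mate correspondence that \ref{s.GD} formalizes (in particular, \ref{s.GD} is what guarantees that your composite of adjunction isomorphisms is induced by the base change map \eqref{e.en} as defined in \eqref{e.bc}, so the Yoneda step applies to the right morphism).
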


The case $M\in D^+$ is a particular case of \cite[V$^\text{bis}$
1.3.12]{SGA4}. This case follows also from the fact that the full
subcategory $I_\cX$ of $\Mod(\cX^\N,\Lambda_\bullet)$ spanned by modules of
the form $\prod_n e_{n*} I_{(n)}$, where $I_{(n)}$ is an injective
$\Lambda_n$-module, is $f_*^\N$-injective. For $I\in I_\cX$, $I$ is
injective and $e_n^{-1}I$ is injective for all~$n$.

The lemma implies that $Rf^{\N}_*$ preserves essentially zero complexes in $D^+$ and AR-null complexes in $D^+$.

The base change maps induce a natural transformation of functors $D(\cX,\Lambda_n)\to D(\cY^\N,\Lambda_\bullet)$
\begin{equation}\label{e.enshr}
  Le_{n!} Rf_* \Rightarrow Rf^\N_*Le_{n!}.
\end{equation}

\begin{Lemma}\label{p.Rpi}\leavevmode
\begin{enumerate}
\item \label{p.Rpi1} If $M\in D^+(\cX^\N, \Lambda_\bullet)$ is essentially
    zero, then we have $R\pi_*M = 0$.

\item \label{p.Rpi2} For $N\in D^+(\cX,\Lambda_n)$, the adjunction map
    $(\pi e_n)_* N\to R\pi_* L\pi^* (\pi e_n)_*N$ is an isomorphism.
\end{enumerate}
\end{Lemma}

Part \ref{p.Rpi1} is a particular case of \cite[Lemma 1.1]{Ekedahl}.

\begin{proof}
\begin{itemize}
\item[\ref{p.Rpi1}] Note that $R^q \pi_* M$ is the sheaf associated to the
    presheaf $(U\mapsto H^q(U^\N,M))$, where $U$ runs over objects of
    $\cX$. Let $a\colon U\to \mathrm{pt}$ be the morphism of topoi from
    $U$ to the punctual topos. Since $Ra_*^{\N} M$ is essentially zero,
    $R\Gamma(U^\N, M)\simeq R\varprojlim Ra^\N_* M=0$.

\item[\ref{p.Rpi2}] The composite
\[R\pi_* e_{n*} N\to R\pi_* L\pi^* R\pi_* e_{n*}N\to R\pi_* e_{n*}N\]
is the identity. By (a) and Lemma \ref{p.Lpi} (b), the second map is an
isomorphism.
\end{itemize}
\end{proof}

\begin{Definition}
We define a functor
\begin{equation}\label{e.Mhat}
D(\cX^\N,\Lambda_\bullet) \to D(\cX^\N,\Lambda_\bullet), \quad M\mapsto \hat{M}=L\pi^* R\pi_* M.
\end{equation}
Following Ekedahl \cite[Definition 2.1 (iii)]{Ekedahl} we say that a complex
$M\in D(\cX^\N,\Lambda_\bullet)$ is \emph{normalized} if the adjunction map
$\hat{M} \to M$ is an isomorphism in $D(\cX^\N,\Lambda_\bullet)$. We say
that $M\in D(\cX^\N,\Lambda_\bullet)$ is \emph{weakly normalized} if for all
$n$, the natural map
\begin{equation}\label{e.Eke}
\Lambda_n\otimes^L_{\Lambda_{n+1}}M_{n+1} \to M_n
\end{equation}
is an isomorphism.
\end{Definition}

The following criterion is a variant of \cite[Proposition 2.2 (ii)]{Ekedahl}
and plays an essential role in what follows.

\begin{Lemma}\label{p.Eke}
Let $M\in D(\cX^\N,\Lambda_\bullet)$. Consider the following conditions:
\begin{itemize}
\item[(a)] $M$ is normalized;
\item[(b)] $M\simeq L\pi^* N$ for some $N\in D(\cX,\cO)$;
\item[(c)] $M$ is weakly normalized.
\end{itemize}
Then (a)$\implies$(b)$\implies$(c). Moreover, if $M\in D^+$, then they are equivalent.
\end{Lemma}

In particular, for $M\in D^+(\cX^\N,\Lambda_\bullet)$, $\hat{M}$ is normalized.

\begin{proof}
  (a)$\implies$(b) By definition, $M\simeq L\pi^* R\pi_* M$.

  (b)$\implies$(c) We have $M_n\simeq \Lambda_n\otimes^L_\cO N$ and \eqref{e.Eke} is induced by the isomorphism
  \[\Lambda_{n}\otimes^L_{\Lambda_{n+1}}\Lambda_{n+1}\otimes^L_\cO N \simto \Lambda_n\otimes^L_\cO N.\]

(c)$\implies$(a) assuming $M\in D^+$. To prove that $M$ is normalized, it
suffices to show that the map $\alpha\colon e_n^{-1} \hat{M}\to e_n^{-1}M$
is an isomorphism for all $n$. This map sits in the commutative diagram
\begin{equation}\label{e.ca}
\xymatrix{(\pi e_n)_* \Lambda_n \otimes^L_\cO R\pi_* M\ar[d]_\gamma^\simeq \ar[r]^\beta_\sim
& (\pi e_n)_* e_n^{-1}L\pi^*R\pi_* M \ar[rd]^{(\pi e_n)_*
\alpha}\\
R\pi_*(L\pi^*(\pi e_n)_* \Lambda_n \otimes^L_{\Lambda_\bullet} M)\ar[r]
& R\pi_* e_{n*} e_n^{-1}(L\pi^*(\pi e_n)_* \Lambda_n \otimes_{\Lambda_\bullet}^L M)\ar[r]
& (\pi e_n)_* e_n^{-1} M,}
\end{equation}
where $\beta$ is projection formula map for $\pi e_n$, $\gamma$ is
projection formula map for $\pi$, and the lower horizontal arrows are
adjunction maps. The map $\beta$ is trivially an isomorphism. Taking the
resolution $F\to (\pi e_n)_*\Lambda_n$, where $F$ is the complex
$\cO\xrightarrow{\times \lambda^{n+1}}\cO$ concentrated in degrees $-1$ and
$0$, we see that $\gamma$ is an isomorphism. Thus it suffices to show that
the composite of the lower row of \eqref{e.ca} is an isomorphism. This row
is obtained by applying $R\pi_*$ to the upper row of the commutative diagram
\[\xymatrix{L\pi^*(\pi e_n)_* \Lambda_n \otimes^L_{\Lambda_\bullet} M\ar[r]\ar[d]_\delta
& e_{n*} e_n^{-1}(L\pi^*(\pi e_n)_* \Lambda_n \otimes_{\Lambda_\bullet}^L M)\ar[r]\ar[d]
& e_{n*} e_n^{-1} M,\\
\pi^*(\pi e_n)_* \Lambda_n \otimes^L_{\Lambda_\bullet} M\ar[r]^-{\epsilon}
& e_{n*} e_n^{-1}(\pi^*(\pi e_n)_* \Lambda_n \otimes_{\Lambda_\bullet}^L M),\ar[ru]_\sim
}
\]
where the vertical arrows are induced by $L\pi^*(\pi e_n)_*\Lambda_n\to
\pi^*(\pi e_n)_*\Lambda_n$. By Lemma \ref{p.Lpi} \ref{p.Lpi2} and Lemma
\ref{p.null}, $\delta$ has AR-null cone. By (c),
  \[\Lambda_n\otimes^L_{\Lambda_m} M_m \to M_n\]
is an isomorphism for all $m\ge n$. It follows that $\epsilon$ has AR-null
cone. Note that $L\pi^*(\pi e_n)_*\Lambda_n\otimes^L_{\Lambda_\bullet} M$
and $e_{n*}e_n^{-1}M$ are both in $D^+$. Hence $R\pi_* (\epsilon \delta)$ is
an isomorphism by Lemma \ref{p.Rpi} (a).
\end{proof}

\begin{Lemma}\label{p.tau}
Let $M\in D(\cX^\N,\Lambda_\bullet)$ be normalized. Then the map
\[\alpha\colon L\pi^* \tau^{\ge a} R\pi_* M \to L\pi^* \tau^{\ge a} R\pi_* \tau^{\ge a}M\simeq L\pi^* R\pi_*\tau^{\ge a} M\]
is an isomorphism.
\end{Lemma}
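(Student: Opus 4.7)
The plan is to reduce the statement to a vanishing involving an $R^1\varprojlim$ whose transitions are multiplication by $\lambda$, and then to exploit the structure forced by $M$ being normalized. The map in question is obtained by applying $L\pi^*$ to the canonical comparison $\alpha\colon \tau^{\ge a}R\pi_*M \to R\pi_*\tau^{\ge a}M$, which exists because $R\pi_*\tau^{\ge a}M\in D^{\ge a}$ by left-exactness of $R\pi_*$ and because $\tau^{\ge a}$ is left adjoint to the inclusion $D^{\ge a}\hookrightarrow D$. Both the source $L\pi^*\tau^{\ge a}R\pi_*M$ and the target $\widehat{\tau^{\ge a}M}=L\pi^*R\pi_*\tau^{\ge a}M$ are of the form $L\pi^*(\cdot)$ and so are normalized; the map $L\pi^*\alpha$ is then an isomorphism iff $L\pi^*\cone(\alpha)=0$, equivalently iff $\Lambda_n\otimes^L_\cO\cone(\alpha)=0$ for every $n$.

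Using the octahedral axiom applied to the two triangles
\[\tau^{<a}R\pi_*M \to R\pi_*M \to \tau^{\ge a}R\pi_*M,\quad R\pi_*\tau^{<a}M \to R\pi_*M \to R\pi_*\tau^{\ge a}M,\]
with identity on the common middle term, $\cone(\alpha)$ is (up to shift) isomorphic to the cone of the induced map $\beta\colon \tau^{<a}R\pi_*M\to R\pi_*\tau^{<a}M$. Since $R\pi_* = R\varprojlim$ has cohomological dimension $\le 1$, the Grothendieck spectral sequence shows $R\pi_*\tau^{<a}M\in D^{\le a}$ with $\fH^i = \fH^i R\pi_*M$ for $i<a$ (compatibly with $\beta$) and $\fH^a(R\pi_*\tau^{<a}M)\cong R^1\varprojlim_n\fH^{a-1}(M_n)$. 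Hence $\cone(\beta)$ is concentrated in degree $a$, equal to $K:=R^1\varprojlim_n\fH^{a-1}(M_n)$, and the goal becomes: multiplication by $\lambda$ is an isomorphism on $K$. Writing $N=R\pi_*M$ and using that $M$ is normalized, so $M_n\simeq\Lambda_n\otimes^L_\cO N$, the Tor short exact sequence
\[0\to\Lambda_n\otimes\fH^{a-1}N\to\fH^{a-1}(M_n)\to\fH^a(N)[\lambda^{n+1}]\to 0\]
has a Mittag--Leffler left-hand system (surjective reductions, hence $R^1\varprojlim=0$) and a right-hand system with transitions given by multiplication by $\lambda$; thus $K\simeq R^1\varprojlim_n\fH^a(N)[\lambda^{n+1}]$.

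Applying $R\varprojlim$ to the four-term exact sequence $0\to A[\lambda]\to A\xrightarrow{\lambda}A\to A/\lambda A\to 0$ for $A_n=\fH^a(N)[\lambda^{n+1}]$, and noting that the transitions of $A[\lambda]$ are induced by $\lambda$ restricted to $\lambda$-torsion, hence zero, so $\varprojlim A[\lambda]=R^1\varprojlim A[\lambda]=0$, one obtains immediately that $\lambda$ is injective on $K$, with cokernel $R^1\varprojlim A/\lambda A$. The principal obstacle---where the full strength of $M$ being normalized enters---is to prove this cokernel vanishes: this requires a further analysis of the transitions on $A/\lambda A$, ultimately a Mittag--Leffler verification making essential use of the completeness identity $R\pi_*L\pi^*N\simeq N$ coming from $M=L\pi^*N$. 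Granting this, $\lambda$ is an isomorphism on $K$, so $L\pi^*K=0$ and the original map is an isomorphism.
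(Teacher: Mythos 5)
Your reduction of the statement to a property of $\Cone(\alpha)$, and the octahedron identifying it with $\Cone(\beta)$, are fine; but the proof as written has two genuine problems. First, the step ``since $R\pi_*=R\varprojlim$ has cohomological dimension $\le 1$'' is unjustified: the lemma is stated for an arbitrary topos $\cX$ (the ``enough points'' hypothesis is only introduced later in \S 3), and for a general topos countable products of sheaves are not exact, so neither the vanishing of $R^q\varprojlim$ for $q\ge 2$ nor the Mittag--Leffler criterion (which you also invoke to kill $R^1\varprojlim$ of the system with surjective transitions) is available. Your identification of $\Cone(\beta)$ with $K=R^1\varprojlim_n\fH^{a-1}(M_n)[-a]$, and hence the whole subsequent computation, rests on this. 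Second, and more seriously, the step that actually carries the content of the lemma in your reformulation --- surjectivity of $\lambda$ on $K$, i.e.\ the vanishing of $R^1\varprojlim A_n/\lambda A_n$ --- is explicitly left unproved (``Granting this\dots''). Since everything before that point is formal, the proof is incomplete precisely where it would have to use the hypothesis that $M$ is normalized.

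For comparison, the paper's argument avoids $R^1\varprojlim$ entirely and is much shorter: one compares $L\pi^*\tau^{\ge a}R\pi_*M$ directly with $\tau^{\ge a}M\simeq\tau^{\ge a}L\pi^*R\pi_*M$ (no second application of $R\pi_*$). The cone of this comparison is concentrated in degree $a-1$ and equals $\fH^{-1}L\pi^*\fH^a R\pi_*M$, which is \emph{essentially zero} by \ref{p.Lpi} (a). Applying the normalization functor $\widehat{(\cdot)}$ kills this cone by \ref{p.Rpi} (a), while $L\pi^*\tau^{\ge a}R\pi_*M$, being $L\pi^*$ of an object of $D^{\ge a}$, is already normalized by \ref{p.Eke}; the lemma follows. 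If you want to salvage your route, you would at minimum need to restrict to a setting where $R\pi_*$ is computed by the two-term $\prod\to\prod$ complex, and you would still have to supply the missing surjectivity argument; the essentially-zero bookkeeping is the intended shortcut.
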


\begin{proof}
By Lemma \ref{p.Lpi} \ref{p.Lpi1}, the map
\[L\pi^* \tau^{\ge a} R\pi_* M \to \tau^{\ge a}L\pi^* \tau^{\ge a} R\pi_* M \simeq \tau^{\ge a} L\pi^* R\pi_* M\simto \tau^{\ge a} M\]
has essentially zero cone in $D^+$. This map equals the composite map
$\beta\alpha$ in the commutative diagram
\[\xymatrix{(L\pi^* \tau^{\ge a} R\pi_* M)\sphat \ar[r]^{\hat \alpha}\ar[d]_\gamma
& (L\pi^*  R\pi_* \tau^{\ge a}M)\sphat\ar[r]^-{\hat\beta}\ar[d]^\delta & \widehat{\tau^{\ge a}M}\ar[d]^\beta\\
L\pi^* \tau^{\ge a} R\pi_* M \ar[r]^\alpha & L\pi^*  R\pi_* \tau^{\ge a}M\ar[r]^-\beta& \tau^{\ge a}M}
\]
Thus by Lemma \ref{p.Rpi} \ref{p.Rpi1}, $\hat\beta\hat\alpha$ is an
isomorphism. By Lemma \ref{p.Eke}, $\gamma$ and $\delta$ are isomorphisms.
Since $\delta$ and $\hat\beta$ have a common section given by the adjunction
$\id\to R\pi_*L\pi^*$, it follows that $\hat\beta$ is an isomorphism.
Therefore, $\hat \alpha$ and $\alpha$ are isomorphisms.
\end{proof}

Objects of $D(\cX^\N,\Lambda_\bullet)$ satisfying condition (b) (resp.\ (c))
of Lemma \ref{p.Eke} are clearly closed under derived tensor product.
Applying Lemma \ref{p.Eke}, we obtain the following proposition.

\begin{Proposition}\label{p.tensornorm}
 Let $M, N\in D^+(\cX^\N,\Lambda_\bullet)$ be normalized. Then $M\otimes^L_{\Lambda_\bullet} N$ is normalized.
\end{Proposition}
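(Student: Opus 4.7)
My plan is to invoke the criterion of Lemma \ref{p.Eke} via condition (b). Since $M$ and $N$ are normalized (and in $D^+$), we may set $M'=R\pi_*M$ and $N'=R\pi_*N$, obtaining isomorphisms $M\simeq L\pi^*M'$, $N\simeq L\pi^*N'$. To ensure $M',N'\in D^+(\cX,\cO)$, I will use that $\pi_*$ is essentially $\varprojlim$, whose higher derived functors vanish in degrees $\ge 2$, so $R\pi_*$ has cohomological amplitude in $[0,1]$; thus $R\pi_*$ preserves $D^+$.

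Next, I appeal to the general identity $L\pi^*(A\otimes^L_\cO B)\simeq L\pi^*A\otimes^L_{\Lambda_\bullet}L\pi^*B$, valid for any morphism of ringed toposes. This yields
\[
M\otimes^L_{\Lambda_\bullet}N\;\simeq\;L\pi^*M'\otimes^L_{\Lambda_\bullet}L\pi^*N'\;\simeq\;L\pi^*(M'\otimes^L_\cO N'),
\]
so the tensor product satisfies condition (b) of Lemma \ref{p.Eke}.

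To conclude, I must check $M\otimes^L_{\Lambda_\bullet}N\in D^+$, which is needed in order to apply the direction (b)$\Rightarrow$(a), the only direction valid only on $D^+$. Since $\cO$ is a discrete valuation ring (global dimension $\le 1$), $M'\otimes^L_\cO N'$ stays in $D^+(\cX,\cO)$; and by Lemma \ref{p.Lpi}(a), $L\pi^*$ has cohomological amplitude contained in $[-1,0]$, so $L\pi^*(M'\otimes^L_\cO N')\in D^+(\cX^\N,\Lambda_\bullet)$. Lemma \ref{p.Eke} then gives the normalization.

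The only real subtlety is propagating the $D^+$ hypothesis: we need finite cohomological dimensions for both $R\pi_*$ (at most $1$, from $\varprojlim^i$) and $L\pi^*$ (at most $1$, from Lemma \ref{p.Lpi}(a)), together with $\mathrm{gldim}(\cO)\le 1$. As an alternative route avoiding this global argument, one could instead check condition (c) of Lemma \ref{p.Eke} directly: with $P_n=M_n\otimes^L_{\Lambda_n}N_n$ (since $e_n^{-1}$ is flat and commutes with $\otimes^L$), associativity of the derived tensor product combined with normalization of $M$ and of $N$ yields $\Lambda_n\otimes^L_{\Lambda_{n+1}}P_{n+1}\simeq (\Lambda_n\otimes^L_{\Lambda_{n+1}}M_{n+1})\otimes^L_{\Lambda_{n+1}}N_{n+1}\simeq M_n\otimes^L_{\Lambda_{n+1}}N_{n+1}\simeq M_n\otimes^L_{\Lambda_n}N_n=P_n$, but one still needs $P\in D^+$ to deduce normalization, so the boundedness argument above is unavoidable.
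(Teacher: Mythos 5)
Your proof is correct and follows essentially the same route as the paper: write $M\simeq L\pi^*M'$, $N\simeq L\pi^*N'$ with $M'=R\pi_*M$, $N'=R\pi_*N\in D^+(\cX,\cO)$, commute $L\pi^*$ past the derived tensor product, and invoke \ref{p.Eke}. The extra bookkeeping you supply on boundedness (amplitude of $L\pi^*$ from \ref{p.Lpi}(a) and $\mathrm{gldim}(\cO)\le 1$) is exactly the detail the paper leaves implicit (it surfaces in the proof of \ref{p.tensornull}(i)), so it is a welcome addition rather than a deviation.
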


\begin{Proposition}\label{p.tensornull}
Assume that $\cO$ is a regular ring. Let $N\in D^{\ge
a}(\cX^\N,\Lambda_\bullet)$ be normalized. Then
\begin{enumerate}
  \item \label{p.tensornull1} $-\otimes^L_{\Lambda_\bullet} N$ has
      cohomological amplitude $\ge a-\dim \cO$.

  \item \label{p.tensornull2} For all $M\in D^+(\cX^\N,\Lambda_\bullet)$
      AR-null, $M\otimes^L_{\Lambda_\bullet} N$ is AR-null.
\end{enumerate}
\end{Proposition}

\begin{proof}
By assumption, $N\simeq L\pi^* N'$ for some $N'\in D^{\ge a}(\cX,\cO)$. Then
$M\otimes_{\Lambda_\bullet}^L N \simeq M\otimes^L_{\pi^{-1} \cO} \pi^{-1}
N'$. Then \ref{p.tensornull1} follows from the fact that
$-\otimes_{\pi^{-1}\cO}^L-$ has cohomological amplitude $\ge -\dim \cO$. For
\ref{p.tensornull2}, we are then reduced to the case $M\in D^b$, which is
covered by Lemma \ref{p.null}.
\end{proof}

\begin{Proposition}\label{p.RcHomfiber}
Let $M,N\in D(\cX^\N,\Lambda_\bullet)$. Suppose that either
\begin{itemize}
\item[(a)] $M$ is weakly normalized; or
\item[(b)] the cohomology sheaves of $M$ are preadic, $M\in D^-$ and $N\in D^+$.
\end{itemize}
Then the natural map
\begin{equation}\label{e.RcHomfiber}
e_n^{-1} R\cHom_{\Lambda_\bullet}(M,N) \to R\cHom_{\Lambda_n} (M_n, N_n)
\end{equation}
is an isomorphism.
\end{Proposition}

This is a variant of \cite[3.1.2]{LO2}.

\begin{proof}
Let $j_n\colon \cX^{\le n} \to \cX^\N$ be the morphism of topoi defined by
$j_n^{-1}M=(M_m)_{m\le n}$, $(j_{n*} N)_m = N_m$ for $m\le n$,
$(j_{n*}N)_m=N_n$ for $m\ge n$. Let $e'_n\colon \cX\to \cX^{\le n}$ be the
morphism of topoi defined by ${e'_n}^{-1} N = N_n$, $(e'_{n*} F)_m = \{*\}$
for $m<n$, $(e'_{n*} F)_n=F$. Then $e_n = j_n e'_n$. Let
$\pi_n\colon(\cX^{\le n},\Lambda_{\le n})\to (\cX,\Lambda_n)$ be the
morphism of ringed topoi given by $\pi_{n*}=e'_n{}^{-1}$,
$(\pi_n^*F)_m=\Lambda_m\otimes_{\Lambda_n} F$.

In case (b), we may assume $M\in \Mod(\cX^\N,\Lambda_\bullet)$ is preadic,
then $\pi_n^* M_n \simto j_n^{-1} M$. As observed in the remark following
Lemma \ref{p.en}, there is a quasi-isomorphism $N\to I$ such that $I^k$ and
$e_n^{-1}I^k$ are injective for all $k$ and $n$. Hence it suffices to show
that for any $N\in \Mod(\cX^\N,\Lambda_\bullet)$,
$e_n^{-1}\cHom_{\Lambda_\bullet}(M,N)\to \cHom_{\Lambda_n}(M_n,N_n)$ is an
isomorphism. This is clear because the map is the composition
\begin{multline*}
e_n^{-1}\cHom_{\Lambda_\bullet}(M,N) = {e'_n}^{-1} j_n^{-1} \cHom_{\Lambda_\bullet}(M,N) \simto {e'_n}^{-1}\cHom_{j_n^{-1}\Lambda_\bullet}(j_n^{-1}M, j_n^{-1}N)\\
\simto \pi_{n*}\cHom_{\Lambda_{\le n}}(\pi_n^* M_n,j_n^{-1} N) \simeq \cHom_{\Lambda_n}(M_n, \pi_{n*} j_n^{-1} N)\simeq \cHom_{\Lambda_n}(M_n,N_n).
\end{multline*}

In case (a), $L\pi_n^* M_n \simto j_n^{-1} M$. Thus \eqref{e.RcHomfiber} is
the composite isomorphism
\begin{multline*}
e_n^{-1}R\cHom_{\Lambda_\bullet}(M,N) = {e'_n}^{-1} j_n^{-1} R\cHom_{\Lambda_\bullet}(M,N) \simto {e'_n}^{-1}R\cHom_{j_n^{-1}\Lambda_\bullet}(j_n^{-1}M, j_n^{-1}N)\\
\simto \pi_{n*}R\cHom_{\Lambda_{\le n}}(L\pi_n^* M_n,j_n^{-1} N) \simeq R\cHom_{\Lambda_n}(M_n, \pi_{n*} j_n^{-1} N)\simeq R\cHom_{\Lambda_n}(M_n,N_n).
\end{multline*}

\end{proof}

\begin{Corollary}\label{p.RcHomnorm}
Let $M\in D^-(\cX^\N,\Lambda_\bullet)$, $N\in D^+(\cX^\N,\Lambda_\bullet)$,
both satisfying condition (b) of Lemma \ref{p.Eke}. Then
\[R\cHom_{\Lambda_\bullet}(M,N)\in D^+(\cX^\N,\Lambda_\bullet)\]
is normalized.
\end{Corollary}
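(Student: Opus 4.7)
Since $M$ and $N$ are normalized, set $M':=R\pi_*M\in D^-(\cX,\cO)$ and $N':=R\pi_*N\in D^+(\cX,\cO)$, so that $M\simeq L\pi^*M'$ and $N\simeq L\pi^*N'$. The plan is to construct a natural isomorphism
\[R\cHom_{\Lambda_\bullet}(M,N)\simeq L\pi^*R\cHom_\cO(M',N'),\]
from which normality follows immediately, the right hand side being of the form $L\pi^*(-)$.

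I will build the comparison morphism $L\pi^*R\cHom_\cO(M',N')\to R\cHom_{\Lambda_\bullet}(L\pi^*M',L\pi^*N')$ by composing the canonical pullback $\pi^{-1}R\cHom_\cO(M',N')\to R\cHom_{\pi^{-1}\cO}(\pi^{-1}M',\pi^{-1}N')$ with scalar extension along the (flat) change of rings $\pi^{-1}\cO\to\Lambda_\bullet$, and then using the $\otimes\dashv R\cHom$ adjunction to identify $R\cHom_{\pi^{-1}\cO}(\pi^{-1}M',L\pi^*N')$ with $R\cHom_{\Lambda_\bullet}(L\pi^*M',L\pi^*N')$.

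To check this is an isomorphism I verify it levelwise via $e_n^{-1}$. On the right, Proposition~\ref{p.RcHomfiber}(a), applicable since $M$ is normalized, gives
\[e_n^{-1}R\cHom_{\Lambda_\bullet}(M,N)\simeq R\cHom_{\Lambda_n}(M_n,N_n)\simeq R\cHom_\cO(M',\Lambda_n\otimes^L_\cO N'),\]
the second step being change of rings along $\cO\to\Lambda_n$ applied to $M_n\simeq \Lambda_n\otimes^L_\cO M'$. On the left, $e_n^{-1}L\pi^*R\cHom_\cO(M',N')\simeq \Lambda_n\otimes^L_\cO R\cHom_\cO(M',N')$. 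Since $\cO$ is a discrete valuation ring, $\Lambda_n$ has the length-one flat $\cO$-resolution $[\cO\xrightarrow{\lambda^{n+1}}\cO]$, so both sides are canonically represented by the two-term complex $[R\cHom_\cO(M',N')\xrightarrow{\lambda^{n+1}}R\cHom_\cO(M',N')]$ in degrees $-1$ and $0$, and the comparison morphism reduces to the identity between these representations.

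The main obstacle is the bookkeeping required to confirm that the global comparison morphism, after passing through $e_n^{-1}$ and all the adjunction and change-of-ring isomorphisms above, is literally the identity on the two explicit two-term complexes, rather than merely inducing an abstract isomorphism at each level. Once this naturality is established, the source is manifestly normalized by construction and the two sides become canonically identified, so $R\cHom_{\Lambda_\bullet}(M,N)$ is normalized as well. (As a side remark, if one prefers to avoid constructing a global morphism, one can instead apply Lemma~\ref{p.Eke}(c) directly: the same calculation shows that the transition maps of $R\cHom_{\Lambda_\bullet}(M,N)$ identify with $\Lambda_n\otimes^L_{\Lambda_{n+1}}(\Lambda_{n+1}\otimes^L_\cO T)\to\Lambda_n\otimes^L_\cO T$ where $T=R\cHom_\cO(M',N')$, which is an isomorphism by associativity of derived tensor product.)
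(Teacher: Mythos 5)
Your argument is correct and is essentially the paper's: both reduce, via Lemma~\ref{p.Eke} and Proposition~\ref{p.RcHomfiber}, to the base-change compatibility $\Lambda_n\otimes^L_\cO R\cHom_\cO(M',N')\simto R\cHom_{\Lambda_n}(\Lambda_n\otimes^L_\cO M',\Lambda_n\otimes^L_\cO N')$ checked with the resolution $\cO\xrightarrow{\times\lambda^{n+1}}\cO$ of $\Lambda_n$, and your parenthetical variant (verifying criterion (c) of \ref{p.Eke} on the transition maps instead of building the global comparison with $L\pi^*R\cHom_\cO(M',N')$) is exactly how the paper phrases it. One harmless slip: $\pi^{-1}\cO\to\Lambda_\bullet$ is \emph{not} flat (cf.\ \ref{p.Lpi}), but the scalar-extension map for $R\cHom$ that you use does not require flatness, so nothing breaks.
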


\begin{proof}
By Lemma \ref{p.Eke} and Proposition \ref{p.RcHomfiber}, we need to show
that
  \begin{equation}\label{e.RcHomnorm0}
  \Lambda_n \otimes^L_{\Lambda_{n+1}} R\cHom_{\Lambda_{n+1}}(M_{n+1}, N_{n+1})  \to R\cHom_{\Lambda_n}(M_n, N_n)
  \end{equation}
  is an isomorphism.  By assumption,
  $M=L\pi^* M'$, $N=L\pi^*N'$, where $M'$, $N'$ are objects of $D(\cX,\cO)$. Thus \eqref{e.RcHomnorm0} is isomorphic to
  \[\Lambda_n \otimes^L_{\Lambda_{n+1}} R\cHom_{\Lambda_{n+1}}(\Lambda_{n+1}\otimes^L_{\cO} M', \Lambda_{n+1}\otimes^L_{\cO} N')
  \to R\cHom_{\Lambda_n}(\Lambda_n\otimes^L_{\cO} M', \Lambda_n\otimes^L_{\cO}N').
  \]
  Therefore, it suffices to show that
  \[\Lambda_n \otimes^L_{\cO} R\cHom_{\cO}(M', N') \to R\cHom_{\Lambda_n}(\Lambda_n\otimes^L_{\cO} M', \Lambda_n\otimes^L_{\cO}N')\]
  is an isomorphism. This map is the composition of
  \begin{equation}\label{e.RcHomnorm}
    \Lambda_n \otimes^L_{\cO} R\cHom_{\cO}(M', N')\to R\cHom_\cO(M',\Lambda_n\otimes^L_{\cO}N')
  \end{equation}
  and the adjunction isomorphism
  \[R\cHom_\cO(M',\Lambda_n\otimes^L_{\cO}N')\simto R\cHom_{\Lambda_n}(\Lambda_n\otimes^L_{\cO} M', \Lambda_n\otimes^L_{\cO}N').\]
  Taking 
  the resolution $F\to \Lambda_n$, where $F$ is the complex $\cO\xrightarrow{\times \lambda^{n+1}} \cO$ concentrated in degrees $-1$ and $0$,
  we conclude that \eqref{e.RcHomnorm} is an isomorphism.
\end{proof}

\begin{Proposition}\label{p.RcHomnull}
  Let $N\in D^+(\cX^\N,\Lambda_\bullet)$ be AR-null and let $M\in D(\cX^\N,\Lambda_\bullet)$. Assume that one of the following conditions holds:
  \begin{itemize}
  \item[(a)] $M\in D^-$ is weakly normalized;
  \item[(a$'$)] $M$ is weakly normalized and $N\in D^b$;
  \item[(b)] $M\in D^-$ is of preadic cohomology sheaves.
  \end{itemize}
  Then $R\cHom_{\Lambda_\bullet}(M,N)$ is AR-null. Moreover, if (a) or (b) holds, then \[R\Hom_{\Lambda_\bullet}(M,N)=0.\]
\end{Proposition}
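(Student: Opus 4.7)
The strategy is to identify $R\cHom_{\Lambda_\bullet}(M,N)$ level by level using Proposition \ref{p.RcHomfiber}, reduce by d\'evissage to the case where $N$ is a single AR-null sheaf, and then exploit the fact that a zero transition on $N$ forces a zero transition on $R\cHom$. Each of the hypotheses (a), (a'), (b) triggers a case of \ref{p.RcHomfiber}, so in all three situations we have a natural isomorphism
\[
e_n^{-1} R\cHom_{\Lambda_\bullet}(M,N) \simto R\cHom_{\Lambda_n}(M_n, N_n).
\]
To show $R\cHom_{\Lambda_\bullet}(M,N)$ is AR-null it is enough to show, for each fixed $k$, that the projective system $\fH^k R\cHom_{\Lambda_\bullet}(M,N)$ is AR-null.

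Fix $k$. In case (a') the complex $N$ lies in $D^b$ and the truncation triangles $\tau^{\le l-1} N \to \tau^{\le l} N \to (\fH^l N)[-l] \to$ reduce, by induction on the length of $N$, to the case where $N$ is concentrated in a single degree. In cases (a) and (b) we have $M \in D^{\le a}$ for some $a$, so $R\cHom_{\Lambda_n}(M_n, \tau^{>l} N_n) \in D^{\ge l - a}$ uniformly in $n$; applying $R\cHom_{\Lambda_\bullet}(M,-)$ to $\tau^{\le l} N \to N \to \tau^{>l} N \to$ with $l > k+a$ yields an isomorphism of projective systems $\fH^k R\cHom_{\Lambda_\bullet}(M, \tau^{\le l} N) \simto \fH^k R\cHom_{\Lambda_\bullet}(M, N)$, reducing to the bounded situation and then, by the previous d\'evissage, to a single AR-null sheaf.

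Now assume $N$ is a single AR-null sheaf, and pick $r$ with $N_{n+r} \to N_n$ the zero morphism in $\Mod(\cX, \Lambda_{n+r})$ for every $n$. Tracing through the proof of \ref{p.RcHomfiber}, the projective system transition $R\cHom_{\Lambda_{n+r}}(M_{n+r}, N_{n+r}) \to R\cHom_{\Lambda_n}(M_n, N_n)$ sends $\phi$ to $\tau_{n,r} \circ (\Lambda_n \otimes^L_{\Lambda_{n+r}} \phi)$, where $M_n \simeq \Lambda_n \otimes^L_{\Lambda_{n+r}} M_{n+r}$ by normalization (cases (a), (a')) or by preadic-ness (case (b)), and $\tau_{n,r} \colon \Lambda_n \otimes^L_{\Lambda_{n+r}} N_{n+r} \to N_n$ is adjoint to the transition $N_{n+r} \to N_n$ under the extension/restriction-of-scalars adjunction. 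Since this adjoint is zero, $\tau_{n,r}$ is zero in $D(\cX, \Lambda_n)$ and the transition on $R\cHom$ is zero, completing the proof that $R\cHom_{\Lambda_\bullet}(M,N)$ is AR-null. For the second assertion, under (a) or (b) we have $R\cHom_{\Lambda_\bullet}(M,N) \in D^+(\cX^\N, \Lambda_\bullet)$ since $M \in D^-$ and $N \in D^+$; factoring $R\Gamma_{\cX^\N} = R\Gamma_\cX \circ R\pi_*$ and invoking \ref{p.Rpi}(a) then yields $R\Hom_{\Lambda_\bullet}(M,N) = 0$. The main technical point is the identification of the transition map in the base case, which requires going into the proof of \ref{p.RcHomfiber} rather than merely using its statement.
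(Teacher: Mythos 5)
Your strategy is the one the paper itself uses: reduce by d\'evissage to $N$ a single AR-null module, identify the levels of $R\cHom_{\Lambda_\bullet}(M,N)$ via \ref{p.RcHomfiber}, show that the transition maps of the resulting projective system factor through the transition maps of $N$ (hence vanish with the uniform $r$), and deduce the vanishing of $R\Hom$ from $R\Hom_{\Lambda_\bullet}(M,N)\simeq R\Gamma(\cX,R\pi_*R\cHom_{\Lambda_\bullet}(M,N))$ and \ref{p.Rpi}(a). In cases (a) and (a') your argument is correct: your description of the transition as $\phi\mapsto\tau_{n,r}\circ(\Lambda_n\otimes^L_{\Lambda_{n+r}}\phi)$ agrees, by naturality of the extension/restriction adjunction, with the paper's factorization through $R\cHom_{\Lambda_{n+r}}(M_{n+r},N_n)$, and either way the zero transition on $N$ kills $\cExt^q$.

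In case (b), however, the step ``$M_n\simeq\Lambda_n\otimes^L_{\Lambda_{n+r}}M_{n+r}$ by preadic-ness'' is false. Preadic only gives $\Lambda_n\otimes_{\Lambda_{n+1}}M_{n+1}\simto M_n$ for the \emph{underived} tensor product; the derived statement is precisely the normalization criterion \ref{p.Eke}(c), which is strictly stronger (e.g.\ $\pi^*\Lambda_0$ is preadic but not normalized, since $\cTor_1^{\Lambda_m}(\Lambda_n,\Lambda_0)\neq 0$). So in case (b) applying $\Lambda_n\otimes^L_{\Lambda_{n+r}}-$ to $\phi$ does not produce a map out of $M_n$, and your identification of the transition breaks down. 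The repair is what the paper does: after reducing (as you may, by the thickness of $\cN$ and $D_\cN$) to $M$ a single preadic module and $N$ a single AR-null module, factor the transition as
\[R\cHom_{\Lambda_m}(M_m,N_m)\to R\cHom_{\Lambda_m}(M_m,N_n)\xto{\alpha} R\cHom_{\Lambda_n}(M_n,N_n),\]
where the first arrow is induced by the transition $N_m\to N_n$ of the target and $\alpha$ is the isomorphism obtained at the level of complexes from the \emph{underived} adjunction $\cHom^\bullet_{\Lambda_m}(M_m,N')\simto\cHom^\bullet_{\Lambda_n}(\Lambda_n\otimes_{\Lambda_m}M_m,N')\simto\cHom^\bullet_{\Lambda_n}(M_n,N')$, with $N'$ an injective resolution of $N_n$. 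With this factorization the zero transition $N_{n+r}\to N_n$ still annihilates the transition on $\cExt^q_{\Lambda_\bullet}(M,N)$, and the rest of your argument goes through unchanged.
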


\begin{proof}
We may assume $N\in \Mod(\cX^\N,\Lambda_\bullet)$. In case (b), we may
assume $M\in \Mod(\cX^\N,\Lambda_\bullet)$ is preadic. In all cases, by
Proposition \ref{p.RcHomfiber},
\[e_n^{-1} R\cHom_{\Lambda_\bullet}(M,N) \simeq R\cHom_{\Lambda_n}(M_n,N_n),\]
hence $e_n^{-1} \cExt^q_{\Lambda_\bullet}(M,N) \simeq \cExt^q_{\Lambda_n}(M_n,N_n)$. The transition maps of $\cExt^q_{\Lambda_\bullet}(M,N)$ are induced by the canonical maps
\[  R\cHom_{\Lambda_{m}}(M_{m},N_{m})\to R\cHom_{\Lambda_{m}}(M_{m},N_{n})
   \xto{\alpha} R\cHom_{\Lambda_n}(M_n, N_n)
\]
for $m\ge n$. In cases (a) and (a$'$), $\alpha$ is the composition
\[R\cHom_{\Lambda_{m}}(M_{m},N_{n}) \simto R\cHom_{\Lambda_{n}}(\Lambda_{n}\otimes^L_{\Lambda_{m}} M_{m},N_{n})\simto R\cHom_{\Lambda_n}(M_n, N_n).\]
In case (b), $\alpha$ is the composition
\[\cHom^\bullet_{\Lambda_{m}}(M_{m},N') \simto \cHom^\bullet_{\Lambda_{n}}(\Lambda_{n}\otimes_{\Lambda_{m}} M_{m},N')\simto \cHom^\bullet_{\Lambda_n}(M_n, N'),\]
where $N'$ is an injective resolution of $N_n$.
In all cases, since $N$ is AR-null, $\cExt^q_{\Lambda_\bullet}(M,N)$ is AR-null for all $q$, hence $R\cHom_{\Lambda_\bullet}(M,N)$ is AR-null. Therefore, if $M\in D^-$, then
\[R\Hom_{\Lambda_\bullet}(M,N) \simeq R\Gamma(\cX,R\pi_* R\cHom_{\Lambda_\bullet}(M,N)) =0\]
by Lemma \ref{p.Rpi} \ref{p.Rpi1}.
\end{proof}

We refer the reader to \cite[C.D., Section I.2]{SGA4d} for the construction
of quotient categories of triangulated categories by thick subcategories.

\begin{Corollary}\label{p.DN}
Let $M\in D^b(\cX^\N,\Lambda_\bullet)$, $N\in D^+(\cX^\N,\Lambda_\bullet)$,
$M$ either normalized or of preadic cohomology sheaves. Let
$D_\cN^+=D_\cN\cap D^+(\cX^\N,\Lambda_\bullet)$, where $D_\cN$ is as in
Definition \ref{s.DN}. Then the localization map
  \[\Hom_{D^+(\cX^\N,\Lambda_\bullet)}(M,N)\to \Hom_{D^+(\cX^\N,\Lambda_\bullet)/D^+_\cN}(M,N)\]
  is an isomorphism.
\end{Corollary}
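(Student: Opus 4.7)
The plan is to reduce the statement to a left-orthogonality of $M$ to $D^+_\cN$, which has already been packaged in Proposition \ref{p.RcHomnull}, and then deduce the isomorphism from Verdier's calculus of fractions.

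First I would verify the vanishing $\Hom_{D^+(\cX^\N,\Lambda_\bullet)}(M,P)=0$ for every $P\in D^+_\cN$. This is a direct application of \ref{p.RcHomnull}: since $M\in D^b\subset D^-$ is either normalized or has preadic cohomology sheaves, hypothesis (a) or (b) of that proposition applies to the pair $(M,P)$, with $P$ playing the role of the AR-null complex in $D^+$, and yields the stronger vanishing $R\Hom_{\Lambda_\bullet}(M,P)=0$. Since $D^+_\cN$ is stable under shifts, the same vanishing holds with $P$ replaced by $P[k]$ for any $k\in\Z$.

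Next I would invoke the calculus of fractions to represent a morphism in $D^+(\cX^\N,\Lambda_\bullet)/D^+_\cN$ from $M$ to $N$ as a co-roof $M\xrightarrow{g} N''\xleftarrow{t} N$ with $P:=\Cone(t)\in D^+_\cN$, and check the two halves separately. For surjectivity, the obstruction to lifting $g$ along $t$ to a genuine map $f\colon M\to N$ is the composition $M\xrightarrow{g}N''\to P$, an element of $\Hom(M,P)$; by the orthogonality this vanishes, so $g=t\circ f$ for some $f$, which represents the class of the co-roof. For injectivity, if $f\colon M\to N$ becomes zero in the quotient, there is $t\colon N\to N''$ with $\Cone(t)\in D^+_\cN$ such that $t\circ f=0$ in $D^+$, so $f$ factors through $\Cone(t)[-1]\in D^+_\cN$; the factorization belongs to $\Hom(M,\Cone(t)[-1])$, which is zero, hence $f=0$.

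The main and essentially only technical input is the $R\cHom$-vanishing of \ref{p.RcHomnull}; once that is in hand, the present corollary follows from pure Verdier formalism and I do not anticipate any further obstacle. One minor sanity check worth noting is that $\Cone(t)$ and its shifts indeed lie in $D^+_\cN$: being the cone of a morphism between objects of $D^+$ it is bounded below, and being AR-null is preserved under shifts; so the orthogonality from the first step applies directly to all the objects arising in the fraction calculus.
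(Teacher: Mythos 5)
Your proof is correct and is essentially the paper's argument: the paper expresses $\Hom$ in the quotient as $\varinjlim_{s\colon N\to N'}\Hom(M,N')$ over maps $s$ with cone in $D^+_\cN$ and observes that each transition map is an isomorphism by the orthogonality $\Hom(M,P[k])=0$ for $P\in D^+_\cN$, which is exactly the consequence of \ref{p.RcHomnull} you use. Your roof-by-roof surjectivity/injectivity check is just an unpacking of the same filtered-colimit statement.
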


\begin{proof}
  We have
  \[\Hom_{D^+(\cX,\Lambda_\bullet)/D^+_\cN}(M,N)\simeq \varinjlim_{s\colon N\to N'}\Hom_{D^+(\cX^\N,\Lambda_\bullet)}(M,N'),\]
  where $s$ runs over maps in $D^+(\cX^\N,\Lambda_\bullet)$ with cone in $D^+_\cN$. For every such $s$, it follows from Proposition \ref{p.RcHomnull} that the map
  \[\Hom_{D^+(\cX^\N,\Lambda_\bullet)}(M,N) \to \Hom_{D^+(\cX^\N,\Lambda_\bullet)}(M,N')\]
  induced by $s$ is an isomorphism.
\end{proof}

Let $f\colon \cX\to \cY$ be a morphism of topoi. Then $f^{\N*}\colon
D(\cY^\N,\Lambda_\bullet) \to D(\cX^\N, \Lambda_\bullet)$ preserves weakly
normalized complexes.

\begin{Proposition}\label{p.fnorm}
Let $f\colon \cX\to \cY$ be a morphism of topoi. The functors
\[f^{\N*}\colon D^+(\cY^\N,\Lambda_\bullet) \to D^+(\cX^\N, \Lambda_\bullet), \quad Rf_*^\N\colon D(\cX^\N,\Lambda_\bullet) \to D(\cY^\N,\Lambda_\bullet)\]
preserve normalized complexes.
\end{Proposition}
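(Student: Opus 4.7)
The plan is to reduce both claims to the structural description of normalized complexes provided by Lemma~\ref{p.Eke}: specifically, criterion (b), $M\simeq L\pi^*N$ for some $N$, which already follows from criterion (a) with no boundedness hypothesis (take $N=R\pi_*M$). The crucial external input is the commutativity of the square of ringed toposes
\[\pi_\cY\circ f^\N \;=\; f\circ \pi_\cX,\]
which makes $L\pi^*$ and the pullback/pushforward along $f$ intertwine in the expected way.

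For $f^{\N*}$: since $f^{\N*}$ is exact (by the stated flatness of $f^\N$) and $f^*\colon\Mod(\cY,\cO)\to\Mod(\cX,\cO)$ is exact (the ring change along $f$ is the identity on $\cO$), the commuting square gives $f^{\N*}L\pi_\cY^* \simeq L\pi_\cX^* f^*$ as triangulated functors. Given a normalized $N\in D^+(\cY^\N,\Lambda_\bullet)$, write $N\simeq L\pi_\cY^*L$ with $L\in D^+(\cY,\cO)$ using \ref{p.Eke}(b). Then $f^{\N*}N\simeq L\pi_\cX^* f^*L$ with $f^*L\in D^+(\cX,\cO)$, and the equivalence of (b) and (a) in $D^+$ from Lemma~\ref{p.Eke} concludes.

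For $Rf_*^\N$: let $M\in D(\cX^\N,\Lambda_\bullet)$ be normalized, set $N=R\pi_{\cX*}M$, so that $M\simeq L\pi_\cX^*N$. From the commuting square, $R\pi_{\cY*}Rf_*^\N \simeq Rf_*R\pi_{\cX*}$, so $R\pi_{\cY*}Rf_*^\N M \simeq Rf_*N$, and normalization of $Rf_*^\N M$ amounts to showing the canonical base-change map
\[\alpha\colon L\pi_\cY^* Rf_*N \longrightarrow Rf_*^\N L\pi_\cX^*N\]
is an isomorphism. I will check $\alpha$ componentwise: if $C\in D(\cY^\N,\Lambda_\bullet)$ satisfies $e_n^{-1}C=0$ for every $n$, then each cohomology projective system $\fH^iC$ has $(\fH^iC)_n = \fH^i(e_n^{-1}C)=0$ (using exactness of $e_n^{-1}$), so $C\simeq 0$. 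Applying $e_n^{-1}$ to $\alpha$ and using Lemma~\ref{p.en} on the right together with the identity $e_n^{-1}L\pi^*= \Lambda_n\otimes^L_\cO(-)$ on the left (since $\pi\circ e_n$ is the ring-change morphism $\cO\to\Lambda_n$), the map $\alpha$ at level $n$ becomes the projection-formula map
\[\Lambda_n\otimes^L_\cO Rf_*N \longrightarrow Rf_*(\Lambda_n\otimes^L_\cO N).\]
This is visibly an isomorphism: the two-term flat $\cO$-resolution $\cO\xrightarrow{\lambda^{n+1}}\cO$ of $\Lambda_n$ realizes $\Lambda_n\otimes^L_\cO(-)$ as a shifted cone of multiplication by $\lambda^{n+1}$, and $Rf_*$, being triangulated, commutes with cones.

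The main obstacle here is conceptual rather than computational: one must resist using criterion (c) of Lemma~\ref{p.Eke} in the $Rf_*^\N$ case, since (c)$\Rightarrow$(a) is only available in $D^+$ whereas the statement covers all of $D$. Working with criterion (b) sidesteps this, at the modest cost of verifying the unbounded base-change identity $e_n^{-1}\circ Rf_*^\N\simeq Rf_*\circ e_n^{-1}$, which is precisely the content of Lemma~\ref{p.en}.
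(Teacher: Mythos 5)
Your proof is correct and follows essentially the same route as the paper: reduce via the commuting square of ringed toposes to the base-change map $L\pi_\cY^*Rf_*N\to Rf_*^\N L\pi_\cX^*N$, check it after applying $e_n^{-1}$ using Lemma~\ref{p.en}, and identify the resulting map with the projection-formula map, which is an isomorphism by the two-term resolution $\cO\xrightarrow{\times\lambda^{n+1}}\cO$ of $\Lambda_n$. The extra details you supply (conservativity of the family $(e_n^{-1})_n$ and the caution about criterion (c) in the unbounded case) are points the paper leaves implicit, but the argument is the same.
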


\begin{proof}
The assertion for $f^{\N*}$ follows trivially from Lemma \ref{p.Eke}. For
$Rf_{\N*}$, consider the square
  of ringed topoi
  \[\xymatrix{(\cX^\N,\Lambda_\bullet) \ar[r]^{\pi_\cX}\ar[d]_{f^{\N}} &(\cX,\cO)\ar[d]^{f}\\
  (\cY^\N,\Lambda_\bullet) \ar[r]^{\pi_\cY}& (\cY,\cO).}
  \]
It suffices to show that, for all $M\in D(\cX,\cO)$, the base change map
  \[L\pi_\cY^* Rf_* M \to Rf^\N_* L\pi_\cX^* M\]
  is an isomorphism. By Lemma \ref{p.en}, it suffices to show that the projection formula map of $D(X,\Lambda_n)$
  \begin{equation}\label{e.fO}
  \Lambda_n \otimes_\cO^L Rf_* M \to Rf_* (\Lambda_n \otimes_\cO^L M)
  \end{equation}
  is an isomorphism.
  It then suffices to take the resolution $F\to \Lambda_n$, where $F$ is the complex $\cO\xrightarrow{\times \lambda^{n+1}} \cO$ concentrated in degrees $-1$ and $0$.
\end{proof}

\begin{Construction}\label{s.jO}
Let $U$ be an object of $\cX$ and $\cU=\cX/U$. Then $\cU^\N$ can be
identified with $\cX^\N/U^\N$. Consider the morphism of topoi $j\colon
\cU\to \cX$. The functor $j_!^{\N}\colon \Mod(\cU^\N,\Lambda_\bullet) \to
\Mod(\cX^\N, \Lambda_\bullet)$ is a left adjoint of $j^{\N*}$ and is exact.
It induces a triangulated functor $j_!^{\N}\colon D(\cU^\N,\Lambda_\bullet)
\to D(\cX^\N, \Lambda_\bullet)$. The base change map
  \begin{equation}\label{e.jen}
  j_!e_n^{-1} \Rightarrow e_n^{-1}j_!^\N
  \end{equation}
is a natural isomorphism of functors $D(\cU^\N,\Lambda_\bullet)\to
D(\cX,\Lambda_n)$.  In particular, $j_!^\N$ preserves AR-null complexes.
Moreover, we have a natural isomorphism of functors $D(\cU,\Lambda_n)\to
D(\cX^\N,\Lambda_\bullet)$
  \begin{equation}\label{e.jLen}
    j_!^\N Le_{n!} \Rightarrow Le_{n!}j_!.
  \end{equation}
It follows from projection formula \eqref{e.jpf} that $j^\N_!$ preserves
weakly normalized complexes, and thus preserves normalized complexes in
$D^+$ by Lemma \ref{p.Eke}.
\end{Construction}

Recall that for $M\in \Mod(\cX^\N,\Lambda_\bullet)$ satisfying the
Mittag-Leffler-Artin-Rees condition, if we let $M'$ denote its universal
image system, the map $M'\to M$ is a monomorphism with AR-null cokernel. For
$r\ge 0$, consider the functor $T_r\colon
\Mod(\cX^\N,\Lambda_\bullet)\to\Mod(\cX^\N,\Lambda_\bullet)$  defined by
$(T_rF)_n=\Lambda_{n} \otimes_{\Lambda_{n+r}} F_{n+r}$. Note that $T_r
T_s=T_{r+s}$. The map $T_r F \to F$ has AR-null kernel and is an epimorphism
(resp.\ isomorphism) for $F$ strict (resp.\ preadic). By \cite[V Proposition
3.2.3]{SGA5}, $F$ is AR-preadic if and only if $F$ satisfies the
Mittag-Leffler-Artin-Rees condition and $T_r F'$ is preadic for some $r\ge
0$. Here $F'$ denotes the universal image system of $F$. The functor
\[\Mod_{AR\text{-}pa}(\cX^\N,\Lambda_\bullet)\to
\Mod_{pa}(\cX^\N,\Lambda_\bullet)
\]
carrying $F$ to $T_r F'$ is a right adjoint of the inclusion functor. By
\cite[V Propositions 2.1.2, 3.1.3, 3.2.4]{SGA5},
$\Mod_{pa}(\cX^\N,\Lambda_\bullet)$,
$\Mod_{AR\text{-}pa}(\cX^\N,\Lambda_\bullet)$ are stable under cokernels and
$\Mod_{pa}(\cX^\N,\Lambda_\bullet)$ is stable under extensions in
$\Mod(\cX^\N,\Lambda_\bullet)$.

\begin{Proposition}\label{p.ARpreadic}
Let $0\to L\to M\to N \to 0$ be a short exact sequence in
$\Mod(\cX^\N,\Lambda_\bullet)$ with $L$ and $N$ AR-preadic. Then $M$ is
AR-preadic.
\end{Proposition}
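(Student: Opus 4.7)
The plan is to reduce to the case where $N$ is genuinely preadic via a pullback construction, and then to run a snake--lemma argument to control the defect of the middle term from being preadic.

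By the remark following Lemma \ref{p.pread}, choose a preadic module $\bar N$ and a morphism $\psi\colon \bar N \to N$ with AR-null kernel and cokernel. Form the pullback $M_1 := M \times_N \bar N$. Since $M\to N$ is surjective, $M_1$ sits in an exact sequence
\[
0 \to L \to M_1 \to \bar N \to 0,
\]
and the first projection $M_1 \to M$ has kernel $\Ker\psi$ and cokernel $\Coker\psi$, both AR-null. Hence $M_1$ and $M$ become isomorphic in the Serre quotient $\Mod(\cX^\N,\Lambda_\bullet)/\cN$, so (by Lemma \ref{p.pread} and the remark after it) AR-preadic-ness of $M_1$ is equivalent to that of $M$. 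We may therefore replace $N$ by the preadic $\bar N$. Similarly, pick a preadic $\bar L$ with a morphism $\phi\colon \bar L \to L$ having AR-null kernel and cokernel; $\phi$ will enter the analysis of the left column in the diagram below (an analogous pushout-type reduction to $L$ preadic is more delicate because $\phi$ need not be injective, so it is cleaner to keep $L$ as AR-preadic and use $\phi$ only for bookkeeping).

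With $N=\bar N$ preadic, the main computation is as follows. Applying $\Lambda_n\otimes_{\Lambda_{n+1}}-$ to the exact sequence at level $n+1$, using preadic-ness of $\bar N$ (so that $\Lambda_n\otimes_{\Lambda_{n+1}}\bar N_{n+1}\simeq \bar N_n$) and the identification $\cTor_1^{\Lambda_{n+1}}(\Lambda_n,\bar N_{n+1})=\bar N_{n+1}[\lambda^{n+1}]$, the snake lemma produces a natural commutative diagram with exact rows
\[
\xymatrix@C=1.4em{
\bar N_{n+1}[\lambda^{n+1}] \ar[r] & \Lambda_n\otimes_{\Lambda_{n+1}} L_{n+1} \ar[r]\ar[d] & \Lambda_n\otimes_{\Lambda_{n+1}} M_{n+1} \ar[r]\ar[d] & \bar N_n \ar[r]\ar@{=}[d] & 0 \\
 & L_n \ar[r] & M_n \ar[r] & \bar N_n \ar[r] & 0.
}
\]
Comparing with the corresponding diagram for the preadic $\bar L$ via $\phi$ (where the left vertical map is an isomorphism) and using that $\Ker\phi$ and $\Coker\phi$ are AR-null, one shows that the left vertical map above has kernel and cokernel forming AR-null systems. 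A diagram chase then yields that the middle vertical $\Lambda_n\otimes_{\Lambda_{n+1}} M_{n+1}\to M_n$ is surjective with AR-null kernel. From this, for a uniform shift $r$ large enough, the submodule $M'\subset M$ defined by $M'_n := \mathrm{image}(M_{n+r}\to M_n)$ is preadic and $M/M'$ is AR-null, which exhibits $M$ as AR-preadic.

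The main obstacle is the uniformity of the shift $r$ throughout the AR-null bookkeeping: one must combine the AR-null kernels and cokernels appearing in the snake-lemma diagram with the AR-null data coming from $\phi$ and $\psi$ to produce a single $r$ that works for all $n$, and to verify that the image-stabilization construction $M'_n=\mathrm{image}(M_{n+r}\to M_n)$ then satisfies the preadic-ness condition $\Lambda_n\otimes_{\Lambda_{n+1}} M'_{n+1}\simto M'_n$. This ultimately rests on an Artin--Rees type principle, available because $\cO$ is Noetherian and $\bar L,\bar N$ are preadic, guaranteeing that the $\lambda$-torsion obstruction $\bar N_{n+1}[\lambda^{n+1}]$ contributes only AR-null noise after a uniform shift.
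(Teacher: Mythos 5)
Your opening reduction (replacing $N$ by a preadic $\bar N$ via the pullback $M\times_N\bar N$, which changes $M$ only by an AR-null kernel and cokernel) is correct and is a clean substitute for part of the paper's reduction. The fatal problem is the last inference. You claim that once the maps $\Lambda_n\otimes_{\Lambda_{n+1}}M_{n+1}\to M_n$ are known to be surjective with AR-null kernel, the image system $M'_n=\Img(M_{n+r}\to M_n)$ is preadic for a uniform $r$ and hence $M$ is AR-preadic. This implication is false: the condition ``$T_1M\to M$ is an epimorphism with AR-null kernel'' (where $(T_1M)_n=\Lambda_n\otimes_{\Lambda_{n+1}}M_{n+1}$) does not characterize AR-preadic modules. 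Take $\cX$ the punctual topos and $M_n=\prod_{k\ge 0}\Lambda_0$ for all $n$, with transition maps the shift $(x_0,x_1,\dots)\mapsto(x_1,x_2,\dots)$. Then $\Lambda_n\otimes_{\Lambda_{n+1}}M_{n+1}\simeq M_{n+1}$ and the map to $M_n$ is the shift, which is surjective; its kernel is the system of first coordinates, whose transition maps are zero, so it is AR-null. Yet $M$ is strict and $T_rM$ is never preadic, so $M$ is not AR-preadic by the criterion of \cite[V 3.2.3]{SGA5} invoked in the paper. Concretely, your stabilized system is $M'=M$ itself (the shifts are already surjective) and it is not preadic, for any $r$. (This $M$ is of course not an extension of AR-preadic modules, so it does not contradict the Proposition; it only shows that your final step cannot be justified from the information you have extracted.) A secondary slip: $\cTor_1^{\Lambda_{n+1}}(\Lambda_n,P)$ is $P[\lambda^{n+1}]/\lambda P$, not $P[\lambda^{n+1}]$; and the intermediate claim that $T_1L\to L$ has AR-null kernel for $L$ merely AR-preadic is itself left unproved (its cokernel is easy, its kernel is not, since $T_1$ is only right exact).

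What is missing is the mechanism for actually producing a preadic module mapping to $M$. The paper first uses the Mittag-Leffler-Artin-Rees condition, inherited by $M$ from $L$ and $N$ by \cite[V 2.1.2 (ii)]{SGA5}, to replace $M$ and $N$ by their universal image (strict) systems; then applies $T_r$ to make $N$ preadic and to make $L$ preadic up to an AR-null subobject $T$, arriving at an exact sequence $0\to T\to L\to M\to N\to 0$ with $T$ AR-null and $L,N$ preadic. The decisive step (Lemma \ref{p.preadic}) is then to lift the extension class: the isomorphism $\Ext^1_{\Lambda_\bullet}(N,L)\simto\Ext^1_{\Lambda_\bullet}(N,L/T)$ supplied by \ref{p.RcHomnull} (b) produces, via the nine lemma, an honest extension $M'$ of $N$ by $L$, which is preadic by \cite[V 3.1.3]{SGA5} and maps onto $M$ with AR-null kernel $T$. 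This $\Ext^1$ argument, which requires both ends to be preadic, is the idea your level-by-level approach has no substitute for; to repair your proof you would need to carry out the reduction of $L$ (not just $N$) to a preadic module and then perform some version of this lifting.
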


This is a variant of \cite[V Proposition 5.2.4]{SGA5}, where one assumes
moreover that $L_n$ is Artinian for all~$n$. The Artinian condition is
seldom satisfied in \'etale cohomology. Indeed, if there exists a nonzero
Artinian \'etale sheaf of $\Lambda_n$-modules for some $n$ on a scheme $X$
of finite type over a field, then $X$ is necessarily an Artinian scheme.

\begin{proof}
As in \cite[V Proposition 5.2.4]{SGA5}, we use the reductions in the proof
of \cite[V Proposition 3.2.4 (ii)]{SGA5} as follows. Since $L$ and $N$
satisfy the Mittag-Leffler-Artin-Rees condition \cite[V Proposition
3.2.3]{SGA5}, $M$ satisfies this condition too \cite[V Proposition 2.1.2
(ii)]{SGA5}. Let $M'$ and $N'$ be the universal image systems of $M$ and
$N$, respectively.  Let $L'=\Ker(M'\to N')$. Applying the snake lemma to the
diagram
\[\xymatrix{0\ar[r] &L'\ar[r]\ar[d] & M'\ar[r]\ar[d]& N'\ar[r]\ar[d]& 0\\
0\ar[r]& L\ar[r] & M\ar[r] & N\ar[r]& 0,}\]
we see that $L'\to L$ is a monomorphism with AR-null cokernel. Therefore, we may assume that $M$ and $N$ are strict.

Since $N$ is AR-preadic, there exists an $r\ge 0$ such that $T_r N$ is
preadic. Let $L''=\Ker(T_r M\to T_r N)$. Applying the snake lemma to the
diagram
\[\xymatrix{0\ar[r] &L''\ar[r]\ar[d] & T_r M\ar[r]\ar[d]& T_r N\ar[r]\ar[d]& 0\\
0\ar[r]& L\ar[r] & M\ar[r] & N\ar[r]& 0,}\]
we see that $L''\to L$ has AR-null kernel and cokernel. Therefore, we may assume that $M$ is strict and $N$ is preadic.

In this case, $L$ is strict \cite[V Proposition 3.1.3 (ii)]{SGA5}. Choose
$r\ge 0$ such that $T_r L$ is preadic. Then we have an exact sequence
\[0\to T \to T_r L \to M \to N \to 0,\]
where $T=\Ker(T_r L\to L)$ is AR-null. We are therefore reduced to the following lemma.
\end{proof}

\begin{Lemma}\label{p.preadic}
Let $0\to T\to L\to M\to N \to 0$ be an exact sequence in
$\Mod(\cX^\N,\Lambda_\bullet)$ with $T$ AR-null, $L$ and $N$ preadic. Then
$M$ is AR-preadic.
\end{Lemma}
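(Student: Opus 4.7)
Write $L':=\Img(L\to M)=L/T$, so that we have short exact sequences $0\to T\to L\to L'\to 0$ and $0\to L'\to M\to N\to 0$. The goal is to construct a preadic module $P$ with an epimorphism $P\to M$ whose kernel is $T$, hence AR-null; such a $P$ is, by definition, a witness that $M$ is AR-preadic.

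For the construction, I apply Lemma~\ref{p.RcHomnull} case~(b) with $N$ (preadic) in the role of ``$M$'' and $T$ (AR-null) in the role of ``$N$'' of that lemma to obtain $R\Hom_{\Lambda_\bullet}(N,T)=0$, whence $\Ext^i_{\Lambda_\bullet}(N,T)=0$ for all $i\ge 0$. The long exact sequence of $\Ext^\bullet(N,-)$ associated to $0\to T\to L\to L'\to 0$ then gives an isomorphism $\Ext^1(N,L)\simto \Ext^1(N,L')$. Lifting the extension class of $0\to L'\to M\to N\to 0$ yields an extension $0\to L\to P\to N\to 0$ together with a morphism of extensions covering $L\twoheadrightarrow L'$ and the identity of $N$. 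The snake lemma identifies the kernel of the resulting map $P\to M$ with $\Ker(L\to L')=T$, and its cokernel with $\Coker(L\to L')=0$.

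It remains to verify that $P$ is preadic. Applying $\Lambda_n\otimes^L_{\Lambda_{n+1}}-$ to the level-$(n+1)$ sequence $0\to L_{n+1}\to P_{n+1}\to N_{n+1}\to 0$ and using the preadicity of $L$ and $N$ to identify $\Lambda_n\otimes L_{n+1}\simeq L_n$ and $\Lambda_n\otimes N_{n+1}\simeq N_n$, the long exact Tor sequence gives a four-term exact sequence
\[\mathrm{Tor}_1^{\Lambda_{n+1}}(\Lambda_n,N_{n+1})\xto{\partial} L_n \to \Lambda_n\otimes_{\Lambda_{n+1}}P_{n+1}\to N_n\to 0.\]
The canonical map $\Lambda_n\otimes_{\Lambda_{n+1}}P_{n+1}\to P_n$ fits into a commutative diagram with the short exact sequence $0\to L_n\to P_n\to N_n\to 0$, and the composite $L_n\to \Lambda_n\otimes_{\Lambda_{n+1}}P_{n+1}\to P_n$ is the inclusion $L_n\hookrightarrow P_n$. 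In particular, $L_n\to \Lambda_n\otimes_{\Lambda_{n+1}}P_{n+1}$ is injective, so $\partial=0$. The four-term sequence is then short exact, and the five lemma gives the required isomorphism $\Lambda_n\otimes_{\Lambda_{n+1}}P_{n+1}\simto P_n$.

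The main obstacle is in this last step: a priori, $\mathrm{Tor}_1^{\Lambda_{n+1}}(\Lambda_n,N_{n+1})$ may be nonzero, but the injectivity of $L_n\hookrightarrow P_n$ forces the Tor boundary into $L_n$ to vanish, which is the key observation that makes the five lemma applicable.
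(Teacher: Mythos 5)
Your proposal is correct and follows essentially the same route as the paper: split off $L'=\Img(L\to M)$, use Lemma~\ref{p.RcHomnull}~(b) to get $R\Hom_{\Lambda_\bullet}(N,T)=0$ and hence $\Ext^1(N,L)\simto\Ext^1(N,L')$, lift the extension class, and conclude by the snake lemma (the paper's 9-diagram). The only difference is that where the paper cites \cite[V 3.1.3]{SGA5} for the preadicity of the lifted extension $P$, you verify it directly with the Tor argument, which is a correct and self-contained substitute.
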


\begin{proof}
  We decompose the exact sequence into two short exact sequences
  \begin{gather}
  \label{e.preadic1}
    0\to T\to L\to Q\to 0,\\
\label{e.preadic2}
    0\to Q\to M\to N\to 0.
  \end{gather}
Using \eqref{e.preadic1} and Proposition \ref{p.RcHomnull} (b), we obtain an
isomorphism $\Ext^1_{\Lambda_\bullet}(N,L) \simto
\Ext^1_{\Lambda_\bullet}(N,Q)$. The preimage of the class of
\eqref{e.preadic2} under this isomorphism provides a 9-diagram
  \[\xymatrix{&0\ar[d]&0\ar[d]\\
  &T\ar[d]\ar@{=}[r]&T\ar[d] \\
  0\ar[r]&L\ar[d]\ar[r]& M'\ar[r]\ar[d] & N\ar[r]\ar@{=}[d]&0\\
  0\ar[r]&Q\ar[r]\ar[d]& M \ar[r]\ar[d] & N\ar[r]& 0.\\
  &0&0}\]
  Since $L$ and $N$ are preadic, $M'$ is also preadic \cite[V Proposition 3.1.3]{SGA5}.
\end{proof}

\begin{Definition}
We say that $M\in \Mod(\cX^\N,\Lambda_\bullet)$ is \emph{adic} (``Noetherian
$\fm$-adic'' in the terminology of \cite[V D\'efinition 5.1.1]{SGA5}) if $M$
is preadic and, for all $n$ or, equivalently, for some $n$, $M_n$ is
Noetherian; $M\in \Mod(\cX^\N,\Lambda_\bullet)$ is \emph{AR-adic} if there
exists an adic module $N\in \Mod(\cX^\N,\Lambda_\bullet)$ and a homomorphism
$N\to M$ with AR-null kernel and cokernel.
\end{Definition}

By Lemma \ref{p.pread}, $M$ is AR-adic if and only if there exists an adic
module $N\in \Mod(\cX^\N,\Lambda_\bullet)$, isomorphic to $M$ in
$\Mod(\cX^\N,\Lambda_\bullet)/\cN$. Note that we do not assume $M_n$ to be
Noetherian, hence our notion of AR-adic modules is more general than the
notion of ``Noetherian AR-$\fm$-adic projective systems'' in \cite[V
D\'efinition 5.1.3]{SGA5}. However, if $M$ is AR-adic and strict, then $M_n$
is Noetherian. Let $\Mod_{a}(\cX^\N,\Lambda_\bullet)$ (resp.\
$\Mod_{AR\text{-}a}(\cX^\N,\Lambda_\bullet)$) be the full subcategory of
$\Mod(\cX^\N,\Lambda_\bullet)$ consisting of adic (resp.\ AR-adic) modules.

\begin{Proposition}\label{p.thick}
The category $\Mod_{AR\text{-}a}(\cX^\N,\Lambda_\bullet)$ is a thick
subcategory of $\Mod(\cX^\N,\Lambda_\bullet)$ (Definition \ref{d.thick}).
\end{Proposition}

\begin{proof}
Using the same methods as in \cite[V Proposition 5.2.1]{SGA5}, one shows
that $\Mod_{AR\text{-}a}(\cX^\N,\Lambda_\bullet)$ is stable by kernel and
cokernel. Using the same reductions as in the proof of Proposition
\ref{p.ARpreadic}, one deduces from Lemma \ref{p.preadic} that this
subcategory is also stable by extension.
\end{proof}

\begin{Construction}
We define $\Mod_c(\cX,\cO)$ to be the quotient
\[\Mod_c(\cX,\cO) = \Mod_{AR\text{-}a}(\cX^\N,\Lambda_\bullet) / \cN.\]
By Lemma \ref{p.pread}, the composition
\[\Mod_a(\cX^\N,\Lambda_\bullet)\xto{\iota} \Mod_{AR\text{-}a}(\cX^\N,\Lambda_\bullet) \xto{\psi} \Mod_c(\cX,\cO)\]
of the inclusion functor $\iota$ and the localization functor $\psi$ is an equivalence of categories. Let
\[\phi\colon \Mod_c(\cX,\cO) \to \Mod_{a}(\cX^\N,\Lambda_\bullet)\]
be a quasi-inverse of $\psi\iota$. For $M\in \Mod_{AR\text{-}a}(\cX,\cO)$,
$\phi\psi M\simeq T_r M'$ for $r$ big enough, where $M'$ is the universal
image system of $M$ and $T_r$ is the functor defined in the proof of
Proposition \ref{p.ARpreadic}.  By Lemma \ref{p.pread}, $\iota\phi$ is a
left adjoint of $\psi$. We put $\cO=\psi \Lambda_\bullet$. Note that
$\cO\xrightarrow{\times \lambda} \cO$ is a monomorphism in
$\Mod_c(\cX,\cO)$, so that $\iota\phi$ is not left exact unless $\cX$ is an
initial topos. By \cite[V Th\'eor\`eme 5.2.3]{SGA5}, $\Mod_c(\cX,\cO)$ is a
Noetherian category.

We denote the category of Noetherian $\Lambda_n$-modules on $\cX$ by
$\Mod_c(\cX,\Lambda_n)$. For $M\in \Mod_c(\cX,\cO)$, $N\in
\Mod_c(\cX,\Lambda_n)$, we have $e_{n*}N\in
\Mod_{AR\text{-}a}(\cX,\Lambda_\bullet)$, $\phi \psi e_{n*}\simeq \pi^*(\pi
e_n)_*$, and
\[\Hom_{\Mod_c(\cX,\Lambda_n)}(e_n^{-1} \phi M,N) \simeq \Hom_{\Mod_c(\cX,\cO)}(M,\psi e_{n*}N).\]
Thus $e_n^{-1}\phi\psi e_{n*}\simeq e_n^{-1}\pi^*(\pi e_n)_* \simeq \one$
and $\psi e_{n*}\colon \Mod_c(\cX,\Lambda_n)\to \Mod_c(\cX,\cO)$ is fully
faithful. Define $\Lambda_n\otimes_\cO M= e_n^{-1}\phi M$. The family of
functors
\[\left(\Lambda_n\otimes_\cO-\colon \Mod_c(\cX,\cO)\to \Mod_c(\cX,\Lambda_n)\right)_{n\in \N}\]
is conservative.
\end{Construction}

\begin{Definition}\label{d.lisse}
We say $M\in \Mod_a(\cX^\N,\Lambda_\bullet)$ is \emph{locally constant}
 if $M_n$ is locally constant \cite[IX 2.0]{SGA4} for all $n$. We say $M\in
\Mod_c(\cX,\cO)$ is \emph{locally constant} if it is in the essential image
of locally constant adic modules. For any $n$, this is equivalent to
$\Lambda_n\otimes_\cO M$ being locally constant.
\end{Definition}

\begin{Lemma}\label{l.torsion}
Let $\cA$ be an Abelian category and let $M$ be a Noetherian object of
$\cA$. Let $F\colon M\to M$ be an endomorphism. Then there exists $n\ge 0$
such that the endomorphism on $\Img(M\xrightarrow{F^n} M)$ induced by $F$ is
a monomorphism.
\end{Lemma}

We say $M\in \Mod_{pa}(\cX^\N,\Lambda_\bullet)$ is \emph{torsion-free} if
$M\xrightarrow{\times\lambda} M$ is a monomorphism in $\Mod_{pa}$. This
definition does not depend on the choice of $\lambda$. By the lemma, every
adic object $M$ of $\Mod(\cX^\N,\Lambda_\bullet)$ sits in a short exact
sequence $0\to M'\to M \to M''\to 0$ in $\Mod_a$, where $M''$ is
torsion-free, and $M'$ is torsion, namely, annihilated by $\lambda^{n}$ for
some $n\ge 0$. In $\Mod(\cX^\N,\Lambda_\bullet)$ we have an exact sequence
$0\to N\to M'\to M \to M''\to 0$, where $N$ is AR-null.

\begin{proof}
Since $M$ is Noetherian, there exists $n\ge 0$ such that
$M'=\Ker(M\xrightarrow{F^n} M)=\Ker(M\xrightarrow{F^{n+1}} M)$. Then the
morphism $M''=\Img (M\xrightarrow{F^n} M)\xrightarrow{F} M$ is a
monomorphism.
\end{proof}

\begin{Construction}\label{c.Dc}
We say that $M\in D(\cX^\N, \Lambda_\bullet)$ is \emph{AR-adic} if $\fH^i M$
is AR-adic for all $i$. Let $D_{AR\text{-}a}(\cX^\N, \Lambda_\bullet)$ be
the full subcategory of $D(\cX^\N, \Lambda_\bullet)$ consisting of  AR-adic
complexes. We define $D_c^*(\cX,\cO)$ to be the quotient
\[D_c^*(\cX,\cO) = D_{AR\text{-}a}^*(\cX^\N, \Lambda_\bullet) / D^*_\cN,\]
where $*\in\{\emptyset, +, b\}$, $D_{AR\text{-}a}^* = D_{AR\text{-}a}\cap
D^*$, $D_\cN^* = D_\cN\cap D^*$, $D_\cN$ as in Definition \ref{s.DN}. The
inclusion functors induce fully faithful functors
\[\Mod_c(\cX,\cO)\to D^b_c(\cX,\cO)
\to D^+_c(\cX,\cO)\to D_c(\cX,\cO)
\]
by \cite[Proposition 10.2.6]{KSCat}. Moreover, truncation and cohomology
functors on $D_{AR\text{-}a}(\cX^\N, \Lambda_\bullet)$ induce truncation and
cohomology functors on $D_c(\cX,\cO)$.
\end{Construction}

In the rest of this section, we assume that $\cX$ has enough points and
$\cO$ is a complete discrete valuation ring with maximal ideal $\fm$.

\begin{Lemma}\label{p.tfflat}
 Let $M\in \Mod_{pa}(\cX^\N,\Lambda_\bullet)$ be torsion-free. Then $M$ is flat.
\end{Lemma}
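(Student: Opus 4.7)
\emph{Proof plan.} The plan is to test flatness stalk-by-stalk. Since $\cX$ has enough points, the family of points $e_n(x)$, where $x$ runs over a conservative set of points of $\cX$ and $n$ over $\N$, is conservative for $\cX^\N$: the stalk of $\Lambda_\bullet$ at $e_n(x)$ is $\Lambda_n$ and the stalk of $M$ is $M_{n,x}$. I would therefore reduce the claim to showing that each $M_{n,x}$ is flat over $\Lambda_n$. Since $\Lambda_n$ is an Artinian local ring with residue field $\kappa=\cO/\fm$, flatness is equivalent to $\Tor_1^{\Lambda_n}(\kappa,M_{n,x})=0$, and the periodic free resolution
\[\cdots\to\Lambda_n\xrightarrow{\lambda^n}\Lambda_n\xrightarrow{\lambda}\Lambda_n\to\kappa\to 0\]
identifies this $\Tor_1$ with $\Ker(\lambda\colon M_{n,x}\to M_{n,x})/\lambda^n M_{n,x}$. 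So the goal reduces to the inclusion $\Ker(\lambda\colon M_{n,x}\to M_{n,x})\subseteq\lambda^n M_{n,x}$ (the reverse being automatic since $\lambda^{n+1}=0$ in $\Lambda_n$).

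The key step is a diagram chase combining the two hypotheses. Let $r$ be the integer supplied by AR-torsion-freeness, so that the transition map $\Ker(\lambda\colon M_{m+r}\to M_{m+r})\to \Ker(\lambda\colon M_m\to M_m)$ vanishes for every $m$; since stalks are exact this persists after restriction to $x$. Given $y\in M_{n,x}$ with $\lambda y=0$, I would lift $y$ through the preadic identification $M_{n,x}\simeq M_{n+r,x}/\lambda^{n+1}M_{n+r,x}$ to some $\tilde y\in M_{n+r,x}$. Then $\lambda\tilde y\in\lambda^{n+1}M_{n+r,x}$, say $\lambda\tilde y=\lambda^{n+1}z$, so $\tilde y-\lambda^n z$ is $\lambda$-torsion at level $n+r$ and therefore maps to $0$ in $M_{n,x}$ by AR-torsion-freeness. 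This yields $y=\lambda^n\bar z\in\lambda^n M_{n,x}$, as required.

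The only subtle point I anticipate is the initial reduction to stalks, which rests on the hypothesis that $\cX$ has enough points together with the standard flatness criterion for Artinian local rings (the latter applying even without finite generation, since every ideal of $\Lambda_n$ admits a finite filtration with subquotients isomorphic to $\kappa$). The diagram chase itself is routine bookkeeping with the preadic and AR-null definitions, and I do not foresee any serious obstacle beyond keeping the indices straight.
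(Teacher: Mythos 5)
Your proof is correct, but it follows a genuinely different route from the paper's. Both arguments begin with the same reduction: using that $\cX$ has enough points, flatness of $M$ over $\Lambda_\bullet$ is tested on the conservative family of points $e_n(x)$, i.e.\ one must show each $M_{n,x}$ is flat over $\Lambda_n$. From there the paper passes to the inverse limit: it sets $M'=\varprojlim_n M_n$, observes that AR-torsion-freeness forces $M'$ to be torsion-free, hence flat over the discrete valuation ring $\cO$, and then invokes the preadic hypothesis to identify $M_n$ with $\Lambda_n\otimes_\cO M'$, so each level is a base change of a flat module. You instead stay at finite levels: the local flatness criterion over the Artinian local ring $\Lambda_n$ reduces everything to $\Tor_1^{\Lambda_n}(\kappa,M_{n,x})=0$, which your periodic resolution identifies with $\Ker(\lambda)/\lambda^n M_{n,x}$, and the inclusion $\Ker(\lambda)\subseteq\lambda^n M_{n,x}$ is then extracted by lifting one stage $r$ up the tower and applying AR-torsion-freeness once. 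What your approach buys is that it avoids the step the paper leaves implicit, namely the identification $M_n\simeq\Lambda_n\otimes_\cO M'$, which requires a small Mittag--Leffler argument (surjectivity of $M'\to M_n$ and computation of its kernel) that is not spelled out; your argument uses only a single transition map $M_{n+r}\to M_n$ and the definitions. The cost is the explicit $\Tor$ bookkeeping, but every step you list — the conservativity of the points $e_n(x)$, the validity of the local criterion over a ring with nilpotent maximal ideal without finiteness hypotheses, the iterated preadic identification $M_{n,x}\simeq M_{n+r,x}/\lambda^{n+1}M_{n+r,x}$, and the vanishing of the transition map on $\lambda$-torsion — is sound, and the uniformity of $r$ in the AR-null condition is not even needed (an $r$ depending on $n$ would suffice).
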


\begin{proof}
We may assume that $\cX$ is the punctual topos. As $M$ is strict, $R^1\pi_*
M=0$. Moreover, $\Ker(M\xrightarrow{\times \lambda^{n+1}}M)$ is AR-null, so
that the exact sequence
\[M\xrightarrow{\times \lambda^{n+1}} M \to \pi^*\Lambda_n\otimes_\cO M\to 0\]
induces a short exact sequence
\[0\to \pi_* M\xrightarrow{\times \lambda^{n+1}} \pi_* M \to M_n\to 0\]
Thus $\pi_* M$ is a torsion-free, hence flat, $\cO$-module and $M_n\simeq
\Lambda_n\otimes_\cO \pi_* M$.
\end{proof}

\begin{Proposition}\label{p.Mhat}
Let $M\in D^+(\cX^\N, \Lambda_\bullet)$ be AR-adic. Then the cone of
$\hat{M}\to M$ is AR-null and $\fH^q \hat{M}_n \in \Mod(\cX,\Lambda_n)$ is
Noetherian for all $q$ and $n$. In particular, $\hat{M}$ is AR-adic. If,
moreover, $M\in D^{[a,b]}(\cX^\N, \Lambda_\bullet)$, then $\hat{M}$ belongs
to $D^{[a-1,b]}(\cX^\N, \Lambda_\bullet)$.
\end{Proposition}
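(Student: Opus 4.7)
The strategy has three steps: first establish the cohomological bound, then d\'evissage to a single AR-adic module in degree~$0$, then treat the adic case by explicit computation.

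For the bound $\hat M\in D^{[a-1,b]}$, Lemma \ref{p.Lpi} (a) gives $L\pi^*$ of cohomological amplitude contained in $[-1,0]$, so it suffices to show $R\pi_* M\in D^{[a,b]}(\cX,\cO)$. By induction on $b-a$ via the triangles $\tau^{<k}M\to M\to (\fH^k M)[-k]\to$, one reduces to the case of a single AR-adic $F$ in degree~$0$. Choosing an adic $N$ with $N\to F$ of AR-null kernel and cokernel (which exists by Lemma \ref{p.pread}), Lemma \ref{p.Rpi} (a) gives $R\pi_* F\simeq R\pi_* N$, and $R\pi_* N=\varprojlim_m N_m$ is concentrated in degree~$0$ by Mittag--Leffler (adic transitions being surjective). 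The same d\'evissage, now applied to $\hat{\,\cdot\,}$, together with the stability of AR-null complexes under cones and of Noetherian modules under extensions, also reduces the claims on $\mathrm{cone}(\hat M\to M)$ and on $\fH^q\hat M_n$ to the case $M=F$ in degree~$0$. Since $\hat N\simto \hat F$ and the cone of $N\to F$ is already AR-null, it remains to prove that for $N$ adic, $\hat N\to N$ has AR-null cone and $\fH^q\hat N_n$ is Noetherian.

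For $N$ adic, $\hat N=L\pi^*\varprojlim_m N_m$. The two-term flat resolution $\cO\xrightarrow{\times\lambda^{n+1}}\cO$ of $\Lambda_n$ places $\hat N_n$ in degrees $[-1,0]$ with $\fH^0\hat N_n=(\varprojlim_m N_m)/\lambda^{n+1}$ and $\fH^{-1}\hat N_n=(\varprojlim_m N_m)[\lambda^{n+1}]$. The proof will then rest on two sub-claims: (i) the natural map $(\varprojlim_m N_m)/\lambda^{n+1}\to N_n$ is an isomorphism, identifying $\fH^0\hat N\simto N$ so that the cone of $\hat N\to N$ becomes $(\fH^{-1}\hat N)[1]$; and (ii) the system $\bigl((\varprojlim_m N_m)[\lambda^{n+1}]\bigr)_n$, with transition $\times\lambda$, is AR-null and has Noetherian $\Lambda_n$-module components. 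Both are sheaf versions of classical statements about finitely generated modules over the DVR $\cO$, accessible via stalks since $\cX$ has enough points.

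The main obstacle is the uniform control of $\lambda$-torsion in $\varprojlim_m N_m$ required for (ii): the AR-null bound must be chosen independently of $n$, which amounts to showing that $(\varprojlim_m N_m)[\lambda^\infty]$ is annihilated by a fixed power of~$\lambda$. Stalkwise this follows from finite generation over $\cO$ and the Artin--Rees lemma, but the globalization to a sheaf statement uses the Noetherianity of each $N_n$ on $\cX$ in an essential way, together with \ref{p.pread} to lift adic representatives across AR-isomorphisms.
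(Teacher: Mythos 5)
Your reduction to a single AR-adic module in degree $0$, and the replacement of that module by an adic one via \ref{p.pread} and \ref{p.Rpi} (a), match the paper. The gap is in your treatment of the adic case, where you diverge from the paper and run into exactly the difficulties its argument is built to avoid. First, the assertion that $R\pi_*N=\varprojlim_m N_m$ is concentrated in degree $0$ ``by Mittag--Leffler'' is unjustified: $R\pi_*$ is the derived inverse limit in the category of sheaves on $\cX$, and surjectivity of the transition maps of a projective system of \emph{sheaves} does not kill $R^q\varprojlim$ for $q\ge 1$ (taking sections destroys surjectivity, and on a general topos even $R^q\varprojlim$ for $q\ge 2$ need not vanish, products of sheaves not being exact). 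Consequently your identification of $\fH^0\hat N_n$ and $\fH^{-1}\hat N_n$ via the two-term resolution of $\Lambda_n$ applied to $\varprojlim_m N_m$ is not established, and for the same reason your sub-claim (i) (surjectivity of $\varprojlim_m N_m\to N_n$) is not ``a sheaf version of a classical statement''. Second, sub-claim (ii) --- the uniform bound on the $\lambda$-torsion of $\varprojlim_m N_m$ --- cannot be checked ``via stalks'': stalk functors do not commute with infinite inverse limits, so $(\varprojlim_m N_m)_x$ is not $\varprojlim_m (N_m)_x$, and the Artin--Rees argument you invoke has nothing to act on.

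The paper sidesteps both issues by never computing $R\pi_*$ or $\varprojlim$ of an adic module. Using that $\Mod_c(\cX,\cO)$ is Noetherian, one first splits an AR-adic module into an AR-torsion-free part and a part killed by a fixed power of $\lambda$ --- this is precisely your uniform torsion bound, obtained structurally rather than stalkwise, and it is the first step, not an afterthought. For the adic torsion-free part, \ref{p.tfflat} gives flatness, so the normalization criterion \ref{p.Eke} (c)$\Rightarrow$(a) (verified on the $e_n^{-1}$'s, with AR-null cones, not via $\varprojlim$) yields $\hat M\simto M$ with no computation at all. For the part killed by $\lambda^{n+1}$ one has $M\simeq\pi^*N$ with $N=M_n$ Noetherian, and \ref{p.Lpi} (b) together with \ref{p.Rpi} identifies $\hat M$ with $L\pi^*N\in D^{[-1,0]}$, whose cohomologies are subquotients of $N$. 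To salvage your computational route you would at minimum have to prove that $R\pi_*N$ is concentrated in degree $0$ for $N$ adic on a general topos with enough points; this is not proved in the paper, is not needed for its argument, and is not supplied by your proposal.
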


This is similar to \cite[Theorem 3.0.14]{LO2}.

\begin{proof}
We may assume $M\in \Mod(\cX^\N,\Lambda_\bullet)$ AR-adic. Note that if
$F\to M$ is a homomorphism of AR-null kernel and cokernel with $F$ adic,
then the induced map $\hat{F} \to \hat{M}$ is an isomorphism. Thus we may
assume $M$ adic. By the remark following Lemma \ref{l.torsion}, we may
further assume that $M$ is either (a) torsion-free or (b) annihilated by
some $\lambda^n$.

In case (a), $M$ is flat by Lemma \ref{p.tfflat}, hence normalized by Lemma
\ref{p.Eke}, that is $\hat{M}\simto M$. In case (b), $M\simeq\pi^*N$, where
$N=M_n\in \Mod(\cX,\Lambda_n)$ is Noetherian. By Lemma \ref{p.Lpi}
\ref{p.Lpi2}, the cone $C$ of $L\pi^*N \to \pi^* N$ is AR-null. Hence, by
Lemma \ref{p.Rpi}, the composition $N\to R\pi_* L\pi^* N \to R\pi_* \pi^*
N$, where the first map is the adjunction map, is an isomorphism. Applying
$L\pi^*$ on both sides, we obtain $L\pi^* N\simto \hat{M}$. Note that
$L\pi^* N$ belongs to $D^{[-1,0]}$ and, by the proof of Lemma \ref{p.Lpi},
$e_n ^{-1} \fH^q L\pi^* N$ is a subquotient of $N_n$, hence is Noetherian,
for all $q$ and $n$. Hence $\hat{M}$ belongs to
$D^{[-1,0]}(\cX^\N,\Lambda_\bullet)$ and $\fH^q \hat{M}_n \in
\Mod(\cX,\Lambda_n)$ is Noetherian for all $q$ and $n$. The cone of
$\hat{M}\to M$ is isomorphic to $C$, which is AR-null.
\end{proof}

\begin{Corollary}\label{p.normadic}
Let $M\in D^+(\cX^\N, \Lambda_\bullet)$ be normalized. Then, for any $n$,
$M$ is AR-adic if and only if $\fH^q M_n$ is Noetherian for all $q$.
\end{Corollary}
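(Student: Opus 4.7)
The plan is to treat the two implications separately. The forward direction is immediate from Proposition~\ref{p.Mhat}: since $M$ is normalized, the adjunction map $\hat{M} \to M$ is an isomorphism, and the Proposition yields Noetherianity of $\fH^q \hat{M}_n = \fH^q M_n$ for all $q$ and $n$.

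For the reverse direction, I would use Lemma~\ref{p.Eke} to write $M \simeq L\pi^* N$ with $N = R\pi_* M \in D^+(\cX, \cO)$, and then apply the short exact sequence \eqref{e.Lpi} to each cohomology sheaf:
\[
0 \to \pi^* \fH^q N \to \fH^q M \to \fH^{-1} L\pi^* \fH^{q+1} N \to 0.
\]
The hypothesis on $\fH^q M_n$ together with the thickness of the category of Noetherian sheaves of $\Lambda_n$-modules (Proposition~\ref{p.cN}) forces both the subsheaf $\pi^* \fH^q N$ (with level-$n$ piece $\Lambda_n \otimes_\cO \fH^q N$) and the quotient $\fH^{-1} L\pi^* \fH^{q+1} N$ (with level-$n$ piece $(\fH^{q+1} N)[\lambda^{n+1}]$) to have Noetherian levels at every $n$. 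The subsheaf is preadic by Lemma~\ref{p.Lpi}(a), hence adic, and therefore AR-adic.

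The main work lies in showing the quotient $\fH^{-1} L\pi^* \fH^{q+1} N$ is AR-adic. Mimicking the case analysis in the proof of Proposition~\ref{p.Mhat}, I would first reduce to the $\lambda$-torsion part of $\fH^{q+1} N$ (its torsion-free quotient contributing nothing to $\fH^{-1} L\pi^*$), and then, using the enough-points hypothesis on $\cX$ together with the stalk-wise structure of $\lambda$-torsion modules over the DVR $\cO$, split further into a bounded-exponent component (whose $\fH^{-1} L\pi^*$ is AR-null) and a divisible Pr\"ufer-type component of finite rank (whose $\fH^{-1} L\pi^*$ is adic, similar to a finite power of $\Lambda_\bullet$). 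The thickness of AR-adic modules (Proposition~\ref{p.thick}) then gives that the quotient is AR-adic, and a second application of \ref{p.thick} to the exact sequence above yields $\fH^q M$ AR-adic. The principal obstacle is executing this stalk-wise structural decomposition carefully, in particular verifying that the bounded-exponent contribution is \emph{uniformly} bounded globally so as to be AR-null rather than merely essentially zero.
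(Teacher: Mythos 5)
Your forward direction and your handling of the subobject $\pi^*\fH^q N$ coincide with the paper's: that sheaf is preadic by \ref{p.Lpi}(a) and has Noetherian levels (being at each level a subobject of the Noetherian $\fH^q M_n$), hence is adic. The genuine gap is in your treatment of the quotient $Q=\fH^{-1}L\pi^*\fH^{q+1}N$, whose level-$n$ piece is indeed $(\fH^{q+1}N)[\lambda^{n+1}]$. The structural decomposition you invoke --- the torsion of $\fH^{q+1}N$ splitting into a bounded-exponent piece plus a finite-rank divisible piece --- is false for general torsion $\cO$-modules (e.g.\ $\bigoplus_{k\ge 1}\cO/\fm^k$ is reduced and unbounded), so it would have to be deduced from the hypothesis that each $(\fH^{q+1}N)[\lambda^{n+1}]$ is Noetherian. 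Even granting this at stalks (where it amounts to the classification of Artinian modules over a complete discrete valuation ring), the splitting off of the bounded part is not canonical, so it does not glue to a decomposition of sheaves; and, as you note yourself, the exponent of the bounded part would need to be \emph{uniformly} bounded over $\cX$ to get AR-nullity rather than mere essential nullity. Nothing in the hypotheses supplies such a bound: in this section $\cX$ is an arbitrary topos with enough points, and Noetherianity of an object of $\Mod(\cX,\Lambda_n)$ does not even give finite generation of stalks. So the argument as proposed does not close.

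The paper never analyzes $Q$ structurally. Instead it applies \eqref{e.Lpi} one degree higher: $\fH^{q+1}M$ contains the adic subobject $M'=\pi^*\fH^{q+1}R\pi_*M$ with essentially zero quotient, so $\widehat{\fH^{q+1}M}\simeq\widehat{M'}$ by \ref{p.Rpi}(a), and \ref{p.Mhat} applied to the adic (hence AR-adic) module $M'$ shows this complex lies in $D^{[-1,0]}$ and is AR-adic. The distinguished triangle $\widehat{\tau^{\le q}M}\to M\to\widehat{\tau^{\ge q+1}M}\to$ coming from \ref{p.tau} then exhibits $\fH^q M$ as an extension of $\fH^{-1}\widehat{\fH^{q+1}M}$ by $\fH^0\widehat{\fH^q M}$, both AR-adic, and \ref{p.thick} concludes. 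If you prefer to keep your formulation via \eqref{e.Lpi}, the repair is the same idea: identify $Q$ with $\fH^{-1}$ of the normalization of an adic module and quote \ref{p.Mhat}, rather than attempting a structure theorem for $\fH^{q+1}N$.
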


\begin{proof}
If $M$ is AR-adic, then $\fH^q M_n \simeq \fH^q \hat{M}_n$ is Noetherian by
Proposition \ref{p.Mhat}. Conversely, if $\fH^q M_n$ is Noetherian for all
$q$, we first show that $\widehat{\fH^q M}$ belongs to $D^{[-1,0]}$ and is
AR-adic for all $q$. The short exact sequence \eqref{e.Lpi} for $R\pi_* M$
induces a short exact sequence
  \[ 0\to M'\to \fH^q M\to M'' \to 0,\]
where $M'$ is preadic and $M''$ is essentially zero. Since $M'_n$ is
Noetherian, $M'$ is adic. By Lemma \ref{p.Rpi} \ref{p.Rpi1}, $\widehat{\fH^q
M} \simeq \widehat{M'}$. The latter belongs to $D^{[-1,0]}$ and is AR-adic
by Proposition \ref{p.Mhat}. The distinguished triangle
  \[\widehat{\tau^{\le q} M} \to M \to \widehat{\tau^{\ge q+1} M }\to\]
induces a short exact sequence
  \[0\to \fH^0 \widehat{\fH^q M}\to \fH^q M \to \fH^{-1} \widehat{\fH^{q+1}M}\to 0.\]
Thus $\fH^q M$ is AR-adic.
\end{proof}

\begin{Construction}\label{s.endthree}
Let $D_{AR\text{-}a,\norm}^+(\cX^\N,\Lambda_\bullet)$ be the full
subcategory of $D^+_{AR\text{-}a}(\cX^\N,\Lambda_\bullet)$ consisting of
normalized complexes.  By Lemma \ref{p.Rpi} \ref{p.Rpi1} and Proposition
\ref{p.Mhat}, the restriction of the functor \eqref{e.Mhat} $M\mapsto
\hat{M}$ to $D^+$ factors to give a functor
\[D^+_c(\cX,\cO)\to D^+_{AR\text{-}a,\norm}(\cX^\N,\Lambda_\bullet)\]
called the \emph{normalization functor}, that we still denote by $M\mapsto
\hat{M}$.  The normalization functor is a quasi-inverse of the composition
\[D^+_{AR\text{-}a,\norm}(\cX^\N,\Lambda_\bullet) \xto{\iota} D^+_{AR\text{-}a}(\cX^\N,\Lambda_\bullet) \xto{\psi} D^+_c(\cX,\cO)\]
of the inclusion and localization functors. Moreover, the composition of the
normalization functor and $\iota$ is a left adjoint of $\psi$.

Let $D^{+,\le a}_c(\cX,\cO)$ (resp.\ $D^{\ge a}_c(\cX,\cO)$) be the
essential image of $D^{+,\le a}_{AR\text{-}a}(\cX^\N,\Lambda_\bullet)$
(resp.\ $D^{\ge a}_{AR\text{-}a}(\cX^\N,\Lambda_\bullet)$) in
$D^+_c(\cX,\cO)$. By Proposition \ref{p.Mhat}, $M\in D^+_c(\cX,\cO)$ belongs
to $D^{+,\le a}_c(\cX,\cO)$ if and only if $\hat M \in D^{\le
a}(\cX^\N,\Lambda_\bullet)$. For $M\in D^{+,\le a}_c(\cX,\cO)$ and $N\in
D^{\ge a+1}_c(\cX,\cO)$,
\[\Hom_{D^+_c(\cX,\cO)}(M,N)\simeq \Hom_{D^+_{AR\text{-}a}(\cX,\Lambda_\bullet)}(\hat M,N')=0,\]
where $N'\in D^{\ge a+1}_{AR\text{-}a}(\cX^\N,\Lambda_\bullet)$. Thus
$(D_c^{+,\le 0}, D_c^{\ge 0})$ is a t-structure, of heart $\Mod_c(\cX,\cO)$,
compatible with the truncation and cohomology functors of Construction
\ref{c.Dc}. The cohomological amplitude of the normalization functor
$M\mapsto \hat M$ is contained in $[-1,0]$ and is not $0$ unless $\cX$ is an
initial topos.

For $M\in \Dbc(\cX,\cO)$ and $N\in D_c^+(\cX,\cO)$, define
$R\Hom_{\cO}(M,N)=R\Hom_{\Lambda_\bullet}(\hat M, N)$. By Proposition
\ref{p.RcHomnull}, this gives a functor
\[R\Hom_\cO(-,-) \colon \Dbc(\cX,\cO)^\op \times D_c^+(\cX,\cO)\to D^+(\cO),\]
where $D^+(\cO)$ is the left-bounded derived category of $\cO$-modules. By
Corollary \ref{p.DN}, for $M\in \Dbc(\cX,\cO)$ and $N\in D^+_c(\cX,\cO)$, we
have
\[H^0R\Hom_\cO(M,N)\simeq \Hom_{D^+_c(\cX,\cO)}(M,N).\]

Let $D^+_c(\cX,\Lambda_n)$ be the full subcategory of $D^+(\cX,\Lambda_n)$
consisting of complexes with Noetherian cohomology sheaves. For $M\in
\Dbc(\cX,\cO)$, $N\in D^+_c(\cX,\Lambda_n)$, we have $\widehat{e_{n*}N}
\simeq L\pi^*(\pi e_n)_* N$,
\[\Hom_{D^+_c(\cX,\Lambda_n)}(e_n^{-1}\hat M,N)\simeq \Hom_{D_c^+(\cX,\cO)}(M,e_{n*}N).\]
For $N\in D_c^+(\cX,\cO)$, we define $\Lambda_n\otimes^L_\cO N = e_n^{-1}
\hat N$. For $m,n\ge 0$, the short exact sequence
\[0\to \Lambda_m\xto{\times \lambda^{n+1}} \Lambda_{m+n+1}\to \Lambda_n\to 0\]
induces a distinguished triangle
\[\Lambda_m\otimes_\cO^L N\xto{\times \lambda^{n+1}} \Lambda_{m+n+1}\otimes_\cO^L N\to \Lambda_n\otimes_\cO^L N\to.\]
The functor
\[\Lambda_n\otimes^L_\cO - \colon D_c^+(\cX,\cO)\to D_c^+(\cX,\Lambda_n)\]
is conservative. For an interval $I$, $\Lambda_0\otimes_\cO^L N\in
D^{+,I}_c(\cX,\Lambda_0)$ implies $N\in D^{+,I}_c(\cX,\cO)$.
\end{Construction}

\section{Operations on $D^+_c(\cX,\cO)$}\label{s.5}
In this section we fix a complete discrete valuation ring $\cO$ and we let
$\fm$ denote its maximal ideal. We apply the formalism of
Section~\ref{s.topos} to \'etale topoi $\cX_\et$ of Noetherian
Deligne-Mumford stacks $\cX$ and apply the gluing result in
Section~\ref{s.1} to complete the construction of the six operations for
$D^+_c(-,\cO)$. Note that $\cX_\et$ has enough points. Recall from
Proposition \ref{p.cN} that, for any $n$, $M\in \Mod(\cX,\Lambda_n)$ is
Noetherian if and only if $M$ is constructible. Here, as in
Section~\ref{s.topos}, we let $\Lambda_n=\cO/\fm^{n+1}$. We will let
$\Mod_c(\cX,\cO)$ and $D^+_c(\cX,\cO)$ denote exclusively the categories
constructed in Section~\ref{s.topos} (rather than the more naive categories
considered at the beginning of Section~\ref{s.3}).

\begin{Proposition}\label{p.tensoradic}
Let $M, N \in D^+(\cX^\N,\Lambda_\bullet)$ be AR-adic. Suppose that
$N$ is normalized. Then $M\otimes^L_{\Lambda_\bullet} N$ is AR-adic.
\end{Proposition}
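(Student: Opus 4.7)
First I would reduce to the case where $M$ is normalized. By Proposition~\ref{p.Mhat}, the adjunction $\hat M \to M$ fits into a distinguished triangle with cone $C$ which is AR-null and lies in $D^+$, while $\hat M$ is itself normalized and AR-adic. Tensoring with $N$ produces the triangle $\hat M \otimes^L_{\Lambda_\bullet} N \to M \otimes^L_{\Lambda_\bullet} N \to C \otimes^L_{\Lambda_\bullet} N$; by Proposition~\ref{p.tensornull}(ii) applied to the AR-null $C \in D^+$ and the normalized $N$, the rightmost term is AR-null. Hence $M \otimes^L N$ is AR-adic if and only if $\hat M \otimes^L N$ is, and we may replace $M$ by $\hat M$.

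With both $M$ and $N$ now normalized, Proposition~\ref{p.tensornorm} gives that $M \otimes^L_{\Lambda_\bullet} N$ is also normalized. Writing $M \simeq L\pi^* M'$ and $N \simeq L\pi^* N'$ for $M', N' \in D^+(\cX,\cO)$ via Proposition~\ref{p.Eke}, we have $M \otimes^L_{\Lambda_\bullet} N \simeq L\pi^*(M' \otimes^L_\cO N')$, which lies in $D^+(\cX^\N,\Lambda_\bullet)$ since $\cO$ has global dimension $1$ and $L\pi^*$ has cohomological amplitude in $[-1,0]$. By the criterion of Proposition~\ref{p.normadic}, it suffices to show that $\fH^q((M \otimes^L N)_n)$ is Noetherian for every $q$ and $n$. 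Associativity of the derived tensor product yields $(M \otimes^L N)_n \simeq M' \otimes^L_\cO N_n$, and applying Proposition~\ref{p.normadic} to $M$ and $N$ themselves gives that $\fH^i M_n$ and $\fH^j N_n$ are already Noetherian.

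The final step is to prove Noetherianness of $\fH^q(M' \otimes^L_\cO N_n)$. Since $\cO$ is a discrete valuation ring, the hypertor spectral sequence has only two nonzero rows, so each such cohomology sheaf admits a finite filtration whose graded pieces are of the form $\fH^i M' \otimes_\cO \fH^j N_n$ (for $i+j = q$) and $\mathrm{Tor}_1^\cO(\fH^i M', \fH^j N_n)$ (for $i+j = q+1$), finiteness coming from the bounded-below assumption on $M'$ and $N_n$. The distinguished triangle $M' \xrightarrow{\lambda^{n+1}} M' \to M_n$ yields short exact sequences $0 \to \fH^i M'/\lambda^{n+1} \to \fH^i M_n \to \fH^{i+1} M'[\lambda^{n+1}] \to 0$, so Noetherianness of $\fH^i M_n$ together with thickness of the subcategory of Noetherian sheaves (\ref{p.cN}) implies that both $\fH^i M'/\lambda^{n+1}$ and $\fH^i M'[\lambda^{n+1}]$ are Noetherian $\Lambda_n$-modules. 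Since $\fH^j N_n$ is annihilated by $\lambda^{n+1}$, the tensor $\fH^i M' \otimes_\cO \fH^j N_n$ reduces to $(\fH^i M'/\lambda^{n+1}) \otimes_{\Lambda_n} \fH^j N_n$, Noetherian as a tensor of two Noetherian $\Lambda_n$-modules; and via the rewriting $A \otimes^L_\cO B \simeq (\Lambda_n \otimes^L_\cO A) \otimes^L_{\Lambda_n} B$ for a $\Lambda_n$-module $B$ together with the two-term complex $[A \xrightarrow{\lambda^{n+1}} A]$ computing $\Lambda_n \otimes^L_\cO A$, the $\mathrm{Tor}_1^\cO$ terms appear as extensions of $\mathrm{Tor}^{\Lambda_n}$-groups of the Noetherian pieces $\fH^i M'/\lambda^{n+1}$, $\fH^i M'[\lambda^{n+1}]$, and $\fH^j N_n$, hence are themselves Noetherian.

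The main technical obstacle is the Tor computation in the last paragraph: one must carefully convert $\mathrm{Tor}^\cO$ into $\mathrm{Tor}^{\Lambda_n}$ using the $\lambda^{n+1}$-annihilation of $\fH^j N_n$, keeping track of the contributions from both $\fH^0$ and $\fH^{-1}$ of $\Lambda_n \otimes^L_\cO \fH^i M'$ and repeatedly invoking the thick subcategory property of Noetherian sheaves. The reductions in the earlier steps are essentially formal consequences of the normalization and structural results already established in this section.
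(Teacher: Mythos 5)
Your proposal is correct and follows the paper's argument: reduce to $M$ normalized via \ref{p.Mhat} and \ref{p.tensornull}~(ii), note $M\otimes^L_{\Lambda_\bullet}N$ is normalized and in $D^+$, and then apply the criterion \ref{p.normadic} by checking Noetherianness of the cohomology of the fibers. The only difference is your final step: the paper identifies $e_n^{-1}(M\otimes^L_{\Lambda_\bullet}N)$ with $M_n\otimes^L_{\Lambda_n}N_n$ and invokes \ref{p.cX} in one line, whereas your Künneth/Tor analysis of $M'\otimes^L_\cO N_n$ over the discrete valuation ring, while valid, ultimately rests on the same constructibility-of-tensor input over $\Lambda_n$ and could be bypassed.
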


\begin{proof}
By Proposition \ref{p.Mhat} and Proposition \ref{p.tensornull}
\ref{p.tensornull2}, we may assume $M$ normalized. Moreover, by Proposition
\ref{p.tensornull} \ref{p.tensornull1}, we may assume that $M=L\pi^* M'$,
$N=L\pi^* N'$, with $M', N'\in D^b(\cX,\cO)$. In this case,
$M\otimes^L_{\Lambda_\bullet} N$ belongs to $D^b$ and is normalized. By
Proposition \ref{p.normadic}, $M_n$ and $N_n$ belong to $D_c$ for all $n$.
Hence, by Proposition \ref{p.tensor}, $e_n^{-1}(M\otimes^L_{\Lambda_\bullet}
N) \simeq M_n\otimes^L_{\Lambda_n} N_n$ belongs to $D_c$ for all $n$.
Therefore $M\otimes^L_{\Lambda_\bullet} N$ is AR-adic by Corollary
\ref{p.normadic}.
\end{proof}

\begin{Construction}\label{c.tensor}
For $M\in D^+(\cX^\N,\Lambda_\bullet)$ AR-adic and $N\in D^+_c(\cX,\cO)$, define
\[M\otimes^L_{\cO} N = M\otimes^L_{\Lambda_\bullet} \hat{N}.\]
By Propositions \ref{p.tensoradic} and \ref{p.tensornull}, this gives a
functor
\[-\otimes^L_{\cO}-\colon D^+_c(\cX,\cO)\times D^+_c(\cX,\cO) \to D^+_c(\cX,\cO).\]
\end{Construction}

Using this definition, for $N\in D^+_c(\cX,\cO)$, the projection formula of
Lemma \ref{p.enpf}
\[(e_{n*}\Lambda_n)\otimes^L_{\Lambda_\bullet}
\hat{N}\simto e_{n*} e_{n}^{-1}\hat{N}
\]
can be reformulated as
\[(e_{n*}\Lambda_n)\otimes^L_{\cO} N \simto e_{n*}(\Lambda_n\otimes^L_\cO N). \]

\begin{Proposition}
For all $M,N\in D^+_c(\cX,\cO)$, the map
\[\Lambda_n\otimes^L_\cO(M\otimes^L_\cO N)\simeq(\Lambda_n \otimes_\cO^L M)\otimes^L_{\Lambda_n}(\Lambda_n\otimes^L_\cO N)\]
is an isomorphism.
\end{Proposition}

\begin{proof}
By definition $M\otimes^L_\cO N$ is represented by $\hat
M\otimes^L_{\Lambda_\bullet} \hat N$, which is normalized by Proposition
\ref{p.tensornorm}, and the assertion is trivial.
\end{proof}

\begin{Construction}\label{c.fus}
Let $f\colon\cX\to \cY$ be a morphism of Noetherian Deligne-Mumford stacks.
It induces a flat morphism of ringed topoi $(f^\N_*, f^{\N*})\colon
(\cX_\et^\N,\Lambda_\bullet) \to (\cY_\et^\N,\Lambda_\bullet)$. The functor
$f^{\N*}\colon D(\cY^\N,\Lambda_\bullet) \to D(\cX^\N, \Lambda_\bullet)$
preserves AR-null complexes and AR-adic complexes. It induces a t-exact
functor $f^*\colon D^+_c(\cY,\cO)\to D^+_c(\cX,\cO)$. For $N\in
D^+_c(\cY,\cO)$, the map
\[f^*(\Lambda_n\otimes^L_\cO N)\to \Lambda_n\otimes^L_\cO f^* N\]
is an isomorphism. In fact, the map $f^{\N*}\hat N \to \widehat{f^* N}$ is
an isomorphism by Proposition \ref{p.fnorm}.

The functor $Rf^\N_*\colon D(\cX^\N,\Lambda_\bullet) \to
D(\cY^\N,\Lambda_\bullet)$ preserves AR-null complexes in $D^+$ by Lemma
\ref{p.en}.
\end{Construction}

In the rest of this section, let $S$ be either a regular scheme of dimension
$\le 1$ or a quasi-excellent scheme. Assume that the residue characteristic
$\ell$ of $\cO$ is invertible on $S$.

Let $\cX$ be a finite-type Deligne-Mumford $S$-stack.

\begin{Proposition}\label{p.RcHomadic}
Let $M\in D^-(\cX^\N,\Lambda_\bullet)$ be normalized and AR-adic, and let
$N\in D^+(\cX^\N,\Lambda_\bullet)$ be AR-adic. Then
$R\cHom_{\Lambda_\bullet}(M,N)\in D^+(\cX^\N,\Lambda_\bullet)$ is AR-adic.
\end{Proposition}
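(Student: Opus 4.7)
The plan is to first reduce to the case where $N$ is normalized, then combine the preservation of normalization by $R\cHom$ (\ref{p.RcHomnorm}) with the fibrewise criterion for AR-adicity (\ref{p.normadic}).

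For the reduction, I would use the normalization triangle. By \ref{p.Mhat} applied to $N\in D^+$, the adjunction map $\hat N \to N$ has AR-null cone $C$, and since both $\hat N$ and $N$ lie in $D^+$, so does $C$. Applying $R\cHom_{\Lambda_\bullet}(M,-)$ to the distinguished triangle $\hat N \to N \to C \to$, condition (a) of \ref{p.RcHomnull} applies (since $M\in D^-$ is normalized), so that $R\cHom_{\Lambda_\bullet}(M,C)$ is AR-null. The associated long exact cohomology sequence then shows that $\fH^q R\cHom_{\Lambda_\bullet}(M,\hat N) \to \fH^q R\cHom_{\Lambda_\bullet}(M,N)$ has AR-null kernel and cokernel for every $q$. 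Since AR-null modules are AR-adic and $\Mod_{AR\text{-}a}(\cX^\N,\Lambda_\bullet)$ is thick by \ref{p.thick}, it suffices to prove the proposition for $\hat N$ in place of $N$.

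Thus we may now assume $N$ is normalized (it is still AR-adic by \ref{p.Mhat}). Then by \ref{p.RcHomnorm}, $R\cHom_{\Lambda_\bullet}(M,N)$ is normalized. By \ref{p.normadic}, its AR-adicity is equivalent to $\fH^q(e_n^{-1} R\cHom_{\Lambda_\bullet}(M,N))$ being Noetherian for all $q$ and $n$, which, by \ref{p.cN}, is the same as being constructible. By \ref{p.RcHomfiber} (a), we have $e_n^{-1} R\cHom_{\Lambda_\bullet}(M,N) \simeq R\cHom_{\Lambda_n}(M_n,N_n)$. Since $M$ and $N$ are normalized and AR-adic, applying \ref{p.normadic} in the other direction yields $M_n \in D^-_c(\cX,\Lambda_n)$ and $N_n \in D^+_c(\cX,\Lambda_n)$. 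Then \ref{p.cX} gives $R\cHom_{\Lambda_n}(M_n,N_n) \in D^+_c(\cX,\Lambda_n)$, which is exactly what is needed.

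The steps are all essentially bookkeeping, with the only conceptual move being the reduction to normalized $N$ via the AR-null cone of $\hat N \to N$; the apparent obstacle is making sure the hypotheses of \ref{p.RcHomnull}, \ref{p.RcHomnorm}, and \ref{p.RcHomfiber} are all satisfied simultaneously, which forces one to pass to $\hat N$ before invoking the last two, and to keep $M$ in $D^-$ throughout so that \ref{p.RcHomnull}(a) applies.
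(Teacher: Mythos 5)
Your argument follows the paper's proof almost step for step (reduce to $N$ normalized via the AR-null cone of $\hat N\to N$ and \ref{p.RcHomnull}, then use \ref{p.RcHomnorm}, \ref{p.RcHomfiber}, \ref{p.cX} and the criterion \ref{p.normadic}), but it omits the paper's \emph{first} reduction, and that omission creates a genuine gap. The problem is your step ``applying \ref{p.normadic} in the other direction yields $M_n\in D^-_c(\cX,\Lambda_n)$'': Proposition \ref{p.normadic} is stated (and proved, via \ref{p.Mhat}) only for normalized complexes in $D^{+}$, whereas your $M$ is only in $D^{-}$. This is not a mere citation technicality. AR-adicity of $\fH^qM$ by itself does \emph{not} imply that $(\fH^qM)_n$ is Noetherian — the paper explicitly warns that its notion of AR-adic module does not require Noetherian components — so the Noetherianness of $\fH^qM_n$ really has to come from the normalization of $M$, and the mechanism for extracting it (\ref{p.Mhat}, \ref{p.normadic}) is only available in $D^{+}$, where $R\pi_*$ and $L\pi^*$ behave well.

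The paper closes this gap by first replacing $M$ by $\widehat{\tau^{\ge a}M}\simeq L\pi^*\tau^{\ge a}R\pi_*M$ (\ref{p.tau}): for $N\in D^{\ge c}$ the piece $L\pi^*\tau^{\le a-1}R\pi_*M$ lies in $D^{\le a}$, so it contributes to $R\cHom_{\Lambda_\bullet}(-,N)$ only in degrees $\ge c-a$; hence for each fixed cohomological degree $q$ one may take $a$ small enough and assume $M\in D^b$, after which \ref{p.normadic} legitimately applies to $M$. You should insert this truncation step at the start of your proof; the rest of your argument (including the reduction to normalized $N$, which you in fact spell out in more detail than the paper) then goes through, since \ref{p.RcHomnull}(a), \ref{p.RcHomnorm} and \ref{p.RcHomfiber}(a) are all compatible with $M\in D^b$.
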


\begin{proof}
Up to replacing $M$ by $\widehat{\tau^{\ge a} M}\simeq L\pi^* \tau^{\ge a}
R\pi_* M$ (Lemma \ref{p.tau}), we may assume $M\in D^b$.  By Propositions
\ref{p.Mhat} and \ref{p.RcHomnull}, we may assume $N$ normalized. Then, by
Corollary \ref{p.RcHomnorm}, $R\cHom_{\Lambda_\bullet}(M,N)$ is normalized.
By Corollary \ref{p.normadic}, $M_n$ and $N_n$ belongs to $D_c$ for all $n$.
Hence, by Propositions \ref{p.RcHomfiber} and \ref{p.cX},
$e_n^{-1}R\cHom_{\Lambda_\bullet}(M,N) \simeq R\cHom_{\Lambda_n} (M_n, N_n)$
belongs to $D_c$. Therefore $R\cHom_{\Lambda_\bullet}(M,N)$ is AR-adic by
Corollary \ref{p.normadic}.
\end{proof}

\begin{Construction}\label{c.RcHom}
For $M\in D^b_c(\cX,\cO)$ and $N\in D^+(\cX^\N,\Lambda_\bullet)$ AR-adic, define
\[R\cHom_{\cX,\cO}(M, N) = R\cHom_{\Lambda_\bullet}(\hat{M}, N).\]
By Propositions \ref{p.RcHomadic} and \ref{p.RcHomnull}, this gives a
functor
\[R\cHom_{\cX,\cO}(-,-)\colon D^b_c(\cX,\cO)^\op \times D^+_c(\cX,\cO) \to D^+_c(\cX,\cO).\]
We omit $\cX$ or $\cO$ from the notation when no confusion arises.
\end{Construction}

\begin{Proposition}\label{p.RcHomO}
For all $M\in D^b_c(\cX,\cO)$, $N\in D^+_c(\cX,\cO)$, the map
  \[\Lambda_n \otimes^L_\cO R\cHom_\cO(M,N) \to R\cHom_{\Lambda_n}(\Lambda_n\otimes^L_\cO M,\Lambda_n\otimes^L_\cO N)\]
  is an isomorphism.
\end{Proposition}

It follows that if $S$ is finite-dimensional, then  $R\cHom_\cO$ carries
$\Dbc(\cX,\cO)^\op\times \Dbc(\cX,\cO)$ to $\Dbc(\cX,\cO)$ by Propositions
\ref{p.RHomft} and \ref{p.cX}.

\begin{proof}
By definition, $R\cHom_\cO(M, N)$ is represented by
$R\cHom_{\Lambda_\bullet}(\hat M,\hat N)$, which is normalized by Corollary
\ref{p.RcHomnorm}. Hence the assertion follows from Proposition
\ref{p.RcHomfiber}.
\end{proof}

\begin{Proposition}\label{p.fadic}
Let $f\colon \cX\to \cY$ be a morphism of finite-type Deligne-Mumford
$S$-stacks. The functor $Rf^\N_*$ preserves AR-adic complexes in $D^+$.
\end{Proposition}

In particular, $Rf_*^\N$ induces a functor between $D^+_c$, which we denote
by
\[Rf_*\colon D^+_c(\cX,\cO)\to D^+_c(\cY,\cO).\]

\begin{proof}
Let $M\in D^+(\cX^\N,\Lambda_\bullet)$ be AR-adic. To prove that $Rf_{\N*}
M$ is AR-adic, we may assume $M$ normalized. Then $Rf^\N_* M$ is normalized
by Proposition \ref{p.fnorm}. By Corollary \ref{p.normadic}, $M_n$ belongs
to $D_c$ for all $n$. Hence, by Lemma \ref{p.en} and Proposition \ref{p.cf}
\ref{p.cf1}, $e_n^{-1} Rf^\N_* M \simeq Rf_* M_n$ belongs to $D_c$ for all
$n$. Therefore $Rf^\N_* M$ is AR-adic by Corollary \ref{p.normadic}.
\end{proof}

\begin{Proposition}\label{p.fO}
Let $f\colon \cX\to \cY$ be a morphism of finite-type Deligne-Mumford
$S$-stacks.  For all $M\in D_c^+(\cX,\cO)$, the map
  \[
    \Lambda_n\otimes^L_\cO Rf_* M \to Rf_*(\Lambda_n\otimes^L_\cO M)
  \]
  is an isomorphism.
\end{Proposition}

It follows that for $f$ of $\ell$-prime inertia, $Rf_*$ sends
$\Dbc(\cX,\cO)$ to $\Dbc(\cX,\cO)$ by Proposition \ref{p.cf} \ref{p.cf1}.
Moreover, for $f$ proper of $\ell$-prime inertia and fibers of dimension
$\le d$, the cohomological amplitude of $Rf_*$ is contained in $[0,2d]$.

\begin{proof}
By definition $Rf_* M$ is represented by $Rf^\N_* \hat M$, which is
normalized by Proposition \ref{p.fnorm}. Hence the assertion follows from
Lemma \ref{p.en}.
\end{proof}

\begin{Proposition}\label{p.pf}
Let $f\colon \cX\to \cY$ be a proper morphism
    between finite-type Deligne-Mumford $S$-stacks, and let $M\in
    D^+(\cX,\Lambda_\bullet)$, $N\in
    D^+_{AR\text{-}a,\norm}(\cY,\Lambda_\bullet)$. Then the projection
    formula map
\[N\otimes_{\Lambda_\bullet}^L Rf^{\N}_* M \to Rf^\N_* (f^{\N*} N\otimes^L_{\Lambda_\bullet} M)\]
is an isomorphism.
\end{Proposition}

\begin{proof}
By Lemma \ref{p.en} and proper base change, we may assume that $\cY$ is the
spectrum of a separably closed field. As in the proof of Proposition
\ref{p.Mhat}, we may assume $N\simeq L\pi^* K$, where $K$ is a finite
$\cO$-module. Replacing $K$ by a finite free resolution, the assertion
becomes clear.
\end{proof}

\begin{Proposition}\label{p.sbcO}\leavevmode
\begin{enumerate}
\item \label{p.sbcO1} (Base change) Let
\[\xymatrix{\cX'\ar[r]^{h}\ar[d]_{f'} & \cX\ar[d]^{f}\\
\cY'\ar[r]^{g} & \cY}\]
be a 2-Cartesian square of Deligne-Mumford stacks with $\cX$ and $\cY$ of finite type over $S$. Assume that one of the following conditions holds:
\begin{itemize}
  \item[(a)] $f$ is proper and $\cY'$ is of finite type over a scheme
      $S'$, either regular of dimension $\le 1$ or quasi-excellent.
  \item[(b)] $g$ is smooth and of finite type.
\end{itemize}
Then the base change map
\[g^* Rf_* M \to Rf'_* h^* M\]
is an isomorphism for all $M\in D_c^+(\cX,\cO)$.

\item \label{p.sbcO2} (Projection formula) Let $f\colon \cX\to \cY$ be a
    proper morphism between finite-type Deligne-Mumford $S$-stacks, and
    let $M\in D^+_c(\cX,\cO)$, $N\in D^+_c(\cY,\cO)$. Then the map
\[N\otimes_\cO^L Rf_* M \to Rf_* (f^* N\otimes^L_\cO M)\]
is an isomorphism.
\end{enumerate}
\end{Proposition}

\begin{proof}
\begin{itemize}
\item[\ref{p.sbcO1}] By Lemma \ref{p.en}, the proper or smooth
    (Proposition \ref{p.sbc}) base change map $g^{\N*} Rf^\N_* \hat M \to
    Rf'^{\N}_* h^{\N*} \hat M$ is an isomorphism.

\item[\ref{p.sbcO2}] This follows from Proposition \ref{p.pf}.
\end{itemize}
\end{proof}

\begin{Proposition}\label{p.fuO}
Let $f\colon \cX\to \cY$ be a morphism of Deligne-Mumford stacks, with $\cY$
of finite type over $S$. Let $M\in \Dbc(\cY,\cO)$, $L\in D^+_c(\cY,\cO)$.
Assume that one of the following conditions holds:
  \begin{itemize}
   \item[(a)] The cohomology sheaves of $M$ are locally constant
       (Definition \ref{d.lisse}) and $\cX$ is of finite type over some
       scheme $S'$ either regular of dimension $\le 1$ or quasi-excellent.
    \item[(b)] $f$ is smooth and of finite type.
  \end{itemize}
  Then the map
  \[f^* R\cHom_\cY(M,L)\to R\cHom_\cX(f^*M,f^*L)\]
  is an isomorphism.
\end{Proposition}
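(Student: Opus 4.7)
The plan is to reduce modulo $\fm^{n+1}$ and invoke the corresponding statement for torsion coefficients. Since the functor $\Lambda_n\otimes^L_\cO-\colon D^+_c(\cX,\cO)\to D^+_c(\cX,\Lambda_n)$ is conservative (\ref{s.endthree}), it suffices to show that applying it to the map in question yields an isomorphism in $D(\cX,\Lambda_n)$. Using \ref{p.RcHomO}(ii) on both $\cY$ and $\cX$, together with the compatibility $\Lambda_n\otimes^L_\cO f^*(-)\simeq f^*(\Lambda_n\otimes^L_\cO-)$ established just before the statement, this reduces to showing that the natural map
\[
f^*R\cHom_{\Lambda_n}(M_n,L_n)\to R\cHom_{\Lambda_n}(f^*M_n,f^*L_n)
\]
is an isomorphism, where $M_n=\Lambda_n\otimes^L_\cO M\in\Dbc(\cY,\Lambda_n)$ and $L_n=\Lambda_n\otimes^L_\cO L\in D^+_c(\cY,\Lambda_n)$.

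In case (b), $f$ is smooth of finite type, and $\Lambda_n$ is annihilated by a power of $\ell$, which is invertible on $S$; the claim is then exactly \ref{p.Homsm}.

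In case (a), I first observe that $M_n$ has locally constant constructible cohomology sheaves: using the two-term $\cO$-flat resolution $\cO\xrightarrow{\times\lambda^{n+1}}\cO\to\Lambda_n$, each $\fH^q M_n$ sits in a short exact sequence built from $\fH^q M/\lambda^{n+1}$ and the $\lambda^{n+1}$-torsion of $\fH^{q+1}M$, both of which inherit lisseness from $\fH^*M$. Since $M_n\in D^b$, the truncation triangles together with the fact that $f^*$ and $R\cHom_{\Lambda_n}(-,L_n)$ are triangulated reduce us to the case $M_n=\cF[0]$ with $\cF$ a locally constant constructible $\Lambda_n$-module on $\cY$. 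The statement is étale-local on $\cY$ (an étale cover $V\to\cY$ is replaced on $\cX$ by its base change $V\times_\cY\cX\to\cX$, and $f^*$, $R\cHom_{\Lambda_n}$ all commute with étale restriction), so after passing to a sufficiently fine étale cover we may assume $\cF$ is the constant sheaf associated with a finitely generated $\Lambda_n$-module $K$. A finite free resolution of $K$ and one more devissage reduce to $\cF=\Lambda_n$, for which $R\cHom_{\Lambda_n}(\Lambda_n,-)=\mathrm{id}$ and both sides of the map become $f^*L_n$. The chief obstacle is the initial reduction to $\Lambda_n$-coefficients, which relies on the normalization formalism of \S 3 to commute $f^*$ and $R\cHom$ with $\Lambda_n\otimes^L_\cO-$; once that is in place, case (b) is immediate and case (a) is a routine devissage terminating with a constant sheaf.
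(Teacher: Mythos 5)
Your reduction is essentially the paper's: pass to $\Lambda_n$-coefficients via the normalization formalism (you use conservativity of $\Lambda_n\otimes^L_\cO-$ together with \ref{p.RcHomO} (ii); the paper phrases the same step via \ref{p.RcHomfiber} applied to normalized complexes), then invoke \ref{p.Homsm} in case (b) and a d\'evissage to locally constant cohomology sheaves in case (a). So the strategy is correct and matches the paper's proof.

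One step in your case (a) is wrong as written: a finitely generated $\Lambda_n$-module $K$ does \emph{not} in general admit a finite free resolution. Since $\Lambda_n=\cO/\fm^{n+1}$ is a self-injective local Artinian ring, any non-free finitely generated module (e.g.\ $\cO/\fm^k$ with $k\le n$) has infinite projective dimension; the two-term complex $\Lambda_n\xrightarrow{\lambda^k}\Lambda_n$ is not a resolution because $\lambda^k$ is not injective. To finish, either (i) use an infinite resolution $P_\bullet\to K$ by finite free modules and observe that, $L_n$ being in $D^{\ge a}$, each $\cExt^q$ on both sides only depends on a finite truncation of $P_\bullet$, so the comparison map is an isomorphism degree by degree; or (ii) do the d\'evissage at the $\cO$-level as the paper does (reduce, as in the proof of \ref{p.Mhat}, to $M=\hat F$ with $F$ adic locally constant and either flat --- in which case each $F_m$ is locally \emph{free} and $R\cHom$ is an ordinary $\cHom$ into a dual tensor --- or annihilated by a power of $\lambda$). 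With that repair the argument is complete.
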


\begin{proof}
\begin{itemize}
\item[(a)] The assertion follows from Proposition \ref{p.RcHomO} and the
    fact that $\Lambda_n\otimes_\cO M\in \Dcft(\cY,\Lambda_n)$ has locally
    constant cohomology sheaves.

\item[(b)] The assertion follows from Propositions \ref{p.RcHomO} and
    \ref{p.Homsm}.
\end{itemize}
\end{proof}

Let $p\colon \cX\to \cY$ be a proper morphism of finite-type Deligne-Mumford
$S$-stacks. If $p$ has $\ell$-prime inertia and the fibers of $p$ have
dimension $\le d$, then $p_*^\N$ has cohomological dimension $\le 2d$ by
Lemma \ref{p.cd}.

\begin{Lemma}\label{p.enshr}
  Let $p\colon \cX\to \cY$ be a proper morphism of $\ell$-prime inertia between finite-type Deligne-Mumford $S$-stacks. For all $M\in D(\cX,\Lambda_n)$, the map \eqref{e.enshr}
  \[Le_{n!}Rp_*M \to Rp_*^\N Le_{n!}M\]
  is an isomorphism.
\end{Lemma}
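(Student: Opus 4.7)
The plan is to check that the given natural map becomes an isomorphism after applying $e_q^{-1}$ for every $q\ge 0$. By \ref{p.en} applied to $p^\N$, there is a natural isomorphism $e_q^{-1}Rp_*^\N \simto Rp_*\,e_q^{-1}$, so the right-hand side, applied $e_q^{-1}$, becomes $Rp_*(e_q^{-1}Le_{n!}M)$, while the left-hand side is simply $e_q^{-1}Le_{n!}Rp_*M$.

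Next I would compute $e_q^{-1}Le_{n!}$ from the formula recalled in \ref{s.null}: since $(e_{n!}F)_q = \Lambda_q\otimes_{\Lambda_n}F$ for $q\le n$ and vanishes for $q>n$, deriving gives
\[e_q^{-1}Le_{n!}K \simeq \Lambda_q\otimes^L_{\Lambda_n}K \text{ for } q\le n,\qquad e_q^{-1}Le_{n!}K=0 \text{ for } q>n,\]
applied to either $K=M$ or $K=Rp_*M$.

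Thus for $q>n$ both sides vanish and the map is trivially an isomorphism, and for $q\le n$ the map becomes the projection formula map
\[\Lambda_q\otimes^L_{\Lambda_n}Rp_*M \to Rp_*(\Lambda_q\otimes^L_{\Lambda_n}M).\]
This is an isomorphism by \ref{p.pbc} (ii) applied with base ring $\Lambda_n$ (which is annihilated by a suitable power of $\ell$, since $\ell\in\fm$) and with $N=\Lambda_q$ on $\cY$, using that $p$ is proper of prime to $\ell$ inertia and hence of prime to that power of $\ell$ inertia.

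The one point that merits care is checking that the image of \eqref{e.enshr} under $e_q^{-1}$, composed with the isomorphism \eqref{e.en}, really is the projection formula map written above. This is a formal unraveling of the adjunctions defining \eqref{e.enshr} and \eqref{e.en}, using that already at the level of modules one has $p^{\N*}e_{n!}=e_{n!}p^*$. I expect no substantive obstacle beyond this bookkeeping, the content of the lemma being entirely captured by the projection formula of \ref{p.pbc} (ii).
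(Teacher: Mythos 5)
Your proof is correct and follows exactly the route of the paper: apply \ref{p.en} to reduce to checking the map level by level, compute $e_q^{-1}Le_{n!}$ (zero for $q>n$, $\Lambda_q\otimes^L_{\Lambda_n}-$ for $q\le n$), and identify the resulting map with the projection formula \ref{p.pbc}~(ii). The paper's proof is just a one-line version of the same argument.
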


\begin{proof}
By Lemma \ref{p.en}, it suffices to show that the map
\[\Lambda_m\otimes_{\Lambda_n}^L Rp_* M \to Rp_*(\Lambda_m\otimes^L_{\Lambda_n} M)\]
is an isomorphism for $m\le n$, which is projection formula (Proposition
\ref{p.pbc} \ref{p.pbc2}).
\end{proof}

\begin{Construction}\label{d.jO}
Let $j\colon \cU\to \cX$ be an \'etale representable morphism between
finite-type Deligne-Mumford $S$-stacks. Then Construction \ref{s.jO} applies
and we have $j_!^\N\colon D(\cU^\N,\Lambda_\bullet)\to
D(\cX^\N,\Lambda_\bullet)$.
\end{Construction}

\begin{Construction}\label{s.lsO}
We construct $Rf_!^\N$ by gluing as in Construction \ref{s.fshr}. Let $\cC$
be the 2-category whose objects are Deligne-Mumford $S$-stacks of finite
type and finite inertia and whose morphisms are the separated morphisms of
$\ell$-prime inertia, let $\cA$ be the arrowy 2-subcategory whose morphisms
are the open immersions, let $\cB$ be the arrowy 2-subcategory whose
morphisms are the proper morphisms, and let $\cD$ be the 2-category of
triangulated categories. Let $F_\cA\colon \cA\to \cD$ be the pseudofunctor
  \[\cX\mapsto D(\cX^\N,\Lambda_\bullet), \quad j\mapsto j_!^\N, \quad \alpha\mapsto \alpha_!^\N,\]
and let $F_\cB\colon \cB\to \cD$ be the pseudofunctor
  \[\cX\mapsto D(\cX^\N,\Lambda_\bullet), \quad p\mapsto Rp_*^\N, \quad \alpha\mapsto R\alpha_*^\N.\]
For any proper open immersion $f$, let $\rho(f)$ be the inverse of the
2-cell \eqref{e.rho} $f_!^\N\Rightarrow Rf_*^\N$. For any 2-Cartesian square
$D$ of the form \eqref{e.2}, let $G_D\colon i_!^\N Rq_*^\N\Rightarrow
Rp_*^\N j_!^\N$ be the 2-cell as in \eqref{e.shrstar}. By \cite[Proposition
8.8]{glue}, $(F_\cA,F_\cB,G,\rho)$ is an object of
$\GD^\Cart_{\cA,\cB}(\cC,\cD)$. By Proposition \ref{p.gluestack}, this
object defines a pseudofunctor $F\colon \cC\to \cD$. For any morphism $f$ of
$\cC$, we define
  \[Rf_!^\N\colon D(\cX^\N,\Lambda_\bullet)\to D(\cY^\N,\Lambda_\bullet)\]
to be $F(f)$. If the fibers of $f$ have dimension $\le d$, then $Rf_!^\N$
has cohomological amplitude contained in $[0,2d]$. If $f$ is representable
and \'etale, then $Rf_!^\N$ coincides with $f_!^\N$ (Construction
\ref{d.jO}) as in Remark \ref{s.Rshr=shr}.

We construct the support-forgetting map $Rf_!^\N\Rightarrow Rf_*^\N$, the
base change isomorphism and the projection formula isomorphism by gluing as
before. We construct two more isomorphisms by gluing. We define a
pseudonatural transformation $\epsilon\colon F\to F_n$ with
$\epsilon(\cX)=e_n^{-1}$ by gluing the inverse of \eqref{e.jen} for $\cA$
and \eqref{e.en} for $\cB$, which is possible by \cite[Propositions 8.9,
8.10]{glue}. We define a pseudonatural transformation $\eta\colon F_n\to F$
with $\eta(\cX)=Le_{n!}$ by gluing the inverse of \eqref{e.jLen} for $\cA$
and Lemma \ref{p.enshr} for $\cB$, which is possible by \cite[Proposition
8.8]{glue}. We have obtained the following.
\end{Construction}

\begin{Lemma}\label{p.shren}
Let $f\colon \cX\to \cY$ be a separated morphism of $\ell$-prime inertia
between finite-type and finite-inertia Deligne-Mumford stacks. Let $M\in
D(\cX^\N,\Lambda_\bullet)$, $N\in D(\cY,\Lambda_n)$. Then the maps
  \begin{gather}
    \label{e.shren} e_n^{-1}Rf_!^\N M\to Rf_!^\N e_{n}^{-1} M,\\
    \label{e.shrLen} Le_{n!}Rf_! N\to Rf_!^\N Le_{n!} N
  \end{gather}
  are isomorphisms.
\end{Lemma}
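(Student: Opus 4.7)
The plan is to deduce both isomorphisms directly from the gluing construction of $Rf_!^\N$ in \ref{s.lsO}. By construction, the arrows \eqref{e.shren} and \eqref{e.shrLen} are, respectively, the $f$-components of the pseudo-natural transformations $\epsilon\colon F\Rightarrow F_n$ (with $\epsilon(\cX)=e_n^{-1}$) and $\eta\colon F_n\Rightarrow F$ (with $\eta(\cX)=Le_{n!}$) introduced at the end of \ref{s.lsO}. So the lemma amounts to showing that these pseudo-natural transformations are pointwise invertible.

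First I would verify that $\epsilon$ and $\eta$ are natural equivalences when restricted to the two subcategories $\cA$ (open immersions) and $\cB$ (proper morphisms of prime to $\ell$ inertia) generating $\cC$. On $\cA$, the components of $\epsilon$ and $\eta$ are, by construction, (the inverses of) the base change maps \eqref{e.jen} and \eqref{e.jLen}, both natural equivalences by \ref{s.jO}. On $\cB$, the components are the isomorphism \eqref{e.en} furnished by \ref{p.en} and the isomorphism furnished by \ref{p.enshr}.

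Next I would apply \ref{p.comp} to factor $f$ as $\cX\xrightarrow{\pi}\cX'\xrightarrow{j}\cZ\xrightarrow{p}\cY$, where $\pi$ is a proper universal homeomorphism, $j$ is an open immersion, and $p$ is proper and representable. Since $f$ is of prime to $\ell$ inertia, so are $\pi$ and $p$ (being factors of~$f$ of the appropriate type); hence $\pi,p\in\cB$ while $j\in\cA$. By pseudo-functoriality of $F$ and $F_n$, combined with the pseudo-naturality axiom for $\epsilon$ and $\eta$, the 2-cells $\epsilon(f)$ and $\eta(f)$ decompose, up to the coherence constraints, as horizontal composites of $\epsilon(\pi),\epsilon(j),\epsilon(p)$ and of $\eta(\pi),\eta(j),\eta(p)$, respectively. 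Each factor is an isomorphism by the previous step, hence so are the composites, which are precisely \eqref{e.shren} and \eqref{e.shrLen}.

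The only mildly delicate point will be unwinding the construction of $\epsilon$ and $\eta$ in \ref{s.lsO} to confirm that they really do restrict to the base change maps listed above on $\cA$ and $\cB$; but this is built into their definitions via \ref{p.rhoAB}, \ref{p.GAB} and \ref{p.ax} (which were the tools used to glue the respective 2-cells in the first place), so no genuine obstacle arises.
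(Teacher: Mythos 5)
Your proposal is correct and follows essentially the same route as the paper: the lemma is stated there as an immediate consequence of the construction in \ref{s.lsO} (``We have obtained the following''), where \eqref{e.shren} and \eqref{e.shrLen} are precisely the components of the glued pseudo-natural transformations $\epsilon$ and $\eta$, whose restrictions to $\cA$ and $\cB$ are the invertible maps \eqref{e.jen}, \eqref{e.en}, \eqref{e.jLen} and \ref{p.enshr}. Your unwinding via the factorization $f\simeq pj\pi$ of \ref{p.comp} is exactly the mechanism by which the gluing formalism produces the component at a general $f$, so no gap remains.
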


It follows from \eqref{e.shren}  that $Rf_!^\N$ preserves AR-null complexes.
It follows from \eqref{e.shren} and projection formula (Theorem \ref{p.bcc}
\ref{p.bcc2}) that $Rf_!^\N$ preserves weakly normalized complexes, thus
preserves normalized complexes in $D^+$. It also follows from
\eqref{e.shren} that $Rf_!^\N$ commutes with small direct sums, hence, by
Brown Representability Theorem \cite[Theorem 14.3.1 (ix)]{KSCat}, admits a
right adjoint
\[Rf^{\N!}\colon D(\cY^\N,\Lambda_\bullet)\to D(\cX^\N,\Lambda_\bullet).\]

\begin{Construction}\label{p.shrO}
Let $f\colon \cX\to \cY$ be a morphism of $\ell$-prime inertia between
finite-type Deligne-Mumford $S$-stacks. Assume either (a) $f$ is
representable and \'etale; or (b) $f$ is separated and $\cX$, $\cY$ are of
finite inertia. Then $Rf_!^\N$ is defined in Constructions \ref{d.jO} and
\ref{s.lsO}. We prove as in Proposition \ref{p.fadic} that it preserves
AR-adic complexes in $D^+$. Thus $Rf_!^\N$ induces
  \[Rf_!\colon D^+_c(\cX,\cO)\to D^+_c(\cY,\cO),\]
  which sends $\Dbc$ to $\Dbc$.
\end{Construction}

\begin{Construction}\label{s.dagO}
Let $f\colon \cX\to \cY$ be a separated morphism between finite-type and
finite-inertia Deligne-Mumford $S$-stacks. We define
  \[Rf_\dag^\N\colon D^+(\cX^\N,\Lambda_\bullet)\to D^+(\cY^\N,\Lambda_\bullet)\]
by gluing $j_!^\N$ for $j$ an open immersion and $Rp_*^\N$ for $p$ proper as
in Construction \ref{s.dag}. If $f$ is of $\ell$-prime inertia, then
$Rf_\dag^\N$ is isomorphic to the restriction of $Rf_!^\N$ to $D^+$. We
construct the support-forgetting map $Rf_\dag^\N \Rightarrow Rf_*^\N$ and
the base change isomorphism by gluing as before. For $N\in
D^+_{AR\text{-}a,\norm}$, we glue projection formula for $Rp^\N_*$
(Proposition \ref{p.pf}) and projection formula for $j_!^\N$ (which follows
from \eqref{e.jpf} and \eqref{e.jen}) to get projection formula isomorphism
for $Rf_\dag^\N$. Moreover, we glue \eqref{e.en} and the inverse of
\eqref{e.jen} and obtain the following.
\end{Construction}

\begin{Lemma}
Let $f\colon \cX\to \cY$ be a separated morphism between finite-type and
finite-inertia Deligne-Mumford $S$-stacks, and let $M\in
D^+(\cX^\N,\Lambda_\bullet)$. Then the map
  \[e_n^{-1}Rf_\dag^\N M \to Rf_\dag^\N e_n^{-1} M\]
  is an isomorphism.
\end{Lemma}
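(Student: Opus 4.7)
The plan is to construct the base change map by gluing, exactly as in \ref{s.lsO} and \ref{p.shren}, but now with $Rf_\dag^\N$ in place of $Rf_!^\N$ and without the prime-to-$\ell$ inertia restriction. Let $\cC_1$, $\cA$, $\cB_1$ and $\cD$ be as in \ref{s.dag}, so that $Rf_\dag^\N$ is the image under the pseudo-functor $F^\N\colon \cC_1\to \cD$ obtained from the gluing datum $(F_\cA^\N, F_{\cB_1}^\N, G^\N, \rho^\N)$ built from $j_!^\N$ and $Rp_*^\N$. Similarly, for each fixed $n$, the pseudo-functor $F_n\colon \cC_1\to \cD$ built from $j_!$ and $Rp_*$ (with $\Lambda_n$-coefficients) gives $Rf_\dag\colon D^+(\cX,\Lambda_n)\to D^+(\cY,\Lambda_n)$.

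I would then define a 1-cell $(\epsilon_\cA,\epsilon_{\cB_1})$ of $\GD'_{\cA,\cB_1}(\cC_1,\cD)$ from $(F_\cA^\N, F_{\cB_1}^\N, G^\N, \rho^\N)$ to $(F_{n,\cA}, F_{n,\cB_1}, G_n, \rho_n)$, whose underlying map on objects is $e_n^{-1}$, by taking $\epsilon_\cA$ to be the inverse of the natural equivalence \eqref{e.jen} (so $\epsilon_\cA(j)\colon e_n^{-1}j_!^\N \simto j_! e_n^{-1}$) and taking $\epsilon_{\cB_1}$ to be the isomorphism \eqref{e.en} of \ref{p.en} (so $\epsilon_{\cB_1}(p)\colon e_n^{-1}Rp_*^\N \simto Rp_* e_n^{-1}$). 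Both are natural equivalences to begin with. One must then verify that $(\epsilon_\cA,\epsilon_{\cB_1})$ is compatible with $\rho$ and $G$; this is precisely the content of \ref{p.rhoAB} and \ref{p.GAB}. For $\rho$, the check is that on a proper open immersion $j$ the two base-change isomorphisms agree after identifying $j_!^\N$ with $Rj_*^\N$ and $j_!$ with $Rj_*$, which follows from the definition of the support-forgetting 2-cell \eqref{e.rho} via the adjunctions; for $G$, one applies \eqref{e.jen} and \eqref{e.en} to a 2-Cartesian square \eqref{e.2} and uses the uniqueness of base change maps characterized by adjunction (\ref{s.GD}).

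Granted this verification, the gluing theorem for 1-cells (the analog of \ref{p.glue'} for pseudo-natural transformations, recalled in \S~7) provides a pseudo-natural transformation $\epsilon\colon F^\N \Rightarrow F_n$ with $\lvert \epsilon \rvert(\cX)=e_n^{-1}$. Evaluating on any 1-cell $f\colon \cX\to \cY$ of $\cC_1$ yields a natural transformation
\[e_n^{-1} Rf_\dag^\N \Rightarrow Rf_\dag\, e_n^{-1}\]
of functors $D^+(\cX^\N,\Lambda_\bullet)\to D^+(\cY,\Lambda_n)$, which on $M$ is the map in question. Since $\epsilon_\cA$ and $\epsilon_{\cB_1}$ are componentwise isomorphisms, and since by \ref{p.comp} every 1-cell of $\cC_1$ factors as $pj\pi$ with $\pi$ a proper homeomorphism and $j$ an open immersion, $\epsilon(f)$ is itself a natural equivalence.

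I expect the main obstacle to be the compatibility check with $G_D$ for the open-immersion/proper 2-Cartesian squares in \ref{p.rhoAB} and \ref{p.GAB}; all components are canonical isomorphisms defined by adjunction, so this amounts to unwinding the definition of $G_D$ in \ref{s.jD} and chasing the relevant adjunctions, but it is the only place where one actually has to track the interaction between $j_!^\N$ and $Rp_*^\N$ under $e_n^{-1}$.
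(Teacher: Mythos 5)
Your proposal matches the paper's own construction: the Lemma is obtained in \ref{s.dagO} precisely by gluing \eqref{e.en} (for proper morphisms) and the inverse of \eqref{e.jen} (for open immersions) into a pseudo-natural transformation $F^\N\Rightarrow F_n$ with underlying map $e_n^{-1}$, the compatibility with $G$ and $\rho$ being supplied by \ref{p.rhoAB} and \ref{p.GAB} exactly as in \ref{s.lsO}. Since both component transformations are natural equivalences, so is the glued one, and this is the whole argument.
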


\begin{Construction}\label{c.dagO}
In particular, $Rf_\dag^\N$ preserves AR-null complexes. It follows from
Proposition \ref{p.fnorm} and projection formula \eqref{e.jpf} that
$Rf_\dag^\N$ preserves normalized complexes. One checks as in Proposition
\ref{p.fadic} that it preserves AR-adic complexes. Thus it induces
\[Rf_\dag \colon D^+_c(\cX,\cO)\to D^+_c(\cY,\cO),\]
endowed with a support-forgetting map $Rf_\dag \Rightarrow Rf_*$ which is a
natural isomorphism if $f$ is proper. If $f$ is of $\ell$-prime inertia,
then $Rf_\dag$ can be identified with $Rf_!$.
\end{Construction}

\begin{Construction}
Let $f\colon \cX \to \cY$ be a finite morphism between finite-type
Deligne-Mumford $S$-stacks. The functor $f_*^\N$ admits a right adjoint
$f^{\N!}\colon \Mod(\cY^\N,\Lambda_\bullet)\to
\Mod(\cX^\N,\Lambda_\bullet)$. Let $Rf^{\N!}\colon D(\cY^\N,\Lambda_\bullet)
\to D(\cX^\N,\Lambda_\bullet)$ be the right derived functor. This is
compatible with the definition following Lemma \ref{p.shren}.
\end{Construction}

Combining the isomorphisms induced from Lemma \ref{p.enshr} and
\eqref{e.shrLen} by adjunction, we have the following.

\begin{Lemma}\label{p.enus}
Let $f\colon \cX\to \cY$ be a separated morphism of $\ell$-prime inertia
between finite-type Deligne-Mumford $S$-stacks. Assume that either $f$ is a
closed immersion, or $\cX$ and $\cY$ are of finite inertia. Then, for all
$M\in D(\cY^\N,\Lambda_\bullet)$, the map
  \[e_n^{-1}Rf^{\N!} M\to Rf^! e_n^{-1}M\]
  is an isomorphism.
\end{Lemma}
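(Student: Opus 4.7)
My plan is to verify this by applying Yoneda's lemma, using the three adjunctions already at our disposal: $Rf_!^\N \dashv Rf^{\N!}$ (noted after \ref{p.shren}), $Rf_! \dashv Rf^!$ (\ref{p.shradj}), and $Le_{n!} \dashv e_n^{-1}$ at the level of derived categories. The natural map $e_n^{-1} Rf^{\N!} M \to Rf^! e_n^{-1} M$ is defined by adjunction: starting from \eqref{e.shren} applied to $Rf^{\N!} M$, we obtain
\[
Rf_! e_n^{-1} Rf^{\N!} M \simeq e_n^{-1} Rf_!^\N Rf^{\N!} M \to e_n^{-1} M,
\]
where the second arrow is the counit; its transpose under $Rf_! \dashv Rf^!$ is the desired map.

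To show this map is an isomorphism, I would, for any $N \in D(\cX,\Lambda_n)$, compute
\[
\Hom(N, e_n^{-1} Rf^{\N!} M) \simeq \Hom(Le_{n!} N, Rf^{\N!} M) \simeq \Hom(Rf_!^\N Le_{n!} N, M)
\]
by successive use of $Le_{n!} \dashv e_n^{-1}$ and $Rf_!^\N \dashv Rf^{\N!}$. Then \eqref{e.shrLen} provides $Rf_!^\N Le_{n!} N \simeq Le_{n!} Rf_! N$, so this is isomorphic to
\[
\Hom(Le_{n!} Rf_! N, M) \simeq \Hom(Rf_! N, e_n^{-1} M) \simeq \Hom(N, Rf^! e_n^{-1} M)
\]
by another use of $Le_{n!} \dashv e_n^{-1}$ and $Rf_! \dashv Rf^!$. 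Yoneda then yields the isomorphism $e_n^{-1} Rf^{\N!} M \simeq Rf^! e_n^{-1} M$.

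The one delicate point I expect is checking that the Yoneda-produced isomorphism coincides with the natural map defined at the outset, which reduces to a compatibility between the base change isomorphisms \eqref{e.shren} and \eqref{e.shrLen}: namely that the composite $e_n^{-1} Rf_!^\N Le_{n!} \Rightarrow Rf_! e_n^{-1} Le_{n!} \Rightarrow Rf_!$ (via \eqref{e.shren} then counit) equals the composite $e_n^{-1} Rf_!^\N Le_{n!} \Rightarrow e_n^{-1} Le_{n!} Rf_! \Rightarrow Rf_!$ (via the inverse of \eqref{e.shrLen} then counit). This should be a formal consequence of the gluing construction in \ref{s.lsO}, where both natural transformations were produced as inverse-adjoint pairs from the same data via \ref{p.rhoAB} and \ref{p.GAB}; I anticipate this is the main routine obstacle, but nothing more than bookkeeping is required.
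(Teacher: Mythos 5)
Your argument is correct and is essentially the paper's: the map is the mate, under the adjunctions $Le_{n!}\dashv e_n^{-1}$, $Rf_!^\N\dashv Rf^{\N!}$ and $Rf_!\dashv Rf^!$, of the isomorphism \eqref{e.shrLen}, and the paper simply \emph{defines} the map as this mate (``induced \dots by adjunction''), which makes your Yoneda chain the whole proof and dissolves the final compatibility worry you raise about \eqref{e.shren}. The only point you gloss over is the closed-immersion case when $\cX$, $\cY$ are not assumed of finite inertia, where \eqref{e.shrLen} and the Brown-representability adjoint $Rf^{\N!}$ are not available; there one runs the same chain with the adjunction $i_*^\N\dashv Ri^{\N!}$ and the isomorphism of \ref{p.enshr} in place of \eqref{e.shrLen}.
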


It follows that $Rf^{\N!}$ preserves AR-null complexes in $D^+$. Moreover,
if $S$ is finite-dimensional, then $Rf^{\N!}$ preserves AR-null complexes
and weakly normalized complexes by Proposition \ref{p.cdim} \ref{p.cdim2}.

\begin{Construction}
Let $i\colon \cY \to \cX$ be a closed immersion between finite-type
Deligne-Mumford $S$-stacks.  Let $j\colon \cU\to \cX$ be the complementary
open immersion. For any complex $M$ of $\Lambda_\bullet$-modules on
$\cX^\N$, we have a natural short exact sequence
  \[0 \to j^\N_!j^{\N*} M \to M \to i^\N_* i^{\N*} M \to 0,\]
  hence a distinguished triangle in $D(\cX^\N,\Lambda_\bullet)$
  \begin{equation}\label{e.distO1}
  j_!^\N j^{\N*} M \to M \to i^\N_* i^{\N*} M \to.
  \end{equation}

  For any complex $N$ of injective $\Lambda_\bullet$-modules on $\cX^\N$, we have a natural short exact sequence
  \[0\to i_*^\N i^{!\N} N \to N \to j_*^\N j^{\N*} N \to 0,\]
  hence, for any $N\in D(\cX^\N,\Lambda_\bullet)$, a distinguished triangle
  \begin{equation}\label{e.distO2}
  i^\N_*Ri^{\N!}N \to N \to Rj^\N_* j^{\N*} N \to.
  \end{equation}
\end{Construction}

\begin{Proposition}
  In the situation of Proposition \ref{p.enus}, $Rf^{\N!}$ preserves normalized complexes in $D^+$.
\end{Proposition}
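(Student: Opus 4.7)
The plan is to verify condition (c) of Lemma~\ref{p.Eke} for $N := Rf^{\N!} M$. First I would check that $N \in D^+$: Lemma~\ref{p.enus} identifies $N_n \simeq Rf^! M_n$ for every $n$, and Proposition~\ref{p.cdim}(iii) ensures that $Rf^!$ has finite cohomological amplitude depending only on $f$. Since each $M_n$ has a uniform lower cohomological bound (inherited from $M \in D^+$), so does each $N_n$, whence $N \in D^+$.

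Next, the structure map to be shown invertible is $\Lambda_n \otimes^L_{\Lambda_{n+1}} N_{n+1} \to N_n$, derived from the transition $N_{n+1} \to N_n$. Using Lemma~\ref{p.enus}, this transition is $Rf^!$ applied to the transition map $M_{n+1} \to M_n$, which, since $M$ is normalized, factors through the isomorphism $\Lambda_n \otimes^L_{\Lambda_{n+1}} M_{n+1} \simto M_n$. The structure map of $N$ is thereby identified with the natural comparison arrow
\[
\Lambda_n \otimes^L_{\Lambda_{n+1}} Rf^! M_{n+1} \to Rf^!\bigl(\Lambda_n \otimes^L_{\Lambda_{n+1}} M_{n+1}\bigr),
\]
so the task reduces to showing that $Rf^!$ commutes with $\Lambda_n \otimes^L_{\Lambda_{n+1}} -$.

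To establish this commutation, I would argue by adjunction. Let $r$ denote restriction of scalars along $\Lambda_{n+1} \to \Lambda_n$, so that $\Lambda_n \otimes^L_{\Lambda_{n+1}} -$ is left adjoint to $r$; also $Rf_! \dashv Rf^!$ by Proposition~\ref{p.shradj}. By the general formalism of double adjunctions, the desired commutation for the right adjoints is equivalent to the natural isomorphism $r \circ Rf_! \simeq Rf_! \circ r$ of left adjoints. This latter statement is essentially tautological: $Rf_!$ is obtained by gluing (\ref{p.gluestack}) the functors $j_!$ (extension by zero, a literal sheaf-level operation) and $Rp_*$ (computable on flasque resolutions, a property independent of the ring of coefficients), so the construction is compatible with restriction of scalars at each step.

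The main obstacle I anticipate is the bookkeeping needed to identify the abstractly produced commutation isomorphism with the canonical structure map of $Rf^{\N!} M$ as a $\Lambda_\bullet$-module, and to verify that the gluing data assembling $Rf_!^n$ and $Rf_!^{n+1}$ is genuinely natural in the coefficient ring. Both checks are routine once one unwinds Proposition~\ref{p.gluestack} together with the adjunction isomorphisms, but some care is required to keep the various compatibility 2-cells straight.
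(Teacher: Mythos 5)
Your reduction to condition (c) of Lemma~\ref{p.Eke} is the right starting point, but the step that carries all the weight --- showing that $Rf^!$ commutes with $\Lambda_n\otimes^L_{\Lambda_{n+1}}-$ --- rests on an invalid adjunction principle. The functor $\Lambda_n\otimes^L_{\Lambda_{n+1}}-$ is a \emph{left} adjoint (of restriction of scalars $r$), while $Rf^!$ is a \emph{right} adjoint; neither composite $\Lambda_n\otimes^L_{\Lambda_{n+1}}Rf^!(-)$ nor $Rf^!(\Lambda_n\otimes^L_{\Lambda_{n+1}}-)$ is itself an adjoint, so there is no ``double adjunction'' equivalence with a statement about left adjoints. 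What mate calculus actually provides is this: from the projection-formula isomorphism $Rf_!(\Lambda_n\otimes^L_{\Lambda_{n+1}}K)\simeq\Lambda_n\otimes^L_{\Lambda_{n+1}}Rf_!K$ one obtains a canonical comparison map $\Lambda_n\otimes^L_{\Lambda_{n+1}}Rf^!L\to Rf^!(\Lambda_n\otimes^L_{\Lambda_{n+1}}L)$, but its invertibility is an additional input, not a formal consequence; such commutation of $Rf^!$ with $-\otimes^L N$ is automatic only for $N$ perfect, and $\Lambda_n$ has \emph{infinite} Tor-dimension over $\Lambda_{n+1}$ (its minimal free resolution is the $2$-periodic complex $\cdots\xrightarrow{\lambda}\Lambda_{n+1}\xrightarrow{\lambda^{n+1}}\Lambda_{n+1}$). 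Moreover, passing to right adjoints in the statement you do call tautological, $r\circ Rf_!\simeq Rf_!\circ r$, yields only that $Rf^!$ commutes with restriction of scalars, resp.\ with $R\cHom_{\Lambda_{n+1}}(\Lambda_n,-)$ --- neither is the statement you need.

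To close the gap you must use the normalization hypothesis on $M$ substantively, since the commutation you want is essentially equivalent to the proposition itself. One repair: by \ref{p.Eke}, $M_{n+1}\simeq\Lambda_{n+1}\otimes^L_\cO R\pi_*M$, hence $\Lambda_n\otimes^L_{\Lambda_{n+1}}M_{n+1}\simeq\Lambda_n\otimes^L_\cO R\pi_*M$, and over $\cO$ the module $\Lambda_n$ \emph{is} perfect (resolution $\cO\xrightarrow{\times\lambda^{n+1}}\cO$), so commuting $Rf^!$ past it amounts to commuting a triangulated functor with the cone of multiplication by $\lambda^{n+1}$, which is formal. The paper instead argues that the statement is \'etale-local on $\cX$ and $\cY$, reducing to (a) $f$ a closed immersion, handled by the distinguished triangle \eqref{e.distO2} together with the fact that $f^{\N*}$ and $Rj^\N_*$ preserve normalized complexes (\ref{p.fnorm}) and that normalized complexes in $D^+$ form a triangulated subcategory; and (b) $f$ a smooth morphism of schemes, where $Rf^{\N!}\simeq f^{\N*}(d)[2d]$. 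Either way, a genuine resolution-theoretic or geometric input is required precisely where your proposal asserts a tautology.
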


\begin{proof}
We easily reduce to two cases: (a) $f$ is a closed immersion; (b) $f$ is a
smooth morphism of schemes. Case (a) follows from the distinguished triangle
\eqref{e.distO2} and Proposition \ref{p.fnorm}. Case (b) follows from the
fact that $Rf^{\N!}\simeq f^{\N*}(d)[2d]$, where $d$ is the relative
dimension of $f$.
\end{proof}

\begin{Construction}\label{c.fushrO}
In the situation of Proposition \ref{p.enus}, one checks as in Proposition
\ref{p.fadic} that $Rf^{\N!}$ preserves AR-adic complexes in $D^+$. It
induces
\[Rf^!\colon D^+_c(\cY,\cO)\to D^+_c(\cX,\cO),\]
which sends $\Dbc$ to $\Dbc$ by Proposition \ref{p.cf} \ref{p.cf3} and
Proposition \ref{p.Lambdan} \ref{p.Lambdan3} below.
\end{Construction}

As in Proposition \ref{p.fO}, we have the following.

\begin{Proposition}\label{p.Lambdan}
  Let $f$ be a morphism between finite-type Deligne-Mumford $S$-stacks, and let $M\in D^+_c(\cX,\cO)$, $N\in D^+_c(\cY,\cO)$.
\begin{enumerate}
  \item If $f$ is representable and \'etale, then the map
  \[\Lambda_n\otimes^L_\cO Rf_! M \to Rf_!(\Lambda_n\otimes^L_\cO M)\]
  is an isomorphism.

  \item If $f$ is separated, $\cX$ and $\cY$ are of finite inertia, then the map
  \[\Lambda_n\otimes^L_\cO Rf_\dag M \to Rf_\dag(\Lambda_n\otimes^L_\cO M)\]
  is an isomorphism.

  \item \label{p.Lambdan3} If $f$ is as in Proposition \ref{p.enus}, then
      the map
  \[\Lambda_n\otimes^L_\cO Rf^! N \to Rf^! (\Lambda_n \otimes^L_\cO N)\]
  is an isomorphism.
\end{enumerate}
\end{Proposition}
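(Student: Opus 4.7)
The plan is to mimic, for each of the three parts, the proof of \ref{p.fO}. In each case the map to be shown an isomorphism measures the failure of reducing modulo $\fm^{n+1}$ to commute with the six-operations functor, and in each case the argument reduces to a commutation with $e_n^{-1}$ already established at the level of projective systems $(\cX^\N,\Lambda_\bullet)$, after one passes to normalized representatives.

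The common first step is to use \ref{p.Mhat} to replace $M\in D^+_c(\cX,\cO)$ (resp.\ $N\in D^+_c(\cY,\cO)$) by its normalization $\hat M$ (resp.\ $\hat N$), which is a normalized AR-adic complex in $D^+(\cX^\N,\Lambda_\bullet)$, so that $\Lambda_n\otimes^L_\cO M=e_n^{-1}\hat M$ by definition (\ref{s.endthree}). For (i), since $f$ is representable and \'etale, $Rf_!^\N=f_!^\N$; by \ref{d.jO} this functor preserves normalized complexes (via \eqref{e.jLen} and the projection formula \eqref{e.jpf}), so $f_!^\N\hat M$ is normalized, and the desired map becomes the isomorphism \eqref{e.shren} of \ref{p.shren} applied to $\hat M$.

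For (ii), by the remark following the Lemma after \ref{s.dagO}, $Rf_\dag^\N$ preserves normalized complexes, so $Rf_\dag^\N\hat M$ is normalized; the map in question is then identified with the $e_n$-commutation isomorphism $e_n^{-1}Rf_\dag^\N\hat M \simto Rf_\dag^\N e_n^{-1}\hat M$ from that same Lemma. For (iii), the Proposition immediately preceding shows that $Rf^{\N!}$ preserves normalized complexes, so $Rf^{\N!}\hat N$ is normalized, and the map is identified with \ref{p.enus} applied to $\hat N$.

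Thus the argument is entirely formal given what is already established: each part reduces, after normalization, to one of the three $e_n^{-1}$-commutation lemmas (\ref{p.shren}, the Lemma after \ref{s.dagO}, and \ref{p.enus}). I do not expect a genuine obstacle; the only point requiring care is to verify that the adjunction/comparison maps are indeed compatible under the identification $\Lambda_n\otimes^L_\cO-=e_n^{-1}(-)$ on normalized complexes, which is a routine unwinding of definitions.
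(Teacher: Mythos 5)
Your proof is correct and is exactly the argument the paper intends (the paper merely says ``As in \ref{p.fO}''): normalize, check that the relevant $\N$-level functor preserves normalized complexes, and invoke the corresponding $e_n^{-1}$-commutation. One small citation point: in case (i) the Proposition does not assume $f$ separated or $\cX,\cY$ of finite inertia, so the $e_n^{-1}$-commutation you need is \eqref{e.jen} (valid for any representable \'etale morphism, and also the input to \ref{d.jO} for preservation of normalized complexes) rather than \eqref{e.shren} of \ref{p.shren}.
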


\begin{Construction}\label{d.distO}
Let $i\colon\cY \to \cX$ be a closed immersion between finite-type
Deligne-Mumford $S$-stacks, and let $j\colon \cU \to \cX$ be the
complementary open immersion. For $M\in D^+_c(\cX,\cO)$, \eqref{e.distO1}
and \eqref{e.distO2} induce distinguished triangles
\begin{gather}
  j_!j^* M \to M \to i_* i^* M \to,\label{e.dist1O}\\
  i_*Ri^!M \to M \to Rj_* j^* M \to.\label{e.dist2O}
\end{gather}
\end{Construction}

Construction \ref{s.dagO} yields the following.

\begin{Theorem}\label{p.dagO}\leavevmode
\begin{enumerate}
\item \label{p.dagO1} (Base change) Let
  \[\xymatrix{\cX'\ar[r]^h \ar[d]_{f'} & \cX\ar[d]^f\\
  \cY'\ar[r]^g & \cY}\] be a 2-Cartesian square of Deligne-Mumford stacks,
  with $f$ separated, $\cX$ and $\cY$ of finite type and finite inertia
  over $S$, $\cY'$ of finite type and finite inertia over a scheme $S'$,
  either regular of dimension $\le 1$ or quasi-excellent. Then, for all
  $M\in D^+_c(\cX,\cO)$, the map
  \[g^* Rf_\dag M \to Rf'_\dag h^* M\]
  is an isomorphism.

  \item \label{p.dagO2} (Projection formula) Let $f\colon \cX\to \cY$ be a
      separated morphism between finite-type and finite-inertia
      Deligne-Mumford $S$-stacks, and let $M\in D^+_c(\cX,\cO)$, $N\in
      D^+_c(\cY,\cO)$. Then the map
  \[N\otimes^L_\cO Rf_\dag M \to Rf_\dag(f^*N\otimes^L_\cO M)\]
  is an isomorphism.
\end{enumerate}
\end{Theorem}

\begin{Corollary}[K\"unneth formula]
For every 2-commutative diagram
\[\xymatrix{\cX_1\ar[rd]\ar[d]_{f_1} && \cX_2\ar[d]^{f_2}\ar[ld]\\
\cY_1\ar[r] &\cS& \cY_2\ar[l]}
\]
of finite-type and finite-inertia Deligne-Mumford $S$-stacks such that $f_1$
and $f_2$ are separated, and for $M_i\in D^+_c(\cX_i,\cO)$, $i=1,2$, we have
a canonical isomorphism
\begin{equation}\label{e.KFdagger}
f_{1\dag}M_1\boxtimes^L_{\cS} f_{2\dag}M_2 \simto f_\dag (M_1\boxtimes^L_{\cS} M_2),
\end{equation}
where $f\colon \cX_1\times_{\cS}\cX_2 \to \cY_1\times_{\cS}\cY_2$.
\end{Corollary}

The base change isomorphism in Construction \ref{s.lsO} induces the
following by adjunction.

\begin{Proposition}\label{p.usbcO}
  Let
    \[\xymatrix{\cX'\ar[r]^h \ar[d]_{f'} & \cX\ar[d]^f\\
  \cY'\ar[r]^g & \cY}\]
  be a 2-Cartesian square of Deligne-Mumford $S$-stacks of finite type and finite inertia, $g$ separated of $\ell$-prime inertia. Let $M\in D^+_c(\cX,\cO)$. Then the map
  \[Rf'_*Rh^! M \to Rg^! Rf_* M\]
  is an isomorphism.
\end{Proposition}

The isomorphisms \eqref{e.RcHomadj}, \eqref{e.jshradju}, \eqref{e.shradj1},
\eqref{e.dist4} and \eqref{e.shradj2} induce the following.

\begin{Proposition}\label{p.shradjO}
Let $f$ be a morphism between finite-type Deligne-Mumford $S$-stacks, and
let $K\in \Dbc(\cX,\cO)$, $L\in D^+_c(\cY,\cO)$, $M\in \Dbc(\cY,\cO)$, $N\in
D^+_c(\cX,\cO)$.
\begin{enumerate}
\item We have a canonical isomorphism
\[R\cHom_\cY(M,Rf_*N)\simto Rf_*R\cHom_\cX(f^*M,N).
\]

  \item \label{p.shradjO1}  If $f$ is representable and \'etale, then we
      have a canonical isomorphism
  \begin{equation}\label{e.shradjO1}
  R\cHom_\cY(Rf_!K,L)\simto Rf_*R\cHom_\cX(K,f^* L).
  \end{equation}

  \item \label{p.shradjO2} If $f$ is separated of $\ell$-prime inertia,
      and $\cX$ and $\cY$ are of finite inertia, then we have a canonical
      isomorphism
      \begin{equation}\label{e.shradjO2}
  R\cHom_\cY(Rf_!K,L)\simto Rf_*R\cHom_\cX(K,Rf^! L).
  \end{equation}

  \item \label{p.shradjO3} If $f$ is as in Proposition \ref{p.enus}, then
      we have a canonical isomorphism
      \begin{equation}\label{e.shradjO3}
  R\cHom_\cX(f^*M,Rf^!L)\simto Rf^!R\cHom_\cY(M,L).
  \end{equation}
\end{enumerate}
\end{Proposition}

\begin{Remark}\label{s.Eke}
Let $X$ be a scheme separated of finite type over $S$. In
\cite[Section~6]{Ekedahl}, Ekedahl constructs a category
$\Dbc(X,\cO)_{\mathrm{Ek}}$ and the six operations on it. By Construction
\ref{s.endthree} and \cite[Proposition 2.7 ii)]{Ekedahl}, the normalization
functor induces an equivalence of categories $\Dbc(X,\cO)\to
\Dbc(X,\cO)_{\mathrm{Ek}}$. The six operations constructed earlier in this
section, when restricted to schemes, are compatible with the six operations
of Ekedahl via this equivalence.
\end{Remark}

In the rest of this section let $S$ be either a regular scheme of dimension
$\le 1$ or an excellent finite-dimensional scheme, endowed with a
nonnegative dimension function $\delta_S$. Recall that $\ell$ is assumed to
be invertible on $S$.

\begin{Construction}\label{s.DO}
Let $\cX$ be a finite-type Deligne-Mumford $S$-stack.  Let
$\Omega_{\cX,n}\in D^{[-2d_\cX,0]}(\cX,\Lambda_n)$ be the dualizing complex
in Construction \ref{c.D}. By construction, we have isomorphisms
$\Lambda_m\otimes^L_{\Lambda_n}\Omega_{\cX,n}\to \Omega_{\cX,m}$ for $m\le
n$. We use the notation introduced in the proof of Proposition
\ref{p.RcHomfiber}. The morphisms of topoi $(j_n\colon \cX^{\le n}\to
\cX^\N)$ give an open covering of $\cX^\N$. Let $\Omega_{\cX,\le
n}=L\pi_n^*\Omega_{\cX,n}\in D(\cX^{\le n},\Lambda_{\le n})$. Then we have
isomorphisms ${e'_n}^{-1}\Omega_{\cX,\le n}=\Omega_{\cX,n}$ and
$\Omega_{\cX,\le n}\res \cX^{\le m}\simeq \Omega_{\cX,\le m}$ for $m\le n$.
Moreover, $R\cHom_{\Lambda_{\le n}}(\Omega_{\cX,\le n},\Omega_{\cX,\le
n})\simeq \Lambda_{\le n}$. In fact,
\begin{multline*}
{e_n'}^{-1}R\cHom_{\Lambda_{\le
n}}(\Omega_{\cX,\le n},\Omega_{\cX,\le n}) \simeq \pi_{n*}R\cHom_{\Lambda_{\le
n}}(L\pi^*_{n}\Omega_{\cX,n},\Omega_{\cX,\le n})\\
\simeq R\cHom_{\Lambda_n}(\Omega_{\cX,n},\pi_{n*}\Omega_{\cX,\le n})
\simeq R\cHom_{\Lambda_n}(\Omega_{\cX,n},\Omega_{\cX,n})\simeq \Lambda_n.
\end{multline*}
By the Be\u\i linson-Bernstein-Deligne gluing theorem \cite[Th\'eor\`eme
3.2.4]{BBD}, there exists a unique $\Omega_\cX\in
D^b(\cX^\N,\Lambda_\bullet)$ such that for all $n$, $j_n^{-1}\Omega_\cX
\simeq \Omega_{\cX,\le n}$. By construction, $\Omega_\cX$ is AR-adic and
normalized. Thus
\[\Lambda_n\otimes^L_\cO \Omega_\cX \simeq e_n^{-1} \Omega_\cX \simeq \Omega_{\cX,n}.\]
Define a triangulated functor $D_\cX\colon \Dbc(\cX,\cO)^{\op} \to
D^+_c(\cX,\cO)$ by $D_\cX M = R\cHom_{\cO}(M,\Omega_\cX)$.
\end{Construction}

\begin{Proposition}\label{p.DO}
  Let $\cX$ be a finite-type Deligne-Mumford $S$-stack.
  The functor $D_\cX$ induces a functor $\Dbc(\cX,\cO)^\op \to \Dbc(\cX,\cO)$ of cohomological amplitude contained in $[-2d_\cX,1]$.
  Moreover, for $M\in \Dbc(\cX,\cO)$, the morphisms
  \[\Lambda_n\otimes^L_\cO D_\cX M \to D_{\cX, n} (\Lambda_n\otimes^L_{\cO} M), \qquad M\to D_{\cX}D_{\cX} M\]
  are isomorphisms.
\end{Proposition}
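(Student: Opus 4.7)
My approach is to establish the base change isomorphism first, and then use it, together with the conservativity of the family $(\Lambda_n\otimes^L_\cO -)_{n\in\N}$ on $D^+_c(\cX,\cO)$ recorded in \ref{s.endthree}, to deduce both the amplitude bound and biduality from the corresponding statements for $\Lambda_n$-coefficients proved in Proposition \ref{p.D}. The base change isomorphism $\Lambda_n\otimes^L_\cO D_\cX M\simto D_{\cX,n}(\Lambda_n\otimes^L_\cO M)$ is obtained directly by applying Proposition \ref{p.RcHomO}(ii) with $N=\Omega_\cX$, combined with the identification $\Lambda_n\otimes^L_\cO \Omega_\cX\simeq \Omega_{\cX,n}$ noted just before the statement.

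For the amplitude bound, let $M\in D^{[a,b]}_c(\cX,\cO)$. Since $\cO$ is a discrete valuation ring, $\Lambda_n$ has flat dimension one over $\cO$, so $\Lambda_n\otimes^L_\cO M\in D^{[a-1,b]}_c(\cX,\Lambda_n)$. By Proposition \ref{p.D}(i), the contravariant functor $D_{\cX,n}$ has cohomological amplitude in $[-2d_\cX,0]$, whence $\Lambda_n\otimes^L_\cO D_\cX M\simeq D_{\cX,n}(\Lambda_n\otimes^L_\cO M)$ belongs to $D^{[-2d_\cX-b,-a+1]}_c(\cX,\Lambda_n)$ for every $n$. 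Conservativity then forces $D_\cX M\in D^{[-2d_\cX-b,-a+1]}_c(\cX,\cO)$; in particular $D_\cX$ sends $\Dbc$ to $\Dbc$ with cohomological amplitude contained in $[-2d_\cX,1]$.

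For biduality, the amplitude bound already guarantees $D_\cX D_\cX M\in \Dbc(\cX,\cO)$. Applying $\Lambda_n\otimes^L_\cO-$ to the canonical morphism $M\to D_\cX D_\cX M$ and iterating the base change isomorphism twice, by naturality, identifies this map with the biduality map $\Lambda_n\otimes^L_\cO M\to D_{\cX,n}D_{\cX,n}(\Lambda_n\otimes^L_\cO M)$, which is an isomorphism by Proposition \ref{p.D}(ii). Since the cone lies in $\Dbc(\cX,\cO)$ and is annihilated by each $\Lambda_n\otimes^L_\cO -$, conservativity finishes the proof. The only nontrivial point is checking that the base change isomorphism is natural with respect to the evaluation pairing defining biduality; this is formal from the construction of $R\cHom_\cO$ via the normalization equivalence $\Dbc(\cX,\cO)\simeq D^b_\norm$ of \ref{s.endthree} and the adjunction between $\otimes^L_\cO$ and $R\cHom_\cO$, and poses no serious obstacle.
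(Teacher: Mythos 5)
Your proposal is correct and follows essentially the same route as the paper: the base change isomorphism comes from \ref{p.RcHomO} together with $\Lambda_n\otimes^L_\cO\Omega_\cX\simeq\Omega_{\cX,n}$, the amplitude bound is deduced from \ref{p.D}~(i) and conservativity of $\Lambda_n\otimes^L_\cO-$, and biduality follows from the commutative square comparing $M\to D_\cX D_\cX M$ with the $\Lambda_n$-level biduality of \ref{p.D}~(ii). The naturality check you flag at the end is exactly the commutativity of that square, which the paper likewise treats as formal.
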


\begin{proof}
The first morphism above is an isomorphism by Proposition \ref{p.RcHomO}.
For $M\in D^{[a,b]}_c(\cX,\cO)$, $\Lambda_n\otimes^L_\cO M\in
D^{[a-1,b]}_c(\cX,\Lambda_n)$. By Proposition \ref{p.GD} \ref{p.GD1},
  $\Lambda_n\otimes^L_\cO D_\cX M$ belongs to $D^{[-b-2d_\cX,-a+1]}$ for all $n$. Hence $D_\cX M$ belongs to $D^{[-b-2d_\cX,-a+1]}_c(\cX,\cO)$. Moreover, for all $n$, the following diagram commutes
  \[\xymatrix{\Lambda_n\otimes^L_\cO M\ar[r]\ar[d] & D_{\cX,n}D_{\cX,n} (\Lambda_n\otimes^L_\cO M)\ar[d]^\simeq\\
  \Lambda_n\otimes^L_\cO D_\cX D_\cX M \ar[r]^\sim & D_{\cX,n} (\Lambda_n\otimes^L_\cO D_\cX M).} \]
By Proposition \ref{p.GD} \ref{p.GD2}, the top arrow is an isomorphism.
Therefore, the map $M \to D_{\cX}D_{\cX} M$ is an isomorphism.
\end{proof}

\begin{Remark}\label{s.fushrO}
Let $f\colon \cX\to \cY$ be a separated morphism of $\ell$-prime inertia
between finite-type Deligne-Mumford $S$-stacks. Assume either $f$ is a
closed immersion, or $\cX$ and $\cY$ are of finite inertia. The isomorphism
\eqref{e.shradjO3} gives an isomorphism $D_\cX f^* \simto R f^! D_\cY $ of
functors $D^b_c(\cY,\cO)\to \Dbc(\cX,\cO)$. Using biduality (Proposition
\ref{p.DO}), we obtain an isomorphism of functors $\Dbc(\cY,\cO)\to
\Dbc(\cX,\cO)$:
\[R f^! \simeq R f^! D_\cY D_\cY \simeq D_\cX f^* D_\cY.\]
\end{Remark}

\begin{Construction}\label{d.fushriekO}
Let $f\colon \cX\to\cY$ be a morphism between finite-type Deligne-Mumford
$S$-stacks. Thanks to Proposition \ref{p.DO}, we can define a triangulated
functor
\[Rf^!\colon \Dbc(\cY,\cO)\to \Dbc(\cX,\cO), \quad N \mapsto D_\cX f^* D_\cY N.\]
By Remark \ref{s.fushrO}, this definition is compatible with previous
definitions. For any $N\in \Dbc(\cY,\cO)$, Proposition \ref{p.DO} induces an
isomorphism
  \[\Lambda_n\otimes^L_\cO Rf^! N \to Rf^!(\Lambda_n\otimes^L_\cO N).\]
If $f\colon \cX\to \cY$ and $g\colon \cY\to \cZ$ are two such morphisms, then we have an isomorphism of functors $\Dbc(\cZ,\cO)\to \Dbc(\cX,\cO)$:
\[
R(gf)^! = D_\cX (gf)^* D_\cZ  \simeq D_\cX f^* g^* D_\cZ
 \simeq D_\cX f^* D_\cY D_\cY g^* D_\cZ  = Rf^! Rg^!.
\]

For $L,M\in \Dbc(\cY,\cO)$, we have
\begin{multline}\label{e.shradjO3p}
R\cHom_\cX(f^*M,Rf^!L)\simeq D_\cX(f^*M\otimes^L_\cO D_\cX Rf^! L)\simeq D_\cX(f^*M \otimes^L_\cO f^*D_\cY L)
\\
\simeq D_\cX f^*(M\otimes^L_\cO D_\cY L)\simeq Rf^!D_\cY(M\otimes^L_\cO D_\cY L) \simeq Rf^!R\cHom_\cY(M,L).
\end{multline}

If $f$ is smooth, it follows from the construction of $\Omega_\cX$ and
$\Omega_\cY$ that $\Omega_\cX\simeq f^*\Omega_\cY(d)[2d]$, where $d$ is the
relative dimension of $f$. It follows that Proposition \ref{p.fuO} induces
an isomorphism of functors $f^*(D_\cY N)(d)[2d]\simeq D_\cX f^*N$ for all
$N\in \Dbc(\cY,\cO)$. Thus
\[Rf^!=D_\cX f^* D_\cY \simeq f^*(d)[2d].\]
\end{Construction}

\begin{Remark}\label{s.flshrO}
Let $f\colon\cX\to \cY$ be a morphism of $\ell$-prime inertia between
finite-type Deligne-Mumford $S$-stacks. Assume either (a) $f$ is
representable and \'etale, or (b) $f$ is separated and $\cX$ and $\cY$ are
of finite inertia. The isomorphisms \eqref{e.shradjO1} and
\eqref{e.shradjO2} give an isomorphism $D_\cY Rf_! \simto Rf_* D_\cX$ of
functors $\Dbc(\cX,\cO)\to \Dbc(\cY,\cO)$. Using biduality (Proposition
\ref{p.DO}), we obtain an isomorphism of functors $\Dbc(\cX,\cO)\to
\Dbc(\cY,\cO)$:
\[Rf_!\simeq D_\cY D_\cY Rf_! \simeq D_\cY Rf_* D_\cX.\]
\end{Remark}

\begin{Construction}\label{d.flshriekO}
Let $f\colon \cX\to \cY$ be a morphism of $\ell$-prime inertia between
finite-type Deligne-Mumford $S$-stacks. Thanks to Proposition \ref{p.DO}, we
can define a triangulated functor
\[Rf_!\colon \Dbc(\cX,\cO)\to \Dbc(\cY,\cO), \quad M \mapsto D_\cY Rf_* D_\cX M.\]
By Remark \ref{s.flshrO}, this definition is compatible with previous
definitions. For $M\in D_c^b(\cX,\cO)$, Propositions \ref{p.fO} and
\ref{p.DO} induce an isomorphism
  \[\Lambda_n\otimes^L_{\cO} Rf_!M \to Rf_! (\Lambda_n\otimes^L_\cO M).\]
If $f\colon \cX\to \cY$ and $g\colon \cY\to \cZ$ are two such morphisms, then we have an isomorphism of functors $\Dbc(\cX,\cO)\to \Dbc(\cY,\cO)$:
\[
  R(gf)_! = D_\cZ R(gf)_* D_\cX \simeq D_\cZ Rg_* Rf_* D_\cX
  \simeq D_\cZ Rg_* D_\cY D_\cY Rf_* D_\cX = Rg_! Rf_!.
\]
\end{Construction}

\section{Operations on $D^+_c(\cX,E)$}\label{s.6}
In this section, let $E$ be a complete discrete valuation field, and let
$\cO$ be its ring of integers.

\begin{Construction}\label{c.E}
For an $\cO$-linear category $\cC$, we let $\cC\otimes_{\cO} E$ denote the
category obtained from $\cC$ by inverting multiplication by nonzero elements
of $\cO$. More precisely, $\cC\otimes_{\cO} E$ have the same objects as
$\cC$ and for objects $X$, $Y$, $\Hom_{\cC\otimes_\cO
E}(X,Y)=\Hom_{\cC}(X,Y)\otimes_\cO E$. Clearly $\cC\otimes_{\cO} E$ is an
$E$-linear category.

Now let $\cC$ be an $\cO$-linear Noetherian Abelian category. Let $\cT$ be
the full subcategory of $\cC$ spanned by torsion objects, namely, objects
annihilated by some power of the maximal ideal of $\cO$. Then $\cT$ is a
Serre subcategory (Definition \ref{d.thick}). By Lemma \ref{l.torsion}, the
quotient category $\cC/\cT$ is equivalent to $\cC\otimes_{\cO} E$.

Let $\cX$ be a topos. The above applies to the category
$\cC=\Mod_c(\cX,\cO)$. We define $\Mod_c(\cX, E)$ to be the quotient
$\Mod_c(\cX,\cO)/\cT$. We have $\Mod_c(\cX, E)\simeq
\Mod_c(\cX,\cO)\otimes_\cO E$.

Let $D_\cT$ be the full subcategory of $D_c(\cX,\cO)$ spanned by complexes
whose cohomology sheaves belong to $\cT$. Then $D_\cT$ is a thick
triangulated subcategory. For $*\in \{\emptyset,+,b\}$, we define
$D^*_c(\cX,E)$ to be the quotient $D^*_c(\cX,\cO)/D^*_\cT$, where
$D^*_\cT=D_\cT\cap D^*$. The inclusion functors induce fully faithful
functors
\[\Mod_c(\cX,E)\to D^b_c(\cX,E)
\to D^+_c(\cX,E)\to D_c(\cX,E).
\]
Truncation and cohomology functors on $D_c(\cX,\cO)$ induce truncation and
cohomology functors on $D_c(\cX,E)$. We denote the localization functor
$D_c(\cX,\cO)\to D_c(\cX,E)$ by $M\mapsto E\otimes_\cO M$. The functor
$D_c(\cX,\cO)\otimes_\cO E\to D_c(\cX,E)$ is not an equivalence in general,
but it induces an equivalence $D^b_c(\cX,E)\simeq D^b_c(\cX,\cO)\otimes_\cO
E$. If $\cX$ has enough points, the canonical t-structure on
$D^+_c(\cX,\cO)$ induces a canonical t-structure on $D^+_c(\cX,E)$, of heart
$\Mod_c(\cX,E)$, compatible with the truncation and cohomology functors.

This applies in particular to the \'etale topos of a Noetherian
Deligne-Mumford stack $\cX$. The functor $-\otimes_\cO -$ in Construction
\ref{c.tensor} induces a functor
\[-\otimes_E-\colon D^+_c(\cX,E) \times D^+_c(\cX,E) \to D^+_c(\cX,E).\]
For any morphism $f\colon \cX\to \cY$ of Noetherian Deligne-Mumford stacks,
the functor $f^*$ in Construction \ref{c.fus} induces a functor
\[f^*\colon
D_c(\cY,E)\to D_c(\cX,E).\]
\end{Construction}

In Construction \ref{d.E} through Proposition \ref{c.ED}, let $S$ be either
a regular scheme of dimension $\le 1$ or a quasi-excellent scheme. Assume
that $E$ has characteristic $0$ and residue characteristic $\ell$ invertible
on $S$.

\begin{Construction}\label{d.E}
For any finite-type Deligne-Mumford $S$-stack, the functor $R\cHom_\cO(-,-)$
in Construction \ref{c.RcHom} induces a functor
\[
R\cHom_E(-,-)\colon D^b_c(\cX,E)^\op \times D^+_c(\cX,E) \to
D^+_c(\cX,E).
\]
Let $f\colon \cX\to \cY$ be a morphism between finite-type Deligne-Mumford
$S$-stacks. The functor $Rf_*$ in Proposition \ref{p.fadic} induces a
functor
\[Rf_*\colon D^+_c(\cX,E)\to D^+_c(\cY,E).\]
If $f$ is separated and $\cX$ and $\cY$ are of finite inertia, then the
functors $Rf_\dag$ in Construction \ref{c.dagO} induces a functor
\[
Rf_!\colon D^+_c(\cX,E)\to D^+_c(\cY,E).
\]
If, moreover, $f$ has $\ell$-prime inertia, then the functor $Rf^!$ in
Construction \ref{c.fushrO} induces a functor
\[Rf^!\colon D_c^+(\cY,E) \to D_c^+(\cX,E).\]
\end{Construction}

\begin{Proposition}\label{p.finE}
Let $f\colon \cX\to \cY$ be a morphism between finite-type Deligne-Mumford
$S$-stacks. Assume that the fibers of $f$ have dimension $\le d$.
\begin{enumerate}
 \item \label{p.finE1} The functor $Rf_*$ sends $\Dbc(\cX,E)$ to
     $\Dbc(\cY,E)$. If $f$ is proper, then the cohomological amplitude of
     $Rf_*$ is contained in $[0,2d]$.

 \item \label{p.finE2} If $f$ is separated and $\cX$ and $\cY$ are of
     finite inertia, then $Rf_!$ has cohomological amplitude contained in
     $[0,2d]$, and, in particular, sends $\Dbc(\cX,E)$ to $\Dbc(\cY,E)$.
 \end{enumerate}
\end{Proposition}

\begin{proof}
\begin{itemize}
\item[\ref{p.finE1}] We may assume $\cY$ is a scheme. For the first
    assertion, using reductions as in Proposition \ref{p.cdim}
    \ref{p.cdim1}, we may assume $\cX$ separated over $S$. Then, for both
    assertions, since $f$ factors through the coarse space of $\cX$, we
    are reduced to two cases: (a) $f$ is a proper universal homeomorphism;
    (b) $f$ is representable. Case (b) is follows from the remark
    following Proposition \ref{p.fO}. In case (a), by proper base change
    (Proposition \ref{p.sbcO} \ref{p.sbcO1}), we may assume that $\cY$ is
    the spectrum of an algebraically closes field. Then $\cX^\red\simeq
    BG$ and the assertions are clear.

\item[\ref{p.finE2}] By base change (Proposition \ref{p.dagO}
    \ref{p.dagO1}), we may assume that $\cY$ is the spectrum of a field.
    The assertion then follows from the decomposition in Propositions
    \ref{p.comp} and \ref{p.finE1}.
\end{itemize}
\end{proof}

\begin{Proposition}\label{p.luE}
Let $f\colon \cX\to \cY$ be a proper universal homeomorphism (Definition
\ref{s.uh}) between finite-type Deligne-Mumford $S$-stacks, and let $L\in
D^+_c(\cY,E)$. Then the adjunction map $L\to Rf_*f^* L$ is an isomorphism.
\end{Proposition}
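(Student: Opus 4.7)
The plan is to mimic the proof of \ref{p.uhmp}, with the prime-to-$m$ inertia hypothesis there replaced by the fact that $E$ has characteristic zero. First I observe that a universal homeomorphism between Deligne-Mumford $S$-stacks of finite type is universally closed, separated (\ref{s.uh}), and of finite type, hence proper. Applying proper base change---the $\cO$-coefficient statement \ref{p.sbcO}(a) gives the corresponding statement for $E$-coefficients by passage to the quotient $D^+_c(-,\cO)/D^+_\cT$---I reduce to the case where $\cY = \Spec(\Omega)$ for an algebraically closed field $\Omega$ over $S$.

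In that situation, the nilpotent thickening $\cX^{\red}\hookrightarrow \cX$ induces an equivalence of \'etale topoi, so I may replace $\cX$ by $\cX^{\red}\simeq BG$ for some finite group $G$. For the structural morphism $f\colon BG \to \Spec(\Omega)$, $f^*L$ is $L$ equipped with the trivial $G$-action, and $Rf_*f^*L \simeq R\Gamma(G,L)$. Since $|G|$ is invertible in the characteristic-zero field $E$, the standard averaging argument yields $H^0(G,L)=L$ and $H^i(G,L)=0$ for $i>0$, which gives the desired isomorphism.

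The main (mild) subtlety I anticipate lies in transferring the computation from $\cO$- to $E$-coefficients, since one naturally works with a lift $\tilde L\in D^+_c(\cY,\cO)$ of $L$. At the $\cO$-level, upon restriction to a geometric point, the cone of the adjunction $\tilde L\to Rf_*f^*\tilde L$ is identified with $R\Gamma^{>0}(G,\tilde L_{\bar y})$, whose cohomology sheaves are $|G|$-torsion. The prime-to-$\ell$ part of $|G|$ is already invertible in $\cO$, so this cone has $\ell$-power torsion cohomology and thus lies in $D^+_\cT$; it therefore vanishes after inverting $\ell$. Apart from this bookkeeping, the argument is a direct translation of \ref{p.uhmp}, and I expect no substantial obstacle.
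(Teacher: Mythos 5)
Your proposal is correct and follows the same route as the paper: reduce by proper base change to $\cY$ the spectrum of an algebraically closed field, replace $\cX$ by $\cX^{\red}\simeq BG$, and conclude because $|G|$ is invertible in $E$ (the paper simply says ``the assertion is clear'' at this point). The extra bookkeeping you supply — that at the $\cO$-level the cone of the adjunction has $|G|$-torsion, hence $\ell$-power-torsion, cohomology and so lies in $D^+_\cT$ — is a valid way to make that last step explicit.
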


\begin{proof}
By proper base change, we may assume that $\cY$ is the spectrum of an
algebraically closed field. Then $\cX^\red\simeq BG$ and the
assertion is clear.
\end{proof}

\begin{Construction}\label{c.ED}
Let $S$ be either a regular scheme of dimension $\le 1$ or an excellent
finite-dimensional scheme, endowed with a dimension function. Recall that
$E$ has characteristic $0$ and residue characteristic $\ell$ invertible on
$S$. The functor $D_\cX$ in Construction \ref{s.DO} induces a reflexive
triangulated functor
\[D_\cX\colon \Dbc(\cX,E)^\op \to \Dbc(\cX,E).\]
Let $f$ be a morphism between finite-type Deligne-Mumford $S$-stacks. The
functor $Rf^!$ in Construction \ref{d.fushriekO} induces a functor
\[Rf^!\colon \Dbc(\cY,E) \to \Dbc(\cX,E),\]
which is the restriction of the $Rf^!$ in Construction \ref{d.E} to $D^b_c$
when the latter is defined.

The finiteness of $Rf_*$ (Proposition \ref{p.finE}) enables us to define a
functor
\[Rf_!\colon \Dbc(\cX,E)\to \Dbc(\cY,E), \quad M \mapsto D_\cY Rf_* D_\cX M.\]
If $f\colon \cX\to \cY$ and $g\colon \cY\to \cZ$ are two such
morphisms, then
\[
R(gf)_! = D_\cZ R(gf)_* D_\cX \simeq D_\cZ Rg_* Rf_* D_\cX
\simeq D_\cZ Rg_* D_\cY D_\cY Rf_* D_\cX = Rg_! Rf_!.
\]
For $K\in \Dbc(\cX,E)$, $L\in \Dbc(\cY,E)$, we have
\begin{multline}\label{e.shradjE}
R\cHom_{\cY}(Rf_!K,L)\simeq D_\cY(Rf_! K\otimes D_\cY L)\simeq D_\cY(Rf_!(K\otimes f^*D_\cY L))\\
\simeq Rf_*D_\cX(K\otimes D_\cX Rf^! L) \simeq Rf_*R\cHom_\cX(K,Rf^!L).
\end{multline}

If $f$ is separated and $\cX$ and $\cY$ are of finite inertia, then $Rf_!$
is the restriction of the $Rf_!$  in Construction \ref{d.E} to $\Dbc$. In
fact, using the decomposition of $f$ in Proposition \ref{p.comp}, we are
reduced to two cases: (a) $f$ proper and quasi-finite, (b) $f$
representable. In case (a), one can repeat the argument of \cite[Corollary
5.15]{OlssonFuji}. Case (b) follows from Remark \ref{s.flshrO}.
\end{Construction}

\begin{Construction}\label{d.Eplus}
Finally let $E'$ be an algebraic extension of $E$. We define
$\Mod_c(\cX,E')$ to be the inductive 2-limit of $\Mod_c(\cX,E'')$, and
define $D^+_c(\cX,E')$ to be the inductive 2-limit of $D^+_c(\cX,E'')$,
where $E''$ runs over all finite extensions of $E$ contained in $E'$. Then
$D^+_c(\cX,E')$ admits the same operations as $D^+_c(\cX,E)$.
\end{Construction}

\section{Pushforward of cohomological correspondences}\label{s.cc}
In this section, we construct several operations on cohomological
correspondences, including the extraordinary pushforward used in the
statement of the Lefschetz-Verdier formula. Throughout this section, we fix
a coefficient ring $\Lambda$ of one of the following types:
\begin{itemize}
\item[(a)] A ring annihilated by an integer $m$;
\item[(b)] A complete discrete valuation ring of residue characteristic
    $\ell>0$;
\item[(c)] An algebraic extension of a discrete valuation field of
    characteristic $0$ and residue characteristic $\ell>0$.
\end{itemize}
Accordingly, we fix a Noetherian base scheme $S$
\begin{itemize}
\item[(a)] with no additional assumption (in particular, we do \emph{not}
    assume $m$ invertible on $S$);
\item[(b)] regular of dimension $\le 1$ or quasi-excellent
    finite-dimensional, with $\ell$ invertible on $S$;
\item[(c)] regular of dimension $\le 1$ or excellent finite-dimensional,
    with $\ell$ invertible on $S$.
\end{itemize}

\begin{Convention}\label{c.Sstack}
In Sections \ref{s.cc} and \ref{s.LV}, all $S$-stacks are assumed to be
separated finite-type Deligne-Mumford $S$-stacks that are, according to the
type of $\Lambda$,
\begin{itemize}
\item[(a)] of $m$-prime inertia;
\item[(b)] of $\ell$-prime inertia;
\item[(c)] with no additional assumption.
\end{itemize}
\end{Convention}

For an $S$-stack $X$, we consider the category $D(X,\Lambda)$ in case (a)
and the category $\Dbc(X,\Lambda)$ in cases (b) and (c). In order to get
uniform statements for all cases, we adopt the following convention in cases
(b) and (c): $D(X,\Lambda):=\Dbc(X,\Lambda)$.

To simplify notation, we omit the base $S$ in products and exterior
products. We write $f_*$, $f_!$, $f^!$, $\otimes$, $\boxtimes_\cS$ for
$Rf_*$, $Rf_!$, $Rf^!$, $\otimes^L_\Lambda$, $\boxtimes^L_\cS$,
respectively. We let $\BC$, $\PF$, $\KF$, $\adj$, $\ev$ denote base change,
projection formula, K\"unneth formula, adjunction, and evaluation maps,
respectively.

We start by reviewing the K\"unneth formula maps (Construction \ref{c.KFH}
through Proposition \ref{p.KFadj}).

\begin{Construction}\label{c.KFH}
Let
\begin{equation}\label{e.square}
\xymatrix{X_2\ar[d] & X\ar[l]_{p_2}\ar[d]^{p_1}\\
\cS & X_1\ar[l]}
\end{equation}
be a 2-commutative diagram of $S$-stacks. Let $L_i, M_i\in D(X_i,\Lambda)$,
$i=1,2$. We consider the map
\[H^0(X_1,M_1)\otimes H^0(X_2,M_2)\to H^0(X,p_1^*M_1\otimes p_2^*M_2)\]
carrying $a\otimes b$, $a\colon \Lambda\to M_1$, $b\colon \Lambda\to M_2$ to
$\Lambda\simeq p_1^*\Lambda\otimes p_2^*\Lambda\xrightarrow{p_1^*a\otimes
p_2^*b}p_1^*M_1\otimes p_2^*M_2$. We also consider the morphism
\[p_1^*R\cHom_{X_1}(L_1,M_1)\otimes p_2^* R\cHom_{X_2}(L_2,M_2)\to
R\cHom_{X}(p_1^* L_1\otimes p_2^*L_2,p_1^* M_1\otimes p_2^*M_2),
\]
composite of the obvious morphisms
\begin{multline*}
p_1^*R\cHom_{X_1}(L_1,M_1)\otimes p_2^*R\cHom_{X_2}(L_2,M_2) \to R\cHom_{X}(p_1^*L_1,p_1^*M_1)\otimes R\cHom_X(p_2^*L_2,p_2^*M_2)\\
\to R\cHom_{X}(p_1^* L_1\otimes p_2^*L_2,p_1^* M_1\otimes p_2^*M_2).
\end{multline*}
In particular, if \eqref{e.square} is 2-Cartesian, we obtain
\begin{gather}
\KF_{H^0}\colon H^0(X_1,M_1)\otimes H^0(X_2,M_2)\to H^0(X,M_1\boxtimes_\cS M_2),\notag\\
\KF\colon R\cHom_{X_1}(L_1,M_1)\boxtimes_\cS R\cHom_{X_2}(L_2,M_2)\to
R\cHom_{X}(L_1\boxtimes_{\cS}L_2,M_1\boxtimes_\cS M_2).\label{e.KF}
\end{gather}
\end{Construction}

\begin{Construction}\label{c.KFstar}
Let
\begin{equation}\label{e.cube}
\xymatrix{&X_2\ar[ld]\ar[dd]|\hole^(.3){f_2} && X\ar[ld]^{p_1}\ar[dd]^f\ar[ll]_{p_2}\\
\cS\ar[dd]_s &&\ar[ll] X_1\ar[dd]_(.3){f_1} \\
& Y_2\ar[ld] && Y\ar[ld]^{q_1}\ar[ll]|\hole_(.3){q_2}\\
\cT && Y_1\ar[ll]}
\end{equation}
be a 2-commutative diagram of $S$-stacks. Let $M_i\in D(X_i,\Lambda)$,
$i=1,2$. We consider the morphism
\[q_1^*f_{1*}M_1\otimes q_2^*f_{2*}M_2\to f_*(p_1^*M_1\otimes p_2^*M_2)\]
adjoint to the composite
\[f^*(q_1^* f_{1*}M_1\otimes q_2^* f_{2*}M_2) \simto p_1^* f_1^*f_{1*}M_1 \otimes p_2^* f_2^* f_{2*} M_2
\xrightarrow{\adj\otimes \adj} p_1^* M_1\otimes p_2^* M_2.
\]
In particular, if the top and bottom squares of \eqref{e.cube} are
2-Cartesian, we obtain
\[\KF_*\colon f_{1*}M_1\boxtimes_\cS f_{2*}M_2 \to f_*(M_1\boxtimes_\cT M_2).\]
This construction is compatible with composition of morphisms of squares.
\end{Construction}

\begin{Construction}\label{c.KFls}
For a 2-commutative diagram of $S$-stacks of the form \eqref{e.cube} with
top and bottom squares 2-Cartesian satisfying $\cS=\cT$ and $s=\id$, and
$N_i\in D(Y_i,\Lambda)$, we consider the map
\[\KF^!\colon f_1^! N_1 \boxtimes_\cS f_2^! N_2 \to f^!(N_1\boxtimes_\cS N_2)\]
adjoint to the composite
\[f_!(f_1^! N_1 \boxtimes_\cS f_2^! N_2) \xrightarrow[\sim]{\KF_!} f_{1!}f_1^!N_1 \boxtimes_\cS f_{2!}f_2^! N_2
\xrightarrow{\adj\boxtimes_{\cS} \adj} N_1\boxtimes_\cS N_2,
\]
where $\KF_!$ is given by the K\"unneth formula isomorphism for $f_!$
(\eqref{e.Kunnethlshr} or \eqref{e.KFdagger}). The construction of $\KF^!$
is compatible with composition of morphisms of 2-Cartesian squares.
\end{Construction}

K\"unneth formula maps constructed above are compatible with the
isomorphisms \eqref{e.shradj1}, \eqref{e.shradj2}, \eqref{e.shradjO2},
\eqref{e.shradjO3} in the following sense.

\begin{Proposition}\label{p.KFadj}
In the situation of Construction \ref{c.KFls}, for $M_i\in D(X_i,\Lambda)$,
$L_i,N_i\in D(Y_i,\Lambda)$, $i=1,2$, the diagram
\[
\xymatrix{\boxtimes_\cS R\cHom_{Y_i}(f_{i!}M_i,L_i)\ar[d]_{\KF}
\ar[r]^\sim
&\boxtimes_\cS f_{i*}R\cHom_{X_i}(M_i,f_i^!L_i)\ar[d]^{\KF_*}\\
R\cHom_Y(\boxtimes_\cS f_{i!}M_i,\boxtimes_\cS L_i)\ar[d]_{\KF_!}^{\simeq} & f_*\boxtimes_\cS R\cHom_{X_i}(M_i,f_i^!L_i)\ar[r]^{\KF}
& f_*R\cHom_X(\boxtimes_\cS M_i, \boxtimes_\cS f_i^! L_i)\ar[d]^{\KF^!}\\
R\cHom_Y(f_!\boxtimes_\cS M_i,\boxtimes_\cS L_i)\ar[rr]^\sim && f_*R\cHom_X(\boxtimes_\cS M_i,f^!\boxtimes_\cS L_i)}
\]
commutes. Here the top and bottom horizontal arrows are both of type
\eqref{e.shradj1} in case (a), of type \eqref{e.shradjO2} in case (b), and
of type \eqref{e.shradjE} in case (c). Moreover, the diagram
\[
\xymatrix{\boxtimes_\cS R\cHom_{X_i}(f_i^*N_i,f_i^!L_i)\ar[d]_{\KF}\ar[rr]^\sim
&& \boxtimes_\cS f_i^! R\cHom_{Y_i}(N_i,L_i)\ar[d]^{\KF^!}\\
R\cHom_X(\boxtimes_\cS f_i^* N_i, \boxtimes_\cS f_i^! L_i)\ar[r]^{\KF^*}_\sim
& R\cHom_X(f^*\boxtimes_\cS N_i, \boxtimes_\cS f_i^! L_i)\ar[d]_{\KF^!}
& f^!\boxtimes_\cS R\cHom_{Y_i}(N_i,L_i)\ar[d]^{\KF}\\
& R\cHom_X(f^*\boxtimes_\cS N_i,f^! \boxtimes_\cS L_i)\ar[r]^\sim & f^!R\cHom_Y(\boxtimes_\cS N_i,\boxtimes_\cS L_i)}
\]
commutes. Here the top and bottom horizontal arrows are both of type
\eqref{e.shradj2} in case (a), of type \eqref{e.shradjO3} in case (b), and
of type \eqref{e.shradjO3p} in case (c).
\end{Proposition}

\begin{proof}
The verification by adjunction is straightforward.
\end{proof}

We will need the following generalizations of the base change and K\"unneth
formula maps for $f_!$.

\begin{Construction}\label{c.GBC}
Let
\[\xymatrix{X'\ar[rrd]^h\ar[rd]^(.8)r\ar[ddr]_{f'} \\
& Z\ar[r]_q\ar[d]^p & X\ar[d]^f\\
&Y'\ar[r]^g & Y}
\]
be a 2-commutative diagram of $S$-stacks with 2-Cartesian square such that
$r$ is proper. We define the generalized base change map
\[\GBC\colon g^* f_! \to f'_! h^*\]
of functors $D(X,\Lambda)\to D(Y',\Lambda)$ to be the composite
\[g^*f_!\xrightarrow[\sim]{\BC} p_! q^* \xrightarrow{\adj} p_!r_* r^* q^* \simeq p_!r_!r^* q^* \simeq f'_! h^*.\]
By adjunction, we obtain the generalized base change map
\[\GBC\colon h_*f'^! \to f^! g_*\]
of functors $D(Y',\Lambda)\to D(X, \Lambda)$. The maps $\GBC$ are compatible
with horizontal and vertical compositions of squares.
\end{Construction}

\begin{Construction}\label{c.GKF}
Consider a 2-commutative diagram of $S$-stacks of the form \eqref{e.cube}
satisfying $\cS=\cT$, $s=\id$. Assume that the morphisms $X\to
X_1\times_{\cS}X_2$, $Y\to Y_1\times_{\cS} Y_2$ are proper. Then we have
2-commutative diagrams with 2-Cartesian squares
\[\xymatrix{X\ar@/^1pc/[rrrd]^{p_2}\ar@/_1pc/[dddr]_{p_1}\ar[rd]^(.7)r\ar[rrd]^{h_2}\ar[rdd]_{h_1} &&&& X\ar@/^.6pc/[rr]^{f}\ar[r]_r\ar[rd] & Z\ar[r]_{f'}\ar[d] & Y\ar[d]\\
&Z\ar[rd]^(.7){f'}\ar[r]_{h'_2}\ar[d]^{h'_1}& X_2\times_{Y_2} Y\ar[r]^{t_2}\ar[d]^{g_2} & X_2\ar[d]^{f_2}&& X_1\times_\cS X_2\ar[r] & Y_1\times_\cS Y_2\\
&X_1\times_{Y_1} Y\ar[r]^{g_1}\ar[d]_{t_1} & Y \ar[r]_{q_2}\ar[d]^{q_1} & Y_2\\
&X_1\ar[r]^{f_1} & Y_1}
\]
where $r$ is proper. Let $M_i\in D(X_i,\Lambda)$, $i=1,2$. We define the
generalized K\"unneth formula map
\[q_1^*f_{1!}M_1\otimes q_2^* f_{2!}M_2\to f_!(p_1^* M_1\otimes p_2^* M_2)\]
to be the map given by the commutative diagram
\[\xymatrix{ & g_{1!}(t_1^*M_1\otimes g_1^*g_{2!}t_2^* M_2)\ar[r]^\sim_{\BC}\ar@/^1pc/[rr]^{\GBC}
&g_{1!}(t_1^*M_1\otimes h'_{1!}p'^*_2M_2) \ar[r]_{\adj_r}\ar[d]^\simeq_{\PF_{h'_1}}
&g_{1!}(t_1^*M_1\otimes h_{1!}p^*_2M_2) \ar[d]^\simeq_{\PF_{h_1}}\\
q_1^*f_{1!}M_1\otimes q_2^* f_{2!}M_2 \ar[r]^{\BC\otimes \BC}_\sim
& g_{1!} t_1^* M_1 \otimes g_{2!}t_2^* M_2\ar[u]_{\PF_{g_1}}^\simeq \ar[d]^{\PF_{g_2}}_\simeq\ar[r]^{\KF_!}_\sim
& f'_!(p'^*_1 M_1\otimes p'^*_2 M_2)\ar[r]^{\adj_r}
& f_!(p_1^* M_1\otimes p_2^* M_2)\\
& g_{2!}(g_2^*g_{1!}t_1^* M_1\otimes t_2^* M_2)\ar[r]_\sim^{\BC}\ar@/_1pc/[rr]_{\GBC}
&g_{2!}(h'_{2!}p'^*_1M_1 \otimes t_2^* M_2) \ar[r]^{\adj_r}\ar[u]_\simeq^{\PF_{h'_2}}
&g_{2!}(h_{2!}p^*_1M_1 \otimes t_2^* M_2) \ar[u]_\simeq^{\PF_{h_2}},}
\]
where $p'_i=t_ih'_i\colon Z\to X_i$, $i=1,2$, and $\KF_!$ is the K\"unneth
formula isomorphism for $f'_!(t_1^*M_1\boxtimes_Y t_2^*M_2)$.
\end{Construction}

\begin{Definition}\label{d.corr}
A \emph{correspondence} (or an \emph{$S$-correspondence} to be more precise)
is a morphism of $S$-stacks $b=(b_1,b_2)\colon B\to X_1\times X_2$, where
$X_1$ and $X_2$ are $S$-stacks. Recall that $X_1\times X_2$ stands for
$X_1\times_S X_2$ by the convention of this section.

Let $b=(b_1,b_2)\colon B\to X_1\times X_2$, $c=(c_1,c_2)\colon C\to
Y_1\times Y_2$ be correspondences. A morphism of correspondences $b\to c$ is
a triple $g^\sharp=(g,(f_1,f_2))$ of morphisms of $S$-stacks such that the
following diagram commutes
\begin{equation}\label{e.corr}
\xymatrix{X_1\ar[d]_{f_1}&\ar[l]_{b_1} B \ar[d]^g \ar[r]^{b_1} & X_2\ar[d]^{f_2}\\
Y_1 & \ar[l]_{c_1} C \ar[r]^{c_2} & Y_2.}
\end{equation}

The morphism of correspondences $g^\sharp$ is called \emph{left-proper}
(resp.\ \emph{right-proper}) if the morphism $B\to X_1\times_{Y_1} C$
(resp.\ $B\to X_2\times_{Y_2} C$) is proper. The morphism $g^\sharp$ is
called of type (i) if it is left-proper and $f_2$ is an identity morphism.
The morphism $g^\sharp$ is called of type (ii) if $f_1$ and $g$ are both
identities.

The \emph{transpose} of a correspondence $b=(b_1,b_2)\colon B\to X_1\times
X_2$ is
\[b^T=(b_2,b_1)\colon B\xrightarrow{b} X_1\times X_2\simeq X_2\times X_1.\]
The transpose of a morphism of correspondences $g^\sharp=(g,(f_1,f_2))\colon
b\to c$ is $g^{\sharp T}=(g,(f_2,f_1))\colon b^T\to c^T$.
\end{Definition}

\begin{Remark}
If $g$ is proper, then $g^\sharp$ is left-proper and right-proper. Type (i)
morphisms and type (ii) morphisms of correspondences are left-proper.
Conversely, every left-proper morphism of correspondences $g^\sharp$ is
canonically the composite of a morphism of type (ii) followed by a morphism
of type (i):
\[\xymatrix{X_1\ar@{=}[d] & \ar[l]_{b_1} B\ar@{=}[d]\ar[r]^{b_1} & X_2\ar[d]^{f_2}\\
X_1\ar[d]_{f_1} & B\ar[l]_{b_1}\ar[d]^g \ar[r] & Y_2\ar@{=}[d]\\
Y_1 & C\ar[l]_{c_1} \ar[r]^{c_2} & Y_2.}
\]
\end{Remark}

\begin{Definition}\label{d.decomp}
Let $b'\colon B'\to X_1\times X_2$, $b''\colon B''\to X_2\times X_1$,
$c'\colon C'\to Y_1\times Y_2$, $c''\colon C''\to Y_2\times Y_1$ be
correspondences. A morphism $(b',b'')\to (c',c'')$ is a datum
$(g',g'')^\sharp= (g',g'',(f_1,f_2))$ such that $(g',(f_1,f_2))\colon b'\to
c'$ and $(g'',(f_2,f_1))\colon b''\to c''$ are morphisms of correspondences.

The morphism $(g',g'')^\sharp$ is called of type (I) if $g'^\sharp$ is of
type (i) and $g''^\sharp$ is of type (ii). The morphism $(g',g'')^\sharp$ is
called of type (II) if $g'^\sharp$ is of type (ii) and $g''^\sharp$ is of
type (i). The morphism $(g',g'')^\sharp$ is called \emph{left-decomposable}
if it is isomorphic to a composite $(g'_1,g''_1)^\sharp \dotsm
(g'_n,g''_n)^\sharp$ such that for each $1\le i\le n$, $(g'_i,g''_i)^\sharp$
is of type (I) or (II). The morphism $(g',g'')^\sharp$ is called
\emph{right-decomposable} if its transpose $(g', g'',(f_2,f_1))\colon
(b'^T,b''^T)\to (c'^T,c''^T)$ is left-decomposable.
\end{Definition}

\begin{Proposition}
Let $b'\colon B'\to X_1\times X_2$, $b''\colon B''\to X_2\times X_1$,
$c'\colon C'\to Y_1\times Y_2$, $c''\colon C''\to Y_2\times Y_1$ be
correspondences and let $(g',g'')^\sharp\colon (b',b'')\to (c',c'')$ be a
morphism. If either $g'$ is proper and $g''^\sharp$ is left-proper, or $g''$
is proper and $g'^\sharp$ is left-proper, then $(g',g'')^\sharp$ is
left-decomposable. In particular, if $g'$ and $g''$ are proper, then
$(g',g'')^\sharp$ is left-decomposable and right-decomposable.
\end{Proposition}

\begin{proof}
By symmetry, it suffices to treat the case $g'$ proper and $g''^\sharp$
left-proper. In this case $(g',g'')^\sharp$ can be decomposed into three
morphisms, of types (I), (II), (I), respectively:
\[
\xymatrix{X_1\ar[d]_{f_1} &  B'\ar[l]\ar@{=}[d] \ar[r] & X_2\ar@{=}[d] & B''\ar[l]\ar[r]\ar@{=}[d] & X_1\ar[d]^{f_1}\\
Y_1\ar@{=}[d] & B'\ar[l]\ar@{=}[d]\ar[r] & X_2\ar[d]^{f_2} & B''\ar[l]\ar[r]\ar[d]^{g''} & Y_1\ar@{=}[d]\\
Y_1\ar@{=}[d] & B'\ar[l]\ar[d]^{g'}\ar[r] & Y_2\ar@{=}[d] & C''\ar[l]\ar@{=}[d]\ar[r] & Y_1\ar@{=}[d]\\
Y_1 & C'\ar[l]\ar[r] & Y_2 & C''\ar[l]\ar[r] & Y_1.}
\]
\end{proof}

\begin{Definition}\label{d.cohcorr}
Let $b=(b_1,b_2)\colon B\to X_1\times X_2$ be a correspondence, and let
$L_i\in D(X_i,\Lambda)$, $i=1,2$. A \emph{$b$-morphism} from $L_1$ to $L_2$
is a map $b_1^* L_1\to b_2^! L_2$ in $D(B,\Lambda)$.
\end{Definition}

A $b$-morphism is also known as a cohomological correspondence \cite[III
3.2]{SGA5}.

\begin{Notation}\label{n.K}
For an $S$-stack $X$, we let $K_X=a^!\Lambda_S$, where $a\colon X\to S$ is
the structure morphism. In this section, we do not use the duality functors
defined in previous sections (except implicitly in the definition of $f^!$
in case (c)). Instead, we let $D_X=R\cHom_X(-,K_X)$.
\end{Notation}

\begin{Remark}\label{r.P}
For $b$ and $L_i\in D(X_i,\Lambda)$, $i=1,2$ as in Definition
\ref{d.cohcorr}, the $b$-morphisms from $L_1$ to $L_2$ form a
$\Lambda$-module $\Hom_B(b_1^*L_1,b_2^!L_2)$. We will use the isomorphism
(\eqref{e.shradj2} in case (a), \eqref{e.shradjO3} in case (b),
\eqref{e.shradjO3p} in case (c))
\begin{equation}\label{e.P1}
R\cHom_B(b_1^*L_1,b_2^!L_2)\simeq b^!R\cHom_{X_1\times
X_2}(p_1^*L_1,p_2^!L_2),
\end{equation}
where $p_i\colon X_1\times X_2\to X_i$, $i=1,2$ are the projections. We will
also use the composite morphism
\begin{equation}\label{e.P2}
D_{X_1} L_1 \boxtimes L_2 \xrightarrow{\KF} R\cHom_{X_1\times X_2} (L_1\boxtimes \Lambda_{X_2}, K_{X_1}\boxtimes L_2)
\xrightarrow{\KF^!} R\cHom_{X_1\times X_2}(p_1^*L_1,p_2^!L_2).
\end{equation}
\end{Remark}

\begin{Construction}[Dual]\label{c.dual}
Let $b=(b_1,b_2)\colon B\to X_1\times X_2$ be a correspondence, and let
$L_i\in D(X_i,\Lambda)$, $i=1,2$. We define
\[D_b\colon R\cHom_B(b_1^*L_1,b_2^!D_{X_2}L_2)\to R\cHom_B(b_2^*L_2,b_1^!D_{X_1}L_1)\]
to be the composite
\begin{multline*}
R\cHom_B(b_1^*L_1,b_2^!D_{X_2}L_2)\xrightarrow{\alpha} R\cHom_B(D_{B}b_2^!D_{X_2}L_2, D_B b_1^*L_1) \simeq R\cHom_B(D_BD_B b_2^*L_2,b_1^!D_{X_1}L_1) \\
\xrightarrow{\beta} R\cHom_B(b_2^*L_2,b_1^!D_{X_1}L_1),
\end{multline*}
where $\alpha$ is the evaluation morphism $R\cHom_B(-,-)\to
R\cHom_B(D_B-,D_B-)$ and $\beta$ is induced by the evaluation morphism
$\id\to D_BD_B$. We have $D_{b^T} D_b=\id$ and $D_bD_{b^T}=\id$, so that
$D_b$ is an isomorphism. Applying $H^0(B,-)$, we obtain an isomorphism
\[D_b\colon \Hom_B(b_1^*L_1,b_2^!D_{X_2}L_2)\simto \Hom_B(b_2^*L_2,b_1^!D_{X_1}L_1).\]
\end{Construction}

\begin{Construction}[Extraordinary pushforward]\label{c.shriek}
Let $g^\sharp$ be a left-proper morphism of correspondences of the form
\eqref{e.corr}, and let $L_i\in D(X_i,\Lambda)$, $i=1,2$. We define
\[g^\sharp_!\colon g_*R\cHom_B(b_1^*L_1,b_2^!L_2)\to R\cHom_C(c_1^*f_{1!}L_1,c_2^!f_{2!}L_2)\]
to be the composite
\begin{multline*}
  g_*R\cHom_B(b_1^*L_1,b_2^!L_2) \xrightarrow{\adj} g_*R\cHom_B(b_1^* L_1,b_2^!f_2^!f_{2!}L_2)
  \simeq g_*R\cHom_B(b_1^*L_1,g^!c_2^!f_{2!}L_2)\\
  \simeq R\cHom_C(g_!b_1^*L_1,c_2^!f_{2!}L_2)\xrightarrow{\GBC} R\cHom_C(c_1^*f_{1!}L_1,c_2^!f_{2!}L_2),
\end{multline*}
where $\GBC$ is defined in Construction \ref{c.GBC}. Taking $H^0(C,-)$, we
obtain a map
\[g^\sharp_!\colon \Hom_B(b_1^*L_1,b_2^!L_2)\to \Hom_C(c_1^*f_{1!}L_1,c_2^!f_{2!}L_2).\]
The maps $g^\sharp_!$ are compatible with composition of morphisms of
correspondences.
\end{Construction}

\begin{Construction}[Pushforward]\label{c.star}
Let $g^\sharp$ be a right-proper morphism of correspondences of the form
\eqref{e.corr}, and let $L_i\in D(X_i,\Lambda)$, $i=1,2$. We define
\[g^\sharp_*\colon g_*R\cHom_B(b_1^*L_1,b_2^!L_2)\to R\cHom_C(c_1^*f_{1*}L_1,c_2^!f_{2*}L_2)\]
to be the composite
\begin{multline*}
g_*R\cHom_B(b_1^*L_1,b_2^!L_2)\xrightarrow{\adj} g_*R\cHom_B(b_1^*f_1^*f_{1*}L_1,b_2^!L_2)\simeq g_*R\cHom_B(g^*c_1^*f_{1*}L_1,b_2^!L_2) \\
\simeq R\cHom_C(c_1^*f_{1*}L_1,g_*b_2^!L_2) \xrightarrow{\GBC} R\cHom_C(c_1^*f_{1*}L_1,c_2^!f_{2*}L_2).
\end{multline*}
Taking $H^0(C,-)$, we obtain a map
\[g^\sharp_*\colon \Hom_B(b_1^*L_1,b_2^!L_2)\to \Hom_C(c_1^*f_{1*}L_1,c_2^!f_{2*}L_2).\]
The maps $g^\sharp_*$ are compatible with composition of morphisms of
correspondences.
\end{Construction}

Extraordinary pushforward and pushforward (Constructions \ref{c.shriek} and
\ref{c.star}) are dual to each other via Construction \ref{c.dual}. More
precisely, we have the following.

\begin{Proposition}\label{r.dualpush}
Let $g^\sharp$ be a left-proper morphism  of correspondences of the form
\eqref{e.corr}, and let $L_i\in D(X_i,\Lambda)$, $i=1,2$. Then the diagram
\[
\xymatrix{g_*R\cHom_B(b_1^*L_1,b_2^!D_{X_2}L_2)\ar[r]^{D_b}_\sim\ar[d]_{g^{\sharp }_!}
& g_*R\cHom_B(b_2^*L_2,b_1^!D_{X_1}L_1)\ar[d]^{g^{\sharp T}_*}\\
R\cHom_C(c_1^*f_{1!}L_1,c_2^!f_{2!}D_{X_2}L_2) \ar[d]_{\alpha}
& R\cHom_C(c_2^*f_{2*}L_2,c_1^!f_{1*}D_{X_1}L_1)\ar[d]_\simeq\\
R\cHom_C(c_1^*f_{1!}L_1,c_2^!D_{Y_2}f_{2*}L_2)\ar[r]^{D_c}_\sim
&R\cHom_C(c_2^*f_{2*}L_2,c_1^!D_{Y_1}f_{1!}L_1),}
\]
commutes. Here $\alpha$ is induced by the composite map
\begin{equation}\label{e.dualpush2}
f_{2!}D_{X_2}\xrightarrow{\ev} D_{Y_2}D_{Y_2}f_{2!}D_{X_2}\simeq D_{Y_2}f_{2*}D_{X_2}D_{X_2}\xrightarrow{\ev}D_{Y_2}f_{2*}
\end{equation}
of functors $D(X_2,\Lambda)^\op\to D(Y_2,\Lambda)$.
\end{Proposition}

\begin{proof}
The diagram can be decomposed as\tiny
\[\xymatrix@C=1em@M=0pt{g_*R\cHom(b_1^*L_1,b_2^!DL_2)\ar[dd]_{\adj}\ar[rd]_{\adj}\ar[rr]^\ev
&&g_* R\cHom(Db_2^!DL_2,Db_1^*L_1)\ar[r]^\sim\ar[d]^{\adj}
& g_*R\cHom(DDb_2^*L_2,b_1^!DL_1)\ar[r]^\ev\ar[dd]^{\adj}
& g_*R\cHom(b_2^*L_2,b_1^!DL_1)\ar[dd]^{\adj}\\
&g_*R\cHom(b_1^*L_1,b_2^!Df_2^*f_{2*}L_2)\ar[r]^\ev
&g_*R\cHom(Db_2^!Df_2^*f_{2*}L_2,Db_1^*L_1)\ar[rd]^\simeq\\
g_*R\cHom(b_1^*L_2,b_2^!f_2^!f_{2!}DL_2)\ar[r]^\beta\ar[d]_\simeq
&g_*R\cHom(b_1^*L_1,b_2^!f_2^!Df_{2*}L_1)\ar[r]^\ev\ar[d]^\simeq\ar[u]^\simeq
&g_*R\cHom(Db_2^!f_2^!Df_{2*}L_2,Db_1^*L_1)\ar[u]^\simeq\ar[d]^\simeq
&g_*R\cHom(DDb_2^*f_2^*f_{2*}L_2,b_1^!DL_1)\ar[r]^\ev\ar[d]^\simeq
&g_*R\cHom(b_2^*f_2^*f_{2*}L_2,b_1^!DL_1)\ar[d]^\simeq\\
g_*R\cHom(b_1^*L_1,g^!c_2^!f_{2!}DL_2)\ar[r]^\beta\ar[d]_\simeq
&g_*R\cHom(b_1^*L_1,g^!c_2^!Df_{2*}L_2)\ar[r]^\ev\ar@{}[rrrd]|{\text{(C)}}\ar[d]^\simeq
&g_*R\cHom(Dg^!c_2^!Df_{2*}L_1,Db_1^*L_1)\ar[r]^\sim
&g_*R\cHom(DDg^*c_2^*f_{2*}L_2,b_1^!DL_1)\ar[r]^\ev
&g_*R\cHom(g^*c_2^*f_{2*}L_2,b_1^!DL_1)\ar[d]^\simeq\\
R\cHom(g_!b_1^*L_1,c_2^!f_{2!}DL_2)\ar[r]^\beta\ar[d]_\GBC
&R\cHom(g_!b_1^*L_1,c_2^!Df_{2*}L_2)\ar[r]^\ev\ar[d]^\GBC
&R\cHom(Dc_2^!Df_{2*}L_2,Dg_!b_1^*L_1)\ar[r]^\sim\ar[d]^\GBC
&R\cHom(DDc_2^*f_{2*}L_2,g_*b_1^!DL_1)\ar[r]^\ev\ar[d]^\GBC
&R\cHom(c_2^*f_{2*}L_2,g_*b_1^!DL_1)\ar[d]^\GBC\\
R\cHom(c_1^*f_{1!}L_1,c_2^!f_{2!}DL_2)\ar[r]^\alpha
& R\cHom(c_1^*f_{1!}L_1,c_2^!Df_{2*}L_2)\ar[r]^\ev
&R\cHom(Dc_2^!Df_{2*}L_2,Dc_1^*f_{1!}L_1)\ar[rd]^\simeq
&R\cHom(DDc_2^*f_{2*}L_2,c_1^!f_{1*}DL_1)\ar[r]^\ev
&R\cHom(c_2^*f_{2*}L_2,c_1^!f_{1*}DL_1)\\
&&&R\cHom(DDc_2^*f_{2*}L_2,c_1^!Df_{1!}L_1)\ar[r]^\ev\ar[u]^\simeq
&R\cHom(c_2^*f_{2*}L_2,c_1^!Df_{1!}L_1),\ar[u]_\simeq}
\]
\normalsize where the arrows marked with $\beta$ are induced by
\eqref{e.dualpush2}. The square (C) commutes by the following lemma while
the other inner cells trivially commute.
\end{proof}

\begin{Lemma}
Let $g\colon B\to C$ be a morphism of $S$-stacks, and let $M\in
D(B,\Lambda)$, $L\in D(C,\Lambda)$. Then the following diagram commutes
\[\xymatrix{R\cHom_C(g_!M, D_L)\ar[r]^\sim\ar[d]_\ev
&g_*R\cHom_B(M,g^!D_C L)\ar[d]^\ev\\
R\cHom_C(D_CD_CL,D_Cg_!M)\ar[d]_\simeq
&g_*R\cHom_B(D_Bg^!D_CL,D_BM)\ar[d]^\simeq\\
R\cHom_C(D_CD_CL,g_*D_BM)\ar[d]_\ev
&g_*R\cHom_B(D_BD_Bg^*L,D_BM)\ar[d]^\ev\\
R\cHom_C(L,g_*D_BM)\ar[r]^\sim
&g_*R\cHom_B(g^*L,D_BM).}
\]
\end{Lemma}

\begin{proof}
The diagram is adjoint to
\[\xymatrix{g_*R\cHom_B(M,g^!D_CL)\otimes L\ar[r]^{\KF_*}
&g_*(R\cHom_B(M,g^!D_C L)\otimes g^*L)\ar[d]^{\KF}\\
R\cHom_C(g_!M,D_CL)\otimes L\ar[u]^\simeq\ar[d]_{\KF}
&g_*R\cHom_B(M,g^!D_CL\otimes g^*L)\ar[d]^{\KF^!}\\
R\cHom_C(g_!M,D_C L\otimes L)\ar[r]^\sim\ar[d]_\ev
&g_*R\cHom_B(M,g^!(D_C L\otimes L))\ar[d]^\ev\\
D_C g_!M\ar[r]^\sim
&g_*D_B M,}
\]
where the upper square commutes by Proposition \ref{p.KFadj} and the lower
square trivially commutes.
\end{proof}

To conclude this section, we prove compatibility results for duality and
extraordinary pushforward (Constructions \ref{c.dual} and \ref{c.shriek})
with the morphisms introduced in Remark \ref{r.P}. In the case where
\eqref{e.P2} is an isomorphism, these results give interpretations of
duality and extraordinary pushforward in terms of external tensor products.

\begin{Proposition}\label{p.interdual}
Let $b\colon B\to X=X_1\times X_2$ be a correspondence, and let $L_i\in
D(X_i,\Lambda)$, $i=1,2$. Let $P=D_{X_1}L_1\boxtimes D_{X_2}L_2$,
$Q=D_{X_2}L_2\boxtimes D_{X_1}L_1$, $\tilde
P=R\cHom_X(p_1^*L_1,p_2^!D_{X_2}L_2)$, $\tilde
Q=R\cHom_X(p_2^*L_2,p_1^!D_{X_1}L_1)$. Then the following diagram commutes
\[\xymatrix{R\cHom_B(b_1^*L_1,b_2^!D_{X_2}L_2)\ar[r]^-\sim\ar[d]_{D_b}^\simeq &
b^!\tilde P\ar[d]_{D_{\id_X}}^\simeq & b^! P\ar[l]\ar[d]^\simeq\\
R\cHom_B(b_2^*L_2,b_1^!D_{X_1}L_1)\ar[r]^-\sim & b^!\tilde Q & b^!Q.\ar[l]}
\]
Here the left horizontal arrows are given by \eqref{e.P1} and the right
horizontal arrows are given by \eqref{e.P2}.
\end{Proposition}

\begin{proof}
By the following lemma, the left square commutes. For the right square, the
two composite maps $P\to \tilde Q$ are both adjoint to the map
$D_{X_1}L_1\boxtimes (D_{X_2}L_2\otimes L_2)\to D_{X_1}L_1\boxtimes
K_{X_2}$.
\end{proof}

\begin{Lemma}
Let $b\colon B\to X$ be a morphism of $S$-stacks, and let $L_i\in
D(X,\Lambda)$, $i=1,2$. Then the following diagram commutes
\[\xymatrix{R\cHom_B(b^*L_1,b^!D_{X}L_2)\ar[d]_\ev\ar[r]^-\sim & b^!R\cHom_X(L_1,D_{X}L_2)\ar[d]^\ev\\
R\cHom_B(D_B b^! D_X L_2, D_B b^* L_1)\ar[d]_\simeq & b^!R\cHom_X(D_XD_XL_2,D_X L_1)\ar[dd]^\ev\\
R\cHom_B(D_BD_B b^*L_2,b^!D_XL_1)\ar[d]_\ev\\
R\cHom_B(b^*L_2,b^!D_{X}L_1)\ar[r]^-\sim & b^!R\cHom_X(L_2,D_X L_1).}
\]
\end{Lemma}

\begin{proof}
The diagram is adjoint to
\[\xymatrix{R\cHom_B(b^*L_1,b^!D_XL_2)\otimes b^* L_2\ar[r]^-\sim\ar[d]_{\KF}
& b^!R\cHom_X(L_1,D_XL_2)\otimes b^*L_2\ar[d]^{\KF^!}\\
R\cHom_B(b^*L_1,b^!D_XL_2\otimes b^*L_2)\ar[d]_{\KF^!}
&b^!(R\cHom_X(L_1,D_X L_2)\otimes L_2)\ar[d]^{\KF}\\
R\cHom_B(b^*L_1,b^!(D_XL_2\otimes L_2))\ar[r]^-\sim\ar[d]_\ev
&b^!R\cHom_X(L_1,D_XL_2\otimes L_2)\ar[d]^\ev\\
D_Bb^*L_1\ar[r]^\sim & b^! D_XL_1,}
\]
where the upper square commutes by Proposition \ref{p.KFadj} and the lower
square trivially commutes.
\end{proof}

\begin{Proposition}\label{p.inter}
Let $g^\sharp$ be a morphism of correspondences of the form \eqref{e.corr},
and let $L_i\in D(X_i,\Lambda)$, $i=1,2$. Let $f=f_1\times f_2\colon
X=X_1\times X_2 \to Y_1\times Y_2=Y$, $P=D_{X_1}L_1\boxtimes L_2$,
$E=D_{X_1}(f_{1!}L_1)\boxtimes L_2$, $\tilde
P=R\cHom_{X}(p_{1,X}^*L_1,p_{2,X}^!L_2)$, $\tilde
E=R\cHom_{Y}(p_{1,Y}^*f_{1!}L_1,p_{2,Y}^!L_2)$. If $g^\sharp$ is of type (i)
(Definition \ref{d.corr}), then $f^\sharp_!\colon f_*\tilde P\to \tilde E$
is an isomorphism, where $f^\sharp=(f,(f_1,f_2))$, and the diagram
\begin{equation}\label{e.inter1}
\xymatrix{g_*R\cHom_B(b_1^*L_1,b_2^!L_2)\ar[r]^-\sim\ar[dd]_{g^\sharp_!}
& g_*b^!\tilde P\ar[d]^{\GBC} & g_* b^! P\ar[d]^{\GBC}\ar[l]\\
& c^!f_*\tilde P\ar[d]^{f^\sharp_!}_\simeq
&c^!f_* P \ar[l]\\
R\cHom_C(c_1^*f_{1!}L_1,c_2^!L_2)\ar[r]^-\sim
& c^!\tilde E
& c^!E\ar[l]\ar[u]_\alpha}
\end{equation}
commutes. Here the left horizontal arrows are given by \eqref{e.P1}, the
right horizontal arrows are given by \eqref{e.P2}, and $\alpha$ is the
composite
\[E=D_{Y_1}(f_{1!}L_1)\boxtimes L_2 \simto f_{1*} D_{X_1} L_1\boxtimes L_2
\xrightarrow{\KF_*} f_*(D_{X_1} L_1\boxtimes L_2)= f_*P.
\]
If $g^\sharp$ is of type (ii) (Definition \ref{d.corr}), then the diagram
\begin{equation}\label{e.inter2}
\xymatrix{R\cHom_B(b_1^*L_1,b_2^!L_2)\ar[d]_{g^\sharp_!}
& b^!\tilde P\ar[l]_-\sim & b^!P\ar[r]^{\adj}\ar[l]
& b^!f^!f_!P\ar[d]^{\beta}_\simeq\\
R\cHom_C(c_1^*L_1,c_2^!f_{2!}L_2) &c^!\tilde E\ar[l]_-\sim &&c^!E\ar[ll]}
\end{equation}
commutes. Here the leftmost horizontal arrows are given by the inverse of
\eqref{e.P1}, the unmarked arrows are given by \eqref{e.P2}, and $\beta$ is
given by $\KF_!\colon f_!P\simto E$.
\end{Proposition}

\begin{proof}
In case (i), the first assertion follows immediately from the definitions.
The upper-right square of \eqref{e.inter1} trivially commutes and the
lower-right square is induced from
\[\xymatrix{
 f_*R\cHom_X(p_{1,X*}L_1,p_{2,X^!}L_2)\ar[d]^\simeq
& f_*(DL_1\boxtimes L_2)\ar[l]\\
 R\cHom_Y(f_!p_{1,X}^*L_1,p_{2,Y!} L_2)\ar[d]^\simeq_{\BC}
& f_{1*} DL_1\boxtimes L_2\ar[u]_{\KF_*}\\
R\cHom_Y(p_{1,Y}^*f_{1!}L_1,p_{2,Y}^!L_2)
&D(f_{1!}L_1)\boxtimes L_2,\ar[l]\ar[u]_{\simeq}}
\]
which commutes by Proposition \ref{p.KFadj}. Moreover, the left square of
\eqref{e.inter1} can be decomposed into
\begin{equation}\label{e.inter3}
\xymatrix{g_*R\cHom_B(b_1^*L_1,b_2^!L_2)\ar[r]^\sim\ar[d]_\simeq
&g_*b^!R\cHom_X(p_{1,X}^*L_1,p_{2,X}^!L_2)\ar[r]^{\GBC}
& c^!f_* R\cHom_X(p_{1,X}^*L_1,p_{2,X}^!L_2)\ar[d]^{\simeq}\\
R\cHom_C(g_!b_1^*L_1,c_2^!L_2)\ar[r]^{\GBC}\ar[rd]_{\GBC}
&R\cHom_C(c^*f_!p_{1,X}^*L_1,c_2^! L_2)\ar[r]^\sim\ar[d]^{\BC}_\simeq
&c^!R\cHom_Y(f_!p_{1,X}^*L_1,p_{2,Y}^!L_2)\ar[d]^{\BC}_\simeq\\
&R\cHom_C(c_1^*f_{1!}L_1,c_2^!L_2)\ar[r]^\sim
& c^!R\cHom_Y(p_{1,Y}^*f_{1!}L_1,p_{2,Y}^!L_2).}
\end{equation}
In fact, by definition, $g^\sharp_!$ is the composite of the left vertical
arrow and the oblique arrow of \eqref{e.inter3} while $f^\sharp_!$ is the
composite of the right vertical arrows of \eqref{e.inter3}. The triangle and
the lower right square of \eqref{e.inter3} trivially commute. The upper
square of \eqref{e.inter3} corresponds by adjunction to the square
\[\xymatrix{b_{2!}(g^*M\otimes b^*M') &\ar[l]_{\PF_b}^\sim p_{2,X!}(b_! g^*M\otimes M')
& p_{2,X!}(f^*c_!M\otimes M')\ar[l]_{\GBC}\\
c_{2!}(M\otimes g_!b^*M')\ar[u]^{\PF_g}_\simeq
& c_{2!}(M\otimes c^*f_!M')\ar[l]_{\GBC}
& p_{2,Y!}(c_! M\otimes f_!M')\ar[l]_{\PF_c}^\sim \ar[u]_{\PF_f}^\simeq,}
\]
which is given by applying $p_{2,Y!}$ to two equivalent definitions of the
generalized K\"unneth formula map (Construction \ref{c.GKF}) associated to
the morphism of squares from
\[\xymatrix{B\ar[r]^b\ar[d]_g & X\ar[d]^f\\ C\ar[r]^c & Y}\]
to the identity square of value $Y$. Here $M\in D(C,\Lambda)$, $M'=p_{1,X}^*
L_1\in D(X,\Lambda)$.

In case (ii), the diagram \eqref{e.inter2} can be decomposed as
\begin{equation}\label{e.inter4}
\xymatrix{R\cHom_B(b_1^*L_1,b_2^!L_2)\ar[d]_{\adj_{f_2}}\ar[r]^-\sim
&b^!R\cHom_X(p_{1,X}^*L_1,p_{2,X}^!L_2)\ar[d]^{\adj_{f_2}}
&b^!(DL_1\boxtimes L_2)\ar[d]^{\adj_f}\ar[l]\\
R\cHom_C(c_1^*L_1,c_2^!f_{2!}L_2)\ar[r]^-\sim\ar[rd]_\simeq
& b^!R\cHom_X(p_{1,X}^*L_1,f^!p_{2,Y}^!f_{2!}L_2)\ar[d]^\simeq
& b^!f^!f_!(DL_1\boxtimes L_2)\ar[d]^{\KF_!}_\simeq\\
& c^!R\cHom_Y(p_{1,Y}^*L_1,p_{2,Y}^!f_{2!}L_2)
& c^!(DL_1\boxtimes f_{2!}L_2).\ar[l]}
\end{equation}
The triangle and the upper left square of \eqref{e.inter4} trivially
commute. The right square of \eqref{e.inter4} can be further decomposed as
\[\xymatrix{&R\cHom_X(p_{1,X}^*L_1,p_{2,X}^!L_2)\ar[dl]_{\adj_{f_2}}
&DL_1\boxtimes L_2\ar[d]^{\adj_f}\ar[ld]^{\adj_{f_2}}\ar[l]\\
R\cHom_X(p_{1,X}^*L_1,f^!p_{2,Y}^!f_{2!}L_2)\ar[d]_\simeq
& DL_1\boxtimes f_2^!f_{2!}L_2\ar[d]^{\KF^!}\ar[l]
& f^!f_!(DL_1\boxtimes L_2)\ar[ld]^{\KF_!}_\simeq\\
f^!R\cHom_Y(p_{1,Y}^*L_1,p_{2,Y}^!f_{2!}L_2)
& f^!(DL_1\boxtimes f_{2!}L_2),\ar[l]}
\]
where the lower-left square commutes by Proposition \ref{p.KFadj} and the
other inner cells trivially commute.
\end{proof}

\section{Lefschetz-Verdier formula}\label{s.LV}
In this section, we state and prove a Lefschetz-Verdier formula for
Deligne-Mumford stacks (Theorem \ref{t.lv}). Throughout this section, we fix
$S$, the spectrum of a field, and a ring $\Lambda$ of one of the following
types:
\begin{itemize}
\item[(a)] A Noetherian ring annihilated by an integer $m$ invertible on
    $S$;
\item[(b)] A complete discrete valuation ring of residue characteristic
    $\ell$ invertible on $S$;
\item[(c)] An algebraic extension of a discrete valuation field of
    characteristic $0$ and residue characteristic invertible on $S$.
\end{itemize}
We adopt Convention \ref{c.Sstack} for $S$-stacks. For an $S$-stack $X$, we
consider the category $\Dcft(X,\Lambda)\subset D^-_c(X,\Lambda)$ in case
(a), and the category $\Dbc(X,\Lambda)$ in cases (b) and (c). In order to
get uniform statements for all cases, we make the following definition in
cases (b) and (c): $\Dcft(X,\Lambda):=\Dbc(X,\Lambda)$. We use the same
simplified notation as in the previous section. Note that $K_X=\Omega_X$ (we
endow $S$ with the dimension function $0$) so that $D_X$ in Notation
\ref{n.K} coincides with the duality functors in Sections \ref{s.3},
\ref{s.5}, \ref{s.6}.

We start by establishing a few K\"unneth formulas. Proposition \ref{p.KFls}
through Remark \ref{r.PP} below are stated in case (a). In cases (b) and
(c), one should replace $D^-_c$ by $D^b_c$.

\begin{Proposition}\label{p.KFls}
In the situation of Construction \ref{c.KFstar}, if $\cS=\cT=S$, $s=\id$,
and $M_i\in D^-_c(X_i,\Lambda)$, $i=1,2$, then the morphism $\KF_*$ is an
isomorphism.
\end{Proposition}

\begin{proof}
We easily reduce to case (a). We may assume $M_i\in D^b_c$. Using the
distinguished triangle \eqref{e.dist2}, we reduce by induction to the case
$M_i=Rj_{i*}L_i$, where $j_i\colon U_i\to X_i$ is an immersion, $L_i\in
D^b_c(U_i,\Lambda)$, and the cohomology sheaves of $L_i$ are locally
constant. Note that the assertion for $(f_i,Rj_{i*} L_i)$ follows from the
assertions for $(j_i, L_i)$ and $(f_i j_i,L_i)$. Up to replacing $M_i$ by
$L_i$, we may thus assume that the cohomology sheaves of $M_i$ are locally
constant. We may further assume $M_i\in \Mod_c(X_i,\Lambda)$. As
$(f_1,f_2)=(f_1,\id)(\id, f_2)$, we may assume that $f_1$ or $f_2$ is
identity. By symmetry we may assume $X_2=Y_2$, $f_2=\id$. As the problem is
local on $Y_2$, we may assume $M_2$ constant. Then $\KF_*$ is the composite
\[q_1^*f_{1*}M_1\otimes q_2^*M_2 \xrightarrow{\BC} f_* p_1^* M_1\otimes q_2^* M_2\to f_*(p_1^* M_1\otimes p_2^* M_2),\]
where the first morphism is an isomorphism by generic base change
(Proposition \ref{p.gbc}) and the second morphism is an isomorphism by
projection formula (Proposition \ref{p.cdim} \ref{p.cdim1}).
\end{proof}

\begin{Proposition}\label{p.KF}
In the situation of Construction \ref{c.KFH}, if $\cS=S$, $L_i\in
\Dcft(X_i,\Lambda)$, $M_i\in D^-_c(X_i,\Lambda)$, $i=1,2$, then the morphism
$\KF$  \eqref{e.KF} is an isomorphism.
\end{Proposition}

\begin{proof}
We reduce easily to case (a) for separated schemes, which is \cite[III
Proposition 2.3]{SGA5}. We may also imitate the proof of \cite[III
Proposition 2.3]{SGA5} to reduce to K\"unneth formula for $f_*$ (Proposition
\ref{p.KFls}).
\end{proof}

\begin{Proposition}\label{p.KFus}
In the situation of Construction \ref{c.KFls}, if $\cS=S$, $N_i\in
D^-_c(Y_i,\Lambda)$, $i=1,2$, then the morphism $\KF^!$ is an isomorphism.
\end{Proposition}

\begin{proof}
Again we reduce easily to case (a) for separated schemes, which is \cite[III
Proposition 1.7.4]{SGA5}. As before, we may also imitate the proof of
\cite[III Proposition 1.7.4]{SGA5} to reduce to K\"unneth formula for $f_*$
(Proposition \ref{p.KFls}).
\end{proof}

\begin{Remark}\label{r.PP}
In the situation of Remark \ref{r.P}, \eqref{e.P2} is an isomorphism for
$L_1\in \Dcft(X_1,\Lambda)$, $L_2\in D^-_c(X_2,\Lambda)$ by Propositions
\ref{p.KF} and \ref{p.KFus}. In this case, composing \eqref{e.P1} with
\eqref{e.P2}, we obtain an isomorphism $b^!(D_{X_1}L_1\boxtimes L_2)\simeq
R\cHom_B(b_1^*L_1,b_2^! L_2)$.
\end{Remark}

\begin{Construction}[Pairing]\label{c.pair}
Let $b'=(b'_1,b'_2)\colon B'\to X_1\times X_2$, $b''=(b''_2,b''_1)\colon
B''\to X_2\times X_1$ be correspondences, and let $L_i\in
\Dcft(X_i,\Lambda)$, $i=1,2$. We form a 2-Cartesian square
\[\xymatrix{B\ar[r]\ar[d] & B''\ar[d]^{b''}\\ B'\ar[r]^{b'} & X_1\times X_2}\]
and define
\[\langle-,-\rangle\colon R\cHom_{B'}(b'^*_1L_1,b'^!_2 L_2)\boxtimes_{X_1\times X_2} R\cHom_{B''}(b''^*_2L_2,b''^!_1L_1) \to K_B\]
to be the composite
\begin{multline*}
R\cHom_{B'}(b'^*_1L_1,b'^!_2 L_2)\boxtimes_{X_1\times X_2} R\cHom_{B''}(b''^*_2L_2,b''^!_1L_1)
\xrightarrow[\sim]{\alpha} b'^! (D_{X_1}L_1\boxtimes L_2)\boxtimes_{X_1\times X_2} b''^!(D_{X_2}L_2\boxtimes L_1)\\
\xrightarrow{\KF^!} b^!((D_{X_1}L_1\boxtimes L_2)\otimes(D_{X_2}L_2\boxtimes L_1))
\simeq b^!((D_{X_1}L_1\otimes L_1)\boxtimes(D_{X_2}L_2\otimes L_2))\\
\xrightarrow{\ev\boxtimes \ev} b^!(K_{X_1}\boxtimes K_{X_2}) \xrightarrow[\sim]{\KF^!} b^!K_{X_1\times X_2}\simeq K_B,
\end{multline*}
where $\alpha$ is given by Remark \ref{r.PP}. The map $\langle -,-\rangle$
is clearly symmetric. It induces a map (similar to \cite[III
(4.2.4)$'$]{SGA5})
\[\langle-,-\rangle\colon b'_*R\cHom_{B'}(b'^*_1L_1,b'^!_2 L_2)\otimes b''_* R\cHom_{B''}(b''^*_2L_2,b''^!_1L_1) \to b_*K_B,\]
composite of
\begin{multline*}
b'_*R\cHom_{B'}(b'^*_1L_1,b'^!_2 L_2)\otimes b''_* R\cHom_{B''}(b''^*_2L_2,b''^!_1L_1)\\
\xrightarrow{\KF_*} b_*(R\cHom_{B'}(b'^*_1L_1,b'^!_2 L_2)\boxtimes_{X_1\times X_2} R\cHom_{B''}(b''^*_2L_2,b''^!_1L_1))
\xrightarrow{b_*\langle-,-\rangle} b_* K_B.
\end{multline*}
This further induces a map
\[\langle-,-\rangle\colon \Hom_{B'}(b'^*_1L_1,b'^!_2 L_2)\otimes \Hom_{B''}(b''^*_2L_2,b''^!_1L_1) \to H^0(B,K_B),\]
composite of
\begin{multline*}
\Hom_{B'}(b'^*_1L_1,b'^!_2 L_2)\otimes \Hom_{B''}(b''^*_2L_2,b''^!_1L_1)\\
\xrightarrow{\KF_{H^0}}
H^0(X_1\times X_2,b'_*R\cHom_{B'}(b'^*_1L_1,b'^!_2 L_2)\otimes b''_* R\cHom_{B''}(b''^*_2L_2,b''^!_1L_1)) \xrightarrow{H^0(X_1\times X_2,\langle -,-\rangle)} H^0(B,K_B).
\end{multline*}
\end{Construction}

\begin{Remark}\label{r.dualpair}
The pairing defined in Construction \ref{c.pair} is compatible with duality
(Construction \ref{c.dual}). More precisely, for $b'$, $b''$ and $L_i\in
\Dcft(X_i,\Lambda)$, $i=1,2$ as above, the diagram
\[\xymatrix{R\cHom_{B'}(b'^*_1L_1,b'^!_2 L_2)\boxtimes_{X_1\times X_2} R\cHom_{B''}(b''^*_2L_2,b''^!_1L_1) \ar[rd]^-{\langle-,-\rangle}\ar[d]_{D_{b'}\boxtimes_{X_1\times X_2} D_{b''}}\\
R\cHom_{B'}(b'^*_2D_{X_2}L_2,b'^!_1 D_{X_1}L_1)\boxtimes_{X_1\times X_2} R\cHom_{B''}(b''^*_1D_{X_1}L_1,b''^!_2D_{X_2}L_2) \ar[r]_-{\langle-,-\rangle} & K_B}
\]
commutes by Proposition \ref{p.interdual}. In particular, for $u\colon
b'^*_1L_1\to b'^!_2 L_2$, $v\colon b''^*_2L_2\to b''^!_1 L_1$, we have the
identity $\langle u,v\rangle =\langle D_{b'}u,D_{b''}v\rangle$ in
$H^0(B,K_B)$. In the case of schemes, such an identity is stated in
\cite[III (5.1.6)]{SGA5}.
\end{Remark}

\begin{Remark}
The pairing defined in Construction \ref{c.pair} is related to
noncommutative traces as follows. Recall that for a finite group $G$, the
inertia stack $I_{BG}:=BG\times_{\Delta_{BG},BG\times BG,\Delta_{BG}} BG$ of
the classifying stack $BG$ can be identified with $[G/G]$, where $G$ acts by
conjugation. Let $g\colon I_{BG}\to G^\natural$ be the coarse moduli
morphism, where $G^\natural$ is the space of conjugacy classes of $G$. Let
$C$ and $Y$ be (separated finite-type) $S$-schemes and let $c\colon C\to
Y\times Y$ be a correspondence. If in Construction \ref{c.pair},
$X_1=X_2=Y\times BG$ (in particular $G$ has order prime to $m$ in case (a)
and prime to $\ell$ in case (b) by Convention \ref{c.Sstack}), and
\[b'=c\times \Delta_{BG}\colon C\times BG \to (Y\times BG) \times (Y\times BG), \quad
b''=\Delta_{Y\times BG}\colon Y\times BG \to (Y\times BG) \times (Y\times BG),
\]
then $B$ can be identified with $Y^c\times I_{BG}$, where
$Y^c=C\times_{c,Y\times Y, \Delta_Y} Y$. For $L\in \Dcft(Y\times
BG,\Lambda)$, the composite
\[\Hom_{C\times BG}((c_1\times \id_{BG})^* L,(c_2\times \id_{BG})^! L)\xrightarrow{\langle -,\id_L\rangle} H^0(Y^c\times I_{BG}, K_{Y^c\times I_{BG}}) \xrightarrow{(\id_{Y^c}\times g)_!} H^0(Y^c\times G^\natural, K_{Y^c\times G^\natural})\]
can be identified with the local trace over the not necessarily commutative
ring $\Lambda[G]$ defined in \cite[III~B (6.10.1)]{SGA5}, by \cite[III~B
(6.10.2)]{SGA5}. The noncommutative local trace is a crucial ingredient in
the divisibility results of \cite[III~B Section 6]{SGA5} (see also
\cite[Rapport, Section 5]{SGA4d}).
\end{Remark}

The following Lefschetz-Verdier formula expresses the compatibility of
pairing (Construction \ref{c.pair}) with extraordinary pushforward
(Construction \ref{c.shriek}).

\begin{Theorem}\label{t.lv}
Let $f_i\colon X_i\to Y_i$ be morphisms of $S$-stacks (see Convention
\ref{c.Sstack}), $i=1,2$ and let $f=f_1\times f_2\colon X=X_1\times X_2\to
Y_1\times Y_2=Y$. Let
\begin{equation}\label{e.lvcube}
\xymatrix{&B''\ar[ld]_{b''}\ar[dd]|\hole^(.3){g''} && B\ar[ld]\ar[dd]^g\ar[ll]\\
X\ar[dd]_{f} &&\ar[ll]_(.7){b'} B'\ar[dd]^(.3){g'} \\
& C''\ar[ld]_{c''} && C\ar[ld]\ar[ll]|\hole\\
Y && C'\ar[ll]_{c'}}
\end{equation}
be a 2-commutative diagram of $S$-stacks with 2-Cartesian top and bottom
squares. We assume that $(g',g'')^\sharp$ is left-decomposable (Definition
\ref{d.decomp}). Let $L_i\in \Dcft(X_i,\Lambda)$, $i=1,2$. Then $g$ is
proper and the following diagram commutes
\begin{equation}\label{e.lv}
\xymatrix{g_*(R\cHom_{B'}(b'^*_1L_1,b'^!_2 L_2)\boxtimes_X R\cHom_{B''}(b''^*_2L_2,b''^!_1L_1))\ar[r]^-{\langle-,-\rangle}
& g_*K_B\ar[dd]^{\adj}\\
g'_{*}R\cHom_{B'}(b'^*_1L_1,b'^!_2 L_2)\boxtimes_Y g''_{*}R\cHom_{B''}(b''^*_2L_2,b''^!_1L_1)\ar[u]^{\KF_*}\ar[d]_{g'^\sharp_!\boxtimes_Y g''^\sharp_!} \\
R\cHom_{C'}(c'^*_1f_{1!}L_1,c'^!_2f_{2!}L_2) \boxtimes_Y R\cHom_{C''}(c''^*_2f_{2!}L_2,c''^!_1f_{1!}L_1)\ar[r]^-{\langle-,-\rangle}
& K_C.}
\end{equation}
\end{Theorem}

The case of schemes of the following was proven by Illusie \cite[III
Th\'eor\`eme 4.4, Corollaire 4.5]{SGA5} under the additional assumption that
$f_1,f_2,g',g'',b',b'',c',c''$ are proper.

\begin{Corollary}\label{c.lv}
In the situation of Theorem \ref{t.lv}, the diagram
\begin{equation}\label{e.lv1}
\xymatrix{f_*(b'_*R\cHom_{B'}(b'^*_1L_1,b'^!_2 L_2)\otimes b''_* R\cHom_{B''}(b''^*_2L_2,b''^!_1L_1))\ar[r]^-{\langle-,-\rangle}& f_*b_*K_B\ar[dd]^{\adj}\\
f_{*}b'_*R\cHom_{B'}(b'^*_1L_1,b'^!_2 L_2)\otimes f_{*}b''_* R\cHom_{B''}(b''^*_2L_2,b''^!_1L_1)\ar[u]^{\KF_*}\ar[d]_{g'^\sharp_!\otimes g''^\sharp_!} \\
c'_*R\cHom_{C'}(c'^*_1f_{1!}L_1,c'^!_2f_{2!}L_2) \otimes c''_*R\cHom_{C''}(c''^*_2f_{2!}L_2,c''^!_1f_{1!}L_1)\ar[r]^-{\langle-,-\rangle}
& c_*K_C,}
\end{equation}
where $b\colon B\to X$, $c\colon C\to Y$, commutes. In particular, the
diagram
\begin{equation}\label{e.lv2}
\xymatrix{\Hom_{B'}(b'^*_1L_1,b'^!_2 L_2)\otimes \Hom_{B''}(b''^*_2L_2,b''^!_1L_1)\ar[r]^-{\langle-,-\rangle}\ar[d]_{g'^\sharp_!\otimes g''^\sharp_!}
& H^0(B,K_B)\ar[d]^{g_!}\\
\Hom_{C'}(c'^*_1f_{1!}L_1,c'^!_2f_{2!}L_2) \otimes \Hom_{C''}(c''^*_2f_{2!}L_2,c''^!_1f_{1!}L_1)\ar[r]^-{\langle-,-\rangle}
& H^0(C,K_C)}
\end{equation}
commutes. In other words, for $u\colon b'^*_1L_1\to b'^!_2 L_2$ and $v\colon
b''^*_2L_2\to b''^!_1L_1$,
\[g_!\langle u,v\rangle=\langle g'^\sharp_!
u,g''^\sharp_! v\rangle.\]
\end{Corollary}

\begin{proof}
The diagram \eqref{e.lv1} can be decomposed as\tiny
\[\xymatrix@C=1em{f_*(b'_*R\cHom_{B'}(b'^*_1L_1,b'^!_2 L_2)\otimes b''_* R\cHom_{B''}(b''^*_2L_2,b''^!_1L_1))\ar[r]^{\KF_*}
&f_*b_*(R\cHom_{B'}(b'^*_1L_1,b'^!_2 L_2)\boxtimes_X R\cHom_{B''}(b''^*_2L_2,b''^!_1L_1))\ar[r]^-{\langle-,-\rangle}
& f_*b_*K_B\ar[dd]^{\adj}\\
f_{*}b'_*R\cHom_{B'}(b'^*_1L_1,b'^!_2 L_2)\otimes f_{*}b''_* R\cHom_{B''}(b''^*_2L_2,b''^!_1L_1)\ar[u]^{\KF_*}\ar[d]_{g'^\sharp_!\otimes g''^\sharp_!}\ar[r]^{\KF_*}
&c_*(g'_{*}R\cHom_{B'}(b'^*_1L_1,b'^!_2 L_2)\boxtimes_Y g''_{*}R\cHom_{B''}(b''^*_2L_2,b''^!_1L_1))\ar[u]^{\KF_*}\ar[d]_{g'^\sharp_!\otimes g''^\sharp_!}\\
c'_*R\cHom_{C'}(c'^*_1f_{1!}L_1,c'^!_2f_{2!}L_2) \otimes c''_*R\cHom_{C''}(c''^*_2f_{2!}L_2,c''^!_1f_{1!}L_1)\ar[r]^{\KF_*}
&c_*(c'_*R\cHom_{C'}(c'^*_1f_{1!}L_1,c'^!_2f_{2!}L_2) \otimes c''_*R\cHom_{C''}(c''^*_2f_{2!}L_2,c''^!_1f_{1!}L_1))\ar[r]^-{\langle-,-\rangle}
& c_*K_C,}
\]
\normalsize where the left squares trivially commute and the right square is
obtained by applying $c_*$ to \eqref{e.lv}. The diagram \eqref{e.lv2} can be
decomposed as\tiny
\[\xymatrix@C=1em{
&H^0(X,b'_*R\cHom_{B'}(b'^*_1L_1,b'^!_2 L_2)\otimes b''_* R\cHom_{B''}(b''^*_2L_2,b''^!_1L_1))\ar[r]^-{\langle-,-\rangle}
&H^0(B,K_B)\ar[dd]^{g_!}\\
\Hom_{B'}(b'^*_1L_1,b'^!_2 L_2)\otimes
\Hom_{B''}(b''^*_2L_2,b''^!_1L_1)\ar[d]_{g'^\sharp_!\otimes
g''^\sharp_!}\ar[ru]^-{\KF_{H^0}}\ar[r]_-{\KF_{H^0}}
&H^0(Y,f_{*}b'_*R\cHom_{B'}(b'^*_1L_1,b'^!_2 L_2)\otimes f_{*}b''_* R\cHom_{B''}(b''^*_2L_2,b''^!_1L_1))\ar[u]^{\KF_*}\ar[d]_{g'^\sharp_!\otimes
g''^\sharp_!}\\
\Hom_{C'}(c'^*_1f_{1!}L_1,c'^!_2f_{2!}L_2) \otimes \Hom_{C''}(c''^*_2f_{2!}L_2,c''^!_1f_{1!}L_1)\ar[r]^-{\KF_{H^0}}
&H^0(Y,c'_*R\cHom_{C'}(c'^*_1f_{1!}L_1,c'^!_2f_{2!}L_2) \otimes c''_*R\cHom_{C''}(c''^*_2f_{2!}L_2,c''^!_1f_{1!}L_1))\ar[r]^-{\langle-,-\rangle}
& H^0(C,K_C),}
\]
\normalsize where the triangle and the lower left square trivially commute
and the right square is obtained by applying $H^0(Y,-)$ to \eqref{e.lv1}.
\end{proof}

Applying Corollary \ref{c.lv} to the case where $f_1=f_2=g''$, $b'', c''$
are the diagonal morphisms, and $L_1=L_2$, and using the fact that
$g''^\sharp_! \id=\id$, we obtain the following corollary. The case of
algebraic spaces was shown by Varshavsky \cite[Proposition
1.2.5]{Varshavsky} under the additional assumption that $f$ is proper. Note
that $X^b$ is not a substack of $B$ in general, unlike the case of algebraic
spaces.

\begin{Corollary}
Let
\[\xymatrix{X\ar[d]_f & B\ar[d]^g\ar[l]_{b_1}\ar[r]^{b_2} & X\ar[d]^f\\
Y &\ar[l]_{a_1}\ar[r]^{a_2} C & Y}
\]
be a morphism of correspondences such that $g$ is proper. Let $L\in
\Dcft(X,\Lambda)$. Then
\[h\colon X^b:=B\times_{b,X\times X, \Delta_X} X\to
C\times_{c,Y\times Y,\Delta_Y} Y=:Y^c
\]
is proper and the following diagram
commutes
\[\xymatrix{\Hom_{B}(b^*_1L,b^!_2 L)\ar[r]^-{\langle-,\id_{L}\rangle}\ar[d]_{g^\sharp_!}
& H^0(X^b,K_{X^b})\ar[d]^{h_!}\\
\Hom_{C}(c^*_1f_{!}L,c^!_2f_{!}L) \ar[r]^-{\langle-,\id_{f_! L}\rangle}
& H^0(Y^c,K_{Y^c}).}
\]
\end{Corollary}

Applying duality to Theorem \ref{t.lv} and Corollary \ref{c.lv}, we obtain
the following compatibility of pairing (Construction \ref{c.pair}) with
pushforward (Construction \ref{c.star}), by Proposition \ref{r.dualpush} and
Remark \ref{r.dualpair}.

\begin{Corollary}
Let $f_i\colon X_i\to Y_i$ be morphisms of $S$-stacks, $i=1,2$ and let
$f=f_1\times f_2\colon X=X_1\times X_2\to Y_1\times Y_2=Y$. Consider a
2-commutative diagram of $S$-stacks of the form \eqref{e.lvcube} with
2-Cartesian top and bottom squares such that $(g',g'')^\sharp$ is
right-decomposable. Let $L_i\in \Dcft(X_i,\Lambda)$, $i=1,2$. Then $g$ is
proper and the following diagram commutes
\[\xymatrix{g_*(R\cHom_{B'}(b'^*_1L_1,b'^!_2 L_2)\boxtimes_X R\cHom_{B''}(b''^*_2L_2,b''^!_1L_1))\ar[r]^-{\langle-,-\rangle}
& g_*K_B\ar[dd]^{\adj}\\
g'_{*}R\cHom_{B'}(b'^*_1L_1,b'^!_2 L_2)\boxtimes_Y g''_{*}R\cHom_{B''}(b''^*_2L_2,b''^!_1L_1)\ar[u]^{\KF_*}\ar[d]_{g'^\sharp_*\boxtimes_Y g''^\sharp_*} \\
R\cHom_{C'}(c'^*_1f_{1*}L_1,c'^!_2f_{2*}L_2) \boxtimes_Y R\cHom_{C''}(c''^*_2f_{2*}L_2,c''^!_1f_{1*}L_1)\ar[r]^-{\langle-,-\rangle}
& K_C.}
\]
In particular, for $u\colon b'^*_1L_1\to b'^!_2 L_2$ and $v\colon
b''^*_2L_2\to b''^!_1L_1$,
\[g_!\langle u,v\rangle=\langle g'^\sharp_*
u,g''^\sharp_* v\rangle
\]
in $H^0(C,K_C)$.
\end{Corollary}

A related formula is Fujiwara's theorem, previously conjectured by Deligne,
which expresses the trace of a cohomological correspondence, after
sufficient twisting by Frobenius, as the sum of naive local terms. Olsson
extended this trace formula to Deligne-Mumford stacks \cite[Theorem
12.3]{OlssonFuji} by reducing the problem to the coarse moduli spaces. By
contrast, it is unclear how to reduce the Lefschetz-Verdier formula for
Deligne-Mumford stacks to the case of algebraic spaces. Instead, we will
give a direct proof of Theorem \ref{t.lv}.

As mentioned earlier, a Lefschetz-Verdier formula for schemes was proven by
Illusie and later another one was proven by Varshavsky. Although
Varshavsky's statement is closer to ours in the sense that it makes fewer
additional properness assumptions, his proof makes essential use of
compactifications of correspondences. It is unclear how to extend
Varshavsky's method to Deligne-Mumford stacks. Our proof of Theorem
\ref{t.lv} is closer in spirit to Illusie's original proof.

\begin{proof}[Proof of Theorem \ref{t.lv}]
We will use the notation $b\colon B\to X$, $c\colon C\to Y$, $P=D_{X_1}
L_1\boxtimes L_2$, $Q=D_{X_2}L_2\boxtimes L_1$, $E=D_{Y_1}f_{1!}L_1\boxtimes
f_{2!}L_2$, $F=D_{Y_2}f_{2!}L_2\boxtimes f_{1!}L_1$.

By compatibility of the diagram with composition, we may assume that
$(g',g'')^\sharp$ is of type (I) or (II). By symmetry, it suffices to treat
case (I). Recall that this is the case where $g'^\sharp$ is of type (i) and
$g''^\sharp$ is of type (ii). In other words, $f_2=\id$, $g''=\id$, and
$B'\to X_1\times_{Y_1} C'$ is proper. Thus we can decompose
$(g',g'')^\sharp$ as follows
\[\xymatrix{X_1\ar@{=}[d] & B'\ar[l]\ar[r]\ar[d] & X_2\ar@{=}[d] & B''\ar@{=}[d]\ar[l]\ar[r] & X_1\ar@{=}[d]\\
X_1\ar[d]_{f_1} & X_1\times_{Y_1} C'\ar[l]\ar[r]\ar[d] & X_2\ar@{=}[d] & B''\ar@{=}[d]\ar[l]\ar[r] & X_1\ar[d]^{f_1}\\
Y_1 & C'\ar[l]\ar[r] & X_2 & B''\ar[l]\ar[r] & Y_1.}
\]
Therefore it suffices to treat two special cases of type (I):
\begin{enumerate}
\item $f_1=\id$;
\item The morphism $B'\to X_1\times_{Y_1} C'$ is an equivalence.
\end{enumerate}

For (1), we will show more generally that the theorem holds for
$(g',g'')^\sharp$, not necessarily of type (I), whenever $f_i=\id$, $i=1,2$
and $g', g''$ are proper. Indeed, in this case, $g$ is clearly proper, and,
by Proposition \ref{p.inter}, the diagram \eqref{e.lv} can be decomposed as
\[\xymatrix{g_*(b'^!P\boxtimes_X b''^! Q)\ar[r]^{\KF^!}\ar[d]_{\KF_!}^\simeq
& g_*g^!(c'^!P\boxtimes_X c''^! Q)\ar[r]^{\KF^!}\ar[d]^{\adj_g}
& g_*b^!(P\otimes Q)\ar[d]^{\adj_g}\ar[r]^{\langle-,-\rangle} & g_* c^! K_X\ar[d]^{\adj_g}\\
g'_*b'^!P\boxtimes_X g''_*c''^!Q \ar[r]^-{\adj_{g'}\boxtimes_X \adj_{g''}}
& c'^!P\boxtimes_X c''^!Q\ar[r]^{\KF^!}
&c^!(P\otimes Q)\ar[r]^{\langle-,-\rangle} & c^! K_X,}
\]
where all inner squares commute.

In case (2), the front square of \eqref{e.lvcube} is 2-Cartesian. It follows
that the back square of \eqref{e.lvcube} is 2-Cartesian as well and $g$ is
an equivalence. We may assume that $g=\id$, so that \eqref{e.lvcube} is of
the form
\[\xymatrix{&B''\ar[ld]_{b''}\ar@{=}[dd]|\hole && B\ar[ld]^{\bar b}\ar@{=}[dd]\ar[ll]_{d}\\
X\ar[dd]_{f} &&\ar[ll]_(.7){b'} B'\ar[dd]^(.3){g'} \\
& B''\ar[ld]_{c''} && B\ar[ld]^{\bar c}\ar[ll]|\hole_(.7){d}\\
Y && C'\ar[ll]_{c'}}
\]
Then, by Proposition \ref{p.inter}, the diagram \eqref{e.lv} can be
decomposed as
\[\xymatrix{g'_*b'^! P\boxtimes_Y b''^!Q\ar[d]_{\BC\boxtimes_Y
\adj_f}\ar[r]^{\KF_*}\ar@{}[rrd]|{\text{(A)}}
& b'^!P\boxtimes_X b''^!Q\ar[r]^{\KF^!}
& b^!(P\otimes Q)\ar[d]^{\adj_f}\ar[r]^{\langle -,- \rangle}
& b^! K_X\ar[d]^{\adj_f}\\
c'^!f_*P\boxtimes_Y c''^!f_!Q\ar[r]^{\KF^!}\ar[d]_\simeq
& c^!(f_*P\otimes f_! Q)\ar[r]\ar@{}[rrd]|{\text{(B)}}\ar[d]^\simeq
& c^!f_!(P\otimes Q)\ar[r]^{\langle-,-\rangle}
& c^!f_!K_X\ar[d]^{\adj_f}\\
c'^!E\boxtimes_Y c''^!F\ar[r]^{\KF^!} & c^!(E\otimes F)\ar[rr]^{\langle -,-\rangle}
&& c^! K_Y,}
\]
where the unmarked arrow is induced by the composite
\[f_*P\otimes f_!Q\xrightarrow{\PF} f_!(f^*f_*P\otimes Q) \xrightarrow{\adj} f_!(P\otimes Q).\]
The upper-right and lower-left squares of the above diagram clearly commute.
The square (A) can be decomposed as
\[\xymatrix{g'_*b'^!P\boxtimes_Y b''^!Q\ar[r]^{\KF_*}\ar[d]^\simeq_{\BC}\ar@{}[rd]|{\text{(A1)}}
& b'^!P\boxtimes_X b''^!Q\ar[r]^{\KF^!}
&d^!(P\boxtimes_X  b''^!Q)\ar[r]^{\KF^!}
&b^!(P\otimes Q)\ar[d]^{\adj_f}\\
c'^!f_*P\boxtimes_Y b''^!Q\ar[r]^{\KF^!}\ar[d]_{\adj_f}
& d^!(f_*P\boxtimes_Y b''^!Q)\ar[ru]^{\adj_f}\ar[r]_{\KF^!}\ar[d]^{\adj_f}
& b^!(f_*P\boxtimes_Y Q)\ar[ru]_{\adj_f}\ar[d]^{\adj_f}\ar@{}[rd]|{\text{(A2)}}
& c^! f_!(P\otimes Q)\\
c'^!f_*P\boxtimes_Y c''^!f_!Q\ar[r]^{\KF^!}
& d^!(f_*P\boxtimes_Y c''^!f_!Q)\ar[r]^{\KF^!}
& b^!(f_*P\boxtimes_Y f^!f_! Q)\ar[r]^{\KF^!}
& c^!(f_*P\otimes f_! Q).\ar[u]}
\]
Here (A1) corresponds by adjunction to the commutative diagram
\[\xymatrix{d_!(\bar c^*c'^!f_*P\otimes d^* R)\ar[d]_{\PF}^\simeq
&d_!(\bar c^* g'_*b'^!P\otimes d^*R)\ar[l]_{\BC}^\sim\ar[r]^{\adj_{g'}}\ar[d]^{\PF}_\simeq
&d_!(\bar b^* b'^!P\otimes d^*R)\ar[d]_\simeq^{\PF}\\
d_!\bar c^* c'^!f_*P\otimes R\ar[d]_{\BC}^\simeq
& d_!\bar c^*g'_*b'^!P\otimes R\ar[r]^{\adj_{g'}}\ar[l]_{\BC}^\sim\ar[d]^{\BC}_{\simeq}
& d_!\bar b^*b'^!P\otimes d^*R\ar[d]^{\BC}_\simeq\\
b''^*b'_!g'^*c'^!f_*P\otimes R\ar[d]_{\BC}^\simeq
& b''^*b'_!g'^*g'_*b'^!P\otimes R\ar[r]^{\adj_{g'}}\ar[l]_{\BC}^\sim \ar[d]^{\BC}_{\simeq}
& b''^*b'_!b'^!P\otimes R\ar[dd]^{\adj_{c'}}\\
b''^*f^*c'_!c'^!f_*P\otimes R\ar[d]_{\adj_{d'}}
&b''^*f^*c'_!g'_*b'^!P\otimes R\ar[l]_{\BC}^\sim\\
b''^*f^*f_*P\otimes R\ar[rr]^{\adj_f} && b''^*P\otimes R,}
\]
where $R=c''^!Q$, and (A2) is obtained by applying $b^!$ to the commutative
diagram
\[\xymatrix{&P\otimes Q\ar[r]^{\adj} & f^!f_!(P\otimes Q)\\
f^*f_*P\otimes Q\ar[r]^{\adj}\ar[ur]^{\adj}\ar[d]_{\adj}
& f^!f_!(f^*f_*P\otimes Q)\ar[ur]^{\adj}\ar[d]^{\adj}\ar[r]_\sim^{\PF}
& f^!(f_*P\otimes f_!Q)\ar[rd]_{\id}\ar[d]_{\adj}\\
f^*f_*P\otimes f^!f_!Q\ar[r]^{\adj}
& f^!f_!(f^*f_*P\otimes f^!f_!Q)\ar[r]^{\PF}_\sim
&f^!(f_*P\otimes f_!f^!f_!Q)\ar[r]_{\adj}
&f^!(f_*P\otimes f_!Q).\ar[uul]}
\]
The diagram (B) can be identified with the exterior product of the
evaluation map $DL_2\otimes L_2\to K_{X_2}$ and the diagram
\[\xymatrix{f_{1*}DL_1\otimes f_{1!}L_1\ar[r]\ar[d]_\simeq & f_{1!}(DL_1\otimes
L_1)\ar[r]^-{\ev} & f_{1!}K_{X_1}\ar[d]^{\adj_{f_1}}\\
D(f_{1!}L_1)\otimes L_1\ar[rr]^{\ev} && K_{Y_1},}
\]
which corresponds by adjunction to the commutative diagram
\[\xymatrix{f_{1*}R\cHom_{X_1}(L_1,K_{X_1})\ar[r]^{\adj}\ar[rd]_{\id}
& f_{1*}R\cHom_{X_1}(L_1,f_1^!f_{1!}K_{X_1})\ar[r]^\sim\ar[d]^{\adj}
& R\cHom_{Y_1}(f_{1!}L_1,f_{1!}K_{X_1})\ar[d]^{\adj}\\
&f_{1*}R\cHom_{X_1}(L_1,K_{X_1})\ar[r]^{\sim} & R\cHom_{Y_1}(f_{1!}L_1,K_{Y_1}).}\]
\end{proof}

\section{Appendix: Brauer theory and Laumon's theorem}\label{s.7}
The six operations constructed in this article are used in joint work with
Illusie \cite[Section~3]{Illusie-Zheng} to give a generalization of Laumon's
theorem comparing direct image and extraordinary direct image. The
generalization works for both torsion-free coefficients and torsion
coefficients. In this appendix, we show that the case of torsion
coefficients can be deduced from the case of torsion-free coefficients by
extending results in Brauer's modular representation theory to Grothendieck
groups of categories constructed in this article. Moreover, we deduce
comparison results for other ordinary and extraordinary operations.

Let $\Lambda$ be either (a) a field of characteristic $\ell>0$ or (b) an
algebraic extension of a complete valuation field of characteristic $0$ and
residue characteristic $\ell>0$. To get uniform statements for both cases,
we let $m=\ell$ in case (a) and $m=1$ is case (b). Let $S$ be a regular
scheme of dimension $\le 1$ such that every finite-type $S$-scheme has a
dense open subscheme that is geometrically unibranch. Assume $\ell$
invertible on $S$.

\begin{Notation}\label{n.Kt}
Let $\cX$ be a Noetherian Deligne-Mumford stack. We let $K(\cX,\Lambda)$
denote the Grothendieck ring of $\Mod_c(\cX,\Lambda)$. For an object $M$ of
$\Mod_c(\cX,\Lambda)$, we let $[M]$ denote its image in $K(\cX,\Lambda)$. We
let $\Kt(\cX,\Lambda)$ denote the quotient of $K(\cX,\Lambda)$ by the ideal
generated by $[\Lambda(1)]-[\Lambda]$, where $[\Lambda]$, $[\Lambda(1)]$
denote the classes of the constant sheaf $\Lambda$ and the Tate twist
$\Lambda(1)$ in the Grothendieck group, respectively.
\end{Notation}

Let $f\colon \cX \to \cY$ be a morphism of $m$-prime inertia between
finite-type separated-diagonal Deligne-Mumford $S$-stacks. The triangulated
functors
\[Rf_*, Rf_! \colon \Dbc(\cX,\Lambda) \to \Dbc(\cY,\Lambda) \]
induce group homomorphisms
\[f_*, f_! \colon K(\cX,\Lambda) \to K(\cY, \Lambda), \]
which in turn induce group homomorphisms
\[f_*\sptilde, f_!\sptilde \colon \Kt(\cX,\Lambda) \to \Kt(\cY, \Lambda)\]
by passing to quotients.

\newcommand{\cit}{\cite[Section 3]{Illusie-Zheng}}
\begin{Theorem}[\cit]\label{t.Laumon}
We have $f_*\sptilde=f_!\sptilde$.
\end{Theorem}

Case (b) of Theorem \ref{t.Laumon} restricted to the case of schemes is a
theorem of Laumon \cite{Laumon}. In \cite[Section 3]{Illusie-Zheng}, cases
(a) and (b) are proven using similar methods. Here we show that case (a) can
be deduced from case (b). We start by reviewing some facts from modular
representation theory.

Let $\cO$ be a complete discrete valuation ring with fraction field $E$ of
characteristic $0$ and residue field $F$ of characteristic $\ell >0$, and
let $\fm$ be the maximal ideal of $\cO$. Note that for any field $F$ of
characteristic $\ell>0$, any Cohen ring of $F$ satisfies the condition
for~$\cO$. For any $\F$-module (resp.\ $\cO$-module, resp.\ $E$-module) $M$
of finite type, we endow $M$ with the unique Hausdorff topology such that
$M$ is a topological module. For a profinite group~$G$, we define a
\emph{coherent $F[G]$-module} (resp.\ \emph{$\cO[G]$-module}, resp.\
\emph{$E[G]$-module}) to be an $\F$-module (resp.\ $\cO$-module, resp.\
$E$-module) of finite type endowed with a continuous $\F$-linear (resp.\
$\cO$-linear, resp.\ $E$-linear) action of $G$, and we denote by $K(G, \F)$
(resp.\ $K(G,\cO)$, resp.\ $K(G,E)$) the Grothendieck ring of the category
of such modules. Let $i\colon \Spec \F\to \Spec \cO$, $j\colon \Spec E\to
\Spec \cO$. Then we have a group homomorphism $i_*\colon K(G,\F)\to
K(G,\cO)$ defined by restriction of scalars, and a ring homomorphism
$j^*\colon K(G, \cO)\to K(G,E)$ defined by extension of scalars $[M]\mapsto
[E\otimes_\cO M]$.

\begin{Lemma}\label{p.Brauer}
We have $i_*=0$ and $j^*$ is an isomorphism.
\end{Lemma}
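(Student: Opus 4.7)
The plan is to treat the two claims in turn; $i_\ast=0$ is the substantive content, and $j^\ast$ being an isomorphism then follows formally. I first reduce to the case where $G$ is finite: since $F$ carries the discrete topology, a coherent $F[G]$-module $M$ is a finite discrete set, so the continuous action of $G$ factors through some finite quotient $\bar G$ of $G$. It thus suffices to show $i_\ast[M]=0$ in $K(\bar G,\cO)$ for every coherent $F[\bar G]$-module $M$, since this class inflates along $G\to \bar G$ to $i_\ast[M]$ in $K(G,\cO)$.

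For $i_\ast=0$, the key input is Brauer's theorem on modular representations: for any finite group $\bar G$, the decomposition map $d\colon K(\bar G,E)\to K(\bar G, F)$ sending $[V]\mapsto [L/\pi L]$ for $L\subset V$ any $\cO[\bar G]$-stable lattice (with $\pi\in\fm$ a uniformizer) is surjective. Thus $[M]=\sum_i n_i[L_i/\pi L_i]$ in $K(\bar G,F)$ for some $\cO[\bar G]$-lattices $L_i$ and integers $n_i$. For each $i$, the short exact sequence
\[0\to L_i\xrightarrow{\pi} L_i\to L_i/\pi L_i\to 0\]
of coherent $\cO[\bar G]$-modules (using that $L_i$ is $\cO$-torsion-free) gives $[L_i/\pi L_i]=0$ in $K(\bar G,\cO)$, whence $i_\ast[M]=\sum_i n_i\cdot 0=0$.

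For $j^\ast$: surjectivity is standard --- given $V\in K(G,E)$, compactness of the image of $G$ in $\Aut_E(V)$ produces a $G$-stable $\cO$-lattice $L\subset V$ with $j^\ast[L]=[V]$. For injectivity, suppose $j^\ast[N]=0$. Consider $0\to N_\tors\to N\to N/N_\tors\to 0$, where $N_\tors$ is the $\fm$-power torsion submodule. As $N_\tors$ is killed by some $\pi^k$, its filtration by $\pi^i N_\tors$ has subquotients which are coherent $F[G]$-modules, so $[N_\tors]\in\Img(i_\ast)=0$ by the previous step. Hence $[N]=[N/N_\tors]$, and $[E\otimes_\cO N/N_\tors]=j^\ast[N]=0$ in $K(G,E)$. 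Since $K(G,E)$ is free abelian on simple coherent $E[G]$-modules (Jordan--H\"older in the finite-length abelian category of such modules), this forces $E\otimes_\cO N/N_\tors=0$; as $N/N_\tors$ is $\cO$-torsion-free, we conclude $N/N_\tors=0$ and therefore $[N]=0$.

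The main obstacle is applying Brauer's theorem at the integer (not merely rational) level, i.e., that $[M]$ itself lies in the image of $d$. This is Theorem~33 of Chapter~16 of Serre's \emph{Repr\'esentations lin\'eaires des groupes finis}, whose proof combines Brauer's theorem on induced characters with the observation that for $\ell$-elementary subgroups every simple modular representation lifts to an integral one. If $E$ happens not to be a splitting field for $\bar G$, one reduces to that case by passing to a finite extension $\cO'\supset\cO$ with $E'$ splitting all representations of $\bar G$, running the argument over $\cO'$, and descending via the commutativity of $i_\ast$ with scalar restriction along $\cO'\to\cO$.
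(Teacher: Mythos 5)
Your argument for $i_*=0$ is correct and is essentially the paper's: surjectivity of the decomposition map (Serre, ch.~16, th.~33) reduces everything to classes of the form $[L/\fm L]$ with $L$ a lattice, which die in $K(G,\cO)$ because of the exact sequence $0\to L\xrightarrow{\lambda} L\to L/\fm L\to 0$. The reduction to a finite quotient of $G$ is a reasonable way to make the citation legitimate for profinite $G$.

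The gap is in the injectivity of $j^*$. You assume that an element of $\Ker j^*$ has the form $[N]$ for a single coherent $\cO[G]$-module $N$; a general element of $K(G,\cO)$ is a virtual difference $[N_1]-[N_2]$, and your argument (positivity of Jordan--H\"older multiplicities forcing $E\otimes_\cO N=0$) says nothing about such differences. After discarding torsion you are left with two lattices $L_1,L_2$ with $[E\otimes_\cO L_1]=[E\otimes_\cO L_2]$ in $K(G,E)$, and you must show $[L_1]=[L_2]$ in $K(G,\cO)$; this is not automatic. The paper closes exactly this gap by constructing a section $s\colon K(G,E)\to K(G,\cO)/\Img i_*$ sending a class $[V]$ to the class of any $G$-stable lattice, and verifying (a) that two lattices in the same $V$ have equal classes modulo $\Img i_*$ (reduce to $L_1\subset L_2$ with torsion quotient), and (b) that $s$ respects the relations coming from short exact sequences of $E[G]$-modules (take $f^{-1}(L)$ and $g(L)$). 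Since $sj^*=\mathrm{id}$ and $\Img i_*=0$, injectivity follows. Some version of this lattice-independence-plus-additivity argument is unavoidable; please add it. As a secondary point, your descent from a sufficiently large coefficient field $E'$ via restriction of scalars only yields $[F':F]\cdot i_*[M]=0$ rather than $i_*[M]=0$, since $\mathrm{Res}(F'\otimes_F M)\cong M^{[F':F]}$; this step would need a different justification if you want to avoid the splitting-field hypothesis.
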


\begin{proof}
We claim that the localization sequence
\[K(G,\F)\xrightarrow{i_*} K(G,\cO)\xrightarrow{j^*} K(G,E) \to 0 \]
is exact.  It is clear that $j^* i_* = 0$. We define a homomorphism $s\colon
K(G,E)\to K(G,\cO)/\Img i_*$ sending the class of a coherent $E[G]$-module
$M$ to the class of any $G$-stable $\cO$-lattice $L$ of $M$. If $L_1$ and
$L_2$ are two $G$-stable lattices of $M$, then $[L_1]-[L_2]$ belongs to
$\Img i_*$. Indeed, we may assume $L_1\subset L_2$, in which case $L_1/L_2$
is killed by a power of $\fm$, hence its class belongs to $\Img i_*$. If
$0\to M' \xrightarrow{f} M \xrightarrow{g} M''\to 0$ is a short exact
sequence of coherent $E[G]$-modules, $L$ is a $G$-stable $\cO$-lattice of
$M$, then $f^{-1}(L)$ is a lattice of $M$ and $g(L)$ is a lattice of $M''$.
Hence $s$ is a well-defined homomorphism. It is clearly an inverse of the
map $K(G,\cO)/\Img i_* \to K(G,E)$ induced by $j^*$.

Consider the decomposition map $d\colon K(G,E) \to K(G,\F)$ sending the
class of a coherent $E[G]$-module $M$ to the class of $[L/\fm L]$, where $L$
is a $G$-stable $\cO$-lattice $L$ of $M$. On shows as in the case of a
finite group \cite[Section 15.2, Th\'eor\`eme 32]{Serre} that this does not
depend on the choice of the lattice. The decomposition map is surjective. In
fact, since every coherent $F[G]$-module comes from a coherent $F[H]$-module
for some finite quotient group $H$ of $G$, the surjectivity follows from the
case of a finite group \cite[Section 16.1, Th\'eor\`eme 33]{Serre}.

By construction $i_* d=0$. It follows that $i_*=0$ and $j^*$ is an
isomorphism.
\end{proof}

Next we establish an analogue of Lemma \ref{p.Brauer} for Deligne-Mumford
stacks. Let $\cX$ be a Noetherian Deligne-Mumford stack. Notation \ref{n.Kt}
applies to $\Lambda=E,F$. We extend it to $\cO$. We let $i_*\colon
K(\cX,F)\to K(\cX,\cO)$ denote the group homomorphism induced by the exact
restriction of scalars functor $e_{0*}\colon
\Mod_c(\cX,F)\to\Mod_c(\cX,\cO)$, let $j^*\colon K(\cX,\cO)\to K(\cX,E)$
denote the ring homomorphism induced by the exact functor (Convention
\ref{c.E})
\[\Mod_c(\cX,\cO)\to \Mod_c(\cX,E), \quad M\mapsto E\otimes_\cO M,\]
and let $i^*\colon K(\cX,\cO)\to K(\cX,F)$ denote the ring homomorphism
induced by the triangulated functor (\ref{s.endthree})
\[\Dbc(\cX,\cO)\to \Dbc(\cX,\F), \quad M\mapsto F\otimes^L_\cO M.\]

\begin{Proposition}\label{p.BrauerX}\leavevmode
\begin{enumerate}
\item\label{p.BrauerX1} $i_*=0$ and $j^*$ is an isomorphism.

\item\label{p.BrauerX2} $i^*$ is a surjection.
\end{enumerate}
\end{Proposition}

\begin{proof}
\begin{itemize}
\item[\ref{p.BrauerX1}] We first show that the localization sequence
\[K(\cX,\F)\xrightarrow{i_*} K(\cX,\cO)\xrightarrow{j^*} K(\cX,E) \to 0 \]
is exact. Since $E\otimes_\cO (e_{0*}M) =0$ for all $M\in \Mod_c(\cX,F)$,
we have $j^*i_* = 0$. We define a homomorphism
\[s\colon K(\cX,E)\to K(\cX,\cO)/\Img i_*, \quad [E\otimes_\cO M] \mapsto [M].\]
Note that if $M\in \Mod_c(\cX,\cO)$ satisfies $E\otimes_\cO M= 0$ in
$\Mod_c(\cX,E)$, then its class in $K(\cX,\cO)$ belongs to $\Img i_*$. If
\[0\to E\otimes_\cO M'\xrightarrow{f} E\otimes_\cO M \xrightarrow{g} E\otimes_\cO M''\to 0\]
is a short exact sequence in $\Mod_c(\cX,E)$, then there exists an integer
$n\ge 0$, $f_\cO\colon M'\to M$ and $g_\cO\colon M\to M''$ such that
$\ell^n f=E\otimes_\cO f_\cO$ and $\ell^n g=E\otimes_\cO g_\cO$. Since
$E\otimes_\cO \Ker f_\cO$, $E\otimes_\cO (\Img f_\cO/\Img f_\cO \cap \Ker
g_\cO)$, $E\otimes_\cO (\Ker g_\cO/\Img f_\cO \cap \Ker g_\cO)$ and
$E\otimes_\cO \Coker g_\cO$ are all zero objects of $\Mod_c(\cX,E)$, it
follows that $[M']-[M]+[M'']$ belongs to $\Img i_*$. Hence $s$ is a
well-defined homomorphism. It is clearly an inverse of the map
$K(\cX,\cO)/\Img i_* \to K(\cX,E)$ induced by $j^*$.

It remains to show $i_*=0$. Note that the Abelian group $K(\cX,F)$ is
generated by elements of the form $[f_! M]$, where $f\colon \cY\to \cX$ is
an immersion, $\cY$ is integral, $M\in \Mod_c(\cY,F)$ is locally constant.
For such $f$, we have a 2-commutative diagram
\[\xymatrix{\Mod_\lisse(\cY,F)\ar[r]^{e_{0*}}\ar[d]_{f_!}& \Mod_\lisse(\cY,\cO)\ar[d]^{f_!}\\
\Mod_c(\cX,F)\ar[r]^{e_{0*}}& \Mod_c(\cX,\cO),}
\]
where $\Mod_\lisse(\cY,-)$ denote the full subcategory of $\Mod_c(\cY,-)$
spanned by locally constant sheaves, which is equivalent to $\Mod_c(G,-)$.
Here $G$ is a fundamental group of $\cY$. The above diagram induces a
commutative diagram
\[\xymatrix{K(G,F)\ar[r]^{i_*}\ar[d] & K(G,\cO)\ar[d]\\
K(\cX,F)\ar[r]^{i_*}& K(\cX,\cO).}
\]
We conclude by applying the fact that the top horizontal arrow vanishes
(Lemma \ref{p.Brauer}).

\item[\ref{p.BrauerX2}] For $f$ and $G$ as before, we have a commutative
    diagram
\[\xymatrix{K(G,\cO)\ar[r]^{j^*}_\sim \ar[d] & K(G,E)\ar[r]^d & K(G,F)\ar[d]\\
K(\cX,\cO)\ar[rr]^{i^*}&& K(\cX,F)}
\]
where the vertical maps are induced by $f_!$. It then suffices to apply
the fact that $d$ is surjective.
\end{itemize}
\end{proof}

With the help of Proposition \ref{p.BrauerX}, we can deduce case (a) of
Theorem \ref{t.Laumon} from case (b) as follows. Let $f\colon \cX \to \cY$
be a morphism of $\ell$-prime inertia between finite-type separated-diagonal
Deligne-Mumford $S$-stacks. We have a 2-commutative diagram of triangulated
categories and triangulated functors
\[\xymatrix{\Dbc(\cX,E)\ar[d]_{Rf_*} & \Dbc(\cX,\cO)\ar[l]_{E\otimes_\cO -} \ar[r]^{F\otimes^L_\cO-} \ar[d]^{Rf_*} & \Dbc(\cX,\F)\ar[d]^{Rf_*}\\
  \Dbc(\cY,E) & \Dbc(\cY,\cO)\ar[l]_{E\otimes_\cO -} \ar[r]^{F\otimes^L_\cO-} & \Dbc(\cY,\F).}
  \]
The 2-commutativity of the left square is trivial and the 2-commutativity of
the right square is Proposition \ref{p.fO}. The above diagram induces a
commutative diagram of Abelian groups and homomorphisms
\[\xymatrix{K(\cX,E)\ar[d]_{f_*} & K(\cX,\cO)\ar[l]_{j^*} \ar[r]^{i^*}\ar[d]^{f_*} & K(\cX,\F)\ar[d]^{f_*}\\
  K(\cY,E) & K(\cY,\cO)\ar[l]_{j^*} \ar[r]^{i^*} & K(\cY,\F) }\]
and a similar diagram for $K\sptilde{}$. Similar statements hold for $Rf_!$.
By Proposition \ref{p.BrauerX}, $j^*$ is an isomorphism and $i^*$ is a
surjection. It follows that $f_*\sptilde=f_!\sptilde$ for $E$ implies
$f_*\sptilde=f_!\sptilde$ for $F$.

Now we turn to consequences of Theorem \ref{t.Laumon} on other operations.
Let $f\colon \cX \to \cY$ be a morphism of between finite-type
separated-diagonal Deligne-Mumford $S$-stacks. The triangulated functors
\[f^*,Rf^!\colon \Dbc(\cY,\Lambda)\to \Dbc(\cX,\Lambda)\]
induce group homomorphisms
\begin{gather*}
f^*,f^!\colon K(\cY,\Lambda)\to K(\cX,\Lambda),\\
{f^*}\sptilde,{f^!}\sptilde\colon \Kt(\cY,\Lambda)\to \Kt(\cX,\Lambda).
\end{gather*}

\begin{Corollary}\label{c.Laumonup}
We have ${f^*}\sptilde={f^!}\sptilde$.
\end{Corollary}

\begin{proof}
We let $\equiv$ denote congruence modulo the ideal generated by
$[\Lambda(1)]-[\Lambda]$. Let $y\in K(\cY,\Lambda)$.

If $f$ is smooth of relative dimension $d$, then $f^!y=f^*y(d)$. If $f$ is a
closed immersion, then $y=j_!j^* y+f_*f^* y=j_*j^*y+f_*f^!y$, where $j$ is
the complementary open immersion. It follows then from Theorem
\ref{t.Laumon} that $f_*f^*y\equiv f_*f^!y$, so that $f^*y\equiv f^!y$. The
case of an immersion follows.

In the general case, let $(\cY_\alpha)_{\alpha\in I}$ be a partition of
$\cY$ into locally closed substack such that each $\cY_\alpha$ is the
quotient stack of an affine scheme by a finite group action. For each
$\alpha$, form the 2-Cartesian square
\[\xymatrix{\cX_\alpha\ar[d]_{f_\alpha} \ar[r]^{j'_\alpha} & \cX\ar[d]^f\\
\cY_\alpha\ar[r]^{j_\alpha} & \cY.}
\]
Then $y=\sum_{\alpha\in I}j_{\alpha!}j_\alpha^* y=\sum_{\alpha\in
I}j_{\alpha*}j_\alpha^! y$, so that
\[f^*y=\sum_{\alpha\in I}f^*j_{\alpha!}j_\alpha^* y=\sum_{\alpha\in I}
j'_{\alpha!} f_\alpha^*j_\alpha^* y,\quad f^!y=\sum_{\alpha\in
I}f^!j_{\alpha*}j_\alpha^! y=\sum_{\alpha\in I} j'_{\alpha*}
f_\alpha^!j_\alpha^! y.
\]
Thus we may assume $\cY=[Y/H]$, where $Y$ is an affine scheme endowed with
an action of a finite group $H$. Similarly, we may assume $\cX=[X/G]$, where
$X$ is an affine scheme endowed with an action of a finite group $G$. Up to
changing $X$ and $G$, we may further assume that $f=[g/\gamma]$, for
$(g,\gamma)\colon (X,G)\to (Y,H)$ (see for example \cite[Proposition
5.1]{Zind}, which extends trivially to our case). In this case $g$ can be
decomposed into $G$-equivariant morphisms $X\xrightarrow{i} Z\xrightarrow{p}
Y$ where $i$ is a closed immersion and $p$ is an affine space. Then
$f=[p/\gamma][i/\id]$, where $[i/\id]$ is a closed immersion and
$[p/\gamma]$ is smooth.
\end{proof}

Now assume that $S$ is the spectrum of a field. For a finite-type
separated-diagonal Deligne-Mumford $S$-stack $\cX$, consider the functors
\[\otimes,\otimes^!\colon \Dbc(\cX,\Lambda)\times \Dbc(\cX,\Lambda)\to \Dbc(\cX,\Lambda),\]
where $-\otimes^!-:=D_\cX(D_\cX-\otimes D_\cX-)$. Note that, if we let
$\Delta_{\cX/S}\colon \cX\to \cX\times_S \cX$ denote the diagonal morphism,
then $-\otimes-\simeq \Delta_{\cX/S}^*(-\boxtimes_S -)$, and
$-\otimes^!-\simeq R\Delta_{\cX/S}^!(-\boxtimes_S -)$ by K\"unneth formula
(Proposition \ref{p.KF} extends trivially to the non-separated case). The
functors induce bilinear maps
\begin{gather*}
\otimes,\otimes^! \colon K(\cX,\Lambda)\times K(\cX,\Lambda)\to K(\cX,\Lambda),\\
\otimes\sptilde,{\otimes^!}\sptilde \colon \Kt(\cX,\Lambda)\times \Kt(\cX,\Lambda)\to \Kt(\cX,\Lambda).
\end{gather*}
Applying Corollary \ref{c.Laumonup} to $\Delta_{\cX/S}$, we obtain the
following.

\begin{Corollary}
We have $\otimes\sptilde={\otimes^!}\sptilde$.
\end{Corollary}

\bibliographystyle{abbrv}
\bibliography{sixop}

\providecommand{\SortNoop}[1]{}
\begin{thebibliography}{10}

\bibitem{Behrend}
K.~A. Behrend.
\newblock Derived {$l$}-adic categories for algebraic stacks.
\newblock {\em Mem. Amer. Math. Soc.}, 163(774), 2003.

\bibitem{BBD}
A.~A. Be{\u\i}linson, J.~Bernstein, and P.~Deligne.
\newblock Faisceaux pervers.
\newblock In {\em Analysis and topology on singular spaces, {I} ({L}uminy,
  1981)}, volume 100 of {\em Ast\'erisque}, pages 5--171. Soc. Math. France,
  Paris, 1982.

\bibitem{Conrad}
B.~Conrad.
\newblock The {K}eel--{M}ori theorem via stacks.
\newblock Preprint, 2005.

\bibitem{CLO}
B.~Conrad, M.~Lieblich, and M.~Olsson.
\newblock Nagata compactification for algebraic spaces.
\newblock {\em J. Inst. Math. Jussieu}, 11(4):747--814, 2012.

\bibitem{SGA4d}
P.~Deligne.
\newblock {\em Cohomologie \'etale}, volume 569 of {\em Lecture Notes in
  Mathematics}.
\newblock Springer-Verlag, Berlin, 1977.
\newblock S{\'e}minaire de G{\'e}om{\'e}trie Alg{\'e}brique du Bois-Marie SGA
  4$\frac{1}{2}$, avec la collaboration de J.-F. Boutot, A. Grothendieck,
  L.~Illusie et J.-L. Verdier.

\bibitem{WeilII}
P.~Deligne.
\newblock La conjecture de {W}eil. {II}.
\newblock {\em Inst. Hautes \'Etudes Sci. Publ. Math.}, 52:137--252, 1980.

\bibitem{Ekedahl}
T.~Ekedahl.
\newblock On the adic formalism.
\newblock In {\em The {G}rothendieck {F}estschrift, {V}ol.\ {II}}, volume~87 of
  {\em Progr. Math.}, pages 197--218. Birkh\"auser, Boston, MA, 1990.

\bibitem{GZ}
P.~Gabriel and M.~Zisman.
\newblock {\em Calculus of fractions and homotopy theory}, volume~35 of {\em
  Ergebnisse der Mathematik und ihrer Grenzgebiete}.
\newblock Springer-Verlag, 1967.

\bibitem{Tohoku}
A.~Grothendieck.
\newblock Sur quelques points d'alg\`ebre homologique.
\newblock {\em T\^ohoku Math. J. (2)}, 9:119--221, 1957.

\bibitem{ILO}
{\em Travaux de {G}abber sur l'uniformisation locale et la cohomologie \'etale
  des sch\'emas quasi-excellents}, volume 363--364.
\newblock 2014.
\newblock S\'eminaire \`a l'\'Ecole polytechnique 2006--2008, dirig\'e par L.
  Illusie, Y. Laszlo et O. Fabrice. Avec la collaboration de F. D\'eglise, A.
  Moreau, V. Pilloni, M. Raynaud, J. Riou, B. Stroh, M. Temkin et W. Zheng.

\bibitem{ILOXVIIIA}
L.~Illusie.
\newblock Cohomological dimension: First results.
\newblock In {\em \emph{\cite{ILO}}}, {E}xpos\'e XVIII${}_{\mathrm{A}}$, pages
  455--459.

\bibitem{Illusie-Zheng}
L.~Illusie and W.~Zheng.
\newblock Odds and ends on finite group actions and traces.
\newblock {\em Int. Math. Res. Not.}, 2013(1):1--62, 2013.

\bibitem{Quillen}
L.~Illusie and W.~Zheng{\SortNoop{1}}.
\newblock Quotient stacks and equivariant \'etale cohomology algebras:
  {Q}uillen's theory revisited.
\newblock Preprint,
  \href{http://arxiv.org/abs/1305.0365}{\texttt{arXiv:1305.0365}}.

\bibitem{KSCat}
M.~Kashiwara and P.~Schapira.
\newblock {\em Categories and sheaves}, volume 332 of {\em Grundlehren der
  Mathematischen Wissenschaften}.
\newblock Springer, Berlin, 2006.

\bibitem{KM}
S.~Keel and S.~Mori.
\newblock Quotients by groupoids.
\newblock {\em Ann. of Math. (2)}, 145(1):193--213, 1997.

\bibitem{Knutson}
D.~Knutson.
\newblock {\em Algebraic spaces}, volume 203 of {\em Lecture Notes in
  Mathematics}.
\newblock Springer-Verlag, Berlin, 1971.

\bibitem{LO1}
Y.~Laszlo and M.~Olsson.
\newblock The six operations for sheaves on {A}rtin stacks. {I}. {F}inite
  coefficients.
\newblock {\em Publ. Math. Inst. Hautes \'Etudes Sci.}, 107:109--168, 2008.

\bibitem{LO2}
Y.~Laszlo and M.~Olsson.
\newblock The six operations for sheaves on {A}rtin stacks. {II}. {A}dic
  coefficients.
\newblock {\em Publ. Math. Inst. Hautes \'Etudes Sci.}, 107:169--210, 2008.

\bibitem{Laumon}
G.~Laumon.
\newblock Comparaison de caract\'eristiques d'{E}uler-{P}oincar\'e en
  cohomologie {$l$}-adique.
\newblock {\em C. R. Acad. Sci. Paris S\'er. I Math.}, 292(3):209--212, 1981.

\bibitem{LMB}
G.~Laumon and L.~Moret-Bailly.
\newblock {\em Champs alg\'ebriques}, volume~39 of {\em Ergebnisse der
  Mathematik und ihrer Grenzgebiete. 3.~Folge. A Series of Modern Surveys in
  Mathematics}.
\newblock Springer-Verlag, Berlin, 2000.

\bibitem{LZ1}
Y.~Liu and W.~Zheng.
\newblock {\SortNoop{1}G}luing restricted nerves of $\infty$-categories.
\newblock Preprint,
  \href{http://arxiv.org/abs/1211.5294}{\texttt{arXiv:1211.5294}}.

\bibitem{LZ2}
Y.~Liu and W.~Zheng.
\newblock {\SortNoop{2}E}nhanced six operations and base change theorem for
  {A}rtin stacks.
\newblock Preprint,
  \href{http://arxiv.org/abs/1211.5294}{\texttt{arXiv:1211.5948}}.

\bibitem{LZ3}
Y.~Liu and W.~Zheng.
\newblock {\SortNoop{3}E}nhanced adic formalism and perverse $t$-structures for
  higher {A}rtin stacks.
\newblock Preprint,
  \href{http://arxiv.org/abs/1404.1128}{\texttt{arXiv:1404.1128}}.

\bibitem{OlssonFuji}
M.~Olsson.
\newblock Fujiwara's theorem for equivariant correspondences.
\newblock Preprint, to appear in \emph{J. Algebraic Geom.}

\bibitem{ILOXIII}
F.~Orgogozo.
\newblock Le th\'eor\`eme de finitude.
\newblock In {\em \emph{\cite{ILO}}}, {E}xpos\'e XIII, pages 261--275.

\bibitem{Rickard}
J.~Rickard.
\newblock Derived categories and stable equivalence.
\newblock {\em J. Pure Appl. Algebra}, 61(3):303--317, 1989.

\bibitem{ILOXVII}
J.~Riou.
\newblock Dualit\'e.
\newblock In {\em \emph{\cite{ILO}}}, {E}xpos\'e XVII, pages 351--453.

\bibitem{Rydhcomp}
D.~Rydh.
\newblock Compactification of tame {D}eligne--{M}umford stacks.
\newblock Preprint, 2011.

\bibitem{Rydhquot}
D.~Rydh.
\newblock Existence and properties of geometric quotients.
\newblock {\em J. Algebraic Geom.}, 22(4):626--669, 2013.

\bibitem{Serre}
J.-P. Serre.
\newblock {\em Repr\'esentations lin\'eaires des groupes finis}.
\newblock Hermann, Paris, 5th edition, 1998.

\bibitem{SGA4}
{\em Th\'eorie des topos et cohomologie \'etale des sch\'emas}, volume 269,
  270, 305 of {\em Lecture Notes in Mathematics}.
\newblock Springer-Verlag, Berlin, 1972.
\newblock S{\'e}minaire de G{\'e}om{\'e}trie Alg{\'e}brique du Bois-Marie
  1963--1964 (SGA 4), dirig{\'e} par M. Artin, A. Grothendieck, et J.-L.
  Verdier, avec la collaboration de N. Bourbaki, P. Deligne et B. Saint-Donat.

\bibitem{SGA5}
{\em Cohomologie {$\ell$}-adique et fonctions {$L$}}, volume 589 of {\em
  Lecture Notes in Mathematics}.
\newblock Springer-Verlag, Berlin, 1977.
\newblock S{\'e}minaire de G{\'e}ometrie Alg{\'e}brique du Bois-Marie
  1965--1966 (SGA 5), dirig{\'e} par A. Grothendieck, avec la collaboration de
  I. Bucur, C. Houzel, L. Illusie, J.-P. Jouanolou et J.-P. Serre.

\bibitem{SP}
{The Stacks Project Authors}.
\newblock {\itshape Stacks Project}.
\newblock \url{http://math.columbia.edu/algebraic_geometry/stacks-git}.

\bibitem{Varshavsky}
Y.~Varshavsky.
\newblock Lefschetz--{V}erdier trace formula and a generalization of a theorem
  of {F}ujiwara.
\newblock {\em Geom. Funct. Anal.}, 17(1):271--319, 2007.

\bibitem{Zind}
W.~Zheng.
\newblock Sur l'ind\'ependance de {$l$} en cohomologie {$l$}-adique sur les
  corps locaux.
\newblock {\em Ann. Sci. \'Ec. Norm. Sup\'er. (4)}, 42(2):291--334, 2009.

\bibitem{glue}
W.~Zheng{\SortNoop{1}}.
\newblock Gluing pseudofunctors via $n$-fold categories.
\newblock Preprint,
  \href{http://arxiv.org/abs/1211.1877}{\texttt{arXiv:1211.1877v3}}.

\end{thebibliography}

\end{document}